\documentclass{article}
\usepackage[%
journal=FoCM,    
lang=american,   
]{ems-journal}

\newif\ifpreprint
\preprinttrue   
\ifpreprint
  \makeatletter
  \renewcommand*\ps@titlepage{%
    \let\@oddhead\@empty \let\@evenhead\@empty
    \let\@oddfoot\@empty \let\@evenfoot\@empty}
  \makeatother
\fi
\preprinttrue

\usepackage{mymacros}
\usepackage{algorithm}
\usepackage{algpseudocode}
\usepackage{multirow}
\usepackage{microtype}
\usepackage{acro}
\usepackage{cleveref}
\usepackage{longtable}
\usepackage{mathtools}
\acsetup{make-links}
\usepackage{pgfplots,tikz}
\usepackage{mytikzconfig}

\numberwithin{equation}{section}
\numberwithin{algorithm}{section}

\renewcommand{\tilde}{\widetilde}
\renewcommand{\hat}{\widehat}
\renewcommand{\Re}{\operatorname{Re}}
\renewcommand{\Im}{\operatorname{Im}}
\newcommand{\cod}{\mathrm{\bullet}}

\RenewAcroTemplate[list]{description}
  {
    \acroheading
    \acropreamble
    \begin{description}
      \acronymsmapF
        {
          \item [ \textcolor{emsblue}{\acrowrite{short}} \acroifT{alt}{/} \acrowrite{alt} ]
            \acrowrite{list}
            \acroifanyT{foreign,extra}{~(}
            \acrowrite{foreign}
            \acroifallT{foreign,extra}{,~}
            \acrowrite{extra}
            \acroifanyT{foreign,extra}{)}
            \acropagefill
            \acropages
              { \acrotranslate{page} \nobreakspace }
              { \acrotranslate{pages} \nobreakspace }
        }
        { \item \AcroRerun }
    \end{description}
  }

\newcommand{\itemeq}[1]{%
  \refstepcounter{equation}%
  $\displaystyle #1$\hfill\textup{(\theequation)}}

\DeclareAcronym{mor}{short=MOR, long=model order reduction}
\DeclareAcronym{rom}{short=ROM, long=reduced--order model}
\DeclareAcronym{fom}{short=FOM, long=full--order model}
\DeclareAcronym{rkhs}{short=RKHS, long=reproducing kernel Hilbert space}
\DeclareAcronym{lti}{short=LTI, long=linear time--invariant}
\DeclareAcronym{irka}{short=IRKA, long=iterative rational Krylov algorithm}
\DeclareAcronym{gmt}{short=GMT, long=generalized Malmquist--Takenaka}
\DeclareAcronym{mt}{short=MT, long=Malmquist--Takenaka}
\DeclareAcronym{bgmt}{short=BGMT, long=biorthogonal--GMT}
\DeclareAcronym{bt}{short=BT, long=balanced truncation}
\DeclareAcronym{cpl}{short=CPL, long=continuous piecewise linear}
\DeclareAcronym{dwr}{short=DWR, long=dual weighted residuals}
\DeclareAcronym{eim}{short=EIM, long=empirical interpolation method}
\DeclareAcronym{fem}{short=FEM, long=finite element method}
\DeclareAcronym{fft}{short=FFT, long=fast Fourier transform}
\DeclareAcronym{rbf}{short=RBF, long=radial basis function}
\DeclareAcronym{siso}{short=SISO, long=single--input single--output}
\DeclareAcronym{svm}{short=SVM, long=support vector machine}
\DeclareAcronym{pde}{short=PDE, long=partial differential equation}
\DeclareAcronym{mse}{short=MSE, long=mean squared error}

\theoremstyle{plain}
\newtheorem{theorem}{Theorem}[section]
\newtheorem{lemma}[theorem]{Lemma}

\newtheorem{probstat}[theorem]{Problem statement}

\newtheorem{corollary}[theorem]{Corollary}
\newtheorem{assumption}[theorem]{Assumption}
\theoremstyle{definition}
\newtheorem{definition}[theorem]{Definition}
\newtheorem{remark}[theorem]{Remark}
\newtheorem{example}[theorem]{Example}

\begin{document}

\title{Norm--optimal model order reduction in reproducing kernel Hilbert spaces}
\titlemark{Norm--optimal MOR in RKHSs}


%

\emsauthor{1}{
	\givenname{Tamás}
	\surname{Dózsa}
	\orcid{0000-0003-0919-4385}}{T.~Dózsa}
\emsauthor{2}{
	\givenname{Andrea}
	\surname{Angino}
	\orcid{0009-0000-8525-375X}}{A.~Angino}
\emsauthor{3}{
	\givenname{Rolf}
	\surname{Krause}
	\orcid{0000-0001-5408-5271}}{R.~Krause}
\emsauthor{4}{
	\givenname{Matthias}
	\surname{Voigt}
	\orcid{0000-0001-8491-1861}}{M.~Voigt}


\Emsaffil{1}{
    \pretext{On leave from:}
	\department{Systems and Control Laboratory}
	\organisation{HUN--REN Institute for Computer Science and Control}
	\address{Lágymányosi Utca 11}
	\zip{1111}
	\city{Budapest}
	\country{Hungary}
    \affemail{dozsa.tamas@hsztaki.hu}}
\Emsaffil{2}{
	\department{Faculty of Mathematics and Computer Science}
	\organisation{FernUni Schweiz/UniDistance Suisse}
	\address{Schinerstrasse~18}
	\zip{3900}
	\city{Brig}
	\country{Switzerland}
    \affemail{andrea.angino@unidistance.ch}}
    
\Emsaffil{3}{
	\department{Computer, Electrical and Mathematical Sciences and Engineering Division}
	\organisation{King Abdullah University of Science and Technology}
	\zip{4700}
	\city{Thuwal}
	\country{Kingdom of Saudi Arabia}
	\affemail{rolf.krause@kaust.edu.sa}
    }

\Emsaffil{4}{
	\department{Faculty of Mathematics and Computer Science}
	\organisation{FernUni Schweiz/UniDistance Suisse}
	\address{Schinerstrasse~18}
	\zip{3900}
	\city{Brig}
	\country{Switzerland}
    \affemail{matthias.voigt@fernuni.ch}
    }

\classification[41A05, 42B35, 46E22, 65D15]{41A65}

\keywords{reproducing kernel Hilbert spaces, reproducing kernels, Malmquist--Takenaka systems, model order reduction, support vector machines, neural networks}

\begin{abstract}
We consider the problem of model order reduction (MOR) in reproducing kernel Hilbert spaces (RKHSs). We prove generalizations of well--known necessary interpolatory conditions for norm--optimal reduced--order models, originally stated for rational functions in $H_2$ Hardy--Hilbert spaces. We construct an alternative to the iterative rational Krylov (IRKA) algorithm for RKHSs with differentiable kernels to obtain norm--optimal reduced--order models. Then, we introduce generalized Malmquist--Takenaka (GMT) functions that can be used to define orthonormal bases in an arbitrary RKHS. Using GMT functions, we propose unitary transformations between separable RKHSs. This allows us to extend our optimality conditions and subsequent algorithms to MOR problems posed in RKHSs with non--differentiable kernels. The potential and versatility of the proposed MOR framework is demonstrated in several numerical experiments. We perform norm--optimal MOR on various machine learning models such as support vector machines and deep ReLU networks. In addition, we show how the proposed MOR framework can be used to solve seemingly unrelated problems. In particular, we provide an example for fitting high degree polynomials to data and also obtain a FEM--like method for solving partial differential equations (PDEs) with an optimal kernel basis.
\end{abstract}

\maketitle

\setcounter{tocdepth}{2}
\tableofcontents


\section{Introduction}

The study of \ac{mor} deals with the simplification of mathematical models referred to as \acp{fom}. In \ac{mor}, \acp{fom} and their simplified approximants usually have prescribed structure. The obtained approximant is called the \ac{rom}. Traditionally, \ac{mor} techniques have been developed for reducing the complexity of dynamical systems~\cite{gugercin_survey_2004, gugercin_mathcalh_2_2008, borghi_mathcal_2024, baur_model_2014, meier_approximation_1967}. Since many control engineering applications must operate in real time under limited hardware capacity, control schemes based on \acp{rom} have proven particularly useful~\cite{obinata_model_2012}. Well--known techniques exist for the reduction of models that describe \ac{lti} systems~\cite{gugercin_survey_2004, gugercin_mathcalh_2_2008, schroder_balanced_2023, soumelidis_hyperbolic_2017, meier_approximation_1967}.

In order to address the \ac{mor} of \ac{lti} systems, several well--known methods have been developed that preserve the structure of the full order model, but provide theoretical guarantees on the performance of the \ac{rom}. One such well--known method is \ac{bt}~\cite{gugercin_survey_2004}, which operates on the state--space representation of \ac{lti} systems. It relies on a balanced realization of the system, which can be obtained by solving Lyapunov equations. Then, the least controllable and observable states in the balanced system are disregarded to obtain the \ac{rom}. One important benefit of the \ac{bt} method is the theoretical guarantee it provides on the $H_{\infty}$ error of the \ac{rom}.

Another popular approach to the reduction of \ac{lti} models is the iterative rational Krylov algorithm (\ac{irka}) introduced in~\cite{gugercin_mathcalh_2_2008}. \ac{irka} is an iterative method that computes the \ac{rom} and guarantees that the result satisfies necessary interpolatory (or, equivalently, orthogonality--based) conditions for optimal approximation in the $H_2$ norm. In recent works~\cite{borghi_mathcal_2024, borghi_data-driven_2025}, the orthogonality--based optimality conditions proposed in~\cite{gugercin_mathcalh_2_2008} for $H_2(\C^{+})$ have been adapted to $H_2$ spaces corresponding to more general domains. For example, in~\cite{borghi_mathcal_2024, borghi_data-driven_2025}, interpolatory necessary conditions and an \ac{irka} analog have been developed for $H_2$ spaces defined over simply connected sets. We also mention~\cite{benner_interpolation-based_2012}, where $H_2$--norm--optimal \acp{rom} are obtained for bilinear systems. Again, the construction uses a variation of \ac{irka} and satisfies an appropriate analog of the interpolatory conditions necessary for optimal approximation in the $H_2$ norm. 

In this study, our objective is to further develop \ac{mor} methods along several directions of generalization. First, we exploit the fact that the $H_2(\C^{+})$ Hardy--Hilbert space is an \ac{rkhs} containing functions $\C^{+} \to \C$, where $\C^{+}$ denotes the open right complex half--plane. We show that the necessary interpolatory optimality conditions proposed in~\cite{gugercin_mathcalh_2_2008, meier_approximation_1967} for $H_2$--norm--optimal \ac{mor} can be generalized to an \ac{rkhs} setting. The proposed optimality conditions are then used to construct iterative schemes to obtain norm--optimal \acp{rom}. We note that our optimality conditions and subsequent algorithms can only be applied directly in \acp{rkhs} whose kernel is Wirtinger differentiable with respect to both variables. We refer to \acp{rkhs} adhering to this condition as admissible. Several relevant large models, such as deep ReLU neural networks~\cite{shilton_reproducing_2025, plonka_spline_2023, daubechies_nonlinear_2019}, cannot be represented in admissible \acp{rkhs}.

To overcome this issue, we introduce the \ac{rkhs} variants of \ac{mt} functions~\cite{mi_fast_2012, soumelidis_hyperbolic_2017} and refer to them as \ac{gmt} functions. The classical \ac{mt} functions form an orthonormal system in the Hardy--Hilbert space $H_2(\D)$, where $\D\coloneqq\{ z \in \C \ : \ |z| < 1 \}$ denotes the interior of the complex unit disk. We prove that the proposed \ac{rkhs} formulation of the \ac{mt} system retains some important properties of its rational counterpart. We mention that drawing an analogy between special complex analytic functions and general \acp{rkhs} is not without precedent. In~\cite{felder_analogues_2022}, for example, an \ac{rkhs} formulation of finite Blaschke products (special inner functions of $H_2(\mathbb{D})$) is proposed. Recently, in~\cite{qian_kernel_2026}, the rational \ac{mt} system has been defined for a family of \acp{rkhs} known as boundary weighted Hardy spaces. We note that our definition of \ac{gmt} functions contains these and other previous generalizations such as~\cite{muller_newton_2009} as special cases. Using the introduced \ac{gmt} bases and the Hilbert space isomorphism theorem, we define unitary transformations between any two separable \acp{rkhs}. These bijections allow us to apply the proposed optimality conditions and \ac{mor} algorithms by transforming the problem to an admissible \ac{rkhs} such as $H_2(\D)$. 

Finally, we present a collection of novel applications of norm--optimal MOR. The purpose of these examples is to demonstrate the versatility of the proposed \ac{rkhs}--\ac{mor} framework. To this end, we include examples that perform \ac{rkhs}--norm--optimal \ac{mor} on modern machine learning models. In particular, we consider the \ac{mor} of support vector machines and deep ReLU neural networks. In addition, we consider two somewhat surprising applications. These illustrate that, under some circumstances, our framework can be used to obtain norm--optimal approximations of \acp{fom} that are not fully known. In particular, we first provide an academic example of fitting high--degree polynomials to data in an \ac{rkhs}--norm--optimal manner. Then, we demonstrate how our \ac{rkhs} approximation framework can be used to obtain kernel--basis weak solutions to certain \acp{pde}. These four applications range from an illustrative academic example to fully developed case studies, with the \ac{pde} application in particular presented as a proof of concept for a genuinely new use of the framework, rather than a completed method.

Below we list the key novelties of our study.
\begin{enumerate}
    \item \textbf{Optimality conditions in general \acp{rkhs}.} We prove necessary conditions for \ac{rkhs} norm optimality of the reduced--order model in admissible \acp{rkhs}. These results, see \Cref{sec:orthCond}, unify and generalize conditions known for $H_2(\C^+)$ approximation~\cite{meier_approximation_1967, gugercin_mathcalh_2_2008}.

    \item \textbf{Iterative \ac{rkhs} \ac{mor} algorithms.} We develop iterative schemes that, in admissible \acp{rkhs}, produce \acp{rom} satisfying the introduced optimality conditions.

    \item \textbf{Generalized Malmquist--Takenaka (\ac{gmt}) functions.} We introduce \ac{gmt} functions for \acp{rkhs}, showing that existing variants, including the classical $H_2(\mathbb{D})$ function system and previous generalizations~\cite{qian_kernel_2026, muller_newton_2009}, arise as special cases. We show interpolation properties, zero locations, factorizations, and several other important properties of \ac{gmt} functions.

   \item \textbf{The \ac{gmt}--transformation.} We introduce the \ac{gmt} transformation: a unitary map between two arbitrary separable \acp{rkhs}. We use the \ac{gmt} transformation to solve \ac{mor} problems posed in non--admissible \acp{rkhs}.

    \item \textbf{Diverse applications.} We provide a diverse collection of example applications. We perform \ac{rkhs}--norm--optimal \ac{mor} on \acp{svm} and neural networks. We develop an adaptive \ac{pde} discretization scheme based on the proposed approximation framework.
\end{enumerate}

The paper is organized as follows. In \Cref{sec:preRKHS}, we revisit the definition of \acp{rkhs} and provide a formal problem statement. In \Cref{sec:pre}, we review related mathematical concepts and prior results, focusing on properties of \acp{rkhs} and $H_2$--optimal \ac{mor}. Our novel optimality conditions are introduced in \Cref{sec:orthCond}. \ac{gmt} functions, their properties and the \ac{gmt} transformation are introduced in \Cref{sec:gmt}. \Cref{sec:irka} contains our proposed \ac{mor} algorithms, while in \Cref{sec:exp}, we demonstrate their effectiveness through four numerical examples. Finally, in \Cref{sec:conc}, we summarize our results and discuss future research directions.

\section{Problem statement}
\label{sec:preRKHS}

In this work, we are interested in the \ac{mor} of \acp{fom} belonging to \acp{rkhs}. In order to provide a formal problem statement, we recall the definition of \acp{rkhs} and reproducing kernels.
\begin{definition}[Reproducing kernel Hilbert space]
    \label{def:RKHS}
    Let $\Omega \subseteq \C$ and $\origspace$ be a Hilbert space of functions $\Omega \to \C$. If the point evaluation functionals
    \begin{equation*}
        \cT_{\lambda} : \origspace \to \C, \quad \cT_{\lambda} (\bF) \coloneqq \bF(\lambda) \quad (\lambda \in \Omega)
    \end{equation*}
    are bounded (continuous) for all $\lambda \in \Omega$, then $\origspace$ is called a \emph{reproducing kernel Hilbert space}.
\end{definition}

By Riesz's representation theorem (see, e.g.,~\cite{conway_course_2007}), any bounded linear operator $\cK$ acting on a Hilbert space $\origspace$ can be expressed as
$$
    \cK\left(\bF\right) = \left\langle \bF, h_{\cK} \right\rangle_{\origspace} \quad \left(\bF \in \origspace\right)
$$
for some unique $h_{\cK} \in \origspace$. Hence, if $\origspace$ is an \ac{rkhs}, for all $\lambda \in \Omega$, there is a unique $\xi_{\lambda} \in \origspace$ such that
$$
    \cT_{\lambda} (\bF ) = \bF(\lambda) = \left\langle \bF, \xi_{\lambda} \right\rangle_{\origspace} \quad \big(\bF \in \origspace \big).
$$
This gives rise to the concept of a reproducing kernel.
\begin{definition}[Reproducing kernel, kernel slice]
    \label{def:reprKer}
    Let $\origspace$ be an \ac{rkhs} of functions $\C \supseteq \Omega \to \C$. The map
    $$
        \xi : \Omega \times \Omega \to \C,
    $$
    that satisfies
    $$
        \cT_{\lambda} ( \bF ) = \bF(\lambda) = \left\langle \bF, \xi(\cdot, \lambda) \right\rangle_{\origspace} = \left\langle \bF, \xi_{\lambda} \right\rangle_{\origspace} \quad (\bF \in \origspace)
    $$
    is called the \emph{reproducing kernel} associated with $\origspace$. For some fixed $\lambda \in \Omega$, the map
    $$
       \xi_{\lambda} \coloneqq   \xi(\cdot, \lambda) \in \origspace
    $$
    is called a \emph{kernel slice}.
\end{definition}
It can verified that a reproducing kernel is Hermitian (i.e., $\xi(x,\lambda) = \overline{\xi(\lambda,x)}$ for all $x,\lambda \in  \Omega$) and positive semi--definite (i.e., $\sum_{i,j=0}^{n-1} c_i\overline{c_j} \xi (x_i,x_j) \ge 0$ for all $x_0,\ldots,x_{n-1}$,  $c_0,\ldots,c_{n-1} \in \C$ and $n \in \N$).
By the Moore--Aronszajn theorem~\cite{aronszajn_theory_1950}, an \ac{rkhs} and its associated reproducing kernel are in one--to--one correspondence. \acp{rkhs} and kernels have many useful properties, which have been exploited in various data--driven modeling applications such as machine learning (see, e.g.,~\cite{hearst_support_1998, vapnik_complete_2020, dozsa_variable_2024}). In \Cref{subsec:rkhs}, we briefly review some of these properties necessary for the methods proposed in this study. For a thorough review of \acp{rkhs}, we recommend~\cite{paulsen_introduction_2016}. 

We are now ready to formally state the problem of \ac{mor} in an \ac{rkhs}. For this, we first define generalized model spaces.
\begin{definition}[Generalized model space, dimension]
\label{def:gmodspace}
    Let $\Omega \subseteq \C$ and $\origspace$ be an \ac{rkhs} of functions $\Omega \to \C$ with reproducing kernel $\xi$. The generalized \emph{generalized model space} in $\origspace$ of \emph{dimension} $n \in \N$ is defined by
    \begin{equation*}
         \fomspaceo \coloneqq \left\{ \bG \in \myspan\left\{ \xi_{\lambda_0},\ldots,\xi_{\lambda_{n-1}}\right\} \ : \ (\lambda_0,\ldots,\lambda_{n-1}) \in \Omeganeq{n} \right\},
    \end{equation*}
    where we define
    \begin{equation*}
     \Omeganeq{n} \coloneqq \left\{ (\lambda_0,\ldots,\lambda_{n-1}) \in \Omega^n \ : \ \lambda_j \neq \lambda_k \ \text{for} \ j \neq k \right\}.
    \end{equation*}
\end{definition}
We refer to the functions in generalized model spaces as \emph{models} throughout the paper. Our objective in this study is to approximate models with functions that adhere to a special structure, e.g., simpler models. To obtain a meaningful \ac{mor} problem, we need to define the \emph{order}, which serves as a measure of model complexity.
\begin{definition}[Model order]
    \label{def:modelord}
   Let $n \in \N$ be the order of the model $\bG$, if $\bG \in \fomspaceo \subseteq \origspace$, but $\bG \not \in \romspaceo \subseteq \origspace$ for any $r < n$.
\end{definition}

In Sections~\ref{sec:gmt} and~\ref{sec:exp}, we encounter problems where we need to transform the \ac{fom} into a companion admissible \ac{rkhs}. This will be done via a unitary transformation $\cF$ acting between the original and companion \acp{rkhs}. For such cases, the \ac{rom} belongs to a weaker notion of a model space (in the original \ac{rkhs}), specified by the following definition.
\begin{definition}[Weak model space, dimension, weak model, order]
    \label{def:weak}
    Let $\Omega \subseteq \C$ and $\origspace$ be an \ac{rkhs} of functions $\Omega \to \C$ with reproducing kernel $\xi$. Let furthermore $\hat{\Omega} \subseteq \C$ and $\ambspace$ be an \ac{rkhs} of functions $\hat{\Omega} \to \C$ with kernel $\hat{\xi}$. The  \emph{weak model space} in $\origspace$ (with respect to $\ambspace$) of \emph{dimension} $n \in \N$  is 
    \begin{equation*}
        \wfomspaceo \coloneqq \left\{\bG = \cF^{-1}\big[ \hat{\bG} \big] \ : \ \hat{\bG} \in \fomspaceg \subseteq \ambspace, \cF : \origspace \to \ambspace \ \textrm{is unitary} \right\}.
    \end{equation*}%
    If $\bG \in \wfomspaceo$, but $\bG \not \in \wromspaceo$ for any $r < n$, we say that $\bG$ is a $\origspace$--\emph{weak model} of \emph{order} $n$.

\end{definition}

\begin{remark}
    \begin{enumerate}[label=(\alph*)]
        \item
  We have $\bG \in \wfomspaceo$ if it can be written as a linear combination of kernel slice preimages
    under a given unitary transformation $\cF : \origspace \to \ambspace$.
    \item For an $\origspace$--weak model $\bG \in \wfomspaceo$, there is no guarantee that there exist $n$ points $(\lambda_0, \ldots, \lambda_{n-1}) \in \Omeganeq{n}$ and $\res_0,\ldots,\res_{n-1} \in \C$ such that
        \begin{equation*}
            \bG = \sum_{k=0}^{n-1} \res_k \xi_{\lambda_k},
        \end{equation*}
        where $\xi$ is the reproducing kernel of $\origspace$. On the other hand, by \Cref{def:weak}, $\bG \in \wfomspaceo$ belongs to an $n$--dimensional subspace of $\origspace$ denoted by $W_n$. Here, $W_n$ is an $n$--dimensional \ac{rkhs} in its own right (see \Cref{thm:findimRKHS} in \Cref{sec:pre}). Consequently, $\bG$ can be written as
        \begin{equation*}
            \bG = \sum_{k=0}^{n-1}  \wres_k \xi^{W_n}_{\lambda_k} 
        \end{equation*}
        for some $(\lambda_0, \ldots, \lambda_{n-1}) \in \Omeganeq{n}$ and $\wres_0, \ldots, \wres_{n-1}\in \C$, where $\xi^{W_n}$ denotes the kernel of the $n$--dimensional \ac{rkhs} $W_n \subseteq \origspace$.
    \end{enumerate}
\end{remark}

We define norm--optimal \ac{mor} in an \ac{rkhs} $\origspace$ as follows.
\begin{probstat}[Norm--optimal \ac{mor} in \acp{rkhs}]
    \label{def:Objective}
    Let $\Omega \subseteq \C$ and $\origspace$ be an \ac{rkhs} of functions $\Omega \to \C$ with reproducing kernel $\xi$. Consider a \ac{fom} $\bG \in \fomspaceo$ for some $n \in \N$. Given $r < n$, the objective of norm--optimal \ac{mor} is to obtain $\bG_r \in \romspaceo$ that minimizes the functional $\rkhsError{\xi}{r} : \romspaceo \to [0, \infty)$ defined as
     \begin{equation}
        \label{eq:Objective}
        \rkhsError{\xi}{r} \left( \cdot \right) = \left\| \bG - \cdot \right\|_{\origspace}.
    \end{equation}
    In the case of weak norm--optimal \ac{rkhs}--\ac{mor} problems, we extend the search space (the domain of $\rkhsError{\xi}{r}$) to \acp{rom} satisfying $\bG_r \in \wromspaceo$.
\end{probstat}
\begin{definition}[Local minimum points of $\rkhsError{\xi}{r}$]
    \label{def:locmin}
    Let $\rkhsError{\xi}{r}$ be an objective functional as defined in \Cref{def:Objective}. The \ac{rom} $\bG_r \in \romspaceo$ is a local minimum point of $\rkhsError{\xi}{r}$ if there exists $\delta > 0$ such that, for any $\tilde{\bG}_r \in \romspaceo$ that satisfies $\big\| \bG_r - \tilde{\bG}_r \big\|_{\origspace} < \delta$, we have
    \begin{equation*}
        \rkhsError{\xi}{r}\left(\bG_r\right) \leq \rkhsError{\xi}{r}\big(\tilde{\bG}_r\big).
    \end{equation*}
\end{definition}

In this work, we seek \acp{rom} which satisfy necessary optimality conditions introduced in \Cref{sec:orthCond}. Since they are not sufficient in general, we cannot ensure to find a local minimum in any case. But we conjecture that local maxima are repellent, so using a numerical method based on the necessary optimality conditions is a feasible route to follow. This is similar to what classical \ac{mor} methods guarantee for the reduction of linear dynamical systems~\cite{gugercin_mathcalh_2_2008, mlinaric_irka_2025, beattie_ch7_2017}.

\begin{remark}
    \begin{enumerate}[label=(\alph*)]
        \item The local minimizer definition given in \Cref{def:locmin} is similar to the definitions given for specific \acp{rkhs} in~\cite{gugercin_mathcalh_2_2008}, where \ac{mor} problems for \ac{lti} systems are considered.
        \item \Cref{thm:ExistLocalOpt} establishes the well--posedness of the minimization problem in~\Cref{def:Objective}.
        
        \item The logic of the \ac{mor} methods presented in our study is illustrated in the diagram in \Cref{fig:pipeline}. Given a \ac{fom} $\bG \in \fomspaceo$, we introduce \ac{mor} strategies, where a \ac{rom} is constructed directly in the \ac{rkhs} $\origspace$ associated with the kernel $\xi$, provided that this space admits a suitable structure. This approach is illustrated on the left side of the diagram.

Alternatively, the right branch defines another reduction procedure: using a family of unitary transformations, termed the \ac{gmt} transform (\Cref{def:GMTtransform}), the problem is mapped to a companion \ac{rkhs} $\ambspace$ with kernel $\hat{\xi}$. The companion \ac{rkhs} can be chosen to have favorable properties from the standpoint of the \ac{mor} problem. A \ac{rom} is then computed in $\ambspace$ and mapped back to $\origspace$. 
        \item This general \ac{mor} strategy, following \Cref{fig:pipeline}, admits a wide range of potential applications (see \Cref{sec:exp}).
    \end{enumerate}
\end{remark}

\begin{figure}[h!]
    \centering
        \begin{tikzpicture}[node distance = 1.5cm, auto]
    
        \node (H) {\textbf{Original \ac{rkhs}} $\origspace$ };
    
        \node[right=5cm of H, align=center] (G) {  
        \textbf{Companion \ac{rkhs}} $\ambspace$};
        
        \node [block, below of = H, node distance=1cm] (FOM) { \ac{fom} $\bG \in \fomspaceo \subseteq \origspace$};
        
        \node [block, right of = FOM, node distance=8cm] (FOMG) {\ac{fom} $\hat{\bG} \in \frak{G}_{\hat{\xi}}^n \subseteq \ambspace$};

        \node [block, below of = FOMG, node distance=5cm] (ROMG) {\ac{rom} $\widehat{\bG}_r \in \frak{G}_{\hat{\xi}}^n \subseteq \ambspace$};
        
        \node [block, below of = FOM, node distance=5cm] (ROM) {\ac{rom} $\bG_r \in \romspaceo$ \\ (or $\bG_r \in \wromspaceo$)};
    
        \draw [->, line width = 3pt] (FOM) -- 
        node[align=center] {\ac{gmt} transformation \\ (Definition~\ref{def:GMTtransform})} 
        (FOMG);

        \draw [->, line width = 3pt] (FOMG) -- 
        node[sloped, align=center] {\ac{mor} in companion space \\ (Algorithm~\ref{alg:cdcRKHS})} 
        (ROMG);
    
        \draw [->, line width = 3pt] (ROMG) -- 
        node[align=center] {Inverse \ac{gmt} \\ transformation } 
        (ROM);
    
        \draw [->, sloped, line width = 3pt] (FOM) -- 
        node[align=center] {Direct methods \\ (Algorithm~\ref{alg:irkaRKHS})} 
        (ROM);
    \end{tikzpicture}
    \caption{The proposed \ac{mor} pipeline for \acp{rkhs}.}
    \label{fig:pipeline}
\end{figure}

\section{Preliminaries}
\label{sec:pre}

In this section, we first review some important \ac{rkhs}--related concepts that will be used later. In addition, we motivate the research by briefly revisiting existing \ac{mor} approaches for \ac{lti} systems.

\subsection{Properties of RKHSs}
\label{subsec:rkhs} 
  \Cref{def:reprKer} gives the following property.
\begin{lemma}[Reproducing inner products]
    \label{thm:reprInnerProd}
    Let $\origspace$ be an \ac{rkhs} with kernel $\xi$. Then,
    \begin{equation*}
        \xi(x, \lambda) = {\langle \xi_{x}, \xi_{\lambda} \rangle}_{\origspace} \quad (x, \lambda \in \Omega).
    \end{equation*}
\end{lemma}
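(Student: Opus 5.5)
The plan is to read the identity off directly from the reproducing property in \Cref{def:reprKer}, applied to a kernel slice. The only genuine choice is which slice to use as the test function and at which point to evaluate it. To land on the inner product $\langle \xi_x, \xi_\lambda\rangle_{\origspace}$ in exactly the stated order, the test function must be $\xi_x$ and the evaluation point must be $\lambda$.

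\textbf{Step 1.} I fix $x,\lambda\in\Omega$. By \Cref{def:reprKer}, $\xi_x\in\origspace$, so $\bF\coloneqq\xi_x$ is an admissible test function.

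\textbf{Step 2.} I apply the reproducing identity $\bF(\lambda)=\langle \bF,\xi_\lambda\rangle_{\origspace}$ from \Cref{def:reprKer} to this $\bF$. This gives
\begin{equation*}
 \xi_x(\lambda) = \langle \xi_x, \xi_\lambda\rangle_{\origspace},
\end{equation*}
which already has the right-hand side in the order required by the lemma.

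\textbf{Step 3.} The remaining task is to identify the value $\xi_x(\lambda)$ with the kernel value $\xi(x,\lambda)$. I expect this identification to be the main obstacle, because it depends on which argument of $\xi$ is regarded as the slice parameter. The lemma is stated so that the kernel value $\xi(x,\lambda)$ is the value of the slice attached to $x$, evaluated at $\lambda$. Under this reading the point evaluation of $\xi_x$ at $\lambda$ is precisely $\xi(x,\lambda)$, and Step 2 becomes the claimed identity $\xi(x,\lambda)=\langle\xi_x,\xi_\lambda\rangle_{\origspace}$ with no conjugation involved.

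This reading is consistent with the Hermitian property recorded after \Cref{def:reprKer}. Indeed, exchanging the roles of $x$ and $\lambda$ in Step 2 gives $\xi(\lambda,x)=\langle\xi_\lambda,\xi_x\rangle_{\origspace}=\overline{\langle\xi_x,\xi_\lambda\rangle_{\origspace}}=\overline{\xi(x,\lambda)}$, so the two orderings differ only by complex conjugation, as expected.

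As a byproduct, taking $x=\lambda$ gives $\xi(\lambda,\lambda)=\|\xi_\lambda\|_{\origspace}^2\ge 0$. This is the diagonal case of positive semi-definiteness and serves as a consistency check on the argument.
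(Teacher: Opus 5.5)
Your Steps 1--2 are fine, but Step 3 has a genuine gap. \Cref{def:reprKer} fixes the convention $\xi_x \coloneqq \xi(\cdot,x)$, so the value of the slice $\xi_x$ at $\lambda$ is $\xi_x(\lambda)=\xi(\lambda,x)$, not $\xi(x,\lambda)$. The reading you adopt, $\xi_x(\lambda)=\xi(x,\lambda)$, amounts to $\xi_x=\xi(x,\cdot)$.

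That reading cannot be combined with the reproducing identity $\bF(\lambda)=\langle \bF,\xi_\lambda\rangle_{\origspace}$ you used in Step 2, which holds only for $\xi_\lambda=\xi(\cdot,\lambda)$. The two conventions agree exactly when $\xi(\cdot,x)=\xi(x,\cdot)$, i.e., when $\xi$ is real-valued. Under your reading even Step 1 can fail. For the Szeg\H{o} kernel and $x\neq 0$, $\rho(x,\cdot)=\overline{\rho_x}$ is anti-analytic and so not in $H_2(\D)$.

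Your Hermitian consistency check cannot detect the problem, because it is invariant under exchanging the two conventions. What Steps 1--2 actually prove is
\begin{equation*}
  \langle \xi_x,\xi_\lambda\rangle_{\origspace}=\xi_x(\lambda)=\xi(\lambda,x)=\overline{\xi(x,\lambda)}.
\end{equation*}

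No argument can close Step 3 in general, because with the paper's conventions the displayed identity holds only for real-valued kernels. For the Szeg\H{o} kernel \eqref{eq:kerh2d} one gets $\langle\rho_x,\rho_\lambda\rangle_{H_2(\D)}=(1-\overline{x}\lambda)^{-1}$, whereas $\rho(x,\lambda)=(1-\overline{\lambda}x)^{-1}$. These differ, e.g., for $x=\mathrm{i}/2$, $\lambda=1/2$.

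The paper gives no proof beyond saying the property follows from \Cref{def:reprKer}. What that definition actually yields is obtained by applying the reproducing property to $\bF=\xi_\lambda$ at the point $x$:
\begin{equation*}
  \xi(x,\lambda)=\xi_\lambda(x)=\langle \xi_\lambda,\xi_x\rangle_{\origspace}.
\end{equation*}
The form stated in the lemma is recovered precisely when $\xi$ is real-valued, which is the case for all kernels used in the experiments. The diagonal case $\xi(\lambda,\lambda)=\|\xi_\lambda\|_{\origspace}^2$ is correct under either convention. Rather than reinterpreting the slice convention, you should either reverse the order of the arguments in the inner product or add the hypothesis that $\xi$ is real-valued.
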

We will use the following theorem, which draws a connection between the derivatives of kernel slices and $\origspace$ inner products. Several notions of derivatives will be used throughout the paper. To ease the discussion, we first introduce some notation. First, consider a kernel slice $\xi_{\lambda} \in \origspace$. Whenever existence conditions are satisfied, the notations $\funder{\xi_{\lambda}}{\lambda}$ and $\funder{\xi_{\lambda}}{\overline{\lambda}}$ refer to the Wirtinger derivative functions defined by
\begin{multline}
    \label{eq:FrechDer}
    \lim_{\Delta \lambda \to 0} \left\| \frac{\xi_{\lambda + \Delta \lambda}(\cdot) - \xi_{\lambda}(\cdot)}{\Delta \lambda} - (\funder{\xi_{\lambda}}{\lambda}) (\cdot)\right\|_{\origspace} = 0, \quad \\ \lim_{\Delta \lambda \to 0} \left\| \frac{\xi_{\lambda + \Delta \lambda}(\cdot) - \xi_{\lambda}(\cdot)}{\overline{\Delta \lambda}} - (\funder{\xi_{\lambda}}{\overline{\lambda}})(\cdot)\right\|_{\origspace} = 0.
\end{multline}
Since $\origspace$ is an \ac{rkhs}, convergence in norm implies pointwise convergence~\cite{paulsen_introduction_2016} and~\eqref{eq:FrechDer} matches the Wirtinger derivatives pointwise, i.e., we have $\frac{ \partial \xi_{\lambda}}{\partial \lambda}(z) = \funder{\xi_{\lambda}}{\lambda}(z)$ and $\frac{ \partial \xi_{\lambda}}{\partial \overline{\lambda}}(z) = \funder{\xi_{\lambda}}{\overline{\lambda}}(z)$ for all $z \in \Omega$. Let $\blambda = (\lambda_0,\ldots,\lambda_{n-1}) \in \Omeganeq{n}$ and consider the function $\Omeganeq{n} \to \origspace$  
\begin{equation*}
 \cU(\blambda) \coloneqq \sum_{k=0}^{n-1} \res_k \xi_{\lambda_k}(\cdot).
\end{equation*}
For a fixed $\blambda \in \Omeganeq{n}$, let $\bG \coloneqq \cU(\blambda) \in \fomspaceo$. Then we can define the derivative of  $\bG$ with respect to the parameter $\lambda_k$ of a kernel slice as
\begin{equation*}
    \funder{\bG}{\lambda_k} \coloneqq \left[\nabla \cU (\blambda) \right]_k \coloneqq \res_k (\funder{\xi_{\lambda}}{\lambda}){\big|}_{\lambda = \lambda_k} \quad \textrm{and} \quad \funder{\bG}{\olambda_k} \coloneqq \big[\overline{\nabla} \cU (\blambda) \big]_k \coloneqq  \res_k (\funder{\xi_{\lambda}}{\olambda}){\big|}_{\lambda = \lambda_k}.
\end{equation*}
Moreover, for $\bG \in \fomspace$, we define
\begin{equation*}
    \funder{\bG}{\lambda} \coloneqq \sum_{k=0}^{n-1} \funder{\bG}{\lambda_k}  \quad \textrm{and} \quad  \funder{\bG}{\olambda} \coloneqq \sum_{k=0}^{n-1} \funder{\bG}{\olambda_k}  .
\end{equation*}

For higher--order derivatives, we use similar notation. In particular, to denote the $m$th order Wirtinger derivative (function) of some $\bG \in \fomspaceo$ for $m \ge 2$, we use the notations $\funder[m]{\bG}{\lambda_k} \coloneqq \funder{\big(\funder[m-1]{\bG}{\lambda_k}\big)}{\lambda_k}$ and  $\funder[m]{\bG}{\lambda} \coloneqq \funder{\left(\funder[m-1]{\bG}{\lambda}\right)}{\lambda}$.

In several places, we also need to denote pointwise Wirtinger derivatives of $\bG \in \origspace$. Since we will only need to express $\frac{\partial \bG}{\partial z}(z)$, we use the notation
\begin{equation*}
    \frac{\partial \bG}{\partial z}(z) = \bG'(z).
\end{equation*}
Finally, note that for higher--order derivatives, for $m \ge 2$ we use the notation
\begin{equation*}
    \frac{\partial^m \bG}{\partial z^m}(z) = \bG^{(m)}(z).
\end{equation*}
Recall the following connection between derivatives and reproducing kernels. Since this theorem is frequently relied upon throughout the study, we provide a short proof for clarity.
\begin{theorem}[Derivatives of kernels~\cite{zhou_derivative_2008}]
    \label{thm:derivKer}
    Let $\origspace$ be an \ac{rkhs} of functions $\C \supseteq \Omega \to \C$ with kernel $\xi$. Let $(\lambda \mapsto \xi_{\lambda}) \in C^{1}(\Omega, \origspace)$ such that 
    \begin{equation*}
    \funder{\xi_\lambda}{\olambda} \in \origspace \quad (\lambda \in \Omega).     
    \end{equation*}
    Then,
    \begin{equation*}
        \left\langle \bG, \funder{\xi_{\lambda}}{\olambda} \right\rangle_{\origspace} = \bG'(\lambda).
    \end{equation*}
\end{theorem}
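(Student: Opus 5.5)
The plan is to differentiate the reproducing identity $\bG(\lambda) = \langle \bG, \xi_\lambda\rangle_{\origspace}$ with respect to $\lambda$, and to move the limit inside the inner product using the norm convergence in~\eqref{eq:FrechDer}. Fix $\bG \in \origspace$ and $\lambda \in \Omega$. For a small increment $\Delta\lambda$ with $\lambda+\Delta\lambda \in \Omega$, \Cref{def:reprKer} gives
\begin{equation*}
\frac{\bG(\lambda+\Delta\lambda)-\bG(\lambda)}{\Delta\lambda}
= \frac{1}{\Delta\lambda}\left\langle \bG, \xi_{\lambda+\Delta\lambda}-\xi_\lambda\right\rangle_{\origspace}
= \left\langle \bG, \frac{\xi_{\lambda+\Delta\lambda}-\xi_\lambda}{\overline{\Delta\lambda}}\right\rangle_{\origspace}.
\end{equation*}
The last equality holds because the inner product is conjugate linear in its second argument, so the scalar $1/\Delta\lambda$ pulled out of the second slot becomes $1/\overline{\Delta\lambda}$. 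This is exactly why the conjugate Wirtinger derivative $\funder{\xi_\lambda}{\olambda}$ appears in the statement.

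Next, I use the assumption $(\lambda\mapsto\xi_\lambda)\in C^1(\Omega,\origspace)$ together with $\funder{\xi_\lambda}{\olambda}\in\origspace$ and the second limit in~\eqref{eq:FrechDer}. By the Cauchy--Schwarz inequality,
\begin{equation*}
\left| \left\langle \bG, \frac{\xi_{\lambda+\Delta\lambda}-\xi_\lambda}{\overline{\Delta\lambda}}\right\rangle_{\origspace} - \left\langle \bG, \funder{\xi_\lambda}{\olambda}\right\rangle_{\origspace}\right|
\le \|\bG\|_{\origspace}\left\|\frac{\xi_{\lambda+\Delta\lambda}-\xi_\lambda}{\overline{\Delta\lambda}}-\funder{\xi_\lambda}{\olambda}\right\|_{\origspace},
\end{equation*}
and the right-hand side tends to $0$. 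Hence the difference quotient of $\bG$ converges to $\langle \bG, \funder{\xi_\lambda}{\olambda}\rangle_{\origspace}$ as $\Delta\lambda\to0$.

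The remaining and main delicate step is to identify this limit with the pointwise Wirtinger derivative $\bG'(\lambda)=\frac{\partial\bG}{\partial z}(\lambda)$. The existence of the limit shows that the complex difference quotient of $\bG$ converges along every direction of approach. So $\bG$ is complex differentiable at $\lambda$, its conjugate Wirtinger derivative vanishes, and its derivative equals $\partial\bG/\partial z$. If this is not the intended reading, I would instead work with real directional increments $\Delta\lambda = t$ and $\Delta\lambda = it$ separately. I would then assemble $\frac12(\partial_x - i\partial_y)\bG$ from the two resulting inner products, using linearity and conjugate linearity of the inner product. This route needs some care with the conjugations, and checking consistency with the stated convention in~\eqref{eq:FrechDer} is where I expect the main bookkeeping effort to lie. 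In either reading, the conclusion is $\langle \bG,\funder{\xi_\lambda}{\olambda}\rangle_{\origspace}=\bG'(\lambda)$.
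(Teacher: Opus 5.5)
Your proof is correct and follows the paper's argument. You rewrite the difference quotient of $\bG$ as an inner product against $(\xi_{\lambda+\Delta\lambda}-\xi_\lambda)/\overline{\Delta\lambda}$ using conjugate linearity, then pass to the limit by continuity of the inner product, which your Cauchy--Schwarz estimate makes explicit. Your extra remark identifying the limit of the difference quotient with $\partial\bG/\partial z$ fills in a step the paper only asserts.
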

\begin{proof}
We have
    \begin{multline*}
        \left\langle \bG, \funder{\xi_{\lambda}}{\olambda} \right\rangle_{\origspace} = \left\langle \bG, \lim_{\Delta \lambda \to 0} \frac{\xi_{\lambda + \Delta \lambda} - \xi_{\lambda} }{\overline{\Delta \lambda}}  \right\rangle_{\origspace} = \lim_{\Delta \lambda \to 0} \frac{1}{\Delta \lambda} \left\langle \bG, \xi_{\lambda + \Delta \lambda} - \xi_{\lambda} \right\rangle_{\origspace}  \\
        = \lim_{\Delta \lambda \to 0} \frac{\left\langle \bG, \xi_{\lambda + \Delta \lambda} \right\rangle_{\origspace} - \left\langle \bG, \xi_{\lambda} \right\rangle_{\origspace}}{\Delta \lambda} = \lim_{\Delta \lambda \to 0} \frac{\bG(\lambda + \Delta \lambda) - \bG(\lambda)}{\Delta \lambda}  = \bG'(\lambda).
    \end{multline*}
\end{proof}
\Cref{thm:derivKer} extends to derivatives of arbitrary order by induction.
\begin{corollary}[Higher--order derivative reproduction]
    \label{thm:derivKerOrder}
    Let $\origspace$ be an \ac{rkhs} of functions $\C \supseteq \Omega \to \C$ with kernel $\xi$. Let $m \in \N$ and suppose $(\lambda \mapsto \xi_{\lambda}) \in C^{m}(\Omega, \origspace)$ such that $\funder[i]{\xi_\lambda}{\olambda} \in \origspace$ for $i=0,\ldots,m$ and all $\lambda \in \Omega$. Then, for any $\bG \in \origspace$ and $i = 0,\ldots,m$,
    \begin{equation*}
        \left\langle \bG, \funder[i]{\xi_{\lambda}}{\olambda} \right\rangle_{\origspace} = \bG^{(i)}(\lambda).
    \end{equation*}
\end{corollary}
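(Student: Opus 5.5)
The proof goes by induction on $i$. It mirrors the argument of \Cref{thm:derivKer}, but we differentiate the already-established identity for order $i-1$ instead of the plain reproducing property. For $i=0$ the claim is exactly the reproducing property of \Cref{def:reprKer}, namely $\langle \bG, \xi_\lambda\rangle_{\origspace} = \bG(\lambda)$. The case $i=1$ is \Cref{thm:derivKer}.

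For the inductive step, fix $1 \le i \le m$ and assume that
\[
\bigl\langle \bG, \funder[i-1]{\xi_{\lambda}}{\olambda} \bigr\rangle_{\origspace} = \bG^{(i-1)}(\lambda)
\]
holds for every $\lambda \in \Omega$. Since $\lambda \mapsto \xi_\lambda$ is $C^m$ as an $\origspace$-valued map, the map $\lambda \mapsto \funder[i-1]{\xi_{\lambda}}{\olambda}$ is differentiable in norm. Its derivative with respect to $\olambda$ is $\funder[i]{\xi_{\lambda}}{\olambda}$, understood as the norm limit of the difference quotients
\[
\frac{\funder[i-1]{\xi_{\lambda+\Delta\lambda}}{\olambda} - \funder[i-1]{\xi_{\lambda}}{\olambda}}{\overline{\Delta\lambda}},
\]
exactly as in \eqref{eq:FrechDer}. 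By continuity of the inner product, we may pull this limit out of the second argument. Because the inner product is conjugate-linear there, the factor $1/\overline{\Delta\lambda}$ becomes $1/\Delta\lambda$. Applying the induction hypothesis at $\lambda+\Delta\lambda$ and at $\lambda$ then gives
\[
\bigl\langle \bG, \funder[i]{\xi_{\lambda}}{\olambda} \bigr\rangle_{\origspace} = \lim_{\Delta\lambda\to 0}\frac{\bG^{(i-1)}(\lambda+\Delta\lambda)-\bG^{(i-1)}(\lambda)}{\Delta\lambda}.
\]

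The main point needing care is identifying this limit with $\bG^{(i)}(\lambda)$. The left-hand side exists, and it exists independently of the direction in which $\Delta\lambda \to 0$, because the norm limit defining $\funder[i]{\xi_{\lambda}}{\olambda}$ is taken over all complex $\Delta\lambda$. So the difference quotient of $\bG^{(i-1)}$ converges, and its limit is by definition the $z$-derivative $\bG^{(i)}(\lambda)$, consistent with the convention $\bG^{(i)} = \partial_z \bG^{(i-1)}$. This also shows that $\bG^{(i)}$ exists for every $\bG \in \origspace$, so the identity is not merely formal.

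A second point is notational. Here $\funder[i]{\xi_{\lambda}}{\olambda}$ must be read as the iterated norm derivative in $\olambda$. This reading is consistent with the pointwise Wirtinger derivatives by the remark after \eqref{eq:FrechDer}, since norm convergence implies pointwise convergence. The hypothesis that each $\funder[i]{\xi_\lambda}{\olambda}$ lies in $\origspace$ guarantees that every inner product in the chain is well defined. Induction up to $i=m$ completes the proof.
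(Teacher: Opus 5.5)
Your proof is correct and is essentially the paper's argument: induction on $i$, with base cases given by the reproducing property and \Cref{thm:derivKer}, pulling the norm limit of the difference quotient out of the second slot of the inner product (conjugate-linearity turns $1/\overline{\Delta\lambda}$ into $1/\Delta\lambda$), and applying the induction hypothesis at $\lambda+\Delta\lambda$ and at $\lambda$. You add one clarification the paper leaves implicit: since the left-hand side exists, the difference quotient of $\bG^{(i-1)}$ converges, so $\bG^{(i)}(\lambda)$ itself exists.
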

\begin{proof}
    The case $m=0$ is the reproducing property and $m=1$ is \Cref{thm:derivKer}. Let $m \ge 2$ and suppose the statement holds for some $m > i \ge 1$. Then,
    \begin{align*}
        \left\langle \bG, \funder[i+1]{\xi_{\lambda}}{\olambda} \right\rangle_{\origspace}  &= \left\langle \bG, \lim_{\Delta \lambda \to 0} \frac{\funder[i]{\xi_{\lambda + \Delta \lambda}}{\olambda} - \funder[i]{\xi_{\lambda}}{\olambda}}{\overline{\Delta \lambda}}  \right\rangle_{\origspace} \\&= \lim_{\Delta \lambda \to 0} \frac{1}{\Delta \lambda} \left\langle \bG, \funder[i]{\xi_{\lambda + \Delta \lambda}}{\olambda} - \funder[i]{\xi_{\lambda}}{\olambda} \right\rangle_{\origspace}  \\
        &= \lim_{\Delta \lambda \to 0} \frac{\left\langle \bG, \funder[i]{\xi_{\lambda + \Delta \lambda}}{\olambda} \right\rangle_{\origspace} - \left\langle \bG, \funder[i]{\xi_{\lambda}}{\olambda} \right\rangle_{\origspace}}{\Delta \lambda} \\ &= \lim_{\Delta \lambda \to 0} \frac{\bG^{(i)}(\lambda + \Delta \lambda) - \bG^{(i)}(\lambda)}{\Delta \lambda}  = \bG^{(i+1)}(\lambda).
    \end{align*}
\end{proof}
One notable property of \acp{rkhs} is that kernel slices are dense in $\origspace$.
\begin{theorem}[Density of kernel slices (e.g.,~\cite{dozsa_adaptive_2026})]
    \label{thm:kerdens}
    Let $\origspace$ be an \ac{rkhs} with associated reproducing kernel $\xi$. Then,
    $$
        \origspace = \mathrm{cl} \big( \myspan \{\xi_{\lambda} \ : \ \lambda \in \Omega \} \big),
    $$
    where $\mathrm{cl}(\cdot)$ denotes the closure of a set with respect to the norm induced by $\langle \cdot, \cdot \rangle_{\origspace}$. Furthermore, if $\origspace$ is separable, then a countable subset of $\Omega$ exists such that the kernel slices defined by it are dense in $\origspace$. 
\end{theorem}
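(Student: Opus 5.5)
The density claim follows from the Hilbert space fact that a subspace $V$ is dense exactly when $V^{\perp} = \{0\}$. Set $V \coloneqq \myspan \{\xi_{\lambda} : \lambda \in \Omega\}$. Every $\xi_\lambda$ lies in $\origspace$, so $\mathrm{cl}(V) \subseteq \origspace$. For the reverse inclusion, I will use the orthogonal decomposition $\origspace = \mathrm{cl}(V) \oplus V^{\perp}$ and show that $V^{\perp}$ is trivial.

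Take $\bF \in V^{\perp}$. By the reproducing property in \Cref{def:reprKer},
\begin{equation*}
\bF(\lambda) = \langle \bF, \xi_\lambda \rangle_{\origspace} = 0 \quad (\lambda \in \Omega),
\end{equation*}
so $\bF$ is the zero function. Elements of $\origspace$ are genuine functions on $\Omega$, not equivalence classes, so $\bF = 0$ in $\origspace$. Hence $\origspace = \mathrm{cl}(V)$.

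For the separable case, I will pass to a countable dense subset $\{\bF_j\}_{j\in\N}$ of $\origspace$. By the first part, for each $j$ and $m$ there is a finite linear combination of kernel slices within distance $1/m$ of $\bF_j$. Collecting the finitely many points $\lambda$ used in each of these countably many approximations gives a countable set $\Lambda \subseteq \Omega$. The span of $\{\xi_\lambda : \lambda \in \Lambda\}$ then contains elements arbitrarily close to every $\bF_j$, hence to every element of $\origspace$, so it is dense.

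An alternative route works too. The family $\{\xi_\lambda\}_{\lambda \in \Omega}$ is a subset of a separable metric space, so it is itself separable. A countable norm-dense subfamily $\{\xi_{\lambda} : \lambda \in \Lambda\}$ then has a closed span containing every $\xi_\lambda$, and therefore, by the first part, all of $\origspace$.

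There is no substantial obstacle. The only point needing care is that every element of $\origspace$ is a pointwise-defined function, which is what makes vanishing at every point equivalent to being the zero vector.
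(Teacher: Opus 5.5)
Your proof is correct. The orthogonal-complement argument gives density: $F \perp \xi_\lambda$ for all $\lambda$ forces $F(\lambda)=0$ pointwise, hence $F=0$. Either of your two countability arguments then handles the separable case, with ``dense'' correctly read as ``having dense linear span''. The paper gives no proof of this statement and only cites \cite{dozsa_adaptive_2026}; your argument is the standard one for this fact.
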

The following characterization of kernels is useful for our discussion.
\begin{definition}[Feature maps, feature spaces~\cite{jorgensen_new_2025}]
    \label{def:features}
    Let $\origspace$ be an \ac{rkhs} of functions $\Omega \to \C$ with associated kernel $\xi$. A Hilbert space $\mathfrak{S}$ is called a feature space, if there exists a map $\Phi : \Omega \to \mathfrak{S}$ that satisfies
    $$
        \xi(x, \lambda) = \langle \Phi(x), \Phi(\lambda) \rangle_{\mathfrak{S}}.
    $$
    In this case, $\Phi$ is referred to as a feature map. Finally, denote the set of all feature map--feature space pairs by
    $$
        \mathfrak{S}_{\xi} \coloneqq \left\{ (\Phi, \mathfrak{S}) \ : \ \xi(x,\lambda) = \langle \Phi(x), \Phi(\lambda) \rangle_{\mathfrak{S}} \ \text{for all} \ x,\lambda \in \Omega \right\}.
    $$
\end{definition}
For any \ac{rkhs} $\origspace$, we have $\mathfrak{S}_{\xi} \neq \emptyset$~\cite{jorgensen_new_2025}. The feature map representation can be useful for characterizing kernels. For example, let $\ell$ be the vector space of one--sided complex sequences and
\begin{equation*}
    \ell^2 \coloneqq \left\{ \bx \in \ell \ : \ \sum_{k=0}^{\infty} |x_k|^2 < \infty \right\}
\end{equation*}
be the Hilbert space endowed with the usual inner product
\begin{equation*}
    {\langle \bx, \by \rangle}_{\ell^2} \coloneqq \sum_{k=0}^{\infty} x_k \overline{y_k} \quad \big(\bx, \by \in \ell^2\big).
\end{equation*}
If $\origspace$ is a separable \ac{rkhs} and $\{ \varphi_k \}_{k=0}^{\infty} \subseteq \origspace$ is a complete orthonormal basis, then choosing the feature space as $\ell^2$ and $\Phi(x) = ( \varphi_k(x) )_{k=0}^{\infty} \in \ell^2$ yields $\xi(x, t) = {\langle \Phi(x), \Phi(t) \rangle}_{\ell^2}$. This statement also follows from Mercer's theorem~\cite{mercer_xvi_1909}.

We note that every finite--dimensional Hilbert space is an \ac{rkhs} and its kernel can be conveniently expressed using the feature map approach.
\begin{theorem}[Finite--dimensional Hilbert spaces~\cite{dozsa_adaptive_2026}]
    \label{thm:findimRKHS}
    Let $\origspace$ be a Hilbert space of functions $\Omega \to \C$ and $\{ \varphi_k \}_{k=0}^{n-1} \subseteq \origspace$ be a basis for some $n \in \N$. Then, the $n$--dimensional subspace $\cH_n \coloneqq \myspan\{\varphi_0,\ldots,\varphi_{n-1}\}$ is an \ac{rkhs} defined by the positive semi--definite reproducing kernel
    \begin{equation*}
        \xi(x,\lambda) = \sum_{k=0}^{n-1} \varphi_k(x) \overline{\varphi_k(\lambda)} \quad (x, \lambda \in \Omega).
    \end{equation*}
Furthermore, for any two functions $\bG = \sum_{k=0}^{n-1} \mathfrak{g}_k \varphi_k, \bF = \sum_{k=0}^{n-1} \mathfrak{f}_k \varphi_k \in \cH_n$, the inner products on $\cH_n$ and $\cH$ are related by
\begin{equation*}
    \left\langle \bG, \bF \right\rangle_{\cH_n} = \sum_{k=0}^{n-1}\mathfrak{g}_k \overline{\mathfrak{f}_k}, \quad 
    \left\langle \bG, \bF \right\rangle_{\cH} = \sum_{k=0}^{n-1} \sum_{j=0}^{n-1} \langle \varphi_k, \varphi_j \rangle_{\cH} \mathfrak{g}_k \overline{\mathfrak{f}_j}.
\end{equation*}
\end{theorem}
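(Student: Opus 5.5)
The plan is to first fix the inner product on $\cH_n$ and then verify the two defining properties of a reproducing kernel directly. Since $\{\varphi_k\}_{k=0}^{n-1}$ is a basis, every $\bG \in \cH_n$ has a unique coefficient vector $(\mathfrak{g}_0,\ldots,\mathfrak{g}_{n-1}) \in \C^n$. I will define $\langle \bG, \bF \rangle_{\cH_n} \coloneqq \sum_{k=0}^{n-1} \mathfrak{g}_k \overline{\mathfrak{f}_k}$. This is the pullback of the Euclidean inner product under the linear bijection $\cH_n \to \C^n$, so it is an inner product. The space $\cH_n$ is then complete, being isometrically isomorphic to $\C^n$, and $\{\varphi_k\}$ is an orthonormal basis for it. The second formula in the statement, for $\langle \bG,\bF\rangle_{\cH}$, follows from sesquilinearity of $\langle\cdot,\cdot\rangle_{\cH}$ by expanding both arguments in the basis; no further work is needed there.

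Next I check the RKHS property and identify the kernel.
\begin{itemize}
\item For fixed $\lambda \in \Omega$, set $\xi_\lambda \coloneqq \sum_k \overline{\varphi_k(\lambda)}\, \varphi_k \in \cH_n$. Its coefficient vector is $(\overline{\varphi_k(\lambda)})_k$.
\item For any $\bG \in \cH_n$ we get $\langle \bG, \xi_\lambda\rangle_{\cH_n} = \sum_k \mathfrak{g}_k \varphi_k(\lambda) = \bG(\lambda)$. Hence point evaluation is a bounded functional represented by $\xi_\lambda$; boundedness is automatic in finite dimensions anyway.
\item Evaluating $\xi_\lambda$ at $x$ gives $\xi(x,\lambda) = \sum_k \varphi_k(x)\overline{\varphi_k(\lambda)}$, as claimed.
\end{itemize}
Uniqueness of the kernel follows from the Riesz representation theorem, as recalled after \Cref{def:RKHS}.

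Positive semi-definiteness follows from the identity
\[
\sum_{i,j} c_i \overline{c_j}\, \xi(x_i,x_j) = \sum_{k} \Bigl|\sum_i c_i \varphi_k(x_i)\Bigr|^2 \ge 0 .
\]
Equivalently, it is an instance of the feature map viewpoint of \Cref{def:features}, with $\Phi(x) = (\overline{\varphi_k(x)})_k \in \C^n$.

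The only point needing care is a conceptual one, not a technical obstacle. The kernel in the statement is the kernel of $\cH_n$ equipped with the coefficient inner product, not with the inner product inherited from $\cH$. The two kernels agree only if the $\varphi_k$ are $\cH$-orthonormal. I will make this explicit, noting that the Gram matrix $\langle \varphi_k,\varphi_j\rangle_{\cH}$ is what links the two inner products.
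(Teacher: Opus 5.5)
Your proof is correct and is essentially the argument the paper has in mind. The paper gives no proof of its own (it defers to an external reference), but the feature-map remark immediately preceding the statement, $\Phi(x) = (\varphi_k(x))_k$ for an orthonormal basis, is your coefficient-isometry argument. Your caveat that $\xi$ is the kernel for the coefficient inner product rather than for $\langle\cdot,\cdot\rangle_{\cH}$ is also the reading under which the paper later applies the result to $\cH$-orthonormal systems in \Cref{thm:embeddedRKHS}.

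Two small fixes. First, under the paper's convention $\langle \mathbf{u},\mathbf{v}\rangle = \sum_k u_k\overline{v_k}$, your conjugated map $\Phi(x) = (\overline{\varphi_k(x)})_k$ gives $\langle \Phi(x),\Phi(\lambda)\rangle = \overline{\xi(x,\lambda)}$, so drop the conjugation; this is harmless for positive semi-definiteness, which your direct identity already proves. Second, to justify that $\cH_n$ is the RKHS ``defined by'' $\xi$, you need the converse of your Riesz remark: the coefficient inner product is the only one on $\cH_n$ for which $\xi$ reproduces. This follows from the Moore--Aronszajn correspondence, or directly because the slices $\xi_\lambda$ span $\cH_n$ (by linear independence of the $\varphi_k$) and the reproducing property fixes every $\langle \xi_x,\xi_\lambda\rangle$ in terms of $\xi$.
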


Importantly, for strictly positive definite kernels, kernel slices are independent for different choices of kernel centers.
\begin{theorem}[Linear independence of kernel slices (Theorem 3.6 in~\cite{paulsen_introduction_2016})]
    \label{thm:posdef}
    Let $\origspace$ be an \ac{rkhs} of functions $\Omega \to \C$ with strictly positive definite kernel $\xi$. Then, for any $(\lambda_0,\dots, \lambda_{n}) \in \Omeganeq{n+1}$, $\xi_{\lambda_n} \not \in \myspan\{ \xi_{\lambda_0},\dots,\xi_{\lambda_{n-1}} \}$.
\end{theorem}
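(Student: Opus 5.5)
We argue by contradiction, using the Gram matrix of kernel slices together with \Cref{thm:reprInnerProd}. Suppose that for some $(\lambda_0,\dots,\lambda_n) \in \Omeganeq{n+1}$ we have $\xi_{\lambda_n} \in \myspan\{\xi_{\lambda_0},\dots,\xi_{\lambda_{n-1}}\}$. Then there are coefficients $c_0,\dots,c_{n-1} \in \C$ with $\xi_{\lambda_n} = \sum_{k=0}^{n-1} c_k \xi_{\lambda_k}$. Set $c_n \coloneqq -1$. The vector $\mathbf{c} = (c_0,\dots,c_n) \in \C^{n+1}$ is then nonzero, because its last entry is $-1$, and it satisfies
\begin{equation*}
    \sum_{k=0}^{n} c_k \xi_{\lambda_k} = 0 \quad \text{in } \origspace.
\end{equation*}

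Next we compute the squared norm of this zero element. By sesquilinearity of the inner product and \Cref{thm:reprInnerProd},
\begin{equation*}
    0 = \Big\| \sum_{k=0}^{n} c_k \xi_{\lambda_k} \Big\|_{\origspace}^2 = \sum_{i,j=0}^{n} c_i \overline{c_j} \langle \xi_{\lambda_i}, \xi_{\lambda_j} \rangle_{\origspace} = \sum_{i,j=0}^{n} c_i \overline{c_j}\, \xi(\lambda_i,\lambda_j).
\end{equation*}
Strict positive definiteness of $\xi$ means that this quadratic form is strictly positive for every nonzero coefficient vector whenever the points $\lambda_0,\dots,\lambda_n$ are pairwise distinct. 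Pairwise distinctness holds precisely because $(\lambda_0,\dots,\lambda_n) \in \Omeganeq{n+1}$. The form is therefore strictly positive, which contradicts the fact that it equals zero. Hence $\xi_{\lambda_n} \notin \myspan\{\xi_{\lambda_0},\dots,\xi_{\lambda_{n-1}}\}$.

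The only delicate point is the meaning of \emph{strictly positive definite}. We take it to mean strict positivity of the quadratic form in the positive semi--definiteness condition, for distinct points and nonzero coefficient vectors; this is the definition used in~\cite{paulsen_introduction_2016}, and it is exactly where the distinctness built into $\Omeganeq{n+1}$ is needed. As a byproduct, the same computation shows that $\{\xi_{\lambda_0},\dots,\xi_{\lambda_n}\}$ is linearly independent.
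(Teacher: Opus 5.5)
Your proof is correct and is essentially the standard Gram-matrix argument behind the cited result in \cite{paulsen_introduction_2016}; the paper itself gives no proof. The argument is that a linear dependence among $\xi_{\lambda_0},\dots,\xi_{\lambda_n}$ would force $\sum_{i,j} c_i\overline{c_j}\langle \xi_{\lambda_i},\xi_{\lambda_j}\rangle_{\origspace}=0$ for a nonzero $\mathbf{c}$, which contradicts strict positive definiteness. One cosmetic point: the reproducing property gives $\langle \xi_{\lambda_i},\xi_{\lambda_j}\rangle_{\origspace}=\xi_{\lambda_i}(\lambda_j)=\xi(\lambda_j,\lambda_i)$, so \Cref{thm:reprInnerProd} as stated has its arguments swapped, and your form is really the positive-definiteness form evaluated at the equally nonzero vector $\overline{\mathbf{c}}$; the conclusion is unaffected.
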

Finally, we consider the following statement about finite--dimensional \acp{rkhs} embedded into Hilbert spaces.
\begin{lemma}[Embedded finite--dimensional \acp{rkhs}]
    \label{thm:embeddedRKHS}
    Let $\origspace$ be a Hilbert space (not necessarily an \ac{rkhs}) of functions $\Omega \to \C$. Let $r \in \N$ and let $ V_r \subseteq \origspace$ be an $r$--dimensional subspace of $\origspace$, and let $\left\{ \varphi_j \right\}_{j=0}^{r-1} \subseteq \origspace$ be an orthonormal system with respect to ${\langle \cdot, \cdot \rangle}_{\origspace}$. 
    Fix an arbitrary $\big(\tlambda_0,\ldots,\tlambda_{r-1} \big) \in \Omeganeq{r}$. Then
    \begin{equation}\label{eq:spaneq}
        V_r \coloneqq \myspan \left\{ \varphi_0, \ldots, \varphi_{r-1} \right\} = \myspan \left\{ \xi_{\tlambda_0}, \ldots, \xi_{\tlambda_{r-1}} \right\},
    \end{equation}
    where the $\Omega \times \Omega \to \C$ map
    \begin{equation*}
        \xi(x, \lambda) \coloneqq \sum_{j=0}^{r-1} \varphi_j(x) \overline{\varphi_j(\lambda)}
    \end{equation*}
    is the reproducing kernel in $V_r$ corresponding to the inner product ${\langle \cdot, \cdot \rangle}_{\origspace}$. Furthermore, for any $\bG \in \origspace$, we have
    \begin{equation}
        \label{eq:projRep}
        \left\langle \bG, \xi_{\tlambda_j} \right\rangle_{\origspace} = \cP_{V_r}[\bG]\big(\tlambda_j\big) \quad (j=0,\ldots,r-1),
    \end{equation}
    where $\cP_{V_r} : \origspace \to V_r$ is the orthogonal projector to $V_r$.
\end{lemma}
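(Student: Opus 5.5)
The plan is to work directly from the explicit formula for $\xi$ and to split the statement into three claims: the reproducing property of $\xi$ on $V_r$, the span identity \eqref{eq:spaneq}, and the projection identity \eqref{eq:projRep}. The last of these will turn out not to need \eqref{eq:spaneq} at all.

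\emph{Kernel slices lie in $V_r$, and $\xi$ reproduces on $V_r$.} For fixed $\lambda \in \Omega$ the kernel slice is
\begin{equation*}
\xi_\lambda = \sum_{j=0}^{r-1} \overline{\varphi_j(\lambda)}\, \varphi_j ,
\end{equation*}
which is visibly an element of $V_r$. Hence $\myspan\{\xi_{\tlambda_0},\ldots,\xi_{\tlambda_{r-1}}\} \subseteq V_r$ for any choice of points. Now take $\bG = \sum_j \mathfrak{g}_j \varphi_j \in V_r$. Orthonormality of $\{\varphi_j\}$ in $\origspace$ gives
\begin{equation*}
\langle \bG, \xi_\lambda\rangle_{\origspace} = \sum_j \mathfrak{g}_j \varphi_j(\lambda) = \bG(\lambda).
\end{equation*}
So $\xi$ is the reproducing kernel of $V_r$ equipped with the restricted inner product $\langle\cdot,\cdot\rangle_{\origspace}$. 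This is consistent with \Cref{thm:findimRKHS}, where the Gram matrix is now the identity.

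\emph{The reverse inclusion in \eqref{eq:spaneq}.} Both sides have dimension at most $r$, so equality holds exactly when $\xi_{\tlambda_0},\ldots,\xi_{\tlambda_{r-1}}$ are linearly independent. Form the matrix $M \coloneqq [\varphi_j(\tlambda_k)]_{j,k=0}^{r-1}$. The coefficient vector of $\xi_{\tlambda_k}$ in the basis $\{\varphi_j\}$ is the $k$th column of $\overline{M}$. Therefore equality is equivalent to $\det M \neq 0$, and equivalently to the Gram matrix $[\xi(\tlambda_j,\tlambda_k)] = M^{\mathsf T}\overline{M}$ being nonsingular.

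This is the step I expect to be the main obstacle. Pairwise distinctness of the points alone does not force $\det M \neq 0$. For example, all $\varphi_j$ could vanish at some $\tlambda_k$, or $\Omega$ could contain points at which $V_r$ does not separate values. I would therefore make explicit the hypothesis that $\xi$ restricted to the chosen points is strictly positive definite, in the sense of \Cref{thm:posdef}, equivalently $\det M \neq 0$. This holds generically: $\det M$, viewed as a function of $(\tlambda_0,\ldots,\tlambda_{r-1})$, is not identically zero because the $\varphi_j$ are linearly independent functions. Under this hypothesis the slices are independent, and the dimension count gives equality.

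\emph{The projection identity \eqref{eq:projRep}.} For arbitrary $\bG \in \origspace$, decompose
\begin{equation*}
\bG = \cP_{V_r}[\bG] + \bigl(\bG - \cP_{V_r}[\bG]\bigr),
\end{equation*}
where the second term is orthogonal to $V_r$. Since each $\xi_{\tlambda_j}$ lies in $V_r$ by the first step, the second term contributes nothing, and
\begin{equation*}
\langle \bG, \xi_{\tlambda_j}\rangle_{\origspace} = \langle \cP_{V_r}[\bG], \xi_{\tlambda_j}\rangle_{\origspace} = \cP_{V_r}[\bG](\tlambda_j),
\end{equation*}
the last equality by the reproducing property on $V_r$. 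This argument uses only the inclusion $\xi_{\tlambda_j} \in V_r$, so \eqref{eq:projRep} holds for every point and every $\bG \in \origspace$, without the invertibility assumption from the previous step.
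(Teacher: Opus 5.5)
Your proof is correct. For the inclusion $\myspan\{\xi_{\tlambda_0},\ldots,\xi_{\tlambda_{r-1}}\}\subseteq V_r$ and for \eqref{eq:projRep} it follows the paper's route exactly: expand $\xi_\lambda$ in the $\varphi_j$, split $\bG$ into $\cP_{V_r}[\bG]$ plus a remainder orthogonal to $V_r$, and use reproduction on $V_r$. Your observation that \eqref{eq:projRep} needs only $\xi_{\tlambda_j}\in V_r$, and therefore holds for every choice of points, is also right.

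You depart from the paper only at the reverse inclusion in \eqref{eq:spaneq}, and there your caution is justified. The paper closes this step by citing \Cref{thm:posdef}. That theorem requires $\xi$ to be strictly positive definite, which is not a hypothesis of the lemma. In fact it is impossible for this rank-$r$ kernel once $\Omega$ has more than $r$ points, since any $r+1$ slices lie in $V_r$ and are therefore dependent.

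Pairwise distinctness of the $\tlambda_j$ does not suffice. Take $r=1$, $\Omega=(-1,1)$, $\origspace=\myspan\{\varphi_0\}$ with $\varphi_0(x)=x$, and $\tlambda_0=0$. Then $\xi_{\tlambda_0}\equiv 0$, so $\myspan\{\xi_{\tlambda_0}\}=\{0\}\neq V_1$. The lemma therefore genuinely needs your extra condition $\det[\varphi_j(\tlambda_k)]_{j,k=0}^{r-1}\neq 0$, equivalently a nonsingular Gram matrix $[\xi(\tlambda_j,\tlambda_k)]$ at the chosen points. With that condition, your dimension count completes the proof.

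One minor correction to your aside. Linear independence of the $\varphi_j$ only guarantees that some tuple with $\det M\neq 0$ exists; it does not make this hold generically. For instance, if some $\varphi_j$ vanishes on an open set $U\subseteq\Omega$, then $\det M=0$ on all of $U^r$. Genericity would need extra structure such as analyticity, but the aside is not used in your argument.
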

\begin{proof}
 By \Cref{thm:findimRKHS}, $V_r$ is an \ac{rkhs} associated with the reproducing kernel $\xi$ and with respect to the inner product ${\langle \cdot, \cdot \rangle}_{\origspace}$. Moreover, for any $(\alpha_0,\ldots,\alpha_{n-1}) \in \C^n$, we have
 \begin{equation*}
  \sum_{k=0}^{r-1} \alpha_k \xi_{\tlambda_k}(\cdot) = \sum_{k=0}^{r-1} \sum_{j=0}^{r-1}  \alpha_k \varphi_j(\cdot) \overline{\varphi_j(\tlambda_k)} = \sum_{j=0}^{r-1} \left(\sum_{k=0}^{r-1} \alpha_k \overline{\varphi_j(\tlambda_k)}\right) \varphi_j(\cdot),
 \end{equation*}
 which implies $\myspan\left\{\xi_{\tlambda_0},\ldots,\xi_{\tlambda_{r-1}} \right\} \subseteq V_r$. By \Cref{thm:posdef}, we obtain \eqref{eq:spaneq}. 
 
   If $\bG \in \origspace$, then~\eqref{eq:projRep} is a consequence of the fact that $\bG = \cP_{V_r}^{\perp}[\bG] + \cP_{V_r}[\bG]$, where $\cP_{V_r}^\perp$ is the projection operator to $\origspace \ominus V_r \coloneqq \origspace \cap V_r^\perp$, where $\ominus$ denotes the orthogonal difference between $\origspace$ and $V_r$. Then, using the reproducing property of the kernel, we get
    \begin{align*}
        \left\langle \bG, \xi_{\tlambda_j} \right\rangle_{\origspace} &= \left\langle \cP_{V_r}^{\perp}[\bG] + \cP_{V_r}[\bG], \xi_{\tlambda_j} \right\rangle_{\origspace} \\ &= \left\langle \cP_{V_r}^{\perp}[\bG], \xi_{\tlambda_j} \right\rangle_{\origspace} + \cP_{V_r}[\bG]\big(\tlambda_j\big) = \cP_{V_r}[\bG]\big(\tlambda_j\big).
    \end{align*}
\end{proof}

Finally, we recall the Hilbert space isomorphism theorem. The method, introduced in \Cref{sec:gmt}, gives a recipe to write such an isomorphism between any two separable \acp{rkhs} of the same dimension.

\begin{theorem}[Hilbert space isomorphism theorem (see, e.g., Theorem 4.14 in~\cite{conway_course_2007})]
    \label{thm:Hism}
    Two Hilbert spaces are isometrically isomorphic if and only if they have equal dimensions. 
\end{theorem}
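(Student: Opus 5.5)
The plan is to prove the two implications separately, after first making sure that ``dimension'' is well defined. Here the dimension of a Hilbert space $\cH$ means the cardinality of an orthonormal basis, i.e.\ a maximal orthonormal system. Such a basis exists by Zorn's lemma: the orthonormal subsets of $\cH$, ordered by inclusion, form a family in which every chain has an upper bound (its union), so a maximal element exists. Maximality is equivalent to completeness: if $\bF \perp e$ for all $e$ in the system, then either $\bF=0$ or $\bF/\|\bF\|$ could be added, contradicting maximality. Completeness then yields the Parseval identity
\[
\|\bF\|^2=\sum_{e}|\langle \bF,e\rangle|^2 ,
\]
together with the unconditional expansion $\bF=\sum_e \langle \bF,e\rangle e$, in which only countably many terms are nonzero by Bessel's inequality.

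The first real step, and the one I expect to be the main obstacle, is showing that any two orthonormal bases $A$ and $B$ of the same space have equal cardinality. If one of them is finite, the space is finite dimensional in the algebraic sense, and the claim is ordinary linear algebra: orthonormal sets are linearly independent, and complete orthonormal sets span. If both are infinite, I would use a counting argument. For each $a\in A$, the set $B_a=\{b\in B:\langle a,b\rangle\neq 0\}$ is countable by Bessel's inequality. Every $b\in B$ lies in some $B_a$, since otherwise $b\perp A$ and completeness of $A$ would force $b=0$. Hence $B=\bigcup_{a\in A}B_a$, so $|B|\le \aleph_0\cdot|A|=|A|$ because $A$ is infinite. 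By symmetry $|A|\le |B|$, and the Schröder--Bernstein theorem gives $|A|=|B|$.

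For the ``if'' direction, take orthonormal bases $\{e_i\}_{i\in I}$ of $\cH_1$ and $\{f_i\}_{i\in I}$ of $\cH_2$, indexed by the same set $I$; this is possible because the dimensions are equal. Define
\[
U\bF=\sum_{i\in I}\langle \bF,e_i\rangle f_i .
\]
The series converges in $\cH_2$ because the coefficient family is square summable. Parseval in $\cH_1$ and $\cH_2$ gives $\|U\bF\|=\|\bF\|$, and polarization then shows that $U$ preserves inner products. For surjectivity, given $\bH\in\cH_2$, the element $\bF=\sum_i\langle \bH,f_i\rangle e_i$ satisfies $U\bF=\bH$, by completeness of $\{f_i\}$. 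So $U$ is a unitary isomorphism.

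For the ``only if'' direction, let $U:\cH_1\to\cH_2$ be an isometric isomorphism. Then $U$ preserves inner products (again by polarization), so it maps an orthonormal basis $\{e_i\}$ to an orthonormal system $\{Ue_i\}$. This system is complete: if $\bH\perp Ue_i$ for all $i$, then $U^{-1}\bH\perp e_i$ for all $i$, so $U^{-1}\bH=0$ and hence $\bH=0$. Therefore $\cH_2$ has an orthonormal basis of the same cardinality as that of $\cH_1$, and by the well-definedness step the two dimensions agree.
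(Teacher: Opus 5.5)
Your proof is correct and is the standard argument. The paper gives no proof of its own and only cites Conway, whose proof runs exactly as yours does: well-definedness of the Hilbert dimension via the countable sets $\{b : \langle a,b\rangle \neq 0\}$ and Bessel's inequality, then a unitary obtained by matching orthonormal bases, and conversely the observation that an isometric isomorphism maps an orthonormal basis onto an orthonormal basis.
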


\subsection{MOR of LTI systems}
\label{subsec:mor}
The methods proposed in this paper can be interpreted as generalizations of well--known \ac{mor} schemes developed for linear time--invariant (\ac{lti}) dynamical systems. For this reason, we briefly review the corresponding \ac{mor} methods. Later, we show that these well--known methods rely mostly on the fact that they operate in an \ac{rkhs}, namely the Hardy--Hilbert space $H_2(\C^+)$. This observation allows us to pose the problem in a more general \ac{rkhs} setting, where we obtain similar results.

First, consider a \ac{siso}--\ac{lti}--\ac{fom} in continuous--time, given by the state--space representation
\begin{equation}
    \label{eq:ss}
    \begin{split}
        \dot{\bx}(t) &= \bA \bx(t) + \bb \bu(t), \quad \bx(0) = 0, \\
        \by(t) &= \bc^\top \bx(t),
    \end{split}
\end{equation}
where $\bA \in \C^{n \times n}$, $\bb,\,\bc \in \C^{n \times 1}$ and $\bx(t) \in \C^{n}$, $\bu(t),\,\by(t) \in \C$ for all $t \in [0, \infty)$. The quantities $\bx(t)$, $\bu(t)$, and $\by(t)$ are referred to as the state, input, and output at time $t$. We refer to $n$ as the order of the system. The goal of \ac{lti}--\ac{mor} is to identify a \ac{rom} of order $r \ll n$ given by
\begin{equation}\label{eq:statROM}
    \begin{split}
        \dot{\bx}_r(t) &= \bA_r \bx_r(t) + \bb_r \bu(t), \quad \bx_r(0) = 0, \\
        \by_r(t) &= \bc_r^\top \bx_r(t),
    \end{split}
\end{equation}
where $\bA_r \in \C^{r \times r}$, $\bb_r,\bc_r \in \C^{r \times 1}$, $\bx_r(t) \in \C^{r}$, and $\by_r(t) \in \C$ for $t \in [0, \infty)$ such that $\by_r(t) \approx \by(t)$ for all admissible inputs $\bu(\cdot)$. 

The systems given in~\eqref{eq:ss} and~\eqref{eq:statROM} are fully defined by their respective transfer functions
\begin{equation}
    \label{eq:transfun}
    \bG(s) \coloneqq \bc^{\top} \left( s \bI - \bA\right)^{-1} \bb, \quad \bG_r(s) \coloneqq \bc_r^{\top} \left( s \bI - \bA_r\right)^{-1} \bb_r,
\end{equation}
where $s \in \C^{+} \coloneqq \{ w \in \C \ : \ \Re{w} > 0 \}$ belongs to the right complex half--plane. The transfer functions $\bG$ and $\bG_r$ are rational and their poles are eigenvalues of $\bA$ and $\bA_r$, respectively. If $\bA$ and $\bA_r$ from~\eqref{eq:ss} and~\eqref{eq:statROM} are Hurwitz, we call the corresponding systems asymptotically stable. In this case, the transfer functions belong to the Hardy--Hilbert space
\begin{equation*}
    H_{2}(\C^{+}) \coloneqq \left\{ \bG \in \cA(\C^{+}) \ : \ \frac{1}{2 \pi} \int_{-\infty}^{\infty} \left| \bG(\iu \omega) \right|^2\, \du \omega < \infty \right\},
\end{equation*}
where $\cA(\C^{+})$ denotes the set of analytic functions on $\C^{+}$. The inner product on this space is
\begin{equation*}
    {\langle \bG, \bH \rangle}_{H_2(\C^{+})} \coloneqq \frac{1}{2 \pi} \int_{-\infty}^{\infty} \bG(\iu \omega) \overline{\bH(\iu \omega)}\, \du \omega \quad (\bG,\bH \in H_2(\C^{+})).
\end{equation*}
Furthermore, it is an \ac{rkhs} with the Cauchy kernel
\begin{equation}
    \label{eq:kerh2c}
    \zeta(s, w) = \frac{1}{s + \overline{w}} \quad (s, w \in \C^{+}).
\end{equation}
Let $\mu_k$ and $\tilde{\mu}_j$ denote the eigenvalues of $\bA$ and $\bA_r$ from~\eqref{eq:ss} and~\eqref{eq:statROM}, respectively ($k=0,\ldots,n-1$, $j=0,\ldots,r-1$).  Then, we can write the transfer functions in~\eqref{eq:transfun} (see, e.g.,~\cite{gugercin_mathcalh_2_2008}) as
\begin{equation*}
    \bG = \sum_{k=0}^{n-1} \res_k \zeta_{\lambda_k}, \quad \bG_r = \sum_{j=0}^{r-1} \tres_j \zeta_{\tlambda_j},
\end{equation*}
where $\lambda_k = -\mu_k$, $\tlambda_j = - \tilde{\mu}_j$ and $\res_k, \res_j \in \C \ (k=0,\ldots,n-1, \ j=0,\ldots,r-1)$. Noticing that $H_2(\C^{+})$ is an \ac{rkhs} allows us to formulate the classical $H_2(\C^+)$--optimal \ac{mor} problem for \ac{siso}--\ac{lti} systems using \Cref{def:Objective}. In particular, fixing $\Omega \coloneqq \C^{+}$, $\xi \coloneqq \zeta$, and $\origspace \coloneqq  H_2(\C^{+})$, we notice that $\bG$ from~\eqref{eq:transfun} belongs to $\fomspacec$, where $\fomspacec \subseteq H_2(\C^{+})$ is a generalized model space defined according to \Cref{def:gmodspace}. Then, $H_2(\C^{+})$--optimal \ac{mor} seeks the \ac{rom} $\bG_r$ (see~\eqref{eq:transfun}) that is a local minimizer (see \Cref{def:locmin}) of the objective function $\rkhsError{\zeta}{r}$ given in \Cref{def:Objective}.
Such a local minimizer satisfies the following necessary conditions.
\begin{theorem}[Meier--Luenberger conditions~\cite{meier_approximation_1967, gugercin_mathcalh_2_2008}]
\label{thm:MLc}
      Let $\bG \in \fomspacec$ be a \ac{fom}. Suppose that $\bG_r \in  \romspacec$ of order $r$ is a local minimizer of $\rkhsError{\zeta}{r}$. Then,
        \begin{equation}
            \label{eq:MLC}
            \bG_r\big(\tlambda_j\big) = \bG\big(\tlambda_j\big), \quad \bG_r'\big(\tlambda_j\big) = \bG'\big(\tlambda_j\big) \quad (j=0,\ldots,r-1),
        \end{equation}
        where $\tlambda_j$ denote the parameters identifying $\bG_r$ (see~\eqref{eq:transfun}).
\end{theorem}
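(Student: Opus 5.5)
The plan is to derive the Meier--Luenberger conditions from first-order stationarity of the squared error $\mathcal{J}(\tres,\tilde{\blambda}) \coloneqq \|\bG - \bG_r\|_{H_2(\C^+)}^2$. Here $\bG_r = \sum_{j=0}^{r-1}\tres_j\zeta_{\tlambda_j}$ is viewed as a function of the residues $\tres_j$ and the parameters $\tlambda_j$. The argument uses only the \ac{rkhs} structure: the reproducing property and \Cref{thm:derivKer}.

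\textbf{Step 1: reduce to stationarity.} Since $\bG_r$ has order exactly $r$, all $\tres_j \neq 0$ and the $\tlambda_j$ are distinct and lie in the open set $\C^+$. Small perturbations of $(\tres,\tilde{\blambda})$ therefore stay in $\romspacec$. The map $(\tres,\tilde{\blambda}) \mapsto \bG_r$ is continuous into $H_2(\C^+)$, because $\lambda\mapsto\zeta_\lambda$ is $C^1$ in norm; this can be checked directly from $\|\zeta_\lambda-\zeta_\mu\|^2 = \zeta(\lambda,\lambda)-2\Re\zeta(\lambda,\mu)+\zeta(\mu,\mu)$. Hence, by \Cref{def:locmin}, $(\tres,\tilde{\blambda})$ is a local minimizer of the smooth real function $\mathcal{J}$ on an open set, so all its Wirtinger derivatives vanish there.

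\textbf{Step 2: residue derivatives give the interpolation $\bG_r(\tlambda_j)=\bG(\tlambda_j)$.} Expand
\[
\mathcal{J} = \|\bG\|^2 - 2\Re \sum_j \overline{\tres_j}\langle \bG,\zeta_{\tlambda_j}\rangle + \sum_{j,k}\tres_j\overline{\tres_k}\langle \zeta_{\tlambda_j},\zeta_{\tlambda_k}\rangle .
\]
Differentiating with respect to $\overline{\tres_j}$ gives $\langle \bG_r-\bG,\zeta_{\tlambda_j}\rangle = 0$. By the reproducing property, this says exactly $\bG_r(\tlambda_j)=\bG(\tlambda_j)$. Equivalently, $\bG_r$ is the orthogonal projection of $\bG$ onto $\myspan\{\zeta_{\tlambda_j}\}$.

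\textbf{Step 3: parameter derivatives give the Hermite condition.} The Cauchy kernel $\zeta_\lambda(s)=(s+\overline{\lambda})^{-1}$ depends antiholomorphically on $\lambda$. Thus $\funder{\zeta_\lambda}{\lambda}=0$, and $\funder{\zeta_\lambda}{\olambda} = -(s+\olambda)^{-2}$ lies in $H_2(\C^+)$ with norm-$C^1$ dependence, so \Cref{thm:derivKer} applies. Only the second argument of the inner product moves, so
\[
\partial_{\olambda_j}\mathcal{J} = -\overline{\tres_j}\,\big\langle \bG-\bG_r, \funder{\zeta_{\lambda}}{\olambda}\big|_{\lambda=\tlambda_j}\big\rangle = -\overline{\tres_j}\,(\bG-\bG_r)'(\tlambda_j).
\]
The last equality is \Cref{thm:derivKer}. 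Setting this to zero and using $\tres_j\ne 0$ yields $\bG_r'(\tlambda_j)=\bG'(\tlambda_j)$.

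\textbf{Main obstacle.} The delicate part is justifying Step 1 and the differentiation in Step 3. One must show that $\mathcal{J}$ is genuinely differentiable in the real sense in $\tlambda_j$, which requires norm-differentiability of $\lambda\mapsto\zeta_\lambda$. One must also separate cleanly the holomorphic and antiholomorphic dependence so that the Wirtinger calculus gives a single condition, rather than two real conditions that might be inconsistent. Both points are checked by explicit computation with $\zeta$: the difference quotients converge in the $H_2$ norm, since the relevant integrals of $|s+\olambda|^{-k}$ over the imaginary axis are finite for $\Re\lambda>0$. The non-vanishing of $\tres_j$ follows from the order assumption, via \Cref{def:modelord} and \Cref{thm:posdef}.
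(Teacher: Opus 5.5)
Your argument is correct and is essentially the paper's route to this result, namely the general \Cref{thm:orthocond} and \Cref{thm:MLcRKHS} specialized to $H_2(\C^{+})$ via \Cref{thm:specialCases}. Stationarity in the residues gives $\bG-\bG_r\perp\zeta_{\tlambda_j}$, i.e.\ interpolation, and stationarity in the centers together with \Cref{thm:derivKer} gives the Hermite condition; your use of the antiholomorphic dependence of $\zeta_\lambda$ on $\lambda$ is what cleanly reduces the two real stationarity conditions to a single complex one.

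There is one harmless slip. The expression $-\overline{\tres_j}\,(\bG-\bG_r)'(\tlambda_j)$ you display is $\partial\mathcal{J}/\partial\tlambda_j$, not $\partial\mathcal{J}/\partial\overline{\tlambda_j}$; the latter is its complex conjugate. Since $\mathcal{J}$ is real-valued, the two vanish together and your conclusion is unaffected.
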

\Cref{thm:MLc} is equivalent to the following orthogonality conditions.
\begin{theorem}[Gugercin--Antoulas--Beattie conditions~\cite{gugercin_mathcalh_2_2008}]
    \label{thm:orthoCondH2}
    Let $\bG \in \fomspacec$ be a \ac{fom}. Suppose that $\bG_r \in  \romspacec$ of order $r$ is a local minimizer of $\rkhsError{\zeta}{r}$. Then,
    \begin{equation}
        \label{eq:orthCondH2}
        \left\langle \bG - \bG_r, \bG_r \cdot \bH_1 + \bH_2 \right\rangle_{H_2(\C^{+})} = 0 \quad \bigg(\bH_1, \bH_2 \in \myspan \left\{ \zeta_{\tlambda_0}, \ldots, \zeta_{\tlambda_{r-1}} \right\}\bigg),
    \end{equation}
    where $\tlambda_j$ denote the parameters identifying $\bG_r$ (see~\eqref{eq:transfun}).
\end{theorem}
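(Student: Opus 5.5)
We derive \Cref{thm:orthoCondH2} from \Cref{thm:MLc} by rewriting the interpolatory conditions~\eqref{eq:MLC} as inner products in $H_2(\C^{+})$. The tools are the reproducing property of the Cauchy kernel~\eqref{eq:kerh2c} and \Cref{thm:derivKer}. The condition~\eqref{eq:orthCondH2} is linear in the pair $(\bH_1,\bH_2)$ (conjugate linear in the second slot of the inner product). It therefore suffices to check it for $\bH_2 = \zeta_{\tlambda_j}$ with $\bH_1 = 0$, and for $\bH_1 = \zeta_{\tlambda_j}$ with $\bH_2 = 0$, for each $j = 0,\ldots,r-1$.

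\textbf{Step 1: the $\bH_2$ part.} By the reproducing property, $\langle \bG - \bG_r, \zeta_{\tlambda_j}\rangle_{H_2(\C^+)} = \bG(\tlambda_j) - \bG_r(\tlambda_j)$. This vanishes by the first condition in~\eqref{eq:MLC}.

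\textbf{Step 2: the $\bH_1$ part.} Write $\bG_r = \sum_{i} \tres_i \zeta_{\tlambda_i}$ and compute the product $\bG_r \cdot \zeta_{\tlambda_j}$ by partial fractions. For $i \neq j$,
\begin{equation*}
\zeta_{\tlambda_i}(s)\,\zeta_{\tlambda_j}(s) = \frac{1}{\overline{\tlambda_j} - \overline{\tlambda_i}}\left(\zeta_{\tlambda_i}(s) - \zeta_{\tlambda_j}(s)\right),
\end{equation*}
while for $i = j$ we get $\zeta_{\tlambda_j}(s)^2 = (s+\overline{\tlambda_j})^{-2} = -\funder{\zeta_{\lambda}}{\olambda}\big|_{\lambda=\tlambda_j}(s)$. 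Hence $\bG_r\cdot\zeta_{\tlambda_j}$ is a linear combination of the slices $\zeta_{\tlambda_0},\ldots,\zeta_{\tlambda_{r-1}}$ plus the multiple $-\tres_j\,\funder{\zeta_{\lambda}}{\olambda}\big|_{\lambda=\tlambda_j}$. The slice terms are orthogonal to $\bG - \bG_r$ by Step 1. For the remaining term, \Cref{thm:derivKer} applies: the map $\lambda\mapsto\zeta_\lambda$ is $C^1$ into $H_2(\C^+)$ and its $\olambda$-derivative lies in $H_2(\C^+)$. It gives $\langle \bG-\bG_r, \funder{\zeta_{\lambda}}{\olambda}|_{\tlambda_j}\rangle = \bG'(\tlambda_j) - \bG_r'(\tlambda_j) = 0$ by the second condition in~\eqref{eq:MLC}.

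\textbf{Converse and main obstacle.} Running Steps 1 and 2 backwards shows that~\eqref{eq:orthCondH2} implies~\eqref{eq:MLC}, provided $\tres_j \neq 0$. This holds because $\bG_r$ has order exactly $r$, so every residue is nonzero. This justifies calling the two sets of conditions equivalent. The main obstacle is the bookkeeping in Step 2. One must verify that the partial-fraction decomposition stays inside the span of the slices and the derivative slice, which uses that the points $\tlambda_i$ are distinct (true since they come from $\Omeganeq{r}$). One must also confirm that \Cref{thm:derivKer} applies to the Cauchy kernel on $\C^+$. I would check this by direct computation, noting that $\partial_{\olambda}\zeta_\lambda(s) = -(s+\olambda)^{-2}$, which belongs to $H_2(\C^+)$ and depends continuously on $\lambda$ in norm.
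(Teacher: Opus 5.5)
Your proposal is correct and follows essentially the paper's own argument. The paper obtains \Cref{thm:orthoCondH2} from \Cref{thm:MLc} via the RKHS equivalence proof in \Cref{app:equivMLc}, which uses the same three ingredients: the reproducing property for the $\bH_2$ part, a partial-fraction decomposition placing $\bG_r\cdot\zeta_{\tlambda_j}$ in the span of the slices plus $-\tres_j$ times the $\olambda$-derivative slice, and \Cref{thm:derivKer} for that derivative term. Your explicit partial-fraction coefficients and the reduction to spanning elements via conjugate linearity make precise bookkeeping that the appendix leaves implicit.
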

\begin{remark}
    The equivalence between Theorems~\ref{thm:MLc} and~\ref{thm:orthoCondH2} is proven in~\cite{gugercin_mathcalh_2_2008}. This equivalence can also be easily shown using an \ac{rkhs} argument, see Appendix~\ref{app:equivMLc}. Even though this proof is a direct consequence of the general optimality condition introduced in \Cref{sec:orthCond}, it demonstrates that the result in $H_2(\C^+)$ can be proven by exploiting the reproducing property in \acp{rkhs}.
  \end{remark}   

The current state--of--the--art approach for obtaining a \ac{rom} that satisfies~\Cref{thm:MLc} and~\Cref{thm:orthoCondH2} is \ac{irka} introduced in~\cite{gugercin_mathcalh_2_2008}. This method uses repeated Galerkin projections, based on the state--space representations~\eqref{eq:ss} and~\eqref{eq:statROM}, to obtain the \ac{rom}. Although convergence is formally proven only for state--space--symmetric systems~\cite{flagg_convergence_2012}, the method has been empirically verified to yield high--quality \acp{rom} for most \ac{lti} systems. A limitation of \ac{irka} is that it can provide \acp{rom} whose parameters $\tlambda_j \not \in \C^{+}$ for some $j=0,\ldots,r-1$ and consequently $\bG_r \not \in \origspace$. In system theory, such systems are referred to as unstable. Recently, alternatives to \ac{irka} address this limitation in~\cite{mlinaric_irka_2025} and~\cite{cdcAngino}.

Finally, we briefly consider discrete--time \ac{lti} systems. Although analogs of Theorems~\ref{thm:MLc} and~\ref{thm:orthoCondH2} hold for this case~\cite{gugercin_mathcalh_2_2008}, we instead review an equivalent theorem recently introduced in~\cite{cdcAngino}. This formulation is very useful for the \ac{rkhs}--\ac{mor} methods developed in \Cref{sec:irka}. In the discrete--time case, the time variable $t$ in~\eqref{eq:ss} and~\eqref{eq:statROM} belongs to $\N$ and $\dot{\bx}(t)$ is replaced by $\bx[t+1]$. Discrete--time \ac{lti} systems can be fully described by rational functions
\begin{equation}
    \label{eq:disctransfun}
    \bG(z) = \sum_{k=0}^{n-1} \frac{\res_k}{1 - \overline{\lambda_k} z}, \quad \bG_r(z) = \sum_{j=0}^{r-1} \frac{\tres_j}{1 - \overline{\tlambda_j} z} \quad (z \in \D),
\end{equation}
where $\D$ denotes the open complex unit disk, $\res_k, \tres_j \in \C$ and $\lambda_k, \tlambda_j$ are defined as $\lambda_k = \overline{\mu_k}, \ \tlambda_j = \overline{\tilde{\mu}_j}$, with $\mu_k$ and $\tilde{\mu}_j$ denoting the eigenvalues of $\bA$ and $\bA_r$, respectively, for $k=0,\ldots,n-1$ and $j=0,\ldots,r-1$. For a thorough overview of this statement, see, e.g.,~\cite{cdcAngino, dozsa_impulse_2026}. In discrete time, asymptotically stable systems correspond to $\bA$ and $\bA_r$ from~\eqref{eq:ss} and~\eqref{eq:statROM} having eigenvalues in $\D$. In this case, the functions in~\eqref{eq:disctransfun} belong to the Hardy--Hilbert space
\begin{equation}
    \label{eq:H2D}
    H_2(\D) \coloneqq \left\{ \bG \in \cA(\D) \ : \ \frac{1}{2 \pi} \int_{-\pi}^{\pi} \left|\bG( \eu^{\iu t} )\right|^2 \du t < \infty \right\},
\end{equation}
where $\cA(\D)$ is s short--hand notation for the complex--analytic functions on $\D$.
This is again an \ac{rkhs} with the reproducing Szeg\H{o} kernel
\begin{equation}
    \label{eq:kerh2d}
    \rho(z, \lambda) = \frac{1}{1 - \overline{\lambda} z} \quad (z, \lambda \in \D).
\end{equation}
Kernel slices in $H_2(\D)$ will be denoted by $\rho_\lambda$ similarly to \Cref{def:reprKer}.
In order to state the necessary optimality condition, following~\cite{cdcAngino}, we first define the equilibrium function associated with the \ac{fom} $\bG \in \fomspaceh$. 
\begin{definition}[Equilibrium function]
    \label{def:equibFunH2}
    Let $\bG = \sum_{k=0}^{n-1} \res_k \rho_{\lambda_k} \in \fomspaceh$.The \emph{equilibrium function} associated with $\bG$ is given by $F_{\bG} : \D^r \to \C^r$, where \begin{equation*}
         \left[ F_{\bG}(\bz) \right]_j \coloneqq \sum_{k=0}^{n-1} \res_k \rho_{\lambda_k}(z_j) \prod_{i=0}^{r-1} \frac{\overline{\lambda_k} - \overline{z_i}}{1 - z_i \overline{\lambda_k}} \quad (j=0,\ldots,r-1).
     \end{equation*}
     for $\bz = [z_0, \ldots, z_{r-1}]^\top$.
\end{definition}
The following theorem provides a necessary condition that a discrete--time \ac{lti}--\ac{rom} $\bG_r \in \romspaceh$ must satisfy if it is a local minimizer of the error functional $\rkhsError{\rho}{r}$.\footnote{In~\cite{cdcAngino}, in the following theorem the authors only require $\bG_r \in \romspaceh$, whereas they should additionally require $\bG_r$ to be of order $r$ for the statement to hold. This inaccuracy is now addressed in \Cref{thm:equib}.}
\begin{theorem}[Equilibrium condition~\cite{cdcAngino}]
    \label{thm:equib}
    Let $\bG \in \frak{G}_{\rho}^n(\D)$, $F_{\bG}$ be the associated equilibrium function and $\bG_r = \sum_{j=0}^{r-1} \tres_j \rho_{\tlambda_j} \in \romspaceh$ of order $r$ a local minimizer of $\rkhsError{\rho}{r}$ (see \Cref{def:Objective}). Then, $\tilde{\blambda} = \left[ \tlambda_0, \ldots, \tlambda_{r-1} \right]^\top$ satisfies
  \begin{equation*}
      F_{\bG}\big(\tilde{\blambda}\big) = 0.
  \end{equation*}
\end{theorem}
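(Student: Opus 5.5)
The plan is to reduce the equilibrium condition to discrete--time Meier--Luenberger interpolation conditions and then translate these through a Blaschke--product (Toeplitz) factorization. Write $\bG_r = \sum_{j=0}^{r-1}\tres_j\rho_{\tlambda_j}$ with $\tilde{\blambda}\in\D^r_{\neq}$. Since $\bG_r$ has order $r$, every $\tres_j \neq 0$; otherwise $\bG_r$ would lie in $\frak{G}_{\rho}^{r-1}(\D)$.

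\textbf{Step 1: interpolation conditions.} The map $(\tres,\tilde{\blambda})\mapsto \sum_j \tres_j\rho_{\tlambda_j}$ is continuous into $H_2(\D)$. Hence small perturbations of the parameters stay within the $\delta$-ball of \Cref{def:locmin}, and $(\tres,\tilde{\blambda})$ is a local minimizer of the smooth function $\|\bG-\sum_j \tres_j\rho_{\tlambda_j}\|^2_{H_2(\D)}$ in the parameters.
\begin{itemize}
\item Stationarity in $\overline{\tres_j}$ gives $\langle \bG-\bG_r,\rho_{\tlambda_j}\rangle=0$. By the reproducing property this is $\bG(\tlambda_j)=\bG_r(\tlambda_j)$.
\item Stationarity in $\overline{\tlambda_j}$ gives $\overline{\tres_j}\,\langle \bG-\bG_r,\funder{\rho_{\lambda}}{\olambda}|_{\lambda=\tlambda_j}\rangle=0$. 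By \Cref{thm:derivKer} this equals $\overline{\tres_j}\,(\bG-\bG_r)'(\tlambda_j)$. Since $\tres_j\neq0$, we get $\bG'(\tlambda_j)=\bG_r'(\tlambda_j)$.
\end{itemize}
This is the $H_2(\D)$ analogue of \Cref{thm:MLc}.

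\textbf{Step 2: Blaschke factorization.} Let
\[
B(z)=\prod_{i=0}^{r-1}\frac{z-\tlambda_i}{1-\overline{\tlambda_i}z}.
\]
It is classical that $H_2(\D)=K_B\oplus BH_2(\D)$, where $K_B=\myspan\{\rho_{\tlambda_0},\ldots,\rho_{\tlambda_{r-1}}\}$; this uses that the $\tlambda_i$ are distinct. Interpolation at the $\tlambda_j$ and $\bG_r\in K_B$ imply $\bG_r=\cP_{K_B}\bG$, so $\bG-\bG_r=Bh$ for some $h\in H_2(\D)$.

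Differentiating $\bG-\bG_r=Bh$ at $\tlambda_j$ gives $0=B'(\tlambda_j)h(\tlambda_j)$, because $B(\tlambda_j)=0$. The point $\tlambda_j$ is a simple zero of $B$, so $B'(\tlambda_j)\neq0$, and therefore $h(\tlambda_j)=0$ for all $j$.

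\textbf{Step 3: identification with $F_{\bG}$.} This is the main computational step. Let $T_{\overline B}$ be the adjoint of multiplication by $B$ on $H_2(\D)$. For all $f$,
\[
\langle f,T_{\overline B}\rho_\lambda\rangle=\langle Bf,\rho_\lambda\rangle=B(\lambda)f(\lambda),
\]
so $T_{\overline B}\rho_\lambda=\overline{B(\lambda)}\rho_\lambda$. A direct check shows
\[
\overline{B(\lambda_k)}=\prod_i\frac{\overline{\lambda_k}-\overline{\tlambda_i}}{1-\tlambda_i\overline{\lambda_k}}.
\]
Combining these, $[F_{\bG}(\tilde{\blambda})]_j=(T_{\overline B}\bG)(\tlambda_j)$.

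Next, $T_{\overline B}$ annihilates $K_B$, since $\langle Bf,\rho_{\tlambda_i}\rangle=0$. Multiplication by the inner function $B$ is an isometry, so $T_{\overline B}(Bh)=h$. Thus $T_{\overline B}\bG=T_{\overline B}(\bG_r+Bh)=h$, and $F_{\bG}(\tilde{\blambda})=(h(\tlambda_j))_j=0$ by Step 2.

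I expect the main obstacle to be the careful justification of Step 1. One must pass from the norm-based local minimality of \Cref{def:locmin} to parametric stationarity, with Wirtinger derivatives in both $\tres_j$ and $\tlambda_j$. This is also exactly where the order-$r$ hypothesis enters: it guarantees $\tres_j\neq0$ and distinct $\tlambda_j$.
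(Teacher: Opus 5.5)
Your proof is correct. The paper gives no proof of this statement; it is quoted from \cite{cdcAngino}. The natural comparison is therefore with the paper's general machinery.

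Your Step~1 is the $H_2(\D)$ instance of the proofs of \Cref{thm:orthocond} and \Cref{thm:MLcRKHS}. The paper differentiates with respect to $\Re\tlambda_j$ and $\Im\tlambda_j$ separately and then recombines. You use Wirtinger derivatives directly, which is cleaner because $\lambda\mapsto\rho_\lambda$ is antiholomorphic. Your use of the order-$r$ hypothesis to force $\tres_j\neq 0$ is exactly the correction made in the footnote to the theorem.

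The genuinely different part is Steps~2--3. The paper's own equilibrium function $\cD$ (\Cref{def:EquibFun}) is defined as the derivative mismatch $\bG'(\tlambda_j)-(\cP_{V_r^{\tilde{\blambda}}}[\bG])'(\tlambda_j)$. With that definition, \Cref{thm:equibCondDistinct} follows almost by construction in any RKHS with a differentiable kernel. However, the paper never relates $\cD$ to the closed-form $F_{\bG}$ of \Cref{def:equibFunH2}.

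Your Blaschke/Toeplitz argument supplies exactly this link. The identity $\bG-\cP_{V_r^{\tilde{\blambda}}}[\bG]=B\,T_{\overline B}\bG$ holds for every $\tilde{\blambda}$ with pairwise distinct entries in $\D$, not only at a minimizer. Hence you actually obtain $[\cD(\tilde{\blambda})]_j=B'(\tlambda_j)\,[F_{\bG}(\tilde{\blambda})]_j$ with $B'(\tlambda_j)\neq 0$, so the two equilibrium functions have identical zero sets.

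The cost is that your route is specific to $H_2(\D)$. It relies on $B$ being an isometric multiplier and on $T_{\overline B}$ acting diagonally on kernel slices. What it buys is a projection-free formula for the equilibrium condition, which is what makes the Newton iteration in \Cref{alg:hybrid_equilibrium} cheap to evaluate.
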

Using \Cref{thm:equib}, we can pose the discrete time \ac{siso} \ac{lti}--\ac{mor} problem as a zero finding task. In~\cite{cdcAngino} this is solved with a Newton method which exploits hyperbolic geometry. This intrinsically guarantees that the obtained parameters $\tilde{\blambda}$ remain in $\D^r$ in each iteration of the algorithm, always resulting in an asymptotically stable \ac{rom}.

At each iteration, the method first attempts a standard Euclidean Newton step for the equilibrium equation.
If the resulting candidate remains in $\D^r$, the update is accepted; otherwise, the method switches to a hyperbolically corrected step that keeps the parameters inside $\D^r$. 
Thus, the equilibrium--based approach gives stricter structural guarantees than \ac{irka}, since the recovered parameters $\tilde{\blambda}$ remain in $\D^r$ throughout each iteration.  
For further details about the hybrid hyperbolic Newton step, we refer to~\cite{cdcAngino}.

In this study, we propose Newton iterations similar to Algorithm~\ref{alg:hybrid_equilibrium} in any \acp{rkhs} with (Wirtinger) differentiable kernels.  In addition, we extend this strategy to more general \acp{rkhs} in \Cref{sec:irka} by transforming the \ac{fom} in the original \ac{rkhs} to a \ac{fom} in a companion \ac{rkhs} with more desirable properties such as $H_2(\D)$ (because of the availability of \Cref{alg:hybrid_equilibrium}). This can be done exploiting the unitary transformations proposed in \Cref{subsec:GMTtrans}. 
\begin{algorithm}[tbp]
\caption{Hybrid equilibrium--based \ac{mor} in $H_2(\D)$}
\label{alg:hybrid_equilibrium}
\begin{algorithmic}[1]
\Require \ac{fom} $\bG \in \frak{G}_{\rho}^n$, reduced order $r < n$, initial parameters $\bz^{(0)} \in \D^r$.
\Ensure \ac{rom} $\bG_r \in \romspaceh$.
\State Construct the equilibrium function $F_{\bG} : \D^r \to \C^r$.
\State $\nu \gets 0$.
\Repeat
    \State Compute a Newton correction $\Delta \bz^{(\nu)}$ for $F_{\bG}\big(\bz^{(\nu)}\big) = 0$.
    
    \If{$\bz^{(\nu)} + \Delta \bz^{(\nu)} \in \D^r$}
        \State $\bz^{(\nu+1)} \gets \bz^{(\nu)} + \Delta \bz^{(\nu)}$.
        \Comment{Euclidean Newton step.}
    \Else
        \State Compute hyperbolic correction $\Delta \bz_{\mathrm{H}}^{(\nu)}$. \Comment{See Section 4 in~\cite{cdcAngino}.}
        \State $\bz^{(\nu+1)} \gets \operatorname{Exp}_{\bz^{(\nu)}}\!\left(\Delta \bz_{\mathrm{H}}^{(\nu)}\right)$.
    \EndIf
    \State $\nu \gets \nu + 1$.
\Until{convergence of $\bz^{(\nu)}$}
\State $\tilde{\blambda} \gets \bz^{(\nu)}$.
\State Construct $\bG_r \in \romspaceh$ using the parameters $\tilde{\blambda}$.
\State \Return $\bG_r$.
\end{algorithmic}
\end{algorithm}

Later, we will observe a unique advantage of Algorithm~\ref{alg:hybrid_equilibrium} over Galerkin--projection--based methods like \ac{irka} when reducing models in \acp{rkhs} defined over real intervals. Consider the space 
\begin{equation}
    \label{eq:realH2}
    \realHtwo \coloneqq H_2(\D)\big|_{(-1,1)}.
\end{equation}
Note that $\realHtwo$ is an \ac{rkhs} that is induced by the Szeg\H{o} kernel restricted to the domain $\mathbb{I}^2$, see~\cite[page 351]{aronszajn_theory_1950}. In particular, all rational models in $\fomspaceh \subseteq H_2(\D)$ with real kernel centers and real linear coefficients belong to $\realHtwo$. For such models, \Cref{alg:hybrid_equilibrium} produces a \ac{rom} candidate in each iteration maintaining these properties.

\subsection{Summary of notation}
\label{subsec:notation}
\Cref{tab:notation} collects the core notation introduced above that is used throughout the remainder of the paper. Notation specific to a single section (e.g., the \ac{gmt} system of \Cref{sec:gmt}) is instead introduced, and recalled, where it is first needed.
{
\small
\begin{longtable}{@{}ll@{}}
    
    \caption{Summary of the core notation used throughout the paper.}
    \label{tab:notation} \\
    \toprule
    \multicolumn{2}{l}{\textit{Domains and function spaces}} \\
    \midrule
    $\N$ & Natural numbers including $0$. \\
    $\Omega \subseteq \C$, $\origspace$ & Domain of the functions and the original \ac{rkhs} of functions $\Omega \to \C$. \\
    $\hat\Omega$, $\ambspace$ & The companion domain and companion \ac{rkhs} \\
    $\D$, $\C^{+}$ & Open unit complex disk, open right complex half--plane \\
    $H_2(\D)$, $H_2(\C^{+})$ & Hardy--Hilbert spaces on $\D$, $\C^{+}$ \\
    \midrule
    \multicolumn{2}{l}{\textit{Kernels}} \\
    \midrule
    $\xi$, $\hat\xi$ & Reproducing kernels of $\origspace$, $\ambspace$ \\
    $\xi_{\lambda} = \xi(\cdot, \lambda)$ & Kernel slice at $\lambda \in \Omega$ \\
    $\zeta$, $\rho$ & Reproducing kernels of $H_2(\C^{+})$~\eqref{eq:kerh2c} and  $H_2(\D)$~\eqref{eq:kerh2d} \\
    \midrule
    \multicolumn{2}{l}{\textit{Model spaces}} \\
    \midrule
    $n$, $r$ & Order of the \ac{fom} and  \ac{rom} with $r < n$ \\
    $\blambda = (\lambda_0, \ldots, \lambda_{n-1})$, $\tlambda_j$ & \ac{fom} kernel centers, an individual \ac{rom} kernel center \\
    $\fomspaceo$, $\romspaceo$ & Generalized \ac{fom}/\ac{rom} model spaces of order $n$ and $r$ in $\origspace$ \\
    $\wfomspaceo$, $\wromspaceo$ & Weak \ac{fom}/\ac{rom} model spaces (\Cref{def:weak}) \\
    $\bG$, $\bG_r$ & Full-- and reduced--order models \\
    $\res_k$ & Linear (kernel--expansion) coefficient \\
    \midrule
    \multicolumn{2}{l}{\textit{Derivatives and operators}} \\
    \midrule
    $\funder{\xi_{\lambda}}{\lambda}$, $\funder{\xi_{\lambda}}{\overline{\lambda}}$ & Wirtinger derivatives of $\xi_{\lambda}$ with respect to $\lambda$, $\overline{\lambda}$ \\
    $\bG'(z)$, $\bG^{(m)}(z)$ & Pointwise Wirtinger derivative in $z$, and its $m$th order analog \\
    $\rkhsError{\xi}{r}$ & norm--optimal \ac{mor} objective functional (\Cref{def:Objective}) \\
    $\cP_{V}$ & Orthogonal projector onto a subspace $V$ \\
    \bottomrule
\end{longtable}
}

\section{Necessary conditions of norm optimality in RKHS model reduction}
\label{sec:orthCond}

In this section, we introduce orthogonality and interpolatory conditions analogous to Theorems~\ref{thm:MLc} and~\ref{thm:orthoCondH2} in an \ac{rkhs} setting. In addition, we generalize the notion of the equilibrium function from \Cref{def:equibFunH2} and obtain a generalization of \Cref{thm:equib}.

In order to state the results in a general form, we update our definition of generalized model spaces from \Cref{def:gmodspace}. The remainder of this study uses the following, extended definition of $\fomspaceo$.

\begin{definition}[Generalized model space, dimension, model order]
\label{def:gmodspacediff}
Let $\Omega \subseteq \C$ and $\origspace$ be an \ac{rkhs} of functions $\Omega \to \C$. Suppose $(\lambda \mapsto \xi_{\lambda}) \in C^m(\Omega,\origspace)$ with $m \ge 1$ and $\funder[i]{\xi_{\lambda}}{\olambda} \in \origspace$ for all $\lambda \in \Omega$ and $i=0,\ldots,m$. The \emph{generalized model space} of \emph{dimension} $n$ is defined as
\begin{multline*}
         \mathfrak{G}_\xi^{n,m} \coloneqq \left\{ \bG \in \vphantom{\sum_{j=0}^{k-1}}\myspan \left\{ \xi_{\lambda_0}, \funder{\xi_{\lambda_0}}{\overline{\lambda}} \ldots, \funder[m_0]{\xi_{\lambda_0}}{\overline{\lambda}}, \ldots, \xi_{\lambda_{k-1}},\funder{\xi_{\lambda_{k-1}}}{\overline{\lambda}} \ldots, \funder[m_{k-1}]{\xi_{\lambda_{k-1}}}{\overline{\lambda}}  \right\} : \right.  \\ (\lambda_0,\ldots,\lambda_{k-1}) \in \Omeganeq{k}, \ \sum_{j=0}^{k-1} m_j = n , \ 0 \leq m_j\leq m \ \text{for} \ j=0,\ldots,k-1\Bigg\}.
    \end{multline*}
 The model $\bG$ is of \emph{order} $n \in \N$, if $\bG \in \mathfrak{G}_\xi^{n,m} \subseteq \origspace$, but $\bG \not \in  \mathfrak{G}_\xi^{r,m} \subseteq \origspace$ for any $r < n$.
\end{definition}

\begin{remark}
    \begin{enumerate}[label = \alph*)]
        \item  Note that $\mathfrak{G}_\xi^{n,0} = \fomspaceo$ with $\fomspaceo$ as in \Cref{def:gmodspace}.
        \item For ease of notation, let us from now on assume that the kernel in \Cref{def:gmodspacediff} is at most $m$--times continuously differentiable, i.e., $m$ denotes the maximum regularity of the kernel. For this reason, we will drop the superscript $m$ in $\mathfrak{G}_\xi^{n,m}$ from now on. 
    \end{enumerate}
\end{remark}

\begin{theorem}[A necessary condition for norm--optimal \acp{rom} in \acp{rkhs} with $m$--times continuously differentiable kernels]
\label{thm:orthocond}
Let $\Omega \subseteq \C$ and $\origspace$ be an \ac{rkhs} of functions $\Omega \to \C$. Suppose $(\lambda \mapsto \xi_{\lambda}) \in C^m(\Omega,\origspace)$ with $m \ge 1$ and $\funder[i]{\xi_{\lambda}}{\olambda} \in \origspace$ for all $\lambda \in \Omega$ and $i=0,\ldots,m$.

Given a fixed $\bG \in \origspace$, suppose that
\begin{equation*}
     \bG_r = \sum_{j=0}^{k-1} \sum_{i=0}^{m_j-1} \tres_{j,i} \funder[i]{\xi_{\tlambda_j}}{\overline{\tlambda}},
\end{equation*}
where $\big(\tlambda_0,\ldots,\tlambda_{k-1}\big) \in \Omeganeq{k}$, $\sum_{j=0}^{k-1} m_j = r$, $1 \leq m_j < m$, and $\tres_{j, m_j-1} \neq 0$ for $j=0,\ldots,k-1$ is a local minimizer of $\rkhsError{\xi}{r}$. Consider the $\origspace$ subspaces
\begin{equation}
    \label{eq:Xr}
    \Vn{r}{\tilde{\blambda}} \coloneqq \myspan \left\{ \xi_{\tlambda_0}, \funder{\xi_{\tlambda_0}}{\overline{\tlambda}} \ldots, \funder[m_0 - 1]{\xi_{\tlambda_0}}{\overline{\tlambda}}, \ldots, \xi_{\tlambda_{k-1}},\funder{\xi_{\tlambda_{k-1}}}{\overline{\tlambda}}, \ldots, \funder[m_{k-1}-1]{\xi_{\tlambda_{k-1}}}{\overline{\tlambda}}  \right\}
\end{equation}
and
\begin{equation}
    \label{eq:dXr}
    D\Vn{r}{\tilde{\blambda}} \coloneqq \myspan \left\{  \funder{\xi_{\tlambda_0}}{\overline{\tlambda}}, \funder[2]{\xi_{\tlambda_0}}{\overline{\tlambda}} \ldots, \funder[m_0]{\xi_{\tlambda_0}}{\overline{\tlambda}}, \ldots, \funder{\xi_{\tlambda_{k-1}}}{\overline{\tlambda}},\funder[2]{\xi_{\tlambda_{k-1}}}{\overline{\tlambda}}, \ldots, \funder[m_{k-1}]{\xi_{\tlambda_{k-1}}}{\overline{\tlambda}}  \right\}.
\end{equation}
Let us construct $\tVn{r}{\tilde{\blambda}} \subseteq \Vn{r}{\tilde{\blambda}}$ and  $D\tVn{r}{\tilde{\blambda}} \subseteq D\Vn{r}{\tilde{\blambda}}$ by removing $\funder[i]{\xi_{\tlambda_j}}{\overline{\tlambda}}$ and $\funder[i+1]{\xi_{\tlambda_j}}{\overline{\tlambda}}$ from $\Vn{r}{\tilde{\blambda}}$ and $D\Vn{r}{\tilde{\blambda}}$, respectively, if $\tres_{j, i} = 0$ for  $j=0,\ldots,k-1$ and $i=0, \ldots, m_j-2$.
Then,
\begin{equation}
    \label{eq:orthocond}
    \left\langle \bG - \bG_r, \bH_1 + \bH_2 \right\rangle_{\origspace} = 0
\end{equation}
for all $\bH_1 \in \tVn{r}{\tilde{\blambda}} \subseteq \romspaceo$ and $\bH_2 \in D\tVn{r}{\tilde{\blambda}}$.
\end{theorem}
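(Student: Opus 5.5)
The plan is a first-order variational argument. Since $\bG_r$ is a local minimizer of $\rkhsError{\xi}{r}$, it also locally minimizes the squared error $J \coloneqq \rkhsError{\xi}{r}^2$. We perturb $\bG_r$ along two families of admissible curves inside $\romspaceo$ (in the extended sense of \Cref{def:gmodspacediff}). Along each curve the derivative of $J$ must vanish, and each such stationarity condition turns out to be an inner product of the residual $\bG-\bG_r$ with one basis element of $\tVn{r}{\tilde{\blambda}}$ or $D\tVn{r}{\tilde{\blambda}}$. Linearity of the inner product in the second argument then gives \eqref{eq:orthocond} for all $\bH_1+\bH_2$.

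\textbf{Step 1 (linear coefficients).} Keep the centers $\tlambda_j$ fixed and vary the coefficients: $\tres_{j,i}\mapsto \tres_{j,i}+\varepsilon$ with $\varepsilon\in\C$. The perturbed function stays in the same span, hence in $\romspaceo$, and is $\|\cdot\|_{\origspace}$-close to $\bG_r$ for small $\varepsilon$. Writing $\phi_{j,i}\coloneqq \funder[i]{\xi_{\tlambda_j}}{\overline{\tlambda}}$, we have
\begin{equation*}
J(\varepsilon)=J(0)-2\Re\big(\overline{\varepsilon}\langle \bG-\bG_r,\phi_{j,i}\rangle_{\origspace}\big)+|\varepsilon|^2\|\phi_{j,i}\|^2_{\origspace}.
\end{equation*}
Local minimality, tested with $\varepsilon$ real and with $\varepsilon$ purely imaginary, forces $\langle \bG-\bG_r,\phi_{j,i}\rangle_{\origspace}=0$. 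This holds for every $j$ and every $i\le m_j-1$, so it covers all of $\Vn{r}{\tilde{\blambda}}\supseteq\tVn{r}{\tilde{\blambda}}$. Equivalently, $\bG_r=\cP_{\Vn{r}{\tilde{\blambda}}}[\bG]$.

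\textbf{Step 2 (centers).} Move a single center, $\tlambda_j\mapsto\tlambda_j+\delta$, and keep all coefficients fixed. For small $\delta$ the tuple stays in $\Omeganeq{k}$, and the map $\delta\mapsto\bG_r(\delta)$ is $C^1$ into $\origspace$ because $(\lambda\mapsto\xi_\lambda)\in C^m$ and $m_j<m$. The perturbed function involves derivatives of order at most $m_j-1\le m$, so it remains in the model space. Using the Wirtinger derivatives \eqref{eq:FrechDer}, and noting that each $\funder[i]{\xi_\lambda}{\olambda}$ depends on $\lambda$, we get
\begin{equation*}
\frac{\partial}{\partial\overline{\delta}}\bG_r(\delta)\Big|_{0}=\sum_{i=0}^{m_j-1}\tres_{j,i}\,\phi_{j,i+1}+\sum_{i=0}^{m_j-1}\tres_{j,i}\,\partial_{\overline{\delta}}\text{-part along }\lambda,
\end{equation*}
where the $\lambda$-direction contributions lie in the span of $\xi$-derivatives of order $\le m_j$ mixed in $\lambda$ and $\olambda$. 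The expansion $J(\delta)=J(0)-2\Re\langle \bG-\bG_r,\bG_r(\delta)-\bG_r\rangle_{\origspace}+O(|\delta|^2)$ and stationarity in both $\delta$ and $\overline\delta$ give
\begin{equation*}
\Big\langle \bG-\bG_r,\ \sum_{i}\tres_{j,i}\,\phi_{j,i+1}\Big\rangle_{\origspace}=0,
\end{equation*}
after discarding the terms already killed in Step 1. By \Cref{thm:derivKerOrder} this condition is a Hermite-type interpolation statement. One perturbation per center yields only one combination of the $\phi_{j,i+1}$. To isolate each $\phi_{j,i+1}$ separately, I would use perturbations that also move the multiplicity structure, for example splitting off coefficients via $\tres_{j,i}\,\phi_{j,i}(\tlambda_j+\delta)$ for each $i$ individually. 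This is exactly why the theorem only claims orthogonality to $\phi_{j,i+1}$ when $\tres_{j,i}\ne0$: if $\tres_{j,i}=0$, the derivative in that direction vanishes and gives no information. Hence the removal in $\tVn{r}{\tilde{\blambda}}$ and $D\tVn{r}{\tilde{\blambda}}$.

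\textbf{Main obstacle.} The delicate point is Step 2: justifying that the single-index perturbation stays in $\romspaceo$ and has order-$r$ structure, and separating the contributions of different $i$ at the same center. The top coefficient $\tres_{j,m_j-1}\ne0$ ensures that the order is preserved and that the highest new direction $\phi_{j,m_j}$ genuinely appears. For the lower indices, I would proceed by a triangular (induction-on-$i$) argument. Lower-order derivative terms reproduce elements of $\Vn{r}{\tilde{\blambda}}$, which are already orthogonal to the residual by Step 1, so the successive conditions decouple. The $\lambda$ (non-conjugate) Wirtinger contributions must be handled in the same way, either by showing they are annihilated through Step 1 or by pairing them with the $\overline{\delta}$ condition.
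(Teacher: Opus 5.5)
Your outline matches the paper's proof: Step~1 gives the projection property $\bG_r=\cP_{\Vn{r}{\tilde{\blambda}}}[\bG]$, and Step~2 is the paper's stationarity in the center $\tlambda_j$, with lower-order terms absorbed by Step~1 and the top term isolated by $\tres_{j,m_j-1}\neq 0$. The gap is the point you leave open, the non-conjugate contributions $\partial_\lambda\partial^i_{\overline{\lambda}}\xi_{\tlambda_j}$, and neither remedy you propose works. These mixed derivatives are not in $\Vn{r}{\tilde{\blambda}}$, so Step~1 does not remove them. Since $J$ is real-valued, stationarity in $\delta$ yields only the single coupled equation
\[
\langle \bG-\bG_r,\partial_\delta \bG_r\rangle_{\origspace}+\overline{\langle \bG-\bG_r,\partial_{\overline{\delta}}\bG_r\rangle_{\origspace}}=0,
\]
which does not separate out the $\overline{\delta}$-part. (In your display, the $\lambda$-contributions actually belong to $\partial_\delta\bG_r$, not to $\partial_{\overline{\delta}}\bG_r$.) If $\partial_\lambda\xi_\lambda\neq 0$ is allowed, the conclusion can fail. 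Take the Gaussian kernel $\xi(z,\lambda)=\exp(-\sigma|z-\lambda|^2)$ on a complex domain, $\bG=\xi_{\mu_1}+\mathrm{i}\,\xi_{\mu_2}$ and $r=1$. For a fixed center the best coefficient is $c=\bG(\lambda)$, and the squared error is $\|\bG\|_{\origspace}^2-\xi(\lambda,\mu_1)^2-\xi(\lambda,\mu_2)^2$. For $\mu_1,\mu_2$ close this is minimized at the midpoint $\lambda^\ast$. There $\bG_r'(\lambda^\ast)=0$, whereas $\bG'(\lambda^\ast)=-\sigma\overline{h}\,\mathrm{e}^{-\sigma|h|^2}(1-\mathrm{i})\neq 0$ with $h=(\mu_2-\mu_1)/2$.

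What closes the gap is structure hidden in the hypothesis. In \eqref{eq:FrechDer}, $\partial_{\overline{\lambda}}\xi_\lambda$ is a norm limit of difference quotients divided by $\overline{\Delta\lambda}$. On an open $\Omega\subseteq\C$ this makes $\lambda\mapsto\xi_\lambda$, and each $\partial^i_{\overline{\lambda}}\xi_\lambda$, conjugate-holomorphic, so the $\lambda$-contributions vanish identically. The paper uses this tacitly when it combines the derivatives in $\Re\tlambda_j$ and $\Im\tlambda_j$ into one Wirtinger derivative. With it, $\bG_r(\delta)=\bG_r+\overline{\delta}\,w+o(|\delta|)$ where $w=\sum_{i=0}^{m_j-1}\tres_{j,i}\phi_{j,i+1}$. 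Hence $J(\delta)=J(0)-2\Re\big(\delta\langle \bG-\bG_r,w\rangle_{\origspace}\big)+o(|\delta|)$, and choosing $\delta=t\,\overline{\langle \bG-\bG_r,w\rangle_{\origspace}}$ with $t\downarrow 0$ forces $\langle \bG-\bG_r,w\rangle_{\origspace}=0$. Since $w-\tres_{j,m_j-1}\phi_{j,m_j}\in\Vn{r}{\tilde{\blambda}}$, Step~1 then gives $\langle \bG-\bG_r,\phi_{j,m_j}\rangle_{\origspace}=0$. On a real interval, real $\delta$ only yields $\Re\langle \bG-\bG_r,w\rangle_{\origspace}=0$, so there you need real-valued functions and real coefficients, as in the paper's examples.

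Your plan to isolate each $\phi_{j,i+1}$ separately is unnecessary. For $i\le m_j-2$ these functions already lie in $\Vn{r}{\tilde{\blambda}}$ and are orthogonal to the residual by Step~1. So the removals defining $\tVn{r}{\tilde{\blambda}}$ and $D\tVn{r}{\tilde{\blambda}}$ merely weaken the claim; they do not reflect lost information. The split-off perturbation you suggest would also in general leave the order-$r$ model space.
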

\begin{proof}
    Notice that $\bG_r \in \tVn{r}{\tilde{\blambda}}$ and $\tVn{r}{\tilde{\blambda}}$ is a finite dimensional subspace of $\origspace$. From the Hilbert projection theorem, we know that for any choice of
    $\left\{ \tlambda_,\ldots,\tlambda_{k-1} \right\} \subseteq \Omega$,
    $\bG_r \in \tVn{r}{\tilde{\blambda}}$
    is the solution to 
    \begin{equation*}
        \min_{\bH \in \tVn{r}{\tilde{\blambda}}}
        {\| \bG - \bH \|}_{\origspace}
    \end{equation*}
    if and only if
    \begin{equation}
        \label{eq:projArg}
        \left\langle \bG - \bG_r, \bH \right\rangle_{\origspace} = 0
    \end{equation}
    for any
    $\bH \in \tVn{r}{\tilde{\blambda}}$.
    It therefore suffices to show
    \begin{equation}
        \label{eq:projArgD}
         \left\langle
            \bG - \bG_r,
            \bH_2
         \right\rangle_{\origspace} = 0 \quad \left(\bH_2 \in D \tVn{r}{\tilde{\blambda}}\right).
    \end{equation}
    Consider the error functional $\rkhsError{\xi}{r}$ given in
    \Cref{def:Objective}. Let
    \begin{equation*}
        \cJ\left(
            \tlambda_{0,\mathrm{re}}, \tlambda_{0,\mathrm{im}},
            \ldots,
            \tlambda_{k-1,\mathrm{re}}, \tlambda_{k-1,\mathrm{im}}
        \right)
        \coloneqq
        \left( \rkhsError{\xi}{r} \left( \bG_r \right) \right)^2
        =
        \left\langle
            \bG - \bG_r,
            \bG - \bG_r
        \right\rangle_{\origspace},
    \end{equation*}
    where
    $\tlambda_j = \tlambda_{j,\mathrm{re}} + \iu \cdot \tlambda_{j,\mathrm{im}}$
    for $j=0,\ldots,k-1$ and $\tlambda_i \neq \tlambda_j$ for all  $i,j = 0,\ldots,k-1$ with $i \neq j$.

    To avoid duplicating identical expressions for the real and imaginary parts,
    we write $\cod \in \{\mathrm{re},\mathrm{im}\}$ and use the notation
    $\tlambda_{j,\cod}$ to denote either $\tlambda_{j,\mathrm{re}}$ or
    $\tlambda_{j,\mathrm{im}}$. Then, since $\bG_r$ is a local minimizer of $\cJ$, we have
    \begin{equation}
        \label{eq:derivErr}
        \frac{\partial \cJ}{\partial \tlambda_{j,\cod}}
        =
        -2 \Re \left\langle
            \bG - \bG_r,
            \frac{\partial \bG_r}{\partial \tlambda_{j,\cod}}
        \right\rangle_{\origspace} = 0.
    \end{equation}
    From this, $\tlambda_i \neq \tlambda_j$ for $i \neq j$ gives
    \begin{equation}
        \label{eq:bGrDer}
        \frac{\partial \bG_r}{\partial \tlambda_{j,\cod}}
        =
       \sum_{i=1}^{m_j} \tres_{j, i-1}
        \frac{\partial^i \xi_{\tlambda_j}}{\partial \tlambda_{j,\cod}^i}.
    \end{equation}
    From~\eqref{eq:derivErr} and~\eqref{eq:bGrDer}, we obtain
    \begin{equation}
        \label{eq:derSums}
        \frac{\partial \cJ}{\partial \tlambda_{j,\cod}}
        =
        -2 \Re \left\langle
            \bG - \bG_r,
            \sum_{i=1}^{m_j} \tres_{j, i-1}
        \frac{\partial^i \xi_{\tlambda_j}}{\partial \tlambda_{j,\cod}^i}
        \right\rangle_{\origspace} = 0.
    \end{equation}
    We note that for $i=1,\ldots,m_j-1$ and $j=0,\ldots,k-1$, 
    and since \eqref{eq:projArg} is true for all $\bH \in \tVn{r}{\tilde{\blambda}}$, 
    \begin{equation*}
        \left\langle \bG - \bG_r, \frac{\partial^i \xi_{\tlambda_j}}{\partial \tlambda^i_{j, \cod}}  \right\rangle_{\origspace} = 0. 
    \end{equation*}
    With this, due to $\tres_{j, m_j-1} \neq 0$, $j=0,\ldots,k-1$, \eqref{eq:derSums}, and \eqref{eq:derivErr}, we obtain
    \begin{equation}
        \label{eq:ReCond}
        \Re \left\langle
            \bG - \bG_r,
            \frac{\partial^{m_j} \xi_{\tlambda_j}}{\partial \tlambda_{j,\cod}^{m_j}}
        \right\rangle_{\origspace}
        =
        0
        \quad
        (\cod \in \{\mathrm{re},\mathrm{im}\}, j=0,\ldots,k-1).
    \end{equation}
    Considering $
        D_{\overline{\tilde{\lambda}}}^i
        \coloneqq
        \frac{1}{2}
        \left(
            \frac{\partial^i}{\partial \tlambda_{j,\mathrm{re}}^i}
            +
            \iu
            \frac{\partial^i}{\partial \tlambda_{j,\mathrm{im}}^i}
        \right),$ for $j=0,\ldots,k-1$ and $i=0,\ldots,m_j$ we obtain
    \begin{equation*}
        \left\langle
            \bG - \bG_r,
           \funder[m_j]{\xi_{\tlambda_j}}{\overline{\tlambda}}
           \right\rangle_{\origspace}
        =
        \frac{1}{2}
        \left(
            \left\langle
                \bG - \bG_r,
                \frac{\partial^{m_j} \xi_{\tlambda_j}}{\partial \tlambda_{j,\mathrm{re}}^{m_j}}
            \right\rangle_{\origspace}
            +
            \iu
            \left\langle
                \bG - \bG_r,
                \frac{\partial^{m_j} \xi_{\tlambda_j}}{\partial \tlambda_{j,\mathrm{im}}^{m_j}}
            \right\rangle_{\origspace}
        \right).
    \end{equation*}
    Finally, due to~\eqref{eq:ReCond} we have
    \begin{equation*}
        \Re \left\langle
            \bG - \bG_r,
            \tres_{j, m_j-1}
            \frac{\partial^{m_j} \xi_{\tlambda_j}}{\partial \tlambda_{j,\mathrm{re}}^{m_j}}
        \right\rangle_{\origspace}
        =
        0,
    \end{equation*}
    from which
    \begin{equation*}
        \Re \left\langle
            \bG - \bG_r,
            \iu \cdot \tres_{j, m_j-1}
            \frac{\partial^{m_j} \xi_{\tlambda_j}}{\partial \tlambda_{j,\mathrm{re}}^{m_j}}
        \right\rangle_{\origspace}
        =
        \Im \left\langle
            \bG - \bG_r,
            \tres_{j, m_j-1}
            \frac{\partial^{m_j} \xi_{\tlambda_j}}{\partial \tlambda_{j,\mathrm{re}}^{m_j}}
        \right\rangle_{\origspace}
        =
        0
    \end{equation*}
    follows. By a similar argument, 
    $    \Re \left\langle
            \bG - \bG_r,
            \tres_{j, m_j-1}
            \frac{\partial^{m_j} \xi_{\tlambda_j}}{\partial \tlambda_{j,\mathrm{im}}^{m_j}}
        \right\rangle_{\origspace}
        =
        0,$
    thus~\eqref{eq:ReCond} implies
    \begin{equation*}
        \left\langle
            \bG - \bG_r,
            \funder[m_j]{\xi_{\tlambda_j}}{\overline{\tlambda}}
        \right\rangle_{\origspace}
        =
        0
        \quad
        (j=0,\ldots,k-1).
    \end{equation*}
    Together with~\eqref{eq:projArg}, this equation implies~\eqref{eq:projArgD} and hence~\eqref{eq:orthocond}. 
\end{proof}
In the next theorem, we obtain the \ac{rkhs} analog of the Meier--Luenberger optimality conditions from \Cref{thm:MLc}.
\begin{theorem}[Interpolatory optimality conditions in \acp{rkhs} with $m$--times differentiable kernels]
\label{thm:MLcRKHS}
With the assumptions and the notation of \Cref{thm:orthocond}, if 
\begin{equation*}
     \bG_r = \sum_{j=0}^{k-1} \sum_{i=0}^{m_j-1} \tres_{j,i}  \funder[i]{\xi_{\tlambda_j}}{\overline{\tlambda}},
\end{equation*}
where $\big(\tlambda_0,\ldots,\tlambda_{k-1}\big) \in \Omeganeq{k}$, $\sum_{j=0}^{k-1} m_j = r$, $1 \leq m_j < m$, and $\tres_{j, m_j-1} \neq 0$ for $j=0,\ldots,k-1$ is a local minimizer of $\rkhsError{\xi}{r}$ from \Cref{def:locmin}, then
\begin{equation}
    \label{eq:MLcRKHS}
    \bG^{(i)}\big(\tlambda_j\big) = \bG_r^{(i)}\big(\tlambda_j\big), \quad \textrm{and} \quad \bG^{(m_j)}\big(\tlambda_j\big) = \bG_r^{(m_j)}\big(\tlambda_j\big)
\end{equation}
for $j=0,\ldots,k-1$ and all $i=0,\ldots,m_j-1$ for which $\tres_{j,i} \neq 0$.
\end{theorem}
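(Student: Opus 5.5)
The plan is to deduce the interpolation conditions directly from the orthogonality conditions of \Cref{thm:orthocond}, translating inner products against kernel derivatives into pointwise derivative values via \Cref{thm:derivKerOrder}.

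\textbf{Step 1: orthogonality.} Since $\bG_r$ is a local minimizer with the stated structure, \Cref{thm:orthocond} applies and gives
\begin{equation*}
\left\langle \bG - \bG_r, \bH \right\rangle_{\origspace} = 0
\end{equation*}
for every $\bH \in \tVn{r}{\tilde{\blambda}}$ and every $\bH \in D\tVn{r}{\tilde{\blambda}}$. This follows by taking $\bH_2 = 0$ or $\bH_1 = 0$ in \eqref{eq:orthocond}.

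\textbf{Step 2: the interpolation conditions of order $i \le m_j-1$.} By construction, $\tVn{r}{\tilde{\blambda}}$ contains $\funder[i]{\xi_{\tlambda_j}}{\overline{\tlambda}}$ exactly for those $i \in \{0,\ldots,m_j-1\}$ with $\tres_{j,i}\neq 0$. Here the top index $i = m_j-1$ is always retained, since $\tres_{j,m_j-1}\neq 0$ by hypothesis. The kernel is $C^m$ and $m_j < m$, so \Cref{thm:derivKerOrder} applies to $\bG - \bG_r \in \origspace$ and yields
\begin{equation*}
0 = \left\langle \bG - \bG_r, \funder[i]{\xi_{\tlambda_j}}{\overline{\tlambda}} \right\rangle_{\origspace} = \bG^{(i)}\big(\tlambda_j\big) - \bG_r^{(i)}\big(\tlambda_j\big)
\end{equation*}
for all such $i$. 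This is the first family of conditions in \eqref{eq:MLcRKHS}.

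\textbf{Step 3: the condition of order $m_j$.} The element $\funder[m_j]{\xi_{\tlambda_j}}{\overline{\tlambda}}$ is never removed from $D\Vn{r}{\tilde{\blambda}}$. The removal rule in \Cref{thm:orthocond} only drops $\funder[i+1]{\xi_{\tlambda_j}}{\overline{\tlambda}}$ for $i \le m_j-2$, that is, derivative orders at most $m_j-1$. Hence $\funder[m_j]{\xi_{\tlambda_j}}{\overline{\tlambda}} \in D\tVn{r}{\tilde{\blambda}}$. The proof of \Cref{thm:orthocond} in fact establishes this orthogonality explicitly in its final display. Applying \Cref{thm:derivKerOrder} again with $i = m_j \le m$ gives $\bG^{(m_j)}(\tlambda_j) = \bG_r^{(m_j)}(\tlambda_j)$.

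\textbf{Main obstacle.} The only delicate point is the index bookkeeping. I have to confirm three things:
\begin{itemize}
\item the hypotheses $m_j < m$ and $\funder[i]{\xi_{\lambda}}{\olambda}\in\origspace$ for $i\le m$ make \Cref{thm:derivKerOrder} applicable up to order $m_j$;
\item the pointwise Wirtinger derivatives $\bG^{(i)}$ appearing in that corollary are the ones meant in \eqref{eq:MLcRKHS};
\item the pruning of $\tVn{r}{\tilde{\blambda}}$ and $D\tVn{r}{\tilde{\blambda}}$ leaves precisely the indices claimed, so that no condition is asserted for a removed element.
\end{itemize}
Once this is checked, the theorem is a direct corollary of \Cref{thm:orthocond}.
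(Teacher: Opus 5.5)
Your proposal is correct and matches the paper's proof. The paper makes the same observations: the kernel derivatives of order $i<m_j$ with nonzero coefficient stay in the pruned subspace, and the derivative of order $m_j$ stays in the pruned derivative subspace because $\tres_{j,m_j-1}\neq 0$; it then applies \Cref{thm:orthocond} to get the vanishing inner products and converts them into the interpolation conditions via \Cref{thm:derivKerOrder}.
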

\begin{proof}
    Let $j \in \left\{0,\ldots,k-1 \right\}$ and $i \in \left\{ 0,\ldots,m_j-1 \right\}$ be arbitrary. If $\tres_{j,i} \neq 0$, we have $\funder[i]{\xi_{\tlambda_j}}{\overline{\tlambda}} \in \tVn{r}{\tilde{\blambda}}$, and hence, by \Cref{thm:orthocond},
    \begin{equation*}
        \left\langle \bG - \bG_r, \funder[i]{\xi_{\tlambda_j}}{\overline{\tlambda}} \right\rangle_{\origspace} = 0.
    \end{equation*}
    Similarly, since $\tres_{j, m_j-1} \neq 0$, we have $\funder[m_j]{\xi_{\tlambda_j}}{\overline{\tlambda}} \in D\tVn{r}{\tilde{\blambda}}$, which by \Cref{thm:orthocond} gives
    \begin{equation*}
        \left\langle \bG - \bG_r, \funder[m_j]{\xi_{\tlambda_j}}{\overline{\tlambda}} \right\rangle_{\origspace} = 0.
    \end{equation*}
    The statements now follow from \Cref{thm:derivKerOrder}, that is,
    \begin{equation*}
        0 = \left\langle \bG - \bG_r, \funder[i]{\xi_{\tlambda_j}}{\overline{\tlambda}} \right\rangle_{\origspace} \stackrel{\textrm{Cor.~\ref{thm:derivKerOrder}}}{=} \bG^{(i)}\big(\tlambda_j\big) - \bG_r^{(i)}\big(\tlambda_j\big)
    \end{equation*}
    for $j \in \left\{0,\ldots,k-1 \right\}$ and $i \in \{0,\ldots,m_j-1\}$ with $\tres_{j,i} \neq 0$, and for $i = m_j$.
\end{proof}

\begin{remark}
        \begin{enumerate}[label=(\alph*)]
            \item If we assume $\tlambda_0, \ldots, \tlambda_{r-1}$ are pairwise distinct (corresponding to $m_j = 1$ for $j=0,\ldots,r-1$), then the optimality conditions of \Cref{thm:orthocond} and \Cref{thm:MLcRKHS} simplify to 
            \begin{equation*}
                \left\langle \bG - \bG_r, \funder{\xi_{\tlambda_j}}{\overline{\tlambda}} + \bH \right\rangle_{\origspace} = 0 \quad \left(\bH \in \myspan \big\{ \xi_{\tlambda_0}, \ldots, \xi_{\tlambda_{r-1}} \big\}\right) 
            \end{equation*}
            and
            \begin{equation*}
                \bG \big( \tlambda_j \big) =  \bG_r \big( \tlambda_j \big) \quad \textrm{and} \quad  \bG' \big( \tlambda_j \big) =  \bG_r' \big( \tlambda_j \big),
            \end{equation*}
            for $j=0,\ldots,r-1$, respectively. The latter conditions match the classical Meier--Luenberger conditions (see \Cref{thm:MLc}) exactly, when $\origspace = H_2(\C^{+})$ or $\origspace = H_2(\D)$.  
        
            \item Notice that \Cref{thm:orthocond} does not make the assumption $\bG \in \fomspaceo$. Indeed, it suffices that the inner product in~\eqref{eq:orthocond} exists. Consequently, Theorems~\ref{thm:orthocond} and~\ref{thm:MLcRKHS} hold for any $\bG \in \origspace$ and $r < n$ as well.
            \item \Cref{thm:orthocond} can also be used in cases where $\bG \in \origspace$, but $\origspace$ is only assumed to be a Hilbert space (not necessarily an \ac{rkhs}). Indeed, consider an orthonormal system $\left\{ \varphi_j \right\}_{j=0}^{r-1} \subseteq \origspace$ and the kernel
            \begin{equation}
                \label{eq:hker}
                \xi(x, \lambda) = \sum_{j=0}^{r-1} \varphi_j(x) \overline{\varphi_j(\lambda)} \quad (x, \lambda \in \Omega).
            \end{equation}
            
            Then, considering for simplicity $\left\{ \tlambda_j \right\}_{j=0}^{r-1} \subset \Omeganeq{r}$, by Theorems~\ref{thm:findimRKHS} and~\ref{thm:posdef},
            \begin{equation*}
                \tVn{r}{\tilde{\blambda}} = \myspan \left\{ \xi_{\tlambda_0}, \ldots, \xi_{\tlambda_{r-1}} \right\} \subseteq \origspace
            \end{equation*}
            is an $r$--dimensional \ac{rkhs}. Furthermore, by \Cref{thm:embeddedRKHS}, the inner product of this \ac{rkhs} matches $\langle \cdot,\cdot\rangle_{\origspace}$. Considering all possible $\tilde{\blambda} \in \Omeganeq{r}$, we can define $\romspaceo \subseteq \origspace$ according to \Cref{def:gmodspace}. Notice that the proof of \Cref{thm:orthocond} does not rely on $\bG$ being in an \ac{rkhs}; rather, it only requires that the inner products 
            \begin{equation*}
                \left\langle \bG - \bG_r, \funder{\xi_{\lambda}}{\olambda} \right\rangle_{\origspace}
            \end{equation*}
            exist. Hence, \Cref{thm:orthocond} holds for an arbitrary $\bG \in \origspace$, where $\romspaceo$ is defined using the kernel~\eqref{eq:hker}. Note, however, that in this case, choosing different orthonormal bases $\left\{ \varphi_j \right\}_{j=0}^{r-1}$ will lead to a different reduced model space $\romspaceo \subseteq \origspace$. There is no natural choice, since the full space $\origspace$ does not possess a kernel of its own. The choice of this basis can therefore influence the quality of the \ac{rom}. Finally, we note that the \ac{rkhs}--based interpolatory Meier--Luenberger conditions given in \Cref{thm:MLcRKHS} do not follow from this, when $\origspace$ is not an \ac{rkhs}. Indeed, in this case the reproducing--kernel--based argument in the proof of \Cref{thm:MLcRKHS} cannot be used.
        \end{enumerate}
        \label{rem:opticonds}
\end{remark}

Next, we prove the equivalence between \Cref{thm:orthocond} and \Cref{thm:MLcRKHS}.
\begin{theorem}
    \label{thm:equivMlcOrtho}
    Consider the assumptions of \Cref{thm:orthocond}. The conditions~\eqref{eq:orthocond} and~\eqref{eq:MLcRKHS} are equivalent.
\end{theorem}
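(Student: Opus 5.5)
The plan is to reduce both conditions to a finite list of scalar identities, using linearity of the inner product and the derivative reproducing property of \Cref{thm:derivKerOrder}. By construction, the subspaces $\tVn{r}{\tilde{\blambda}}$ and $D\tVn{r}{\tilde{\blambda}}$ are spanned by explicit finite families of kernel derivatives. The first family consists of the $\funder[i]{\xi_{\tlambda_j}}{\overline{\tlambda}}$ with $\tres_{j,i}\neq 0$ for $i\le m_j-2$, together with $i=m_j-1$, which is always retained since $\tres_{j,m_j-1}\neq 0$. The second consists of the corresponding $\funder[i+1]{\xi_{\tlambda_j}}{\overline{\tlambda}}$, in particular $\funder[m_j]{\xi_{\tlambda_j}}{\overline{\tlambda}}$.

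First I would observe that, since $\bH_1$ and $\bH_2$ range independently over linear spaces (either may be $0$), \eqref{eq:orthocond} is equivalent to $\langle \bG-\bG_r, \bH\rangle_{\origspace}=0$ for every $\bH$ in the union of these two spanning families. Conjugate-linearity of the inner product in its second argument lets me pass between the spans and their generators.

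Second, for each generator $\funder[i]{\xi_{\tlambda_j}}{\overline{\tlambda}}$, \Cref{thm:derivKerOrder} gives
\begin{equation*}
\left\langle \bG-\bG_r, \funder[i]{\xi_{\tlambda_j}}{\overline{\tlambda}}\right\rangle_{\origspace} = \bG^{(i)}\big(\tlambda_j\big)-\bG_r^{(i)}\big(\tlambda_j\big).
\end{equation*}
This applies because $i\le m_j<m$, which lies within the assumed regularity. It also requires $\bG_r\in\origspace$, which holds since $\bG_r$ is a finite combination of kernel derivatives that belong to $\origspace$ by hypothesis. Hence vanishing against the generators of $\tVn{r}{\tilde{\blambda}}$ is exactly the interpolation of derivatives of order $i$ at $\tlambda_j$ for the retained indices. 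Vanishing against the generators of $D\tVn{r}{\tilde{\blambda}}$ is the interpolation of order $i+1$ for those indices, in particular order $m_j$.

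Third, I would match these index sets with those in \eqref{eq:MLcRKHS}. The indices in \eqref{eq:MLcRKHS} are $i\le m_j-1$ with $\tres_{j,i}\neq0$, plus $i=m_j$; these coincide with the generators of $\tVn{r}{\tilde{\blambda}}$ together with the top derivative $\funder[m_j]{\xi_{\tlambda_j}}{\overline{\tlambda}}$. Both implications then follow by reading the displayed identity in either direction.

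The main obstacle is bookkeeping in $D\tVn{r}{\tilde{\blambda}}$. For $i\le m_j-2$ with $\tres_{j,i}\neq0$, that space also contains $\funder[i+1]{\xi_{\tlambda_j}}{\overline{\tlambda}}$, and the corresponding interpolation condition appears in \eqref{eq:MLcRKHS} only if $\tres_{j,i+1}\neq 0$. I would first check whether the removal rule makes these sets agree. Failing that, I would interpret \eqref{eq:MLcRKHS} as ranging over exactly the indices attached to generators of $\tVn{r}{\tilde{\blambda}}$ and $D\tVn{r}{\tilde{\blambda}}$. In the generic case $m_j=1$ of \Cref{rem:opticonds}(a) the sets clearly agree, since only $\xi_{\tlambda_j}$ and $\funder{\xi_{\tlambda_j}}{\overline{\tlambda}}$ occur. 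With the index sets aligned, the equivalence is immediate from the two preceding steps.
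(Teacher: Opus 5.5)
Your argument is the paper's argument: both directions apply \Cref{thm:derivKerOrder} generator by generator. Your forward direction, \eqref{eq:orthocond}$\Rightarrow$\eqref{eq:MLcRKHS}, is complete. The obstacle you flag in the converse is real, and the removal rule does not resolve it, so the check you propose to run first will come out negative.

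Take $k=1$, $m_0=3$, $\tres_{0,0}\neq 0$, $\tres_{0,1}=0$, $\tres_{0,2}\neq 0$. Then
\[
\tVn{r}{\tilde{\blambda}}=\myspan\{\xi_{\tlambda_0},\funder[2]{\xi_{\tlambda_0}}{\overline{\tlambda}}\},\qquad
D\tVn{r}{\tilde{\blambda}}=\myspan\{\funder{\xi_{\tlambda_0}}{\overline{\tlambda}},\funder[3]{\xi_{\tlambda_0}}{\overline{\tlambda}}\}.
\]
The generator $\funder{\xi_{\tlambda_0}}{\overline{\tlambda}}$ is kept because $\tres_{0,0}\neq 0$. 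Hence \eqref{eq:orthocond} amounts to interpolation of orders $0,1,2,3$ at $\tlambda_0$, whereas \eqref{eq:MLcRKHS} only asserts orders $0,2,3$.

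In an RKHS where $\xi_{\tlambda_0},\ldots,\funder[3]{\xi_{\tlambda_0}}{\overline{\tlambda}}$ are linearly independent (e.g.\ $H_2(\D)$), an invertible Gram system lets you choose $\bG=\bG_r+\bE$ with $\bE^{(p)}(\tlambda_0)=0$ for $p=0,2,3$ and $\bE'(\tlambda_0)\neq 0$. So, read as an implication between the two conditions, the converse fails. In general the missing orders are those $l$ with $1\le l\le m_j-2$, $\tres_{j,l-1}\neq 0$ and $\tres_{j,l}=0$. They never occur when all $m_j\le 2$, which covers the case of pairwise distinct centers.

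The paper's proof passes over exactly this point. It claims the order-$m_j$ identity ``ensures'' $\bG-\bG_r\perp D\tVn{r}{\tilde{\blambda}}$, but that identity only handles the generator $\funder[m_j]{\xi_{\tlambda_j}}{\overline{\tlambda}}$. Your fallback of enlarging the index set in \eqref{eq:MLcRKHS} is therefore a change to the statement, not a reading of it.

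The only other repair is to use the local-minimizer hypothesis contained in ``the assumptions of \Cref{thm:orthocond}''. The error is a convex quadratic on the subspace $\Vn{r}{\tilde{\blambda}}$ of admissible competitors, so local minimality forces $\bG_r=\cP_{\Vn{r}{\tilde{\blambda}}}[\bG]$, which supplies every intermediate order. But then both conditions simply hold and the equivalence has no content. You should state explicitly which version you prove: the restricted index pattern, the enlarged \eqref{eq:MLcRKHS}, or the trivial version under minimality.
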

\begin{proof}
    Suppose first that~\eqref{eq:orthocond} holds. The proof of \Cref{thm:MLcRKHS} shows that~\eqref{eq:MLcRKHS} follows from this. Now suppose we have
    \begin{equation*}
    \bG^{(i)}\big(\tlambda_j\big) = \bG_r^{(i)}\big(\tlambda_j\big) \quad \textrm{and} \quad \bG^{(m_j)}\big(\tlambda_j\big) = \bG_r^{(m_j)}\big(\tlambda_j\big)
    \end{equation*}
    for $j=0,\ldots,k-1$ and $i=0\ldots,m_j-1$  for which $\tres_{j,i} \neq 0$. For $i < m_j$, this yields
    \begin{equation*}
        0 = \bG^{(i)}\big(\tlambda_j\big) - \bG_r^{(i)}\big(\tlambda_j\big) \stackrel{\textrm{Cor.~\ref{thm:derivKerOrder}}}{=} \left\langle \bG - \bG_r, \funder[i]{\xi_{\tlambda_{j}}}{\overline{\tlambda}}\right\rangle_{\origspace},  
    \end{equation*}
    hence $\bG - \bG_r \perp \tVn{r}{\tilde{\blambda}}$. Similarly, for $i=m_j$, we get
    \begin{equation*}
        0 = \bG^{(m_j)}\big(\tlambda_j\big) - \bG_r^{(m_j)}\big(\tlambda_j\big) \stackrel{\textrm{Cor.~\ref{thm:derivKerOrder}}}{=} \left\langle \bG - \bG_r, \funder[m_j]{\xi_{\tlambda_{j}}}{\overline{\tlambda}}\right\rangle_{\origspace}.  
    \end{equation*}
    This additional observation ensures $\bG - \bG_r \perp D \tVn{r}{\tilde{\blambda}}$, from which~\eqref{eq:orthocond} follows.
\end{proof}

The following lemma clarifies that the classical $H_2$ optimality conditions from \Cref{thm:orthoCondH2} and \Cref{thm:MLc} are special cases of the newly introduced theorems.
\begin{lemma}[Optimality conditions in $H_2\big(\C^{+} \big)$ are special cases of \Cref{thm:orthocond} and \Cref{thm:MLcRKHS}] 
\label{thm:specialCases}
    Let $\Omega = \C^{+}$, $\origspace = H_2( \C^{+} )$, $\bG \in \fomspacec$ be a \ac{fom}, and $\bG_r \in \romspacec$ be the $H_2 ( \C^{+} )$--norm--optimal \ac{rom} of order $r$ to $\bG$ given by
    \begin{equation*}
        \bG_r = \sum_{j=0}^{r-1} \tres_j \zeta_{\tlambda_j},
    \end{equation*}
    where we assume $\tilde{\blambda} \in \Omega^{r}_{\neq}$ and $\zeta$ is the Cauchy kernel from~\eqref{eq:kerh2c}. Then, \Cref{thm:orthocond} and \Cref{thm:MLcRKHS} apply to $\bG_r$ and are equivalent to \Cref{thm:orthoCondH2} and \Cref{thm:MLc}.
\end{lemma}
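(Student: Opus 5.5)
We first check that the hypotheses of \Cref{thm:orthocond} and \Cref{thm:MLcRKHS} hold for $\origspace = H_2(\C^{+})$ with the Cauchy kernel $\zeta$. The map $\lambda \mapsto \zeta_\lambda = (s + \olambda)^{-1}$ is smooth from $\C^{+}$ into $H_2(\C^{+})$. Its antiholomorphic derivatives are
\begin{equation*}
    \funder[i]{\zeta_\lambda}{\olambda}(s) = (-1)^i\, i!\, (s+\olambda)^{-(i+1)} .
\end{equation*}
Each of these is a rational function with a single pole at $-\olambda$ in the open left half--plane, and it decays like $|s|^{-(i+1)}$. Hence it lies in $H_2(\C^{+})$. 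Convergence of the difference quotients in the $H_2(\C^+)$ norm can be verified directly on the imaginary axis by dominated convergence, since $|s+\olambda| \ge \Re \lambda > 0$ there. So we may take $m$ as large as we like, in particular $m \ge 2$. The holomorphic derivative $\funder{\zeta_\lambda}{\lambda}$ vanishes, so the Wirtinger calculus used in the proof of \Cref{thm:orthocond} reduces to ordinary complex differentiation in $\olambda$.

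Next we specialize the structure of $\bG_r$. Since $\tilde{\blambda} \in \Omeganeq{r}$, we are in the case $k=r$, $m_j = 1$ for all $j$, with $\tres_{j,0} = \tres_j$. The condition $1 \le m_j < m$ holds as soon as $m \ge 2$. The assumption $\tres_{j,0} \neq 0$ follows from $\bG_r$ being of order $r$: if some $\tres_j$ vanished, then $\bG_r$ would lie in $\mathfrak{G}_\zeta^{r-1}$. Thus no slices are removed, and $\tVn{r}{\tilde{\blambda}} = \myspan\{\zeta_{\tlambda_j}\}$ and $D\tVn{r}{\tilde{\blambda}} = \myspan\{\funder{\zeta_{\tlambda_j}}{\olambda}\}$. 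Local minimality in the sense of \Cref{def:locmin} is exactly the hypothesis of \Cref{thm:orthocond}. So both \Cref{thm:orthocond} and \Cref{thm:MLcRKHS} apply, giving
\begin{equation*}
    \bG(\tlambda_j) = \bG_r(\tlambda_j), \qquad \bG'(\tlambda_j) = \bG_r'(\tlambda_j) \qquad (j=0,\ldots,r-1),
\end{equation*}
by part (a) of \Cref{rem:opticonds}. This is literally \eqref{eq:MLC}. Combined with \Cref{thm:equivMlcOrtho}, it shows that \Cref{thm:MLcRKHS} coincides with \Cref{thm:MLc} and is equivalent to \eqref{eq:orthocond}.

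The remaining step, which is the main obstacle, is to identify \eqref{eq:orthocond} with \eqref{eq:orthCondH2}. The key computation is the partial fraction identity
\begin{equation*}
    \zeta_{\tlambda_i}\,\zeta_{\tlambda_j} = \frac{\zeta_{\tlambda_i} - \zeta_{\tlambda_j}}{\overline{\tlambda_j} - \overline{\tlambda_i}} \quad (i \ne j), \qquad \zeta_{\tlambda_j}^2 = -\funder{\zeta_{\tlambda_j}}{\olambda}.
\end{equation*}
Expanding $\bG_r \bH_1$ with these identities gives
\begin{equation*}
    \bG_r \cdot \myspan\{\zeta_{\tlambda_j}\} + \myspan\{\zeta_{\tlambda_j}\} \subseteq \myspan\{\zeta_{\tlambda_j}\} + \myspan\{\funder{\zeta_{\tlambda_j}}{\olambda}\}.
\end{equation*}
Hence \eqref{eq:orthocond} implies \eqref{eq:orthCondH2}. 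For the converse, take $\bH_1 = \zeta_{\tlambda_j}$. The coefficient of $\zeta_{\tlambda_j}^2$ in $\bG_r \zeta_{\tlambda_j}$ is $\tres_j \neq 0$. Therefore each $\funder{\zeta_{\tlambda_j}}{\olambda}$ equals $-\tres_j^{-1}\bG_r\zeta_{\tlambda_j}$ plus an element of $\myspan\{\zeta_{\tlambda_i}\}$, which can be absorbed into $\bH_2$. So the two test spaces coincide, and the orthogonality conditions \eqref{eq:orthocond} and \eqref{eq:orthCondH2} are the same. This finishes the equivalence.
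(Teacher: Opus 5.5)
Your proof is correct and follows the paper's route. For pairwise distinct centers you reduce \eqref{eq:MLcRKHS} to \eqref{eq:MLC} and invoke \Cref{thm:equivMlcOrtho}. The one difference is the last step: the paper cites the equivalence of \Cref{thm:MLc} and \Cref{thm:orthoCondH2}, which it proves in \Cref{app:equivMLc} with the same partial--fraction identities, whereas you show directly that the two test spaces coincide.

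Your version is also more complete, because you verify hypotheses the paper only asserts. These are $m \ge 2$, $\tres_j \neq 0$ as a consequence of $\bG_r$ having order $r$, and the membership in $H_2(\C^{+})$ and norm--differentiability of the kernel derivatives.
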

\begin{proof}
   The proof of the statement is given in \Cref{app:specialCases}. 
\end{proof}
\begin{remark}
    \begin{enumerate}[label=(\alph*)]
        \item  A similar result with identical reasoning can be given if $\origspace = H_2(\D)$.
        \item  Although no further formal proofs will be given, we note that recent $H_2$--optimal \ac{mor} results given by Borghi and Breiten in~\cite{borghi_mathcal_2024} are also likely special cases of the proposed theorems. Indeed, it is not difficult to see that the $H_2$ spaces defined over simply connected sets in these works are also \acp{rkhs}, which, at the very least, makes \Cref{thm:orthocond} and \Cref{thm:MLcRKHS} applicable to assess the optimality of \acp{rom}.
    \end{enumerate} 
\end{remark}

Another interesting necessary condition of optimality can be obtained if we consider \acp{rkhs} that contain polynomial--like objects. In order to introduce these properly, we consider multipliers on \acp{rkhs}.
\begin{definition}[Multipliers of $\origspace$, invariance]\label{def:multipliers}
    Let $\origspace$ be an \ac{rkhs} of functions $\Omega \to \C$. For some $\bG \in \origspace$, let
    \begin{equation*}
        D_{\bG} \coloneqq \left\{ \bH \in \origspace \ : \ \bG \cdot \bH \in \origspace \right\} \subseteq \origspace,
    \end{equation*}
    where $\bG \cdot \bH$ denotes pointwise multiplication. We call the linear operator $\cM_{\bG} : D_{\bG} \to \origspace$ a \emph{multiplier} of $\origspace$ associated with $\bG$ if
    \begin{equation*}
        \cM_{\bG}[\bH] \coloneqq \bG \cdot \bH \quad (\bH \in \origspace).
    \end{equation*}
 Additionally, when $D_{\bG} =\origspace$, we say that $\origspace$ is $\cM_{\bG}$--\emph{invariant}.
\end{definition}
By the closed graph theorem, multipliers on \acp{rkhs} are always bounded (see, e.g., Theorem 5.21 in~\cite{paulsen_introduction_2016}). Hence, we obtain a convenient notation for \acp{rkhs} that contain polynomial--like functions: we say that they are $\cM_{\myid}$--invariant, where $\myid$ is the identity operator on $\Omega$, and therefore,
\begin{equation*}
    \cM_{\myid}[\bH](\lambda) = \lambda \bH(\lambda) \quad (\lambda \in \Omega).
\end{equation*}

With these concepts, we are now ready to state the next optimality condition. It shows that in $\cM_{\myid}$--invariant \acp{rkhs}, we can describe the optimality of a \ac{rom} with a rational--like argument, similar to the results obtained in classical \ac{mor} literature for $H_2$ spaces~\cite{gugercin_mathcalh_2_2008}.

\begin{theorem}[A necessary condition for norm--optimal \acp{rom} in $\cM_{\myid}$--invariant \acp{rkhs}]
\label{thm:mzMLc}
    Suppose the assumptions of \Cref{thm:orthocond} are satisfied. In addition, suppose $\origspace$ is $\cM_{\myid}$--invariant as in \Cref{def:multipliers}.
    Let $(\lambda_0, \ldots, \lambda_{n-1}) \in \Omeganeq{n}$ and assume we have given the \ac{fom}
    \begin{equation*}
        \bG = \sum_{k=0}^{n-1} \res_k \xi_{\lambda_k} \in \fomspaceo,
    \end{equation*}
    Then, if $\big(\tlambda_0, \ldots, \tlambda_{r-1} \big) \in \Omeganeq{r}$ defines  an $\origspace$--norm--optimal \ac{rom}
    \begin{equation*}
        \bG_r = \sum_{j=0}^{r-1} \tres_j \xi_{\tlambda_j} \in \romspaceo,
    \end{equation*}
    then
    \begin{equation*}
        \left\langle \bG - \bG_r, \left( \cM_{\myid}^{\ast} - \overline{\tlambda_j} \myidH \right)\bigg|_{Y_r^{\tilde{\blambda}}}^{+} \left[\xi_{\tlambda_j} \right] \right\rangle_{\origspace} = 0 \quad (j=0,\ldots,r-1).
    \end{equation*}
    Here, $\cM_{\myid}^{\ast}$ denotes the adjoint of the multiplier $\cM_{\myid}$ and $\left( \cM_{\myid}^{\ast} - \overline{\tlambda_j} \myidH \right)\bigg|_{Y_r^{\tilde{\blambda}}}^{+}$ denotes the Moore--Penrose pseudoinverse of the operator $ \cM_{\myid}^{\ast} - \overline{\tlambda_j} \myidH$ restricted to
    \begin{equation*}
        Y_r^{\tilde{\blambda}} \coloneqq \tVn{r}{\tilde{\blambda}} \oplus D \tVn{r}{\tilde{\blambda}} =  \myspan\left\{ \xi_{\tlambda_0}, \funder{\xi_{\tlambda_0}}{\overline{\tlambda}}, \ldots,  \xi_{\tlambda_{r-1}}, \funder{\xi_{\tlambda_{r-1}}}{\overline{\tlambda}}\right\} \subseteq \origspace.
    \end{equation*}
\end{theorem}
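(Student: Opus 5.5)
The plan is to show that the vector $\bH_j \coloneqq \left( \cM_{\myid}^{\ast} - \overline{\tlambda_j} \myidH \right)\big|_{Y_r^{\tilde{\blambda}}}^{+} \left[\xi_{\tlambda_j} \right]$ lies in $Y_r^{\tilde{\blambda}}$. Then \Cref{thm:orthocond} gives $\langle \bG - \bG_r, \bH_j\rangle_{\origspace} = 0$ directly. In the present setting all $m_j = 1$, so $Y_r^{\tilde{\blambda}}$ is exactly the space of admissible test functions $\bH_1 + \bH_2$ in \eqref{eq:orthocond}. This requires every $\tres_j \neq 0$, which holds because $\bG_r$ has order $r$: a vanishing coefficient would place $\bG_r$ in $\mathfrak{G}^{r-1}_\xi$. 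The work is therefore to understand how $\cM_{\myid}^\ast$ acts on kernel slices and on their $\olambda$--derivatives.

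First, I would establish the eigenvector identity $\cM_{\myid}^{\ast}\xi_\lambda = \overline{\lambda}\,\xi_\lambda$. This follows from the reproducing property: for any $\bF \in \origspace$,
\begin{equation*}
\langle \bF, \cM_{\myid}^\ast \xi_\lambda\rangle_{\origspace} = \langle \lambda\cdot \ldots \rangle,
\end{equation*}
more precisely $\langle \cM_{\myid}\bF, \xi_\lambda\rangle_{\origspace} = \lambda \bF(\lambda) = \langle \bF, \overline{\lambda}\xi_\lambda\rangle_{\origspace}$. Next, I would take the Wirtinger derivative in $\olambda$ in the norm sense of \eqref{eq:FrechDer}. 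Since $\cM_{\myid}$ is bounded by the closed graph theorem, $\cM_{\myid}^\ast$ is bounded as well, so it commutes with the norm limit of difference quotients. Differentiating $\cM_{\myid}^\ast \xi_\lambda = \overline{\lambda}\xi_\lambda$ then yields
\begin{equation*}
\cM_{\myid}^\ast \funder{\xi_\lambda}{\olambda} = \xi_\lambda + \overline{\lambda}\, \funder{\xi_\lambda}{\olambda}.
\end{equation*}
I expect this step to be the main technical point. It needs the $C^1$ assumption of \Cref{thm:orthocond} and care that the product rule applies to the scalar factor $\overline{\lambda}$ under the Wirtinger derivative (where $\partial \overline{\lambda}/\partial\olambda = 1$).

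With these identities, set $T_j \coloneqq \cM_{\myid}^\ast - \overline{\tlambda_j}\myidH$. It acts on the spanning set of $Y_r^{\tilde{\blambda}}$ as
\begin{equation*}
T_j \xi_{\tlambda_i} = (\overline{\tlambda_i} - \overline{\tlambda_j})\xi_{\tlambda_i}, \qquad T_j \funder{\xi_{\tlambda_i}}{\olambda} = \xi_{\tlambda_i} + (\overline{\tlambda_i}-\overline{\tlambda_j})\funder{\xi_{\tlambda_i}}{\olambda}.
\end{equation*}
Hence $T_j$ maps the finite-dimensional space $Y_r^{\tilde{\blambda}}$ into itself. In particular $T_j \funder{\xi_{\tlambda_j}}{\olambda} = \xi_{\tlambda_j}$, so $\xi_{\tlambda_j}$ lies in the range of the restriction $T_j|_{Y_r^{\tilde{\blambda}}}$.

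Finally, I would use the basic property of the Moore--Penrose pseudoinverse of an operator defined on the finite-dimensional domain $Y_r^{\tilde{\blambda}}$. For a right-hand side in its range, the pseudoinverse returns the unique minimum-norm preimage. That preimage is the orthogonal projection of any preimage, for instance $\funder{\xi_{\tlambda_j}}{\olambda}$, onto $Y_r^{\tilde{\blambda}} \ominus \ker\big(T_j|_{Y_r^{\tilde{\blambda}}}\big)$. Either way it lies in $Y_r^{\tilde{\blambda}}$, so $\bH_j \in Y_r^{\tilde{\blambda}}$. By \Cref{thm:orthocond}, $\bG - \bG_r \perp Y_r^{\tilde{\blambda}}$, which gives the claim for each $j = 0,\ldots,r-1$. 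I would also note that the argument does not require computing $\ker T_j$ explicitly; only membership in $Y_r^{\tilde{\blambda}}$ matters.
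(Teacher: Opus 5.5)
Your proposal is correct, but its decisive step differs from the paper's. Both arguments establish $\cM_{\myid}^{\ast}\xi_{\lambda}=\overline{\lambda}\xi_{\lambda}$ and $\cM_{\myid}^{\ast}\funder{\xi_{\lambda}}{\olambda}=\xi_{\lambda}+\overline{\lambda}\funder{\xi_{\lambda}}{\olambda}$, and both finish with \Cref{thm:orthocond}. The paper gets the second identity by applying \Cref{thm:derivKer} to $\cM_{\myid}\bH$; you get it by differentiating the eigenrelation through the bounded operator $\cM_{\myid}^{\ast}$. Both are fine.

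The difference is in between. The paper represents $(\cM_{\myid}^{\ast}-\overline{\tlambda_j}\myidH)|_{Y_r^{\tilde{\blambda}}}$ by the block matrix \eqref{eq:Amat} and uses \eqref{eq:toshow} to claim that the pseudoinverse returns exactly $\funder{\xi_{\tlambda_j}}{\olambda}$. You only use that the pseudoinverse of an operator with domain $Y_r^{\tilde{\blambda}}$ takes values in $Y_r^{\tilde{\blambda}}$. Combined with $\bG-\bG_r\perp Y_r^{\tilde{\blambda}}$, that is all that is needed.

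The paper's route aims at an explicit, rational-looking test function in the spirit of \Cref{thm:orthoCondH2}. Yours shows that the statement is just a rewriting of $\bG-\bG_r\perp Y_r^{\tilde{\blambda}}$; the $\cM_{\myid}$-invariance only makes the expression meaningful. It needs neither a matrix representation nor linear independence of the $2r$ spanning functions.

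That robustness is worth having. Your formula $T_j\xi_{\tlambda_i}=(\overline{\tlambda_i}-\overline{\tlambda_j})\xi_{\tlambda_i}$ shows that \eqref{eq:Amat} is accurate only in the $j$th block. Moreover, since the basis $\{\xi_{\tlambda_i},\funder{\xi_{\tlambda_i}}{\olambda}\}$ is not orthonormal, the coordinate pseudoinverse $\bA^{+}$ is not the $\origspace$-pseudoinverse. Your minimum-norm description gives the correct output: $\funder{\xi_{\tlambda_j}}{\olambda}$ minus its orthogonal projection onto $\ker(T_j|_{Y_r^{\tilde{\blambda}}})$, which is $\myspan\{\xi_{\tlambda_j}\}$ when the $2r$ functions are independent. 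This differs from $\funder{\xi_{\tlambda_j}}{\olambda}$ whenever $\langle\funder{\xi_{\tlambda_j}}{\olambda},\xi_{\tlambda_j}\rangle_{\origspace}\neq0$, e.g.\ in $H_2(\D)$ with $\tlambda_j\neq0$. It still lies in $Y_r^{\tilde{\blambda}}$, so the theorem stands.

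A minor point: $\tres_j\neq0$ is already among the hypotheses imported from \Cref{thm:orthocond}. You need not derive it from the order of $\bG_r$, which the statement does not explicitly assume.
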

\begin{proof}
    Since $\lambda \mapsto \xi_{\lambda}$ is differentiable, for all $\bH \in \origspace$, \Cref{thm:derivKer} gives
    \begin{equation}
        \label{eq:deriv1}
        \left\langle \bH, \cM_{\myid}^{\ast} \left[ \funder{\xi_{\lambda}}{\olambda}\right] \right\rangle_{\origspace} =  \left\langle \cM_{\myid}\left[\bH\right], \funder{\xi_{\lambda}}{\olambda} \right\rangle_{\origspace} = \left( \cM_{\myid}\left[\bH\right] \right)'(\lambda) = \bH(\lambda) + \lambda \bH'(\lambda).
    \end{equation}
    Notice, however, that $\bH(\lambda) + \lambda \bH'(\lambda)$ can be written as
    \begin{equation}
        \label{eq:deriv2}
        \bH(\lambda) + \lambda \bH'(\lambda) = \left\langle \bH, \xi_{\lambda}\right\rangle_{\origspace} + \lambda\left\langle \bH, \funder{\xi_{\lambda}}{\olambda} \right\rangle_{\origspace} = \left\langle \bH, \xi_{\lambda} + \overline{\lambda} \funder{\xi_{\lambda}}{\olambda} \right\rangle_{\origspace}.
    \end{equation}
    Equating the first equation in~\eqref{eq:deriv1} and the last equation in~\eqref{eq:deriv2} gives
    \begin{equation*}
        \left\langle \bH, \cM_{\myid}^{\ast} \left[\funder{\xi_{\lambda}}{\olambda}\right] \right\rangle_{\origspace} = \left\langle \bH, \xi_{\lambda} + \overline{\lambda} \funder{\xi_{\lambda}}{\olambda} \right\rangle_{\origspace},
    \end{equation*}
    which holds for all $\bH \in \origspace$, and thus,
    \begin{equation}
        \label{eq:adjMonDers}
        \cM_{\myid}^{\ast} \left[ \funder{\xi_{\lambda}}{\olambda} \right] = \xi_{\lambda} + \overline{\lambda} \funder{\xi_{\lambda}}{\olambda}.
    \end{equation}
    Rearranging this gives
    \begin{equation}
        \label{eq:toInvert}
        \xi_{\lambda} = \cM_{\myid}^{\ast} \left[ \funder{\xi_{\lambda}}{\olambda}\right] - \overline{\lambda}\funder{\xi_{\lambda}}{\olambda} =
        \left(\cM_{\myid}^{\ast} - \overline{\lambda} \myidH\right) \left[ \funder{\xi_{\lambda}}{\olambda} \right].
    \end{equation}
    Inverting the operator on the right--hand side is not possible in general, since for $\mu = \lambda$, we have $\left( \cM_{\myid}^{\ast} - \overline{\lambda} \myidH \right) \left[ \xi_{\mu}\right] = 0$. This follows from the observation that for any $\bH \in \origspace$, we have
    \begin{equation*}
        \left\langle \bH, \cM_{\myid}^{\ast} \left[ \xi_{\lambda} \right] \right\rangle_{\origspace}= \left\langle \cM_{\myid} \left[\bH\right], \xi_{\lambda}  \right\rangle_{\origspace} = \lambda \bH(\lambda) = \lambda \left\langle \bH, \xi_{\lambda} \right\rangle_{\origspace} = \left\langle \bH, \overline{\lambda} \xi_{\lambda} \right\rangle_{\origspace}.
    \end{equation*}
    The choice of $\bH$ here is arbitrary; consequently,
    \begin{equation}
        \label{eq:adjMonKernel}
        \cM_{\myid}^{\ast} \left[ \xi_{\lambda} \right] = \overline{\lambda} \xi_{\lambda},
    \end{equation}
    that is, the kernel slices $\xi_{\lambda}$ are eigenfunctions of $\cM_{\myid}^{\ast}$. 
    
    Due to \eqref{eq:adjMonDers} and \eqref{eq:adjMonKernel}, we know the effect of $\cM_{\myid}^{\ast}$ on the basis functions of $Y_r^{\tilde{\blambda}}$.
    In order to prove the claim, we  show that the Moore--Penrose pseudoinverse of the restricted operator $\left( \cM_{\myid}^{\ast} - \overline{\tlambda_j} \myidH \right)\bigg|_{Y_r^{\tilde{\blambda}}}$ satisfies
    \begin{equation*}
        \left( \cM_{\myid}^{\ast} - \overline{\tlambda_j} \myidH \right)\bigg|_{Y_r^{\tilde{\blambda}}}^{+} \left( \cM_{\myid}^{\ast} - \overline{\tlambda_j} \myidH \right)\bigg|_{Y_r^{\tilde{\blambda}}} \left[ \funder{\xi_{\tlambda_j}}{\overline{\lambda}} \right] = \funder{\xi_{\tlambda_j}}{\overline{\lambda}} \quad (j=0,\ldots,r-1),
    \end{equation*}
    for any $\funder{\xi_{\tlambda_j}}{\olambda} \in Y_r^{\tilde{\blambda}}$.
   Let 
   \begin{equation*}
       v_{2j} = \funder{\xi_{\tlambda_j}}{\overline{\lambda}}, \quad v_{2j + 1} = \xi_{\tlambda_j} \quad (j=0,\ldots,r-1).
   \end{equation*}
    Then, $Y_r^{\tilde{\blambda}} = \myspan \left\{ v_0, \ldots, v_{2r-1} \right\}$. 
    
From~\eqref{eq:adjMonDers} and~\eqref{eq:adjMonKernel}, we obtain the conditions
   \begin{equation*}
        \left( \cM_{\myid}^{\ast} - \overline{\tlambda_j} \myidH \right)
            \left[v_{2j}\right] = \xi_{\tlambda_j}, \quad  \left( \cM_{\myid}^{\ast} - \overline{\tlambda_j} \myidH \right)
            \left[v_{2j + 1}\right] = 0
    \end{equation*}
   for $j=0,\ldots,r-1$. 
Thus, $\left( \cM_{\myid}^{\ast} - \overline{\tlambda_j} \myidH \right)\bigg|_{Y_r^{\tilde{\blambda}}}$ can be represented by the $\C^{2r \times 2r}$ matrix
\begin{equation}
    \label{eq:Amat}
    \bA
    =
    \bI_r \otimes
    \begin{bmatrix}
        0 & 0\\
        1 & 0
    \end{bmatrix},
\end{equation}
where $\otimes$ denotes the Kronecker product and $\bI_r$ is identity matrix in $\C^{r\times r}$.
Letting $\be_k \in \C^{2r}$ denote the $k$th unit vector, for $k=0,\ldots, 2r -1$, $\bA$ satisfies
\begin{equation}
    \label{eq:Aeffect}
    \bA \be_k = \begin{cases}
        \be_{k+1}, & \text{if } k \bmod 2 = 0, \\
        0, & \textrm{otherwise}.
    \end{cases}
\end{equation}
While $\bA$ is singular, its Moore--Penrose inverse $\bA^+$ exists. We show 
\begin{equation}
    \label{eq:toshow}
    \be_{2j} = \bA^{+} \bA \be_{2j}  \quad  (j=0,\ldots,r-1).
\end{equation}
Here, $\bA^{+} \bA$ is the orthogonal projection operator that maps vectors onto the row space of $\bA$, which is the image of $\bA^*$. Since any vector $\bx \in \C^{2r}$ satisfies
\begin{equation*}
    \bx = \bx_{\mathrm{Row}} + \bx_{\mathrm{Null}},
\end{equation*}
where $\bx_{\mathrm{Row}}$ and $\bx_{\mathrm{Null}}$ belong to the row and null spaces of $\bA$, respectively, for~\eqref{eq:toshow} to hold, it suffices to show that $\be_{2j}$ is in the  row space of $\bA$, equivalently, orthogonal to its null space. This is enough, because for any $\bx_{\mathrm{Row}}$ in the row space, we have $\bx_{\mathrm{Row}} = \bA^+ \bA \bx_{\mathrm{Row}}$, since $\bA^+ \bA$ is an orthogonal projector. Notice that by~\eqref{eq:Amat},  the rank of $\bA$ is exactly $r$. Therefore, by the rank--nullity theorem, 
the dimension of the null space of $\bA$ is also $r$.
Note, however, that by~\eqref{eq:Aeffect}, $\be_{2j+1}$, $j=0,\ldots,r-1$ are exactly $r$ basis vectors spanning the  null space of $\bA$. Hence,~\eqref{eq:toshow} is equivalent to
\begin{equation*}
    \left\langle \be_{2j}, \be_{2k+1} \right\rangle_{\C^{2r}} = 0 \quad (j,k=0,\ldots,r-1),
\end{equation*}
which holds due to the orthogonality of $\be_k$ for $k=0,\ldots,2r-1$.

We recall that $\bA$ is the matrix representation of $\left( \cM_{\myid}^{\ast}  - \overline{\tlambda_j} \myidH \right)\bigg|_{Y_r^{\tilde{\blambda}}}$. The Moore--Penrose pseudoinverse of this restricted operator is the operator whose matrix representation is $\bA^{+}$ in the respective bases. In this case, by~\eqref{eq:toInvert} and~\eqref{eq:toshow}, we have 
 \begin{equation*}
        \funder{\xi_{\tlambda_j}}{\overline{\tlambda}} = \left( \cM_{\myid}^{\ast}  - \overline{\tlambda_j} \myidH \right)\bigg|_{Y_r^{\tilde{\blambda}}}^{+} \left[ \xi_{\tlambda_j} \right] \quad (j=0,\ldots,r-1)
    \end{equation*}
    and our claim follows from \Cref{thm:orthocond}.
\end{proof}
\begin{remark}
    \Cref{thm:mzMLc} is not directly used by the \ac{mor} algorithms or experiments presented later in this paper; we include it as a self--contained result of independent interest. It shows that, in $\cM_{\myid}$--invariant \acp{rkhs}, the necessary optimality condition retains a rational--like structure, similar in spirit to the classical orthogonality conditions familiar from the \ac{mor} literature, see \Cref{thm:orthoCondH2}.
\end{remark}

Finally, we formulate a necessary condition of $\origspace$ optimality in terms of an equilibrium function. This formulation will be especially useful for constructing an iterative method in \Cref{sec:irka} to obtain norm--optimal \acp{rom}.
\begin{definition}[Equilibrium function]
    \label{def:EquibFun}
    Let $\origspace$ be an \ac{rkhs} of functions $\Omega \to \C$ of dimension at least $n$ with kernel $\xi$ that satisfies the assumptions made in \Cref{thm:orthocond}, and let $\bG \in \fomspaceo$. Assuming $r \leq n$, let $\bmu = \big( \mu_0, \ldots, \mu_{r-1} \big) \in \Omega^r$ and let $m_{\mu_i} \le m$ count the repetitions of $\mu_i$ in $\bmu$ for $i=0,\ldots,r-1$. Furthermore, let $k \leq r$ and $\big( \tlambda_0, \ldots, \tlambda_{k-1}\big) \in \Omeganeq{k}$ be the $k$ pairwise distinct components of $\bmu \in \Omega^r$, i.e., assume there exist $r_0, \ldots, r_{k-1} \in \left\{ 0, \ldots, r-1 \right\}$, for which $\tlambda_j = \mu_{r_j}$ for $j=0,\ldots,k-1$.
    
    The \emph{equilibrium function} associated with $\bG$ is defined as $\cD : \Omega^r \to \C^r$ with 
    \begin{equation}
        \label{eq:EquibFun}
        \left[ \cD(\bmu) \right]_i \coloneqq \bG^{(m_{\mu_i})}(\mu_i) - \left(\cP_{\Vn{r}{\tilde{\blambda}}} \left[ \bG \right]\right)^{(m_{\mu_i})}\big(\mu_i\big) \quad (i=0,\ldots,r-1),
    \end{equation}
   where $\cP_{\Vn{r}{\tilde{\blambda}}}$ denotes the orthogonal projector onto the $r$--dimensional subspace $\Vn{r}{\tilde{\blambda}} \subseteq \origspace$ as in \eqref{eq:Xr} with $m_j \coloneqq m_{\mu_{r_j}}$, $j = 0,\ldots,k-1$.
\end{definition}
Using \Cref{def:EquibFun}, we arrive at the following optimality condition. 
\begin{theorem}
    \label{thm:equibCond}
    Consider the assumptions made in \Cref{thm:orthocond} and let $\cD : \Omega^r \to \C^r$ be the equilibrium function as given in \Cref{def:EquibFun}. If 
    \begin{equation*}
        \cD( \bmu ) = 0
    \end{equation*}
    for some $\mu \in \Omega^r$, then~\eqref{eq:MLcRKHS} and~\eqref{eq:orthocond} are satisfied by the \ac{rom} 
    \begin{equation*}
        \bG_r \coloneqq \cP_{\Vn{r}{\tilde{\blambda}}} \left[ \bG \right].
    \end{equation*}
\end{theorem}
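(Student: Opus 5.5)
The plan is to combine the Hilbert projection theorem with the derivative reproducing property of \Cref{thm:derivKerOrder}, in the same way as in the converse direction of \Cref{thm:equivMlcOrtho}.

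\textbf{Step 1: the projection part.} Set $\bG_r \coloneqq \cP_{\Vn{r}{\tilde{\blambda}}}[\bG]$. By the Hilbert projection theorem, $\bG - \bG_r \perp \Vn{r}{\tilde{\blambda}}$. In particular, $\bG - \bG_r$ is orthogonal to every $\funder[i]{\xi_{\tlambda_j}}{\overline{\tlambda}}$ with $0 \le i \le m_j - 1$. Since $\tVn{r}{\tilde{\blambda}} \subseteq \Vn{r}{\tilde{\blambda}}$, we get $\langle \bG-\bG_r,\bH_1\rangle_{\origspace}=0$ for all $\bH_1 \in \tVn{r}{\tilde{\blambda}}$. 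By \Cref{thm:derivKerOrder}, this orthogonality reads
\begin{equation*}
\bG^{(i)}\big(\tlambda_j\big) = \bG_r^{(i)}\big(\tlambda_j\big) \quad \text{for all } i<m_j,
\end{equation*}
which is even stronger than the first half of \eqref{eq:MLcRKHS}, since there the condition is only required for indices with $\tres_{j,i}\neq 0$.

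\textbf{Step 2: the equilibrium equations.} Unpack $\cD(\bmu)=0$. For each distinct center $\tlambda_j = \mu_{r_j}$, its multiplicity is $m_{\mu_{r_j}} = m_j$. The component $[\cD(\bmu)]_{r_j}$ therefore reads
\begin{equation*}
\bG^{(m_j)}(\tlambda_j) - \bG_r^{(m_j)}(\tlambda_j) = 0.
\end{equation*}
This is exactly the second half of \eqref{eq:MLcRKHS}. The components of $\cD$ belonging to repeated copies of $\tlambda_j$ give the same equation again, so they add nothing new. By \Cref{thm:derivKerOrder} once more, this identity is equivalent to
\begin{equation*}
\Big\langle \bG-\bG_r, \funder[m_j]{\xi_{\tlambda_j}}{\overline{\tlambda}} \Big\rangle_{\origspace}=0.
\end{equation*}
This is where the hypothesis $m_j<m$ from \Cref{thm:orthocond} is needed, so that derivatives of order $m_j$ are reproduced.

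\textbf{Step 3: assembling \eqref{eq:orthocond}.} The generators of $D\tVn{r}{\tilde{\blambda}}$ are of two kinds:
\begin{itemize}
\item derivatives of order $1\le i\le m_j-1$, which lie in $\Vn{r}{\tilde{\blambda}}$ and are covered by Step 1;
\item the top derivatives of order $m_j$, which are covered by Step 2.
\end{itemize}
Hence $\bG-\bG_r \perp D\tVn{r}{\tilde{\blambda}}$, and \eqref{eq:orthocond} follows by linearity. Alternatively, once \eqref{eq:MLcRKHS} is established, \eqref{eq:orthocond} follows directly from \Cref{thm:equivMlcOrtho}.

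\textbf{Main obstacle: bookkeeping.} The difficulty is not analytic but lies in matching the structure. One must make precise that the coefficients $\tres_{j,i}$ of $\bG_r$ are its coordinates in the spanning set of $\Vn{r}{\tilde{\blambda}}$. One must also handle the theorem's assumption $\tres_{j,m_j-1}\neq0$: the projection may have a vanishing top coefficient, in which case the reduced spaces $\tVn{r}{\tilde{\blambda}}$ and $D\tVn{r}{\tilde{\blambda}}$ drop elements. Because Steps 1 and 2 prove orthogonality to the full spaces $\Vn{r}{\tilde{\blambda}}$ and $D\Vn{r}{\tilde{\blambda}}$, the claim holds regardless of which coefficients vanish. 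I would state this explicitly, which makes the argument robust to degenerate coefficients.
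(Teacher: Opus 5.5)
Your proposal is correct and follows essentially the same route as the paper's proof. The projection theorem gives $\bG-\bG_r \perp \Vn{r}{\tilde{\blambda}}$, hence the lower-order interpolation conditions via \Cref{thm:derivKerOrder}; the equations $\cD(\bmu)=0$ supply the order-$m_j$ conditions; and together these give orthogonality to $D\Vn{r}{\tilde{\blambda}}$, hence \eqref{eq:orthocond}. Like the paper, you prove orthogonality to the full spaces rather than the reduced ones, so the argument is insensitive to vanishing coefficients $\tres_{j,i}$.
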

\begin{proof}
    By our assumptions $\bG_r = \cP_{\Vn{r}{\tilde{\blambda}}} \left[ \bG \right]$, hence $\bG - \bG_r \perp \Vn{r}{\tilde{\blambda}}$ holds. Consequently, for any $\bH \in \Vn{r}{\tilde{\blambda}}$, we have $\left\langle \bG - \bG_r, \bH \right\rangle_{\origspace} = 0$. 
    From this, we obtain
    \begin{equation*}
        0 = \left\langle \bG - \bG_r, \funder[i]{\xi_{\tlambda_j}}{\overline{\tlambda}} \right\rangle_{\origspace} \stackrel{\text{Cor.}~\ref{thm:derivKerOrder}}{=} \bG^{(i)}\big(\tlambda_j\big) - \bG_r^{(i)}\big(\tlambda_j\big)
    \end{equation*}
    for $j=0,\ldots,k-1$ and $i=0,\ldots,m_{j}-1$. 

    By our assumptions that $\cD(\bmu) = 0$ and \Cref{def:EquibFun}, this means that 
    \begin{equation*}
        \bG^{( m_{j} )} \big( \tlambda_j \big) - \bG_r^{(m_j)} \big( \tlambda_j \big) = 0
    \end{equation*}
    for $j=0,\ldots,k-1$.
    This shows that~\eqref{eq:MLcRKHS} is satisfied. To show~\eqref{eq:orthocond}, we employ \Cref{thm:derivKerOrder} again to find
    \begin{equation*}
       0 = \bG^{( m_{j} )} \big( \tlambda_j \big) - \bG_r^{(m_j)} \big( \tlambda_j \big) 
       \stackrel{\text{Cor.}~\ref{thm:derivKerOrder}}{=} \left\langle \bG - \bG_r,  \funder[m_j]{\xi_{\tlambda_j}}{\overline{\tlambda}} \right\rangle_{\origspace},
    \end{equation*}
    which implies $\bG - \bG_r \perp D\Vn{r}{\tilde{\blambda}}$ with $D\Vn{r}{\tilde{\blambda}}$ as in~\eqref{eq:dXr}. 
\end{proof}
Note that in \Cref{thm:equibCond}, we only showed that finding a zero of the equilibrium function identifies a \ac{rom}, which satisfies the optimality conditions of \Cref{thm:orthocond} and \Cref{thm:MLcRKHS}. In applications, we can often assume that all kernel centers defining the \ac{rom} are distinct, i.e., $\big(\mu_0,\ldots,\mu_{r-1} \big) = \big(\tlambda_0, \ldots, \tlambda_{r-1} \big) \in \Omeganeq{r}$. This is commonly assumed for example in the reduction of \ac{lti} systems~\cite{cdcAngino, gugercin_mathcalh_2_2008}. With this additional assumption, we can show that the condition $\cD \big( \bmu \big) = \cD \big( \tilde{\blambda}\big) = 0$ not only implies, but is equivalent to \Cref{thm:orthocond} and \Cref{thm:MLcRKHS}.
\begin{theorem}
    \label{thm:equibCondDistinct}
    Suppose that the assumptions made in \Cref{thm:orthocond} are satisfied and let $\cD : \Omega^r \to \C^r$ be the equilibrium function as given in \Cref{def:EquibFun}. Assume further that $\big( \tlambda_0, \ldots, \tlambda_{r-1}\big) \in \Omeganeq{r}$. Then a \ac{rom}
    \begin{equation*}
    \bG_r \in \Vn{r}{\tilde{\blambda}} \subseteq \romspaceo,    
    \end{equation*}
    satisfies \eqref{eq:MLcRKHS} (with $m_j=1$, $j=0,\ldots,r-1$) if and only if
    \begin{equation*}
     \cD\big(\tilde{\blambda}\big) = 0.
    \end{equation*}
\end{theorem}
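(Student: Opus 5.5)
We take $\bmu = \tilde{\blambda} \in \Omeganeq{r}$, so that every multiplicity is $m_{\mu_i} = 1$, $k = r$, and $\Vn{r}{\tilde{\blambda}} = \myspan\{\xi_{\tlambda_0},\ldots,\xi_{\tlambda_{r-1}}\}$. Then \Cref{def:EquibFun} reduces to
\begin{equation*}
    \left[\cD\big(\tilde{\blambda}\big)\right]_j = \bG'\big(\tlambda_j\big) - \left(\cP_{\Vn{r}{\tilde{\blambda}}}[\bG]\right)'\big(\tlambda_j\big) \quad (j=0,\ldots,r-1).
\end{equation*}
The equivalence is meant for the \ac{rom} that the equilibrium function produces. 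Hence $\bG_r \in \Vn{r}{\tilde{\blambda}}$ should be read as the candidate with the prescribed centers, and the key link is the identity $\bG_r = \cP_{\Vn{r}{\tilde{\blambda}}}[\bG]$. We establish this identity in each direction and then translate between inner products and derivatives using \Cref{thm:derivKerOrder}.

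For the direction ``$\Leftarrow$'', assume $\bG_r \in \Vn{r}{\tilde{\blambda}}$ satisfies \eqref{eq:MLcRKHS} with $m_j=1$, i.e., $\bG(\tlambda_j)=\bG_r(\tlambda_j)$ and $\bG'(\tlambda_j)=\bG_r'(\tlambda_j)$ for all $j$. By the reproducing property, the first family gives $\langle \bG-\bG_r, \xi_{\tlambda_j}\rangle_{\origspace}=0$ for all $j$. Hence $\bG-\bG_r \perp \Vn{r}{\tilde{\blambda}}$, and since $\bG_r\in\Vn{r}{\tilde{\blambda}}$, the Hilbert projection theorem yields $\bG_r=\cP_{\Vn{r}{\tilde{\blambda}}}[\bG]$. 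Substituting this into the derivative conditions gives $[\cD(\tilde{\blambda})]_j = \bG'(\tlambda_j)-\bG_r'(\tlambda_j)=0$.

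For the direction ``$\Rightarrow$'', assume $\cD(\tilde{\blambda})=0$. \Cref{thm:equibCond} then applies directly: the \ac{rom} $\bG_r=\cP_{\Vn{r}{\tilde{\blambda}}}[\bG]$ satisfies \eqref{eq:MLcRKHS}. Concretely, orthogonality to each $\xi_{\tlambda_j}$ gives the value interpolation, and the vanishing of $\cD$ gives the derivative interpolation, with \Cref{thm:derivKerOrder} identifying $\langle \bG-\bG_r, \funder{\xi_{\tlambda_j}}{\overline{\tlambda}}\rangle_{\origspace}$ with the derivative mismatch.

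The main obstacle is the quantifier in the statement. If $\bG_r$ were an arbitrary element of $\Vn{r}{\tilde{\blambda}}$, the direction ``$\Rightarrow$'' could fail for elements other than the projection. I will therefore state explicitly that the \ac{rom} in question is the one attached to $\tilde{\blambda}$ through the projection, as in \Cref{thm:equibCond}. I will also note that the value interpolation conditions alone already force this identification, as the direction ``$\Leftarrow$'' shows. A minor point to check is that the coefficient conditions $\tres_{j,i}\neq 0$ in \eqref{eq:MLcRKHS} are harmless here: with $m_j=1$, the only relevant coefficient is $\tres_{j,0}=\tres_{j,m_j-1}\neq 0$ by assumption, so all value conditions are required.
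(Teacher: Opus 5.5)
Your proof is correct and follows the paper's argument essentially step for step: in one direction the value conditions and the reproducing property force $\bG_r$ to be the orthogonal projection, so the components of $\cD(\tilde{\blambda})$ become the derivative mismatches, and the other direction is delegated to \Cref{thm:equibCond}. Your arrow labels are swapped relative to the order of the statement, which is harmless, and your remark that, starting from $\cD(\tilde{\blambda})=0$, the ROM must be taken to be the projection is exactly the reading the paper's proof uses implicitly.
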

\begin{proof}
    Note that due to the distinct kernel center assumption, the subspaces $\Vn{r}{\tilde{\blambda}}$ from~\eqref{eq:Xr}, $D \Vn{r}{\tilde{\blambda}}$ from~\eqref{eq:dXr} coincide with the subspaces $\tVn{r}{\tilde{\blambda}}$ and $D \tVn{r}{\tilde{\blambda}}$ from \Cref{thm:orthocond}. These are given by
    \begin{equation*}
        \Vn{r}{\tilde{\blambda}} = \tVn{r}{\tilde{\blambda}} \coloneqq \myspan \left\{ \xi_{\tlambda_0}, \ldots, \xi_{\tlambda_{r-1}} \right\}
    \end{equation*}
    and
    \begin{equation*}
        D \Vn{r}{\tilde{\blambda}} = D \tVn{r}{\tilde{\blambda}} \coloneqq \myspan \left\{ \funder{\xi_{\tlambda_0}}{\overline{\tlambda}}, \ldots, \funder{\xi_{\tlambda_{r-1}}}{\overline{\tlambda}}\right\}.
    \end{equation*}
    First, assume $\bG\big(\tlambda_j \big) = \bG_r\big(\tlambda_j \big)$ and $\bG'\big(\tlambda_j \big) = \bG_r'\big(\tlambda_j \big)$ for $j=0,\ldots,r-1$, that is, \eqref{eq:MLcRKHS} with $m_j=1$, $j=0,\ldots,r-1$ is satisfied. 
    The reproducing property means
    \begin{equation*}
        \bG\big(\tlambda_j \big) - \bG_r \big(\tlambda_j \big) = 0 \iff \left\langle \bG - \bG_r, \xi_{\tlambda_j} \right\rangle_{\origspace} = 0 \quad (j=0,\ldots,r-1).
    \end{equation*}
    However, by the Hilbert projection theorem, this means $\bG - \bG_r \perp \Vn{r}{\tilde{\blambda}}$, and consequently $\bG_r = \cP_{\Vn{r}{\tilde{\blambda}}}\left[ \bG \right]$. In addition, by assumption, we have
    \begin{equation*}
        \left[\cD(\tilde{\blambda}) \right]_j = \bG'\big(\tlambda_j\big) - \left(\cP_{\Vn{r}{\tilde{\blambda}}}\left[ \bG \right]\right)' \big(\tlambda_j\big) = \bG'\big(\tlambda_j\big) - \bG_r'\big(\tlambda_j\big) = 0.
    \end{equation*}

    Conversely, assume that $\cD\big(\tilde{\blambda}\big) = 0$. By \Cref{thm:equibCond}, $\bG_r = \cP_{\Vn{r}{\tilde{\blambda}}}\left[ \bG \right]$, and we have
    \begin{multline*}
        \bG\big(\tlambda_j\big) - \bG_r \big(\tlambda_j \big) =  \bG\big(\tlambda_j \big) - \cP_{\Vn{r}{\tilde{\blambda}}}\left[ \bG \right] \big(\tlambda_j \big) = \left\langle \bG - \cP_{\Vn{r}{\tilde{\blambda}}}\left[ \bG \right], \xi_{\tlambda_j}  \right\rangle_{\origspace} = 0 \\ (j=0,\ldots,r-1).
    \end{multline*}
    Since $\cD\big(\tilde{\blambda}\big) = 0$, we then have
    \begin{equation*}
        \left[ \cD \big( \tilde{\blambda} \big) \right]_j = \bG' \big( \tlambda_j \big) - \left( \cP_{\Vn{r}{\tilde{\blambda}}}\left[ \bG \right] \right)'\big( \tlambda_j \big) = \bG'\big(\tlambda_j\big) - \bG_r'\big(\tlambda_j\big) = 0 \quad (j=0,\ldots,r-1).
    \end{equation*}
\end{proof}
\begin{remark}
    Our implementations of the algorithms introduced in \Cref{sec:irka} make the assumption $\big( \tlambda_0, \ldots, \tlambda_{r-1} \big) \in \Omeganeq{r}$. The results in \Cref{sec:exp} show that even under this assumption, high quality \acp{rom} can be obtained for a diverse family of \acp{fom}.
\end{remark}

\section{Generalized Malmquist--Takenaka systems}
\label{sec:gmt}
In this section, we introduce a novel orthonormal function system called \emph{generalized Malmquist--Takenaka (\ac{gmt}) functions} that can be constructed in any \ac{rkhs}. Notice that Theorems~\ref{thm:orthocond} and~\ref{thm:MLcRKHS} can only be applied in \acp{rkhs} with sufficiently smooth kernels. For \ac{mor} problems posed in \acp{rkhs}, where the above theorems cannot be applied, we introduce a method to transform the \ac{mor} problem to an admissible \ac{rkhs}. With the help of \ac{gmt} functions, we obtain isometries (in fact unitary maps) that can be used to conduct this transformation procedure. For an illustration of this pipeline, see \Cref{fig:pipeline}.

\subsection{Generalized MT functions in RKHSs}
Henceforth, for $\Omega \subseteq \C$, let $\ell_{\Omega}$ denote the set of sequences whose components belong to $\Omega$. Similarly, we denote by $\ell_{\N}$ the set of sequences containing natural numbers. We make use of the following assumptions throughout this section.
\begin{assumption}
    \label{ass:GMTkers}
    Let $\origspace$ be an \ac{rkhs} of functions $ \C \supseteq \Omega \to \C$ with kernel $\xi$. Suppose $\big( \lambda \mapsto \xi_{\lambda} \big) \in C^m(\Omega, \origspace)$ for some $m \in \N$ and $\funder[i]{\xi_{\lambda}}{\olambda} \in \origspace$ for $i=0,\ldots,m$. Let $\blambda = (\lambda_0,\lambda_1,\lambda_2,\ldots) \in \ell_{\Omega}$ be such that each of its unique elements appears at most $m+1$ times in the sequence. Let $\mathbf{d} = (d_0,d_1,d_2,\ldots) \in \ell_{\N}$ be defined such that $d_k$ is number of occurrences of the value $\lambda_k$ in the vector $(\lambda_0,\ldots,\lambda_{k-1})$ for all $k \in \N$. 
    
    Denote by $\left\{ \eta_k \right\}_{k=0}^{\infty} \subseteq \origspace$ the function system
    \begin{equation}
        \label{eq:kers}
        \eta_k \coloneqq 
            \funder[d_k]{\xi_{\lambda_k}}{\olambda}.
    \end{equation}
    Suppose finally that the functions $\left\{ \eta_0,\ldots,\eta_{n-1} \right\}$ are linearly independent for all admissible choices of $\blambda$ (and $\mathbf{d}$) and $n \le \dim \origspace$.
\end{assumption}

In this section, we further define
\begin{equation}
\label{eq:VnSec5}
\Vn{n}{\blambda}  \coloneqq \myspan \left\{ \eta_0, \ldots, \eta_{n-1} \right\} \subseteq \origspace.
\end{equation}
Note that this definition of $\Vn{n}{\blambda}$ corresponds to $\Vn{n}{\blambda}$ from \Cref{thm:orthocond} with the difference that now repeated entries in $\blambda$ are allowed. In addition, for ease of notation, note that $\blambda$ will denote both sequences and finite--length vectors of kernel centers depending on the context. We are now ready to introduce the \ac{gmt} basis functions.
\begin{definition}[Generalized Malmquist--Takenaka functions, \ac{gmt}--Fourier partial sum]
    \label{def:GMT}
    Suppose Assumption~\ref{ass:GMTkers} is satisfied. For $n \ge 0$, the $n$th \emph{\ac{gmt} function} is defined as
    \begin{equation*}
        \gmtn \coloneqq \frac{\eta_n - \cS_{n}^{\blambda} \left[ \eta_n \right]  }{ \left\| \eta_n - \cS_{n}^{\blambda} \left[ \eta_n \right] \right\|_{\origspace}},
    \end{equation*}
    where for $n \ge 0$,
    \begin{equation}
        \label{eq:GMTpartsum}
         \cS_{n}^{\blambda} \left[ \bG \right] \coloneqq 
            \sum_{k=0}^{n-1} \left\langle \bG, \gmtk \right\rangle_{\origspace} \gmtk   \quad \left(\bG \in \origspace \right).
    \end{equation}
    We refer to the operator $\cS_{n}^{\blambda}$ as the $n$th \emph{\ac{gmt}--Fourier partial sum}.  
\end{definition}
\begin{remark}
If we consider $d_k=0$ for $k=0,\ldots,n$ in Assumption~\ref{ass:GMTkers}, then \Cref{def:GMT} describes a Gram--Schmidt orthogonalization procedure applied to the pairwise distinct kernel slices $\xi_{\lambda_k}$. Hence, the \ac{gmt} system can be constructed in any \ac{rkhs}. If $d_k > 0$ for some $0 \leq k \leq n$, then the orthogonalization procedure acts on the kernel slices and their derivatives up to at most order $m$. Notice that the actual functions undergoing orthogonalization are parameterized by the sequence $\blambda \in \ell_{\Omega}$.
\end{remark}
\Cref{def:GMT} is a generalization of the classical Malmquist--Takenaka (\ac{mt}) system~\cite{mi_fast_2012, qian_kernel_2026, dozsa_bernoullis_2024}. Indeed, if we choose $\origspace = H_2(\D)$ and $\blambda \in \ell_{\D}$, then the above definition gives the well--known rational orthonormal system
\begin{equation}
    \label{eq:MT}
    \gmtn(z) = \frac{\sqrt{1 - |\lambda_n|^2}}{1 - \overline{\lambda_n} z} \prod_{k=0}^{n-1} \frac{z - \lambda_k}{1 - \overline{\lambda_k} z} \quad \left(z \in \D, \blambda \in \ell_{\D}\right).
\end{equation}
This system is complete in $H_2(\D)$ (see~\eqref{eq:H2D}) if and only if $\blambda \in \ell_{\D}$ satisfies the Blaschke--Sz\'asz condition~\cite{heuberger_modelling_2005}
\begin{equation}
    \label{eq:szasz}
    \sum_{n=0}^{\infty} \left( 1 - |\lambda_n| \right) = \infty. 
\end{equation}
 \ac{mt} functions have many peculiar properties that make them important for the study of Hardy spaces. In \Cref{subsec:GMTprop}, we show that several of these properties are retained by the \ac{gmt} system in \Cref{def:GMT}. \ac{mt} functions also have many applications, for example, in system identification~\cite{dozsa_bernoullis_2024, heuberger_modelling_2005, gozse_realizing_2015, dozsa_pole_2023, szabo_identification_1999}, rational interpolation~\cite{nagy-csiha_construction_2023, garcia_finite_2018}, medical signal processing~\cite{kovacs_generalized_2020}, and model order reduction~\cite{soumelidis_hyperbolic_2017, mi_fast_2012}.

We note that the classical \ac{mt} functions in~\eqref{eq:MT} form a rather general orthonormal system in $H_2(\D)$. Indeed, if we choose $\blambda = (0,0, 0,\ldots)$, then~\eqref{eq:MT} reduces to $\gmtn(z) = z^n$ $(z \in \D)$. This can be analytically continued to $\T$, where the functions then become the classical trigonometric system. Choosing $\blambda = (\lambda, \lambda, \ldots) \ (\lambda \in \D \setminus \{0\})$ yields the Laguerre orthonormal system in $H_2(\D)$ with many applications in system theory~\cite{dozsa_impulse_2026, wahlberg_system_1991}. Finally, we note that the classical \ac{mt} system has been previously generalized to other specific \acp{rkhs}. For example, in~\cite{pap_multiresolution_2011}, \ac{mt} functions are constructed in Bergman spaces. Additionally, using conformal maps, they can be defined on $H_2$ spaces of the half plane in~\cite{nagy-csiha_construction_2023, cdcAngino}.

Some attempts to define \ac{gmt}--like systems in \acp{rkhs} have also been made previously. In~\cite{muller_newton_2009}, a Newton--basis is defined for general \acp{rkhs}, which matches the above definition if $\lambda_k \neq \lambda_j$ when $j \neq k$ for all $k,j=0,\ldots,n$. In~\cite{muller_newton_2009}, the authors only investigate certain interpolatory properties of the proposed functions and make no connection to the classical rational \ac{mt} system. \Cref{def:GMT} generalizes this Newton--basis by allowing repeated kernel centers in $\blambda$ if the kernel $\xi$ is sufficiently smooth. Recently, in~\cite{qian_kernel_2026}, generalized \ac{mt} functions have been proposed in a family of \acp{rkhs} known as boundary weighted Hardy spaces. Instead of the Gram--Schmidt method, the authors of~\cite{qian_kernel_2026} consider an inner--outer factorization--based approach to generate their \ac{mt} system. This restricts their constructions to a family of $H_2$--like analytic function spaces. In addition, they do not consider \acp{rkhs} where the map $\lambda \mapsto \xi_{\lambda}$ is only $m$--times differentiable. Hence, the kernel \ac{mt} system defined in~\cite{qian_kernel_2026} is also an important special case of \Cref{def:GMT}. To the best of our knowledge, however, there have been no previous generalizations to cover repeated kernel centers in $\blambda \in \ell_{\Omega}$ in arbitrary \acp{rkhs} as given in \Cref{def:GMT}. 

Numerical concerns regarding a possible definition similar to \Cref{def:GMT} have been voiced in~\cite{qian_kernel_2026} due to the application of the Gram--Schmidt method. These concerns are partially addressed empirically in \Cref{sec:exp}, where a variety of experiments shows that, in many \acp{rkhs}, \ac{gmt} functions can in fact be safely generated. In addition, we introduce a biorthogonal system later in \Cref{subsec:GMTprop}, which can be used to transform \acp{fom} to companion spaces without the need for the explicit computation of \ac{gmt} functions. Finally, we note that the proposed biorthogonal \ac{gmt} basis can be interpreted as the analog of Lagrange basis polynomials for interpolation problems in \acp{rkhs}. 

\subsection{Properties of GMT functions}
\label{subsec:GMTprop}
We now show some elementary properties of \ac{gmt} functions. We will make the  following assumption throughout this section.
\begin{assumption}
    \label{ass:RKHSprops}
    Assume that $\origspace$ is an \ac{rkhs} of functions $\C \supseteq \Omega \to \C$ with reproducing kernel $\xi$ and let furthermore, $\blambda \in \ell_{\Omega}$. In addition, let $\big\{ \gmtk \big\}_{k=0}^{n-1}$ be the first $n$ \ac{gmt} functions generated by $\blambda$ as per \Cref{def:GMT}.
\end{assumption}

\begin{lemma}[Orthogonality of the \ac{gmt} system]
\label{thm:GMTONS}
Under \Cref{ass:RKHSprops}, for any choice of $n \in \N$ and \ac{rkhs} $\origspace$, the \ac{gmt} functions $\big\{\gmtk\big\}_{k=0}^{n-1}$ are orthonormal, that is, for $j,k=0,\ldots,n-1$, we have
\begin{enumerate}[label=(\alph*)]
\item 
    \itemeq{\left\langle \gmtj, \gmtk \right\rangle_{\origspace} = \delta_{j,k} \coloneqq \begin{cases}
        0, & \text{if } j \neq k, \\
        1, & \text{if } j = k,
    \end{cases}}   \label{eq:GMTONS}
\item 
       \itemeq{\myspan\left\{ \varphi^{\blambda}_0, \varphi^{\blambda}_1, \ldots, \varphi^{\blambda}_{n-1} \right\} = \Vn{n}{\blambda}.}
  \label{eq:GMTspan}
\end{enumerate}
\end{lemma}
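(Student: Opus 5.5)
The plan is a simultaneous induction on $n$, proving (a) and (b) together. This is the standard Gram--Schmidt argument, with \Cref{ass:GMTkers} ensuring that every normalization is well defined. For $n=1$ we have $\cS_0^{\blambda}=0$, so $\varphi^{\blambda}_0=\eta_0/\|\eta_0\|_{\origspace}$. Linear independence of $\{\eta_0\}$ gives $\eta_0\neq 0$, so $\varphi^{\blambda}_0$ is well defined, has unit norm, and spans $\myspan\{\eta_0\}=\Vn{1}{\blambda}$.

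For the inductive step, assume $\{\varphi^{\blambda}_k\}_{k=0}^{n-1}$ is orthonormal and spans $\Vn{n}{\blambda}$. The first task is to identify $\cS_n^{\blambda}$ from \eqref{eq:GMTpartsum}. Since it is built from an orthonormal basis of $\Vn{n}{\blambda}$, it is the orthogonal projector $\cP_{\Vn{n}{\blambda}}$. To check this, note that it is idempotent and self-adjoint with range $\Vn{n}{\blambda}$. Equivalently, for every $j<n$ we have $\langle \bG-\cS_n^{\blambda}[\bG],\varphi^{\blambda}_j\rangle_{\origspace}=\langle\bG,\varphi^{\blambda}_j\rangle_{\origspace}-\sum_{k<n}\langle\bG,\varphi^{\blambda}_k\rangle_{\origspace}\delta_{k,j}=0$.

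The only genuine point is that the denominator in \Cref{def:GMT} does not vanish. Suppose $\|\eta_n-\cS_n^{\blambda}[\eta_n]\|_{\origspace}=0$. Then $\eta_n\in\Vn{n}{\blambda}=\myspan\{\eta_0,\ldots,\eta_{n-1}\}$, which contradicts the linear independence of $\{\eta_0,\ldots,\eta_n\}$ assumed in \Cref{ass:GMTkers}. This requires $n+1\le\dim\origspace$, which is implicit whenever $n+1$ GMT functions are formed. We also need each $\eta_k$ to lie in $\origspace$, which holds because $\funder[d_k]{\xi_{\lambda_k}}{\olambda}\in\origspace$ with $d_k\le m$. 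So $\varphi^{\blambda}_n$ is well defined and has unit norm.

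By the computation above, $\varphi^{\blambda}_n$ is orthogonal to each $\varphi^{\blambda}_j$ with $j<n$, which gives (a) at level $n+1$. For (b), we have $\varphi^{\blambda}_n\in\myspan\{\eta_n\}+\Vn{n}{\blambda}\subseteq\Vn{n+1}{\blambda}$. Conversely, $\eta_n=\|\eta_n-\cS_n^{\blambda}[\eta_n]\|_{\origspace}\,\varphi^{\blambda}_n+\cS_n^{\blambda}[\eta_n]\in\myspan\{\varphi^{\blambda}_0,\ldots,\varphi^{\blambda}_n\}$. Together with the induction hypothesis, this gives equality of the spans.

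The only step requiring care is the nonvanishing of the normalization. It rests entirely on the linear independence hypothesis in \Cref{ass:GMTkers}, which is where repeated centers, i.e.\ derivative slices, enter. Everything else is a routine Gram--Schmidt verification.
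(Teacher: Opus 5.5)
Your proof is correct and is the same Gram--Schmidt argument the paper uses. The paper's proof is a single sentence stating that \Cref{def:GMT} is a Gram--Schmidt procedure; your induction fills in that argument. In particular, you make explicit that the normalizing denominator is nonzero precisely because of the linear independence hypothesis in \Cref{ass:GMTkers}.
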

\begin{proof}
    The statement follows from the observation that \Cref{def:GMT} is a Gram--Schmidt procedure in $\origspace$.
\end{proof}
\begin{remark}
    The orthogonality property is also proven for the \ac{rkhs} Newton basis introduced in~\cite[Theorem~6]{muller_newton_2009}. This corresponds to the \ac{gmt} system when the generating sequence $\blambda$ does not contain any repeated kernel centers.
\end{remark}

The next claim verifies that we can use \ac{gmt} functions to characterize the generalized model spaces from \Cref{def:gmodspace}.

The ordering of the first $n$ components in $\blambda \in \ell_{\Omega}$ does not affect the subspace generated by the first $n$ \ac{gmt} functions. Hence, there are $n!$ \ac{gmt} systems that can be used to span $\Vn{n}{\blambda}$. The next theorem gives insight into the placement of zeros of \ac{gmt} functions.
\begin{theorem}[Zeros]
    \label{thm:GMTzers}
 Under~\Cref{ass:GMTkers} and~\Cref{ass:RKHSprops}, it holds that
    \begin{equation*}
        \left(\gmtk\right)^{(d_j)}(\lambda_j) = 0 \quad (k=0,\ldots,n-1, \ j=0,\ldots, k-1).
    \end{equation*}
\end{theorem}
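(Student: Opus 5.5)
The plan is to turn each zero condition into an orthogonality statement and then read it off from the Gram--Schmidt structure of \Cref{def:GMT}. By \Cref{thm:derivKerOrder} (with $d_j \le m$, guaranteed by \Cref{ass:GMTkers}), for every $\bH \in \origspace$ and every index $j$ we have
\begin{equation*}
    \bH^{(d_j)}(\lambda_j) = \left\langle \bH, \funder[d_j]{\xi_{\lambda_j}}{\olambda} \right\rangle_{\origspace} = \left\langle \bH, \eta_j \right\rangle_{\origspace},
\end{equation*}
where the last equality is the definition \eqref{eq:kers} of $\eta_j$. Applying this with $\bH = \gmtk$ shows that the claim is equivalent to
\begin{equation*}
    \left\langle \gmtk, \eta_j \right\rangle_{\origspace} = 0 \quad (j=0,\ldots,k-1).
\end{equation*}

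This orthogonality comes directly from \Cref{thm:GMTONS}. By \eqref{eq:GMTspan}, applied with $n = k$, each $\eta_j$ with $j<k$ lies in $\Vn{k}{\blambda} = \myspan\{\varphi^{\blambda}_0,\ldots,\varphi^{\blambda}_{k-1}\}$. By \eqref{eq:GMTONS}, $\gmtk$ is orthogonal to each of $\varphi^{\blambda}_0,\ldots,\varphi^{\blambda}_{k-1}$, hence to their span, hence to $\eta_j$.

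An equivalent route avoids \Cref{thm:GMTONS}. The numerator $\eta_k - \cS_k^{\blambda}[\eta_k]$ is $\eta_k$ minus its orthogonal projection onto $\Vn{k}{\blambda}$, since $\cS_k^{\blambda}$ is the orthogonal projector onto that space. The numerator is therefore orthogonal to $\Vn{k}{\blambda}$, and normalizing does not affect this. The denominator is nonzero because of the linear independence required in \Cref{ass:GMTkers}.

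The only delicate point is the derivative bookkeeping. The order $d_j$ counts how often $\lambda_j$ already occurred before position $j$, so $\eta_j$ is exactly the $d_j$th $\olambda$-derivative of the kernel slice at $\lambda_j$. This matches the pointwise derivative $(\cdot)^{(d_j)}(\lambda_j)$ through \Cref{thm:derivKerOrder}, and the smoothness hypotheses of \Cref{ass:GMTkers} make that corollary applicable.

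The case $k=0$ is vacuous.
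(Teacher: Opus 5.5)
Your proof is correct and follows essentially the same route as the paper: you use \Cref{thm:GMTONS} to get $\gmtk \perp \Vn{k}{\blambda}$ with $\eta_j \in \Vn{k}{\blambda}$ for $j<k$, and then turn $\langle \gmtk, \eta_j\rangle_{\origspace}=0$ into the derivative condition via \Cref{thm:derivKerOrder}. Your alternative projection-based route only unpacks the Gram--Schmidt orthogonality behind \Cref{thm:GMTONS}, so it is not a genuinely different argument.
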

\begin{proof}
    Let $n > 0$ and $k \leq n$. By~\eqref{eq:GMTONS} and~\eqref{eq:GMTspan}, we have $\gmtn \perp \Vn{k}{\blambda}$ and $\eta_j = \funder[d_j]{\xi_{\lambda_j}}{\olambda} \in \Vn{k}{\blambda}$ for $j=0,\ldots,k-1$. Thus,
    \begin{equation*}
        0 = \left\langle \gmtn, \funder[d_j]{\xi_{\lambda_j}}{\olambda} \right\rangle_{\origspace} \stackrel{\textrm{Cor.~\ref{thm:derivKerOrder}}}{=}  \left(\gmtn\right)^{(d_j)}(\lambda_j).
    \end{equation*}
\end{proof}

\begin{remark}
\Cref{thm:GMTzers} also holds for the classical \ac{mt} functions and can be easily seen from the factorized form~\eqref{eq:MT}.
\end{remark}
For some applications, we need to consider real--valued \acp{rkhs}. In the real--valued case, the Chebyshev or Haar property is key for many applications such as uniform approximation of continuous functions. The following theorem establishes this property for the \ac{gmt} system under some additional assumptions.
\begin{theorem}[Chebyshev--Haar property]
    \label{thm:haar}
    Let $\origspace$ be an \ac{rkhs} of functions $\Omega \supseteq \R \to \R$. Under~\Cref{ass:GMTkers}, suppose that for any choice of $n \geq 1$ and
    \begin{equation*}
        t_0 < t_1 < \ldots < t_{n-1} \in \Omega \quad \textrm{and} \quad \lambda_0 < \lambda_1 < \ldots < \lambda_{n-1} \in \Omega,
    \end{equation*}
    we have
    \begin{equation}
        \label{eq:totalPos}
        \det\left[ \xi(t_k, \lambda_j) \right]_{k,j=0}^{n-1} > 0.
    \end{equation}
    Then, the \ac{gmt} system $\left\{ \gmtk \right\}_{k=0}^{n-1}$ has the Chebyshev--Haar property, that is, the zeros identified in \Cref{thm:GMTzers} are the only zeros.
\end{theorem}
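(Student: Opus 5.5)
The plan is to prove the claim for the case of distinct kernel centers, where $d_k=0$ and the hypothesis \eqref{eq:totalPos} is stated directly in terms of kernel values. The idea is to express $\gmtk$ as a determinant (a Newton-basis type formula) and then count zeros by a determinant-sign argument.

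\textbf{Step 1: determinant formula.} Since $\gmtk \in \Vn{k+1}{\blambda}$, we can write $\gmtk = \sum_{j=0}^{k} c_j \xi_{\lambda_j}$. By \Cref{thm:GMTzers}, $\gmtk(\lambda_j)=0$ for $j<k$, and $\gmtk$ is not identically zero. These $k$ homogeneous interpolation conditions on the $(k+1)$-dimensional space determine $\gmtk$ up to a scalar. The scalar is fixed by normalization, and its sign by $\langle \gmtk, \xi_{\lambda_k}\rangle_{\origspace} = \|\eta_k - \cS_k^{\blambda}[\eta_k]\|_{\origspace} > 0$. This leads to
\begin{equation*}
\gmtk(t) = C_k \det\begin{bmatrix} \xi(\lambda_0,\lambda_0) & \cdots & \xi(\lambda_0,\lambda_k)\\ \vdots & & \vdots\\ \xi(\lambda_{k-1},\lambda_0) & \cdots & \xi(\lambda_{k-1},\lambda_k)\\ \xi(t,\lambda_0) & \cdots & \xi(t,\lambda_k)\end{bmatrix}, \qquad C_k \neq 0 .
\end{equation*}
To check this formula, note that the determinant lies in $\mathrm{span}\{\xi_{\lambda_j}\}$ (expand along the last row), vanishes at $t=\lambda_j$ for $j<k$ (two equal rows), and is nonzero by linear independence from \Cref{ass:GMTkers}. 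Setting $t=\lambda_k$ gives a Gram determinant, which is positive.

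\textbf{Step 2: sign count.} Suppose $\gmtk(t^*)=0$ for some $t^* \notin \{\lambda_0,\ldots,\lambda_{k-1}\}$. Insert $t^*$ into the ordered list of $\lambda_0,\ldots,\lambda_{k-1}$. The rows of the determinant can then be permuted so that the row points are increasing, and the columns are also ordered increasingly after reordering the $\lambda_j$. Hypothesis \eqref{eq:totalPos} then forces this determinant to be nonzero (up to the sign of the permutation), which contradicts $\gmtk(t^*)=0$.

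This requires the $\lambda_j$ to be sorted, but the ordering of the centers only affects the GMT system up to a change of basis. To handle this, apply the argument to the determinant representation directly. The determinant is invariant (up to sign) under simultaneous reordering of the columns, and its row points are the arbitrary distinct set $\{\lambda_0,\ldots,\lambda_{k-1},t^*\}$. So \eqref{eq:totalPos} applies once both index sets are sorted.

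\textbf{Main obstacle: repeated centers.} When some $d_j>0$, rows and columns become derivatives $\partial^{d}$ of the kernel. I would try to handle this by a confluent limit. Replace the repeated $\lambda_j$ by nearby distinct points, apply the previous argument, divide by Vandermonde-like factors, and pass to the limit. Strict positivity may degenerate to nonnegativity in the limit, so one needs either an extended total positivity statement or an argument via multiplicities. The latter would show that $\gmtk$ cannot have more than $k$ zeros counted with multiplicity, since otherwise a perturbation would produce $k+1$ distinct zeros of some element of $\Vn{k+1}{\blambda}$. That would contradict the nonvanishing of the determinants in \eqref{eq:totalPos}, because an element of an $(k+1)$-dimensional span vanishing at $k+1$ distinct points must be zero.
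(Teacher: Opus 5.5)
Your argument for pairwise distinct centers is correct and is essentially the paper's. Both reduce the claim to the nonsingularity of the kernel collocation matrix $[\xi(s_i,\lambda_j)]$ at distinct nodes, obtained from \eqref{eq:totalPos} after sorting rows and columns. The paper packages this as $\Phi = A\Xi$ with $A$ triangular, plus the observation that no nonzero element of $\mathrm{span}\{\xi_{\lambda_0},\ldots,\xi_{\lambda_{n-1}}\}$ has $n$ distinct zeros; you instead use the equivalent Cramer-type bordered determinant for $\varphi_k$ directly.

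Your final paragraph on repeated centers is not needed, since the paper's proof begins by taking the hypothesis to force distinct centers. As sketched it would not close anyway: zeros of even multiplicity need not survive a real perturbation, and confluent limits of the determinants in \eqref{eq:totalPos} are only nonnegative. That case would therefore require an additional extended (confluent) total positivity hypothesis.
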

\begin{proof}
    We note that by the assumptions of the theorem, all kernel centers in $\blambda$ are distinct. Hence, by \Cref{def:GMT} and \Cref{thm:GMTONS}, we have
    \begin{equation*}
        \Vn{n}{\blambda} = \myspan\left\{ \gmt_0, \ldots, \gmt_{n-1} \right\} = \myspan\left\{ \xi_{\lambda_0}, \ldots, \xi_{\lambda_{n-1}} \right\}.
    \end{equation*}
    From this, we observe that for any $k=0,\ldots,n-1$ we can write
    \begin{equation*}
        \gmtk = \sum_{j=0}^{k} a_{k,j} \xi_{\lambda_j},
    \end{equation*}
    where $a_{k,k} \neq 0$. Introduce the matrix $\bA = \left[ a_{k,j} \right]_{k,j=0}^{n-1}$ with the convention that $a_{k,j} = 0$ whenever $j > k$. Similarly, introduce $\Xi = \left[ \xi(t_j, \lambda_k) \right]_{k,j=0}^{n-1}.$
    Then, we can express the matrix $\Phi$, which satisfies $\left[ \Phi \right]_{k, j} = \gmtk(t_j)$ for $k,j=0,\ldots,n-1$ as $\Phi = \bA \Xi$.
    From this,
    \begin{equation*}
        \det \Phi = \det \bA \det \Xi,
    \end{equation*}
    where $\det \Xi > 0$ by~\eqref{eq:totalPos} and $\det \bA \neq 0$ because it is a triangular matrix with nonzero diagonal entries. This implies the statement. Indeed, consider $\bH \in \Vn{n}{\blambda} \setminus \{0\}$ and suppose $\bH$ has $n$ distinct zeros, i.e.,
    \begin{equation*}
        \bH(t_k) = \sum_{j=0}^{n-1} c_j \gmtj(t_k) = 0 \quad (k=0,\ldots,n-1).
    \end{equation*}
    Taking $\bc \coloneqq \left[ c_0, \ldots, c_{n-1} \right]^\top$, we can write this in matrix form $\Phi \bc = 0$.
    However, $\det \Phi \neq 0$, hence the only solution to this is $\bc = 0$, which contradicts the assumption $\bH \neq 0$. Thus, $\bH \in \Vn{n}{\blambda}$ cannot have $n$ distinct zeros. Since $\gmt_{n-1} \in \Vn{n}{\blambda}$, the $n-1$ zeros from \Cref{thm:GMTzers} are the only zeros.
\end{proof}
\begin{remark}
    \begin{enumerate}[label = (\alph*)]
        \item Notice that under the assumptions of \Cref{thm:haar}, every initial segment $\gmt_0, \ldots, \gmtk$ with $k < n$ also has the Chebyshev--Haar property.
        \item The Chebyshev--Haar property is key for various numerical methods. For example, the Remez algorithm, which constructs best approximations in the Chebyshev norm to continuous functions, has an extension to Haar systems in general~\cite{petrushev_rational_1988}.
    \end{enumerate}
\end{remark}

The next claim asserts that the \ac{gmt}--Fourier partial sums given in~\eqref{eq:GMTpartsum} are orthogonal projection operators onto the subspace $\Vn{n}{\blambda}$.
\begin{lemma}[\ac{gmt} projection]
\label{thm:orthoProjGMT}
   Under~\Cref{ass:RKHSprops} and \Cref{def:GMT}, we have 
   \begin{equation*}
       \gmtPsumn = \cP_{\Vn{n}{\blambda}}.
   \end{equation*}
\end{lemma}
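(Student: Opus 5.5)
The plan is to verify directly that $\gmtPsumn$ satisfies the defining properties of the orthogonal projector onto $\Vn{n}{\blambda}$. The two facts we rely on are from \Cref{thm:GMTONS}: the functions $\gmt_0,\ldots,\gmt_{n-1}$ are orthonormal by~\eqref{eq:GMTONS}, and they span $\Vn{n}{\blambda}$ by~\eqref{eq:GMTspan}. Since $\Vn{n}{\blambda}$ is finite dimensional, it is closed. The Hilbert projection theorem therefore gives a unique decomposition $\bG = \cP_{\Vn{n}{\blambda}}[\bG] + (\bG - \cP_{\Vn{n}{\blambda}}[\bG])$ with the second term orthogonal to $\Vn{n}{\blambda}$. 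It suffices to show that $\gmtPsumn[\bG]$ lies in $\Vn{n}{\blambda}$ and that $\bG - \gmtPsumn[\bG]$ is orthogonal to $\Vn{n}{\blambda}$; uniqueness then forces the two operators to coincide.

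The first property is immediate. By~\eqref{eq:GMTpartsum}, $\gmtPsumn[\bG]$ is a finite linear combination of $\gmt_0,\ldots,\gmt_{n-1}$, so~\eqref{eq:GMTspan} places it in $\Vn{n}{\blambda}$.

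For the second property we test against the basis, fixing $j\in\{0,\ldots,n-1\}$. By sesquilinearity and~\eqref{eq:GMTONS},
\begin{equation*}
\left\langle \gmtPsumn[\bG], \gmtj \right\rangle_{\origspace} = \sum_{k=0}^{n-1} \left\langle \bG, \gmtk\right\rangle_{\origspace} \left\langle \gmtk, \gmtj\right\rangle_{\origspace} = \left\langle \bG, \gmtj\right\rangle_{\origspace}.
\end{equation*}
Hence $\langle \bG - \gmtPsumn[\bG], \gmtj\rangle_{\origspace}=0$ for every $j$. By linearity and~\eqref{eq:GMTspan}, this orthogonality extends to all of $\Vn{n}{\blambda}$.

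There is no real obstacle here. The only point needing care is that the $\gmt_k$ are well defined, meaning the normalizing denominators in \Cref{def:GMT} are nonzero. This follows from the linear independence of $\eta_0,\ldots,\eta_{n-1}$ in \Cref{ass:GMTkers}, which is already implicit in \Cref{thm:GMTONS}.
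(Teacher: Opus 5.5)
Your proof is correct and follows the same route as the paper, which simply cites ``standard linear algebra arguments'' with the orthonormal basis from \Cref{thm:GMTONS}. You write that argument out explicitly: the partial sum lies in the span, the residual is orthogonal to each basis function, and uniqueness in the Hilbert projection theorem then identifies the two operators.
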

\begin{proof}
 With the orthonormal basis $\left\{ \gmt_0,\ldots,\gmt_{n-1} \right\}$ of $\Vn{n}{\blambda}$, by standard linear algebra arguments we obtain
  \begin{equation*}
   \cP_{\Vn{n}{\blambda}}\left[ \bG \right] = \sum_{k=0}^{n-1} \left\langle \bG, \gmt_k \right\rangle_{\origspace} \gmt_k = \gmtPsumn \left[ \bG \right] \quad (\bG \in \origspace).
  \end{equation*}

\end{proof}

The following theorem shows that \ac{gmt}--Fourier partial sums provide a convenient way to obtain interpolants in \acp{rkhs}. This can be seen as a generalization of the rational interpolation technique proposed in~\cite{nagy-csiha_construction_2023} using classical \ac{mt} expansions. It also generalizes the Newton--basis--based interpolation in \acp{rkhs}~\cite{muller_newton_2009}, which did not allow for kernel centers with higher multiplicities. With the proposed \Cref{def:GMT}, we can claim much more. Suppose that $k \leq n-1$ is the highest index of appearance of $\lambda$. Then, by the next statement, we find that our \ac{gmt} expansions are Hermite interpolants of any function $\bG \in \origspace$ at the node $\lambda$ up to order $d_k$.

\begin{theorem}[Interpolatory property of \ac{gmt} partial sums]
    \label{thm:interpGMT} Suppose that \Cref{ass:GMTkers} and \Cref{ass:RKHSprops} hold. Then, for any $\bG \in \origspace$,
    \begin{equation*}
        \left(\gmtPsumn \left[\bG\right]\right)^{(d_k)} (\lambda_k) = \bG^{(d_k)}(\lambda_k) \quad (k=0,\ldots,n-1),
    \end{equation*}
    where $\gmtPsumn$ defines the \ac{gmt}--Fourier partial sums given in~\eqref{eq:GMTpartsum}.
\end{theorem}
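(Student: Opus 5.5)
The plan is to reduce the interpolation identity to an orthogonality statement and then use the derivative reproduction of \Cref{thm:derivKerOrder}. Fix $\bG \in \origspace$ and $k \in \{0,\ldots,n-1\}$. By \Cref{thm:orthoProjGMT}, $\gmtPsumn[\bG] = \cP_{\Vn{n}{\blambda}}[\bG]$. Hence the residual $\bG - \gmtPsumn[\bG]$ is orthogonal to $\Vn{n}{\blambda}$; this is the defining property of an orthogonal projection, with $\Vn{n}{\blambda}$ closed because it is finite dimensional.

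Next, I identify an element of $\Vn{n}{\blambda}$ that reproduces the $d_k$th derivative at $\lambda_k$. By \eqref{eq:kers} and \eqref{eq:VnSec5}, $\eta_k = \funder[d_k]{\xi_{\lambda_k}}{\olambda}$ is one of the spanning functions of $\Vn{n}{\blambda}$, since $k \le n-1$. Therefore
\begin{equation*}
\left\langle \bG - \gmtPsumn[\bG], \funder[d_k]{\xi_{\lambda_k}}{\olambda} \right\rangle_{\origspace} = 0 .
\end{equation*}
\Cref{ass:GMTkers} gives $(\lambda\mapsto\xi_\lambda)\in C^m(\Omega,\origspace)$ with $\funder[i]{\xi_\lambda}{\olambda}\in\origspace$ for $i\le m$. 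Each value appears at most $m+1$ times in $\blambda$, so $d_k \le m$. Thus \Cref{thm:derivKerOrder} applies with $i = d_k$ to both $\bG$ and $\gmtPsumn[\bG] \in \origspace$. Using linearity of the inner product in its first argument, the display becomes $\bG^{(d_k)}(\lambda_k) - (\gmtPsumn[\bG])^{(d_k)}(\lambda_k) = 0$, which is the claim.

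The only step needing care is the bookkeeping that $d_k \le m$, so that the derivative reproduction is legitimate. This follows from the multiplicity bound in \Cref{ass:GMTkers}: $d_k$ counts earlier occurrences of $\lambda_k$, so $d_k + 1 \le m+1$. The case $d_k = 0$ is just the reproducing property. No other obstacle is expected; in particular the Gram--Schmidt form of the $\gmtk$ is never needed beyond \Cref{thm:orthoProjGMT} and \eqref{eq:GMTspan}.
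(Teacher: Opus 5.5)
Your proposal is correct and follows the paper's proof: you identify $\gmtPsumn$ with $\cP_{\Vn{n}{\blambda}}$ via \Cref{thm:orthoProjGMT}, test the residual against $\eta_k = \funder[d_k]{\xi_{\lambda_k}}{\olambda} \in \Vn{n}{\blambda}$, and turn both inner products into derivative values with \Cref{thm:derivKerOrder}. Your explicit check that $d_k \le m$, from the multiplicity bound in \Cref{ass:GMTkers}, is a detail the paper leaves implicit, and it is exactly what makes the corollary applicable.
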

\begin{proof}
    By \Cref{thm:orthoProjGMT}, $\gmtPsumn$ is an orthogonal projector onto $\Vn{n}{\blambda}$.
    Hence, for any $\bG \in \origspace$ and $k=0,\ldots,n-1$ we have
    \begin{multline*}
        0 = \left\langle \bG -  \gmtPsumn \left[\bG \right], \eta_k \right\rangle_{\origspace} = \left\langle \bG, \funder[d_k]{\xi_{\lambda_k}}{\olambda} \right\rangle_{\origspace} -  \left\langle \gmtPsumn \left[\bG \right], \funder[d_k]{\xi_{\lambda_k}}{\olambda} \right\rangle_{\origspace} \\   \stackrel{\textrm{Cor.~ \ref{thm:derivKerOrder}}}{=} \bG^{(d_k)}(\lambda_k) - \left( \gmtPsumn \left[\bG \right]\right)^{(d_k)}(\lambda_k).
    \end{multline*}
\end{proof}
With the help of \ac{gmt} functions, we can also define Lagrange interpolation schemes in \acp{rkhs}. For this, consider the following definition of generalized Lagrange basis polynomials.
\begin{definition}[Lagrange--\ac{gmt} basis functions]
\label{def:Lagrange}
   Under~\Cref{ass:GMTkers}, assume that $\lambda_0, \ldots, \lambda_{n-1}$ are pairwise distinct. Suppose also that $\xi(\lambda, \lambda) \neq 0$ for all $\lambda \in \Omega$. Let 
   \begin{equation*}
       \blambda^{l} \coloneqq \left(\lambda_0, \ldots, \lambda_{l-1}, \lambda_{l+1}, \ldots, \lambda_{n-1}, \lambda_{l}\right) 
   \end{equation*}
   for $l=0,\ldots,n-1$. The \emph{Lagrange--\ac{gmt} basis function} corresponding to the node $\lambda_l$ is given by
   \begin{equation*}
       \psi_l \coloneqq \frac{\varphi_{n-1}^{\blambda^{l}}}{\varphi_{n-1}^{\blambda^{l}}(\lambda_l)},
   \end{equation*}
   where $\varphi_{n-1}^{\blambda^{l}}$ is the $n$th \ac{gmt} function generated by $\blambda^l$ for $l=0,\ldots,n-1$.
\end{definition}
We now show some key properties of the Lagrange--\ac{gmt} basis from \Cref{def:Lagrange}.
\begin{theorem}[Lagrange property]
    \label{thm:LagrangeProps}
    The Lagrange--\ac{gmt} functions from \Cref{def:Lagrange}  satisfy the Lagrange property and moreover, $\left\{\psi_k \right\}_{k=0}^{n-1}$ forms a biorthogonal system to $\left\{ \xi_{\lambda_k} \right\}_{k=0}^{n-1}$. That is, 
    \begin{equation*}
       \left\langle \psi_k, \xi_{\lambda_j} \right\rangle_{\origspace} = \psi_k \left( \lambda_j \right) = \delta_{j,k}
    \end{equation*}
    for $k,j=0,\ldots,n-1$ (where $\delta_{j,k}$ denotes again the Kronecker symbol). 
\end{theorem}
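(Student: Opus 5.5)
The plan is to reduce everything to the zero property of \Cref{thm:GMTzers} together with the reproducing property. Fix $l \in \{0,\ldots,n-1\}$ and consider the reordered sequence $\blambda^{l}$. Since its first $n$ entries are pairwise distinct, all $d_k = 0$, so the generating functions are just the kernel slices. The first $n-1$ entries of $\blambda^l$ are exactly the nodes $\lambda_j$ with $j \neq l$, and the last entry is $\lambda_l$.

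By \Cref{thm:GMTONS}, the last \ac{gmt} function $\varphi_{n-1}^{\blambda^l}$ is orthogonal to $\Vn{n-1}{\blambda^l} = \myspan\{\xi_{\lambda_j} : j \neq l\}$. Applying the reproducing property (equivalently, \Cref{thm:GMTzers} with $d_j=0$) then gives
\begin{equation*}
\varphi_{n-1}^{\blambda^l}(\lambda_j) = \left\langle \varphi_{n-1}^{\blambda^l}, \xi_{\lambda_j}\right\rangle_{\origspace} = 0 \quad (j \neq l).
\end{equation*}
Dividing by $\varphi_{n-1}^{\blambda^l}(\lambda_l)$ then yields $\psi_l(\lambda_j) = 0$ for $j\neq l$ and $\psi_l(\lambda_l) = 1$. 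The identity $\langle \psi_l, \xi_{\lambda_j}\rangle_{\origspace} = \psi_l(\lambda_j)$ is again just the reproducing property, which gives the biorthogonality.

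The main obstacle is well-definedness: we must show $\varphi_{n-1}^{\blambda^l}(\lambda_l) \neq 0$. I would compute it directly. Write $\eta = \xi_{\lambda_l}$ and $P = \cS_{n-1}^{\blambda^l}$, which by \Cref{thm:orthoProjGMT} is the orthogonal projector onto $\Vn{n-1}{\blambda^l}$. Then
\begin{equation*}
\varphi_{n-1}^{\blambda^l}(\lambda_l) = \frac{\langle \eta - P\eta, \eta\rangle_{\origspace}}{\|\eta - P\eta\|_{\origspace}} = \frac{\|\eta - P\eta\|_{\origspace}^2}{\|\eta-P\eta\|_{\origspace}} = \|\eta - P\eta\|_{\origspace},
\end{equation*}
where the middle step uses $\eta - P\eta \perp P\eta$. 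This quantity is nonzero exactly when $\xi_{\lambda_l} \notin \myspan\{\xi_{\lambda_j}: j\neq l\}$. That in turn is guaranteed by the linear independence hypothesis in \Cref{ass:GMTkers}, or alternatively by \Cref{thm:posdef} if the kernel is strictly positive definite; this is the same condition that makes the \ac{gmt} function itself well defined. The hypothesis $\xi(\lambda,\lambda)\neq 0$ covers the degenerate case $n=1$, where $P=0$ and the value equals $\xi(\lambda_0,\lambda_0)^{1/2}$.
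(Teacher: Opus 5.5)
Your proof is correct and follows the same route as the paper: the vanishing $\psi_l(\lambda_j)=0$ for $j\neq l$ comes from \Cref{thm:GMTzers} (orthogonality of the last \ac{gmt} function to the preceding kernel slices, combined with the reproducing property), and the diagonal case $\psi_l(\lambda_l)=1$ is just the normalization. Your extra computation $\varphi_{n-1}^{\blambda^l}(\lambda_l)=\|\xi_{\lambda_l}-\cS_{n-1}^{\blambda^l}[\xi_{\lambda_l}]\|_{\origspace}>0$ is a worthwhile addition, since the paper's proof assumes without comment that this denominator is nonzero. The stated hypothesis $\xi(\lambda,\lambda)\neq 0$ only settles this for $n=1$; for $n>1$ it follows from the linear independence in \Cref{ass:GMTkers}, exactly as you argue.
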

\begin{proof}
Consider first $j=k$. Then,
    \begin{equation*}
        \left\langle \psi_k, \xi_{\lambda_k} \right\rangle_{\origspace} = \psi_k(\lambda_k) = \frac{\varphi_{n-1}^{\blambda^k} (\lambda_k)}{\varphi_{n-1}^{\blambda^k} (\lambda_k)} = 1.
    \end{equation*}
    If $k \neq j$, by \Cref{thm:GMTzers} and \Cref{def:Lagrange}, we have
    \begin{equation*}
        \left\langle \psi_k, \xi_{\lambda_j} \right\rangle_{\origspace} = \psi_k(\lambda_j) = \frac{\varphi_{n-1}^{\blambda^k} (\lambda_j)}{\varphi_{n-1}^{\blambda^k} (\lambda_k)} = 0.
    \end{equation*}
\end{proof}

We now turn our attention to more general properties of the \ac{gmt} system. First, we discuss the question of completeness in \acp{rkhs} and raise a point about the Blaschke--Sz\'asz property for \ac{gmt} systems in general \acp{rkhs}. Then, we introduce a \ac{gmt}--like biorthogonal system in $\cM_{\myid}$--invariant \acp{rkhs} to express $\cP_{\Vn{n}{\blambda}}$.

{\sloppy The next theorem clarifies the existence of complete \ac{gmt} systems in infinite--dimensional \acp{rkhs}.\par}
\begin{theorem}[Complete \ac{gmt} systems]
   Let $\origspace$ be an infinite--dimensional separable \ac{rkhs} of functions $\C \supseteq \Omega \to \C$. Suppose~\Cref{ass:GMTkers} holds and let $\left\{ \gmtk \right\}_{k=0}^{\infty}$ denote the \ac{gmt} system. Then there exists a $\blambda \in \ell_{\Omega}$ such that $\left\{ \gmtn \right\}_{n=0}^{\infty}$ forms a complete orthonormal system in $\origspace$.
\end{theorem}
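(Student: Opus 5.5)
The approach is to take $\blambda$ with pairwise distinct entries (so $d_k=0$ for all $k$ and $\eta_k=\xi_{\lambda_k}$) drawn from a countable dense set of kernel centers. Completeness then reduces to density of the span of the chosen kernel slices, and the Gram--Schmidt structure of \Cref{def:GMT} transfers that density to the \ac{gmt} system.

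\textbf{Step 1.} By \Cref{thm:kerdens}, since $\origspace$ is separable, there is a countable set $\{\mu_0,\mu_1,\ldots\}\subseteq\Omega$ such that $\mathrm{cl}\big(\myspan\{\xi_{\mu_k}\}\big)=\origspace$. Without loss of generality the $\mu_k$ are pairwise distinct; repetitions can simply be discarded because they contribute nothing to the span. Set $\lambda_k\coloneqq\mu_k$. Each value then appears once, so the multiplicity condition of \Cref{ass:GMTkers} holds trivially (it allows up to $m+1\ge 1$ occurrences). \Cref{ass:GMTkers} also guarantees that $\eta_0,\ldots,\eta_{n-1}$ are linearly independent for every $n$, since $\origspace$ is infinite--dimensional. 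Consequently every denominator in \Cref{def:GMT} is nonzero, and the whole infinite system $\{\gmtn\}_{n=0}^\infty$ is well defined.

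\textbf{Step 2.} By \Cref{thm:GMTONS}, the system is orthonormal and satisfies $\myspan\{\varphi^{\blambda}_0,\ldots,\varphi^{\blambda}_{n-1}\}=\Vn{n}{\blambda}=\myspan\{\xi_{\lambda_0},\ldots,\xi_{\lambda_{n-1}}\}$ for every $n$. Taking the union over $n$ shows that the linear span of the whole \ac{gmt} system coincides with the linear span of $\{\xi_{\lambda_k}\}_{k\ge0}$. By Step 1, this span is dense in $\origspace$.

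\textbf{Step 3.} Conclude by the standard criterion: an orthonormal system whose span is dense is complete. Concretely, suppose $\bF\perp\gmtn$ for all $n$. Then $\bF$ is orthogonal to the dense span, so $\bF=0$. Equivalently, by \Cref{thm:orthoProjGMT}, $\gmtPsumn=\cP_{\Vn{n}{\blambda}}$ converges strongly to the identity, since the subspaces $\Vn{n}{\blambda}$ are increasing with dense union.

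The only delicate point is Step 1: the countable dense family from \Cref{thm:kerdens} must be arranged into an admissible generating sequence. Deduplication handles the distinctness, and \Cref{ass:GMTkers} supplies the linear independence that the Gram--Schmidt procedure requires. If one prefers not to rely on that assumption, one can instead select $\lambda_k$ greedily, skipping any $\mu_j$ with $\xi_{\mu_j}\in\myspan\{\xi_{\lambda_0},\ldots,\xi_{\lambda_{k-1}}\}$. This preserves the span, and the selection never terminates because $\origspace$ is infinite--dimensional.
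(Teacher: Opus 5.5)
Your proposal is correct and follows the paper's route. The paper's proof is the single remark that the result is a direct consequence of \Cref{thm:kerdens}, and your Steps 1--3 supply exactly the omitted details: a deduplicated countable dense set of centers, equality of spans via \Cref{thm:GMTONS}, and the fact that an orthonormal system with dense span is complete. One point to state explicitly in your main route is that the deduplicated dense set is necessarily infinite, since a finite set would span a closed finite--dimensional subspace that cannot be all of $\origspace$; this guarantees that $\blambda$ is a genuine infinite sequence.
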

\begin{proof}
    The statement is a direct consequence of \Cref{thm:kerdens}.
\end{proof}
\begin{remark}
    A natural question is whether there are analogs of the Blaschke--Sz\'asz condition (see~\eqref{eq:szasz}) for \ac{gmt} functions in a general separable \ac{rkhs}. We do not attempt to answer this question in this paper; however, we note that, in addition to $H_2(\D)$, similar conditions have already been defined in other \acp{rkhs}. We highlight the Bergman space $A_2(\D)$, which contains absolutely square--integrable holomorphic functions of the open unit disk~\cite{duren_bergman_2024}. For this space, a special case of the \ac{gmt} functions has been defined in~\cite{pap_multiresolution_2011}. In addition, an analog of the Blaschke--Sz\'asz condition is also known~\cite{horowitz_zeros_1974}. While the condition in this Hilbert space looks very similar to~\eqref{eq:szasz}, the proofs of these theorems are quite different. In particular, we note that the proof in Bergman spaces does not rely on inner--outer factorizations (unlike the proof of~\eqref{eq:szasz}, see~\cite[Chapter 2]{noauthor_theory_1970}), as these do not exist in Bergman spaces. We refer to~\cite[Chapter 4]{duren_bergman_2024}  and~\cite{horowitz_zeros_1974} for a proof of the Blaschke--Sz\'asz condition for Bergman spaces. It is therefore an interesting question whether a Blaschke--Sz\'asz condition applicable to a wide class of \ac{rkhs} spaces can be proved. An analog of the Blaschke--Sz\'asz condition is also shown in~\cite{qian_kernel_2026} for boundary weighted Hardy Spaces.
\end{remark}

In~\cite{qian_kernel_2026}, the authors show that \ac{mt} functions can be efficiently generated in a family of $H_2$--like \acp{rkhs} known as boundary weighted Hardy spaces using inner--outer factorizations. Many \acp{rkhs} of interest, such as Bergman spaces, \ac{rbf} spaces, and others, however, do not belong to this class. The next theorem defines a biorthogonal system in \acp{rkhs} with $\cM_{\myid}$ invariance. This class of \acp{rkhs} is much wider than boundary weighted Hardy spaces; for example, it also contains Bergman spaces and several non--analytic \acp{rkhs}. The following biorthogonal expansions offer a way to transform \ac{mor} problems to admissible \acp{rkhs} as introduced in \Cref{subsec:GMTtrans} when $\origspace$ is $\cM_{\myid}$--invariant. They are inspired by the Lagrange--\ac{gmt} functions from \Cref{def:Lagrange}.

\begin{theorem}[Biorthogonal \ac{gmt} systems]
    \label{thm:shiftInv}
    Let~\Cref{ass:RKHSprops} be satisfied and assume furthermore that $\origspace$ is $\cM_{\myid}$--invariant as in~\Cref{def:multipliers}. Suppose the first $n \in \N$ elements of $\blambda$ are pairwise distinct, that is, $d_k = 0$ for $k=0,\ldots,n-1$. Finally, suppose that $\xi(\lambda_k, \lambda_k) \neq 0$ for $k=0,\ldots,n-1$. Let
    \begin{equation}
        \label{eq:BGMT}
        \vartheta_k = \xi_{\lambda_k} \cdot \ell_{k} \quad (k = 0,\ldots,n-1),
    \end{equation}
    where $\ell_{k}$ is the Lagrange basis polynomial satisfying
    \begin{equation*}
        \ell_{k}(x) = \prod_{\substack{j=0 \\ j \neq k}}^{n-1} \frac{x - \lambda_j}{\lambda_k - \lambda_j}.
    \end{equation*}
    Then, the $k$th \ac{bgmt} function
    \begin{equation*}
        \bior_k(x) \coloneqq \frac{\vartheta_k}{\xi(\lambda_k, \lambda_k)}
    \end{equation*}
    is biorthogonal to the system $\left\{ \xi_{\lambda_k} \right\}_{k=0}^{n-1}$, that is, 
        $\left\langle \bior_k, \xi_{\lambda_j} \right\rangle_{\origspace} = \delta_{j,k}$
    for $j,k=0,\ldots,n-1$.
\end{theorem}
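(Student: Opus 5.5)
The plan is to reduce the biorthogonality relation to a pointwise evaluation via the reproducing property. Once we know $\bior_k \in \origspace$, \Cref{def:reprKer} gives
\begin{equation*}
    \left\langle \bior_k, \xi_{\lambda_j} \right\rangle_{\origspace} = \bior_k(\lambda_j) = \frac{\xi_{\lambda_k}(\lambda_j)\, \ell_k(\lambda_j)}{\xi(\lambda_k,\lambda_k)} = \frac{\xi(\lambda_j,\lambda_k)\, \ell_k(\lambda_j)}{\xi(\lambda_k,\lambda_k)} .
\end{equation*}
The proof then has two steps: first show membership $\vartheta_k \in \origspace$, then evaluate this expression.

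For membership, I will use the $\cM_{\myid}$--invariance from \Cref{def:multipliers}. Since $D_{\myid} = \origspace$, the operator $\cM_{\myid} - \lambda_j \myidH$ maps $\origspace$ into $\origspace$, and it acts by $\bH \mapsto (x - \lambda_j)\bH$. Starting from $\xi_{\lambda_k} \in \origspace$, I apply the $n-1$ operators $(\lambda_k - \lambda_j)^{-1}(\cM_{\myid} - \lambda_j \myidH)$ for $j \neq k$ in succession; the divisions are legitimate because the nodes are pairwise distinct ($d_k = 0$). By induction on the number of factors, each intermediate product stays in $\origspace$. This yields
\begin{equation*}
    \vartheta_k = \prod_{j \neq k} (\lambda_k - \lambda_j)^{-1}\big(\cM_{\myid} - \lambda_j \myidH\big)\left[\xi_{\lambda_k}\right] \in \origspace .
\end{equation*}
Dividing by the nonzero constant $\xi(\lambda_k,\lambda_k)$ keeps $\bior_k$ in $\origspace$.

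The evaluation is then a case split. If $j \neq k$, the factor $(x-\lambda_j)$ in $\ell_k$ vanishes at $x = \lambda_j$, so $\ell_k(\lambda_j) = 0$ and the inner product is $0$. If $j = k$, then $\ell_k(\lambda_k) = 1$ and the expression reduces to $\xi(\lambda_k,\lambda_k)/\xi(\lambda_k,\lambda_k) = 1$. Together these give $\delta_{j,k}$.

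The only step that needs real care is the membership argument. The statement is posed in an abstract \ac{rkhs}, so the pointwise product of a kernel slice with a polynomial is not automatically an element of $\origspace$. Full $\cM_{\myid}$--invariance (not merely $\xi_{\lambda_k} \in D_{\myid}$) is exactly what makes the iteration go through, because every intermediate product must itself lie in $D_{\myid}$. Boundedness of the multiplier, noted after \Cref{def:multipliers}, is not needed for this argument.
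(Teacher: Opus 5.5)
Your proposal is correct and follows the paper's proof. Membership of $\vartheta_k$ in $\origspace$ comes from $\cM_{\myid}$--invariance, and the reproducing property then reduces $\langle \bior_k, \xi_{\lambda_j}\rangle_{\origspace}$ to $\xi(\lambda_j,\lambda_k)\,\ell_k(\lambda_j)/\xi(\lambda_k,\lambda_k) = \delta_{j,k}$; your iterated application of $(\lambda_k-\lambda_j)^{-1}(\cM_{\myid} - \lambda_j \myidH)$ simply spells out the membership step that the paper states in one line.
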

\begin{proof}
   Let $k,j \in \{0, 1,\ldots, n-1\}$. Since $\vartheta_k \in \origspace$ due to $\cM_{\myid}$ invariance we have
   \begin{equation*}
       \left\langle \vartheta_k, \xi_{\lambda_j} \right\rangle_{\origspace} = \left\langle \xi_{\lambda_k} \ell_k, \xi_{\lambda_j} \right\rangle_{\origspace} = \xi_{\lambda_k}(\lambda_j) \ell_k(\lambda_j) = \begin{cases}
           \xi_{\lambda_k}(\lambda_k), & \text{if } k = j, \\
           0, & \text{if } k \neq j.
       \end{cases}
   \end{equation*}
\end{proof}
\begin{remark}
    \begin{enumerate}[label = (\alph*)]
        \item \Cref{thm:shiftInv} could easily be extended to include repeated entries in $\blambda$, but nonvanishing assumptions on higher--order derivatives of $\xi(\lambda_k, \lambda_k)$ for $k=0,\ldots,n-1$ would have to be imposed.

        \item \Cref{thm:shiftInv} covers a much wider collection of \acp{rkhs} than the traditional Hardy--like spaces. Indeed, the \ac{rkhs} $\origspace$ is only required to be $\cM_{\myid}$--invariant, but does need not to have a Szeg\H{o}--like kernel, nor even contain analytic functions. For example, Bergman spaces~\cite{duren_bergman_2024} satisfy the conditions of \Cref{thm:shiftInv}, but fall outside the class of boundary weighted Hardy spaces, for which efficient \ac{gmt} constructions are proposed in~\cite{qian_kernel_2026}. Other good examples are Fock spaces~\cite{zhu_analysis_2012}, which do not even contain analytic functions but are frequently used in signal processing~\cite{de_figueiredo_generalized_1983}.
    \end{enumerate}
\end{remark}

The following lemma notes how \ac{bgmt} functions can be used to represent models in $\cM_{\myid}$--invariant \acp{rkhs}.
\begin{lemma}
    \label{thm:biorExp}
   Suppose the assumptions of \Cref{thm:shiftInv} are satisfied and let $\bG \in \fomspaceo \subseteq \origspace$. We have
    \begin{equation*}
        \bG = \sum_{k=0}^{n-1} \left\langle \bG, \bior_k \right\rangle_{\origspace} \xi_{\lambda_k}.
    \end{equation*}
\end{lemma}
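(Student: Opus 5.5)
Since $\bG \in \fomspaceo$, I will write $\bG = \sum_{j=0}^{n-1} \res_j \xi_{\lambda_j}$, where the kernel centers $\lambda_0,\ldots,\lambda_{n-1}$ are the same pairwise distinct nodes used to build the \ac{bgmt} functions in \Cref{thm:shiftInv}. The statement then reduces to showing that the expansion coefficients are recovered by the \ac{bgmt} inner products, i.e., $\res_k = \langle \bG, \bior_k \rangle_{\origspace}$ for $k=0,\ldots,n-1$.

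The key step is linearity of the inner product together with the biorthogonality from \Cref{thm:shiftInv}. Concretely, I will compute
\begin{equation*}
\left\langle \bG, \bior_k \right\rangle_{\origspace} = \sum_{j=0}^{n-1} \res_j \left\langle \xi_{\lambda_j}, \bior_k \right\rangle_{\origspace} = \sum_{j=0}^{n-1} \res_j \overline{\left\langle \bior_k, \xi_{\lambda_j} \right\rangle_{\origspace}} = \sum_{j=0}^{n-1} \res_j \delta_{j,k} = \res_k .
\end{equation*}
Substituting these coefficients back into the kernel expansion of $\bG$ gives the claimed identity.

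The only delicate point is the identification of the model's centers with the nodes defining $\bior_k$. The lemma implicitly takes $\bG$ to be supported on the first $n$ entries of $\blambda$, so I will state that convention explicitly. I will also note that uniqueness of the coefficients, which would follow from linear independence (\Cref{ass:GMTkers} or \Cref{thm:posdef}), is not actually needed, since the computation above yields the coefficients directly. Finally, the conjugation in the middle step is harmless because $\delta_{j,k}$ is real.
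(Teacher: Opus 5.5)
Your proof is correct and is essentially the paper's own: expand $\bG=\sum_{j}\res_j\xi_{\lambda_j}$, then use linearity and the biorthogonality of \Cref{thm:shiftInv} to get $\left\langle \bG,\bior_k\right\rangle_{\origspace}=\res_k$. The paper leaves implicit your conjugate-symmetry step and the convention that the model's centers coincide with the nodes defining $\bior_k$, so stating both is a reasonable addition.
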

\begin{proof}
    Since for any $\bG \in \fomspaceo$, we have
    \begin{equation*}
        \bG = \sum_{k=0}^{n-1} \res_k \xi_{\lambda_k},
    \end{equation*}
    it suffices to show $\res_k = \left\langle \bG, \bior_k \right\rangle_{\origspace}$.
    This is indeed the case, since
    \begin{equation*}
        \left\langle \bG, \bior_k \right\rangle_{\origspace} =  \left\langle \sum_{j=0}^{n-1} \res_j \xi_{\lambda_j}, \bior_k \right\rangle_{\origspace} = \sum_{j=0}^{n-1} \res_j \left\langle \xi_{\lambda_j}, \bior_k \right\rangle_{\origspace} \stackrel{\text{Thm.}~{\ref{thm:shiftInv}}}{=} \res_k.
    \end{equation*}
\end{proof}
\begin{corollary}
For $\bG \in \origspace$, the operator $\bgmtPsumn : \origspace \to \Vn{n}{\blambda}$ given by
\begin{equation*}
    \bgmtPsumn \left[ \bG \right] = \sum_{k=0}^{n-1} \left\langle \bG, \bior_k \right\rangle_{\origspace} \xi_{\lambda_k}
\end{equation*}
is the orthogonal projector to $\Vn{n}{\blambda}$, that is,
\begin{equation*}
    \bgmtPsumn = \cP_{\Vn{n}{\blambda}}.
\end{equation*}
\end{corollary}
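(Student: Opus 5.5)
The plan is to verify the two defining properties of an orthogonal projector onto $\Vn{n}{\blambda}$ for the operator $\bgmtPsumn$: its range is $\Vn{n}{\blambda}$ and it restricts to the identity there, and its kernel is $\origspace \ominus \Vn{n}{\blambda}$. Linearity and boundedness are immediate, since each coefficient $\bG \mapsto \langle \bG, \bior_k \rangle_{\origspace}$ is a bounded linear functional. The bound is $\|\bior_k\|_{\origspace}$, which is finite because $\vartheta_k \in \origspace$ by $\cM_{\myid}$--invariance. The assumption $d_k = 0$ gives $\Vn{n}{\blambda} = \myspan\{\xi_{\lambda_0},\ldots,\xi_{\lambda_{n-1}}\}$, so the range of $\bgmtPsumn$ lies in $\Vn{n}{\blambda}$.

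\textbf{Step 1: identity on $\Vn{n}{\blambda}$.} Every $\bH \in \Vn{n}{\blambda}$ is a finite kernel expansion $\sum_k \res_k \xi_{\lambda_k}$. \Cref{thm:biorExp} (equivalently, the biorthogonality in \Cref{thm:shiftInv}) gives $\bgmtPsumn[\bH] = \bH$. Hence $\bgmtPsumn$ is idempotent with range exactly $\Vn{n}{\blambda}$.

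\textbf{Step 2: kernel.} Write $\bG = \cP_{\Vn{n}{\blambda}}[\bG] + \cP^{\perp}_{\Vn{n}{\blambda}}[\bG]$. By Step 1 it suffices to show $\langle \cP^{\perp}_{\Vn{n}{\blambda}}[\bG], \bior_k \rangle_{\origspace} = 0$ for every $k$. This holds as soon as $\bior_k \in \Vn{n}{\blambda}$, so the task reduces to locating $\bior_k = \xi_{\lambda_k}\ell_k/\xi(\lambda_k,\lambda_k)$ inside $\myspan\{\xi_{\lambda_j}\}$.

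I would approach this through the adjoint relation \eqref{eq:adjMonKernel}, $\cM_{\myid}^{\ast}[\xi_{\lambda}] = \overline{\lambda}\xi_\lambda$. It implies that $\Vn{n}{\blambda}$ is invariant under $\cM_{\myid}^{\ast}$, and that $\prod_{j\neq k}(\cM_{\myid}^{\ast} - \overline{\lambda_j})$ maps $\Vn{n}{\blambda}$ onto $\myspan\{\xi_{\lambda_k}\}$. Dualizing, $\langle \bG, \vartheta_k\rangle_{\origspace} = \langle \ell_k(\cM_{\myid})^{\ast}[\cdot]\ldots\rangle$, and I would try to express $\langle \bG, \vartheta_k\rangle$ in the form $\langle \bG, \overline{\ell_k}(\cM_{\myid}^{\ast})\ldots\rangle$. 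The goal is to show that the $\cP^{\perp}$ part is annihilated.

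\textbf{Step 3: self-adjointness.} An idempotent whose kernel is $\Vn{n}{\blambda}^{\perp}$ is self-adjoint, so Steps 1 and 2 give $\bgmtPsumn = \cP_{\Vn{n}{\blambda}}$. Uniqueness of the orthogonal projector, via \Cref{thm:orthoProjGMT}, then finishes the argument.

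\textbf{Main obstacle.} Step 2 is the crux. Multiplication by $\ell_k$ acts through $\cM_{\myid}$, not $\cM_{\myid}^{\ast}$, so membership $\bior_k \in \Vn{n}{\blambda}$ is not automatic. For Szegő-type kernels, for instance, $\xi_{\lambda_k}\ell_k$ is a polynomial times a kernel slice and is generally not in the span.

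If the direct argument fails, I would identify precisely the extra condition under which it holds, namely $\cM_{\myid}$ mapping $\Vn{n}{\blambda}$ into itself up to the component along $\xi_{\lambda_k}$. Alternatively, I would read the coefficient functionals as acting through $\cP_{\Vn{n}{\blambda}}[\bior_k]$. This has the same pairings $\delta_{j,k}$ with the $\xi_{\lambda_j}$, and so is the dual basis of $\{\xi_{\lambda_j}\}$ in $\Vn{n}{\blambda}$, for which Steps 1–3 go through verbatim.
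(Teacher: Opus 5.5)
Your Step~2 cannot be completed, and the issue you flag under ``main obstacle'' is real: the statement is false at the stated generality. The $\xi_{\lambda_k}$ are linearly independent and $\Vn{n}{\blambda}$ is closed. Hence $\bgmtPsumn = \cP_{\Vn{n}{\blambda}}$ holds if and only if every $\bior_k$ lies in $\Vn{n}{\blambda}$, so your sufficient condition is also necessary. $\cM_{\myid}$--invariance does not provide it.

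A concrete counterexample: in $H_2(\D)$ take $n=2$, $\lambda_0=0$, $\lambda_1=a\in(0,1)$. Then $\rho_0\equiv 1$, $\bior_0(z)=1-z/a\notin\myspan\{1,\rho_a\}$, and $\bior_1(z)=(1-a^2)z/(a(1-az))$. The function $\bG(z)=z(z-a)$ vanishes at $0$ and $a$, so $\bG\perp\Vn{2}{\blambda}$ and $\cP_{\Vn{2}{\blambda}}[\bG]=0$. Comparing Taylor coefficients gives $\langle\bG,\bior_0\rangle_{H_2(\D)}=(-a)(-1/a)=1$ and $\langle\bG,\bior_1\rangle_{H_2(\D)}=-(1-a^2)+(1-a^2)=0$. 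So the operator returns $\rho_0=1\neq 0$.

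Your adjoint route fails for a structural reason. $\Vn{n}{\blambda}$ is $\cM_{\myid}^{\ast}$--invariant, so its orthogonal complement is $\cM_{\myid}$--invariant. Moving the multiplication by $\ell_k$ from $\xi_{\lambda_k}$ onto $\bG$, however, produces a polynomial in $\cM_{\myid}^{\ast}$, and that operator need not preserve the complement.

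The paper states the corollary directly after \Cref{thm:biorExp} with no separate proof, and that lemma is exactly your Step~1: $\bgmtPsumn$ reproduces every element of $\Vn{n}{\blambda}$. Nothing there controls the component of $\bG$ orthogonal to $\Vn{n}{\blambda}$. What is actually justified is therefore either that $\bgmtPsumn$ restricted to $\Vn{n}{\blambda}$ is the identity, or the full claim under the extra hypothesis $\vartheta_k\in\Vn{n}{\blambda}$ for all $k$.

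Your fallback of replacing $\bior_k$ by $\cP_{\Vn{n}{\blambda}}[\bior_k]$, the dual basis of $\{\xi_{\lambda_j}\}$ in $\Vn{n}{\blambda}$, does give the orthogonal projector. But it is a different operator from the one in the statement. Computing those projections also requires the Gram matrix of the kernel slices, which is precisely what the biorthogonal construction was meant to avoid. Rather than hedging, you should conclude that the statement as written fails, give the counterexample, and record the corrected version.
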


We finish our discussion on the properties of \ac{gmt} functions with a note on numerical aspects. Prior works have refrained from the pursuit of \Cref{def:GMT}, citing the numerical instability of Gram--Schmidt orthogonalization in \acp{rkhs}~\cite{qian_kernel_2026}. While it is certainly true that the standard Gram--Schmidt procedure becomes numerically unstable when applied to basis vectors that are nearly linearly dependent, in \Cref{sec:exp} we give empirical evidence that computing several hundred and in some cases several thousand \ac{gmt} functions is entirely possible, with standard Cholesky--factorization--based orthogonalization~\cite{terao_lu-cholesky_2020}, which we use in our implementation to obtain \ac{gmt} systems from kernel slices. Our experiments cover a variety of \acp{rkhs}, and in a few cases, for example due to the distinct support of kernel slices, the orthogonalization procedure behaves extremely well even for large $n$. We also note that randomized orthogonalization techniques based on sketching have recently enjoyed much attention from the research community~\cite{wei_random_2024, jang_randomized_2025}. Documenting these results, and adapting them to the \ac{rkhs} setting if needed, should be a future research priority.

Finally, we note that the biorthogonal system in \Cref{thm:shiftInv} can be used to address numerical concerns of the \ac{mor} methods proposed in this study. Indeed, the Lagrange basis polynomials $\ell_k$ in~\eqref{eq:BGMT} can be efficiently computed using a barycentric--formula--based approach~\cite{nagy-csiha_construction_2023, berrut_barycentric_2004}, even for large $n$. Then, \Cref{thm:biorExp} can be used to express models in $\fomspaceo$ in a numerically safe manner. We also note that an analog of the \ac{gmt} transformation introduced in the next section can be defined with the \ac{bgmt} system in~\eqref{eq:BGMT} to transform \ac{mor} problems to--and--from $\cM_{\myid}$--invariant \acp{rkhs}.

Finally, in many specific \acp{rkhs}, efficient algorithms have already been developed to evaluate special cases of the \ac{gmt} functions. In $H_2(\D)$, discrete orthogonal variants have been considered in~\cite{dozsa_bernoullis_2024}. It has also been shown that special sampling schemes within $\Omega=\D$ induce a \ac{gmt}--based multi--resolution of $H_2(\D)$~\cite{pap_hyperbolic_2011} and the Bergman space~\cite{pap_multiresolution_2011}. For Laguerre bases, fast methods for computing expansion coefficients have recently been proposed in~\cite{dozsa_impulse_2026}. A more general construction based on the \ac{fft} for rational systems is given in~\cite{schipp_rational_2003}. Generalizing some of these results for \acp{rkhs} could bypass the need for computing $\gmtn$ in a kernel basis. Instead, using an appropriate discrete sampling of a $\bG \in \origspace$, one could obtain the \ac{gmt} expansion coefficients of $\bG$ directly.

\subsection{The GMT transformation}\label{subsec:GMTtrans}

We are now ready to introduce a family of unitary transformations that will allow us to develop the desired \ac{mor} methods in \Cref{sec:irka}. First, we make the following set of assumptions.
\begin{assumption}
    \label{ass:GMTtrans}
    Let $\Omega, \hat{\Omega} \subseteq \C$ and suppose that $\origspace$ and $\ambspace$ are \acp{rkhs} of functions $\Omega \to \C$ and $\hat{\Omega} \to \C$, respectively. Let $\gamma : \Omega \to \hat{\Omega}$ be a bijection. Denote by $\xi$ and $\hat{\xi}$ the reproducing kernels of $\origspace$ and $\ambspace$. Finally, let $\left\{\gmtk \right\}_{k=0}^{n-1}$ and $\left\{ \tgmtk \right\}_{k=0}^{n-1}$ be two \ac{gmt} systems spanning 
    \begin{equation}\label{eq:defVn}
            \Vn{n}{\blambda} = \myspan \left\{ \gmtk \right\}_{k=0}^{n-1} \subseteq \origspace, \quad \hVn{n}{\gamma(\blambda)} = \myspan \left\{ \tgmtk \right\}_{k=0}^{n-1} \subseteq \ambspace.
    \end{equation}
\end{assumption}

\begin{definition}[\ac{gmt} transformation]
    \label{def:GMTtransform}
    Consider Assumption~\ref{ass:GMTtrans}.We define the \emph{\ac{gmt} transformation} $\gmtTransVn : \Vn{n}{\blambda} \to \hVn{n}{\gamma(\blambda)}$ as $\gmtTransVn \left[ \gmtk \right] \coloneqq \tgmtk$
    for $k=0,\ldots,n-1$, and for an arbitrary $\bG \in \Vn{n}{\blambda}$ as
    \begin{multline}
         \label{eq:GMTtransfrom}
         \gmtTransVn \left[\bG\right] = \gmtTransVn \left[ \sum_{k=0}^{n-1} \left\langle \bG, \gmtk \right\rangle_{\origspace} \gmtk  \right] = \sum_{k=0}^{n-1} \left\langle \bG, \gmtk \right\rangle_{\origspace} \gmtTransVn \left[ \gmtk \right] \\ =  \sum_{k=0}^{n-1} \left\langle \bG, \gmtk \right\rangle_{\origspace} \tgmtk = \hat{\bG} \in \hVn{n}{\gamma(\blambda)} \subseteq \ambspace.
    \end{multline}
\end{definition}
From \Cref{def:GMTtransform}, we immediately obtain the following property.
\begin{theorem}
    \label{thm:unitary}
    Under~\Cref{ass:GMTtrans}, the \ac{gmt} transformation in \Cref{def:GMTtransform} is linear and unitary, the latter meaning that
    \begin{equation*}
        \left\langle \bG, \bH \right\rangle_{\origspace} = \left\langle \gmtTransVn \left[\bG\right], \gmtTransVn \left[\bH\right] \right\rangle_{\ambspace} \quad (\bG, \bH \in \Vn{n}{\blambda} \subseteq \origspace).
    \end{equation*}
\end{theorem}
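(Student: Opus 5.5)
The plan is to use the fact that both families $\{\gmtk\}_{k=0}^{n-1}$ and $\{\tgmtk\}_{k=0}^{n-1}$ are orthonormal bases of their spans. Together with \Cref{def:GMTtransform}, this reduces both claims to coefficient identities. First, by \Cref{thm:GMTONS} (applied in $\origspace$ and in $\ambspace$), each family is orthonormal and spans the corresponding space in \eqref{eq:defVn}. Since the expansion coefficients in an orthonormal basis are unique, the map \eqref{eq:GMTtransfrom} is well defined on all of $\Vn{n}{\blambda}$. By \Cref{thm:orthoProjGMT}, every $\bG\in\Vn{n}{\blambda}$ satisfies $\bG=\cS_n^{\blambda}[\bG]=\sum_{k}\langle \bG,\gmtk\rangle_{\origspace}\gmtk$.

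Linearity follows from the linearity of $\bG\mapsto\langle \bG,\gmtk\rangle_{\origspace}$ in the first argument. For $\alpha,\beta\in\C$ and $\bG,\bH\in\Vn{n}{\blambda}$, the coefficients of $\alpha\bG+\beta\bH$ are $\alpha\langle\bG,\gmtk\rangle_{\origspace}+\beta\langle\bH,\gmtk\rangle_{\origspace}$. Substituting these into \eqref{eq:GMTtransfrom} gives $\gmtTransVn[\alpha\bG+\beta\bH]=\alpha\gmtTransVn[\bG]+\beta\gmtTransVn[\bH]$.

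For the inner product identity, write $g_k=\langle \bG,\gmtk\rangle_{\origspace}$ and $h_k=\langle\bH,\gmtk\rangle_{\origspace}$. Expanding $\langle\bG,\bH\rangle_{\origspace}$ bilinearly and using \eqref{eq:GMTONS} in $\origspace$ gives $\sum_k g_k\overline{h_k}$ (Parseval on a finite orthonormal basis). Expanding $\langle\gmtTransVn[\bG],\gmtTransVn[\bH]\rangle_{\ambspace}=\langle\sum_k g_k\tgmtk,\sum_j h_j\tgmtj\rangle_{\ambspace}$ and using \eqref{eq:GMTONS} in $\ambspace$ gives the same sum. Finally, $\gmtTransVn$ is surjective onto $\hVn{n}{\gamma(\blambda)}$, since it maps one orthonormal basis onto another. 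Combined with the isometry this makes it bijective, and its inverse is the map sending $\tgmtk\mapsto\gmtk$, which is also its adjoint. So the transformation is unitary in the full sense, not merely isometric.

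The only point needing care is the implicit hypothesis that both GMT systems actually consist of $n$ well-defined orthonormal functions. This requires the denominators in \Cref{def:GMT} to be nonzero, which is guaranteed by the linear independence built into \Cref{ass:GMTkers}. It also requires $\dim\origspace,\dim\ambspace\ge n$. I would state this explicitly, since it is what makes both $\Vn{n}{\blambda}$ and $\hVn{n}{\gamma(\blambda)}$ $n$-dimensional. Otherwise the argument is routine finite-dimensional linear algebra, and it is consistent with \Cref{thm:Hism}.
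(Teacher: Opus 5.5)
Your proof is correct and takes essentially the same route as the paper. Linearity comes from substituting the coefficients $\langle \bG,\gmtk\rangle_{\origspace}$ into \eqref{eq:GMTtransfrom}, and the inner-product identity comes from expanding both sides in the orthonormal GMT bases via \eqref{eq:GMTONS} (Parseval in each space); your extra remarks on surjectivity, the inverse coinciding with the adjoint, and the implicit nondegeneracy hypotheses go slightly beyond the paper's proof, which only checks preservation of inner products.
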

\begin{proof}
    Linearity holds, because by~\eqref{eq:GMTtransfrom}, for any $\bG,\bH \in \Vn{n}{\blambda}$ and $w \in \C$ we have
    \begin{align*}
     \gmtTransVn \left[ \bG + w \bH \right] &= \gmtTransVn \left[ \sum_{k=0}^{n-1} \left\langle \bG, \gmtk \right\rangle_{\origspace} \gmtk + w  \sum_{k=0}^{n-1} \left\langle \bH, \gmtk \right\rangle_{\origspace}\gmtk \right] \\ 
      & =  \sum_{k=0}^{n-1} \left( \left\langle \bG, \gmtk \right\rangle_{\origspace} + w\left\langle \bH, \gmtk \right\rangle_{\origspace} \right) \gmtTransVn \left[ \gmtk \right]  \\ &= \sum_{k=0}^{n-1} \left( \left\langle \bG, \gmtk \right\rangle_{\origspace} + w\left\langle \bH, \gmtk \right\rangle_{\origspace} \right) \tgmtk \\ &= \sum_{k=0}^{n-1} \left\langle \bG, \gmtk \right\rangle_{\origspace} \tgmtk  + w  \sum_{k=0}^{n-1} \left\langle \bH, \gmtk \right\rangle_{\origspace} \tgmtk = \gmtTransVn \left[ \bG \right] + w\gmtTransVn \left[ \bH \right].
    \end{align*}
    The unitarity follows from the fact that $\gmtTransVn$ maps an orthonormal system in $\Vn{n}{\blambda}$ to one in $\hVn{n}{\gamma(\blambda)}$. Indeed, suppose that $\bG,\bH \in \Vn{n}{\blambda} \subseteq \origspace$. Then, 
    \begin{align*}
    \left\langle \gmtTransVn \left[\bG\right], \gmtTransVn \left[\bH\right] \right\rangle_{\ambspace} &= \left\langle \sum_{k=0}^{n-1} \left\langle \bG, \gmtk \right\rangle_{\origspace} \tgmtk,  \sum_{j=0}^{n-1} \left\langle \bH, \gmtj \right\rangle_{\origspace} \tgmtnj \right\rangle_{\ambspace} \\ &= \sum_{k=0}^{n-1}  \sum_{j=0}^{n-1} \left\langle \bG, \gmtk \right\rangle_{\origspace}\overline{\left\langle \bH, \gmtj \right\rangle}_{\origspace} \left\langle \tgmtk, \tgmtnj \right\rangle_{\ambspace} \\ &\stackrel{\text{\eqref{eq:GMTONS}}}{=} \sum_{k=0}^{n-1} \left\langle \bG, \gmtk \right\rangle_{\origspace}  \overline{\left\langle \bH, \gmtk \right\rangle}_{\origspace} \\&= \sum_{k=0}^{n-1} \sum_{j=0}^{n-1} \left\langle \bG, \gmtk \right\rangle_{\origspace} \overline{\left\langle \bH, \gmtj \right\rangle}_{\origspace} \left\langle \gmtk, \gmtj \right\rangle_{\origspace} = \left\langle \bG, \bH \right\rangle_{\origspace}.
    \end{align*}    
\end{proof}
Using \Cref{def:GMTtransform}, we can transform a \ac{mor} problem posed in an \ac{rkhs} $\origspace$ to another one, where we might be able to solve the problem more easily. To this end, we first define the concept of a weak model space from \Cref{def:weak} generated by the \ac{gmt} transformation. 

For the remainder of this study, we note that the  definition of the weak model spaces in \Cref{def:weak} can in a straight--forward manner be generalized to the case of repeated kernel centers. In the rest of this work, we assume that the unitary maps $\cF$ in this definition are chosen as \ac{gmt} transformations.


\begin{example}[\ac{mor} and the \ac{gmt} transformation]
Suppose we are given a \ac{fom} $\bG$ in the \ac{rkhs} $\origspace$ defined as
\begin{equation*}
	\origspace \coloneqq \left\{ \bF \in L^2(\Omega) \ : \ \bF' \in L^2(\Omega), \ \bF(0)^2 + \int_{0}^{1} \bF'(t)^2\, \du t < \infty \right\},
\end{equation*}
where $\Omega \coloneqq [0,1]$ and differentiation is understood in the weak sense. This is indeed an \ac{rkhs} of functions $\Omega \to \R$  with kernel
\begin{equation*}
\xi(t, \lambda) \coloneqq 1 + \min \{t, \lambda\} \quad (t, \lambda \in \Omega),
\end{equation*}
see, e.g.,~\cite[Section 1.2]{Wahba_1990}. This \ac{rkhs} contains first--order splines and also certain deep ReLU networks. For a deeper discussion on $\origspace$, we refer to \Cref{subsec:relu}. Here, we only note that for any $\bF \in \origspace$, we have
\begin{equation}
	\label{eq:snorm}
	\| \bF \|_{\origspace} \coloneqq \sqrt{ \bF(0)^2 + \int_{0}^{1} \bF'(t)^2 \, \du t }. 
\end{equation}
Let the \ac{fom} be defined as
\begin{equation}
	\label{eq:exfom}
	\bG \coloneqq \xi_{\frac{1}{4}} + \xi_{\frac{3}{4}},
\end{equation}
that is, we consider a \ac{fom} of order $n=2$ with kernel centers $\lambda_0 = \frac{1}{4}$ and $\lambda_1 = \frac{3}{4}$. Suppose that the \ac{rom} is of order $r=1$, i.e., 
\begin{equation*}
	\bG_r \coloneqq \tres_0 \xi_{\tlambda_0}
\end{equation*}
is an $\origspace$--norm--optimal approximation of $\bG$. We clearly cannot apply our optimality conditions from \Cref{sec:orthCond} to verify that $\bG_r$ fulfills at least the proposed necessary conditions of optimality. Indeed, each of the applicable conditions needs to evaluate the first--order derivative of $\bG_r$ (or at least of $\xi_{\tlambda_0}$) at $\tlambda_0$, which does not exist. Hence, it becomes necessary to transform the problem to an admissible \ac{rkhs} $\ambspace$.  

Since $\origspace$ is an \ac{rkhs}, we can apply \Cref{def:GMT} to write $\bG$ in a \ac{gmt} expansion. A quick calculation  using~\eqref{eq:snorm} and \Cref{def:GMT} verifies that the \ac{gmt} system defined by $\lambda_0$ and $\lambda_1$ can be written as
\begin{equation*}
	\gmt_0(t) = \frac{2}{\sqrt{5}} + \frac{2}{\sqrt{5}} \min \left\{t, \frac{1}{4} \right\} 
\end{equation*}
and
\begin{equation*}
	\gmt_1(t) = \sqrt{2} \min \left\{t, \frac{3}{4}\right\} - \sqrt{2} \min \left\{t, \frac{1}{4}\right\}.
\end{equation*}
By~\eqref{eq:exfom}, the reproducing property, and the fact that functions in $\origspace$ are real--valued, we obtain
\begin{equation*}
	\left\langle \bG, \gmt_0 \right\rangle_{\origspace} = \sqrt{5}, \quad \left\langle \bG, \gmt_1 \right\rangle_{\origspace} = \frac{1}{\sqrt{2}}.
\end{equation*}
Hence, we can write
\begin{equation*}
	\bG = \sqrt{5} \gmt_0 + \frac{1}{\sqrt{2}} \gmt_1.
\end{equation*}
The map $\gamma(t) = 2t - 1$
is a bijection between $\Omega$ restricted to $(0,1)$ and $\mathbb{I} = (-1, 1)$. This allows us to choose the real analytic space $\realHtwo$ from~\eqref{eq:realH2} as the companion space $\ambspace$. Recall that this is an \ac{rkhs} with kernel 
\begin{equation}
    \label{eq:kerh2I}
	\rho(x, \lambda) = \frac{1}{1 - x \lambda} \quad (x, \lambda \in \mathbb{I}).
\end{equation}
Using the \ac{gmt} transformation from \Cref{def:GMTtransform}, we obtain a surrogate \ac{fom} $\hat{\bG} \in \ambspace$ as 
\begin{equation*}
	\hat{\bG} = \gmtTransVn \left[\bG\right] = \sqrt{5} \tgmt_0 + \frac{1}{\sqrt{2}} \tgmt_1,
\end{equation*}
where $\gamma\big( \frac{1}{4} \big) = -\frac{1}{2}$, $\gamma \big( \frac{3}{4} \big) = \frac{1}{2}$, and by~\eqref{eq:MT},
\begin{equation*}
    \tgmt_0(t) = \frac{\sqrt{1 - \left| -\frac{1}{2}\right|^2}}{1 + \frac{t}{2}} = \frac{\sqrt{3}}{2 + t}, \quad \tgmt_1(t) = \frac{\sqrt{1 - \left| \frac{1}{2}\right|^2}}{1 - \frac{t}{2}} \frac{t + \frac{1}{2}}{1 + \frac{t}{2}} = \frac{2 \sqrt{3} t + \sqrt{3}}{4 - t^2}.
\end{equation*}
Now suppose that, using Algorithm~\ref{alg:hybrid_equilibrium}, we identified an $\realHtwo$--norm--optimal \ac{rom} to $\hat{\bG}$ denoted by
\begin{equation}
    \label{eq:GrHatExample}
	\hat{\bG}_r = c_0 \hat{\varphi}^{\gamma(\tilde{\blambda})}_0, 
\end{equation}
where by \Cref{def:GMT},
\begin{equation}
\label{eq:varphiHatExample}
\hat{\varphi}^{\gamma(\tilde{\blambda})}_0 = \frac{\rho_{\gamma(\tlambda_0)}} {\sqrt{\rho\big(\gamma(\tlambda_0), \gamma(\tlambda_0)\big)}}
\end{equation}
holds for some $\gamma(\tlambda_0) \in \mathbb{I}$. This is possible because the optimality conditions of \Cref{sec:orthCond} can be applied, as the kernel $\rho$ is sufficiently smooth in both variables. The norm optimality of $\hat{\bG}_r$ implies that $\hat{\bG}_r = \cP_{\hVn{1}{\gamma(\tilde{\blambda})}} \big[ \hat{\bG} \big]$, which by~\eqref{eq:kerh2I},~\eqref{eq:GrHatExample}, and~\eqref{eq:varphiHatExample} is equivalent to
\begin{equation*}
	c_0 = \sqrt{1 - \big| \gamma(\tlambda_0) \big|^2} \hat{\bG} \big( \gamma(\tlambda_0) \big).
\end{equation*}
A natural idea to obtain an optimal \ac{rom} to $\bG$ in $\origspace$ would now be to take 
\begin{equation*}
	\bG_r = \gmtTransVr^{-1} \big[ \hat{\bG}_r \big] = \underbrace{\sqrt{1 - \big| \gamma(\tlambda_0) \big|^2} \hat{\bG} \big( \gamma(\tlambda_0) \big)}_{=c_0} \rgmt_0.
\end{equation*}
Unfortunately, this choice cannot guarantee that $\left\| \bG - \bG_r \right\|_{\origspace} = \big\| \hat{\bG} - \hat{\bG}_r \big\|_{\ambspace}$, because the function system $\left\{ \hat{\varphi}^{\gamma(\tilde{\blambda})}_0, \tgmt_0, \tgmt_1 \right\}$ is not orthogonal in $\ambspace$ and therefore, the transformation mapping these to $\left\{ \rgmt_0, \gmt_0, \gmt_1 \right\}$ is no longer unitary between the subspaces $\origspace$ and $\ambspace$ that contain $\bG,\bG_r$ and $\hat{\bG}, \hat{\bG}_r$, respectively. Instead, we have to write $\hat{\bG}_r$ in a new \ac{gmt} basis generated by the kernel centers
\begin{equation*}
	\gamma(\boldsymbol{\mu}) = \left( \gamma(\lambda_0), \gamma(\lambda_1), \gamma(\tlambda_0) \right) \in \mathbb{I}^3 = \hat{\Omega}^3.
\end{equation*}
Using this basis, we can fully describe both $\hat{\bG}$ and $\hat{\bG}_r$ with the first three, now orthogonal, \ac{gmt} functions generated by $\gamma(\boldsymbol{\mu})$. Thus, the norm--optimal approximant to $\bG$ in $\origspace$ satisfying $\rkhsError{\xi}{r}(\bG_r) = \hat{\cE}_{\rho}^{r}\big(\hat{\bG}_r\big) \coloneqq \min_{\hat{\bH} \in \fG_{\rho}^r} \big\|\hat{\bG} - \hat{\bH}\big\|_{\hat{\cH}}$ is given by
\begin{equation}
	\label{eq:weakSolEx}
	\Gweak{r} \coloneqq \gmtTransVnr^{-1} \big[ \hat{\bG}_r \big] = \left\langle \hat{\bG}_r, \hat{\varphi}_{0}^{\gamma(\boldsymbol{\mu})} \right\rangle_{\ambspace} \varphi^{\boldsymbol{\mu}}_{0} +  \left\langle \hat{\bG}_r, \hat{\varphi}_{1}^{\gamma(\boldsymbol{\mu})} \right\rangle_{\ambspace} \varphi^{\boldsymbol{\mu}}_{1} + \left\langle \hat{\bG}_r, \hat{\varphi}_{2}^{\gamma(\boldsymbol{\mu})} \right\rangle_{\ambspace} \varphi^{\boldsymbol{\mu}}_{2}.
\end{equation}
Notice, however, that this $\Gweak{r}$ belongs to a weak solution space in $\origspace$ as defined in \Cref{def:weak} generated by the \ac{gmt} transformation. Indeed, nothing guarantees that the linear combination in~\eqref{eq:weakSolEx} belongs to a one--dimensional subspace in $\origspace$ spanned by a single kernel slice $\xi_{\tlambda}$ for some $\tlambda \in \Omega$. Below, we investigate how weak solutions can still be used to perform \ac{mor}, and how they can be approximated by strong counterparts belonging to $\romspaceo \subseteq \origspace$.
\end{example} 

For a \ac{mor} problem posed in a general \ac{rkhs} $\origspace$, we can choose an admissible $\ambspace$ such that its kernel $\hat{\xi}$ satisfies the differentiability conditions needed for \Cref{thm:orthocond}. More generally, suppose a \ac{fom} $\bG$ belongs to the $n$--dimensional subspace $\Vn{n}{\blambda} \subseteq \fomspaceo$ in the \ac{rkhs} $\origspace$. Suppose further that for some $\blambda \in {\Omega}^n$ we have $\bG = \gmtPsumn\left[ \bG \right]$, where the operator $\gmtPsumn$ is defined according to~\eqref{eq:GMTpartsum}. Applying the \ac{gmt} transformation defined by this tuple yields $\gmtTransVn \left[\bG \right] = \hat{\bG} \in \hVn{n}{\gamma(\blambda)} \subseteq \fomspaceg \subseteq \ambspace$. Suppose that we obtain a \ac{rom} $\hat{\bG}_r \in \hVn{r}{\gamma(\tilde{\blambda})} \subseteq \romspaceg$, where the first $r$ components of the tuple $\gamma(\tilde{\blambda})$ describe an $r$--dimensional subspace $\hVn{r}{\gamma(\tilde{\blambda})}\in \romspaceg \subseteq \ambspace$ (see \Cref{def:gmodspace}). 

In order to recover the corresponding \ac{rom} in $\origspace$, consider first the extended kernel center tuple
\begin{equation}
    \label{eq:extendedKerCenters}
    \boldsymbol{\mu} \coloneqq \left( \mu_0, \mu_1, \ldots, \mu_{n+r-1} \right) \coloneqq \left( \lambda_0, \ldots, \lambda_{n-1}, \tlambda_{0}, \ldots, \tlambda_{r-1} \right) \in \Omega^{n+r}.
\end{equation}
The kernel slices defined by the first $n+r$ components of $\boldsymbol{\mu}$ and $\gamma(\boldsymbol{\mu})$, respectively, define the $(n+r)$--dimensional subspaces
\begin{equation*}
        \Vn{n+r}{\boldsymbol{\mu}} = \myspan \left\{ \gmtkj  \right\}_{k=0}^{n+r-1} \subseteq \origspace, \quad
        \hVn{n+r}{\gamma(\boldsymbol{\mu})} = \myspan \left\{ \tgmtkj  \right\}_{k=0}^{n+r-1} \subseteq \ambspace.
\end{equation*}

Notice that we have $\bG \in \Vn{n+r}{\boldsymbol{\mu}}$ and $\hat{\bG}, \hat{\bG}_r, \hat{\bG} - \hat{\bG}_r \in \hVn{n+r}{\gamma(\boldsymbol{\mu})}$. Due to the ordering of $\boldsymbol{\mu}$ in~\eqref{eq:extendedKerCenters}, the first $n$ functions of these systems span $\Vn{n}{\blambda}$ and $\hVn{n}{\gamma(\blambda)}$. Using \Cref{def:GMTtransform}, we can define a unitary \ac{gmt} transformation between $\Vn{n+r}{\boldsymbol{\mu}}$ and $\hVn{n+r}{\gamma(\boldsymbol{\mu})}$ denoted by $\gmtTransVnr$. Again, because of the ordering chosen in~\eqref{eq:extendedKerCenters}, restricting this operator to $\Vn{n}{\blambda}$ yields $\gmtTransVn$. Thus, we have
\begin{equation}
    \label{eq:sameBasis}
        \hat{\bG} = \gmtTransVn \left[ \bG \right] = \gmtTransVnr \left[ \bG \right], \quad \bG = \gmtTransVn^{-1} \big[ \hat{\bG} \big] = \gmtTransVnr^{-1} \big[ \hat{\bG} \big].
\end{equation}
We can now define the corresponding \ac{rom} in $\origspace$ as
\begin{equation}
    \label{eq:weakROM}
    \Gweak{r} \coloneqq \gmtTransVnr^{-1} \big[ \hat{\bG}_r \big].
\end{equation}
Consider the subspace 
\begin{equation*}
    \hVn{r}{\gamma(\tilde{\blambda})} \coloneqq \myspan\left\{ \tgmtj \right\}_{j=0}^{r-1} = \myspan \left\{  \hat{\eta}_j \right\}_{j=0}^{r-1} \subseteq \ambspace,
\end{equation*}
where $\left\{ \hat{\eta}_j \right\}_{j=0}^{r-1}$ are defined according to~\eqref{eq:kers} in the companion \ac{rkhs} $\ambspace$. Furthermore, define the subspace $\Vn{r}{\tilde{\blambda}} \subseteq \origspace$ spanned by $\left\{ \eta_j \right\}_{j=0}^{r-1}$ with corresponding \ac{gmt} basis $\left\{ \rgmtj \right\}_{j=0}^{r-1}$. Unfortunately, restricting $\gmtTransVnr$ to $\Vn{r}{\tilde{\blambda}}$ does not yield the \ac{gmt} transformation $\gmtTransVr$. Indeed, the \ac{rom} defined in~\eqref{eq:weakROM} belongs to the $r$--dimensional subspace
\begin{equation*}
    W_r \coloneqq \myspan \left\{ \gmtTransVnr^{-1} \left[ \tgmtj \right]  \right\}_{j=0}^{r-1} \subseteq \origspace.
\end{equation*}
In general, functions in $W_r$ cannot be expressed as an $r$--term linear combination of kernel slices $\eta_j$ for $j=0,\ldots,r-1$. On the other hand, \Cref{thm:findimRKHS} guarantees that the $r$--dimensional $\origspace$ subspace $W_r \subseteq \wromspaceo$ is an \ac{rkhs} in its own right; therefore, its elements, in particular the \ac{rom} $\Gweak{r}$, can be expressed as
\begin{equation}
    \label{eq:weakROMnativeBasis}
    \Gweak{r} = \sum_{j=0}^{r-1} \wtres_j \xi_{\tlambda_j}^{W_r} 
\end{equation}
where $\xi^{W_r}$ denotes the kernel associated with the \ac{rkhs} $W_r$ and $\wtres_j \in \C$, $\tlambda_j \in \Omega$ for $j=0,\ldots,r-1$. We summarize the steps of \ac{gmt}--transformation--based \ac{mor} in \acp{rkhs}, which can be used to obtain a norm--optimal weak solution:
\begin{enumerate}
    \item Given \ac{fom} $\bG \in \fomspaceo$, companion \ac{rkhs} $\ambspace$ and reduced order $r$, write $\bG$ in a \ac{gmt} expansion, i.e.,
    \begin{equation*}
        \bG = \sum_{k=0}^{n-1} c_k \gmtk \in \Vn{n}{\blambda}.
    \end{equation*}
    \item Obtain corresponding \ac{fom} in $\ambspace$, i.e., 
    \begin{equation*}
        \hat{\bG} = \gmtTransVn [ \bG ] = \sum_{k=0}^{n-1} c_k \tgmtk \in \hVn{n}{\gamma(\blambda)}.
    \end{equation*}
    \item Use an appropriate \ac{mor} method in the companion space to obtain \ac{rom} kernel centers $\gamma\big(\tlambda_0\big), \ldots, \gamma(\tlambda_{r-1}) \in \hat{\Omega}$. Suppose that the \ac{rom} is given by $\hat{\bG}_r = \gmtPsumrT \big[ \hat{\bG} \big]$ and it satisfies one of the optimality conditions given in \Cref{sec:orthCond}.
    \item Construct an extended \ac{gmt} basis spanning $\big\{ \tgmtkj \big\}_{k=0}^{n+r-1}$ denoted by $\hVn{n+r}{\gamma(\boldsymbol{\mu})}$. The first $n$ functions in this system match the \ac{gmt} functions spanning $\hVn{n}{\gamma(\blambda)}$ to ensure condition~\eqref{eq:sameBasis}.
    \item Obtain a weak \ac{rom} $\bG_r$ according to~\eqref{eq:weakROM}. 
    \item If needed, compute the kernel $\xi^{W_r}$. This is possible using a feature map expression from \Cref{def:features}, because the functions $\left\{\gmtTransVnr^{-1} \left[\tgmtj\right]\right\}_{j=0}^{r-1}$ form a basis in $W_r$. Finally, write $\Gweak{r}$ according to~\eqref{eq:weakROMnativeBasis}.
\end{enumerate}
Then, by \Cref{thm:unitary}, we have $\left\| \bG - \Gweak{r} \right\|_{\origspace} = \big\| \hat{\bG} - \hat{\bG}_r \big\|_{\ambspace}$ with $\hat{\bG}_r$ satisfying one of the optimality conditions given in \Cref{sec:orthCond}. 

One might be tempted to reverse the order of kernel centers in~\eqref{eq:extendedKerCenters}. In this case, we obtain~\eqref{eq:sameBasis} for the \ac{rom}, instead of the \ac{fom}. However, this choice leads to the issue that usually
\begin{equation*}
\bG \neq \gmtTransVnr^{-1}  \underbrace{\left[ \gmtTransVn\left[ \bG \right] \right]}_{\hat{\bG}}.
\end{equation*}
Consequently, $\left\| \bG - \bG_r \right\|_{\origspace} = \big\| \hat{\bG} - \hat{\bG}_r \big\|_{\ambspace}$ would no longer hold, where $\bG_r \coloneqq \gmtTransVr^{-1} \big[ \hat{\bG}_r \big]$.

Finally, we note that for some special choices of $\origspace$, the weak and strong model spaces seem to coincide under the \ac{gmt} transformation pipeline. That is, the above transformation pipeline always produces $\Gweak{r} \in \romspaceo$. This seems to be the case, for example, if $\origspace=H_2(\D)$ and $\ambspace = H_2(\C^{+})$. In the current study, we refrain from pursuing sufficient conditions for this to hold in general. We note, however, that if $\origspace$ and $\ambspace$ are both $H_2$ Hardy--Hilbert spaces, the \ac{gmt} transformation can alternatively be realized using a conformal map between their domains, instead of the basis--matching construction of \Cref{def:GMTtransform}. In this case, weak and strong solutions are guaranteed to coincide.

Practically, if an application requires a strong \ac{rom} in $\romspaceo$, one way to proceed is to project the weak solution to $\Vn{r}{\tilde{\blambda}} = \myspan\left\{ \rgmtj \right\}_{j=0}^{r-1} \subseteq \origspace$. The next theorem clarifies error bounds on this projection.
\begin{theorem}[Error of approximate strong models]
    \label{thm:ProjErr}
    Suppose \Cref{ass:GMTtrans} is satisfied, and let $\Vn{n}{\blambda}$, $\Vn{r}{\tilde{\blambda}}$, $\hVn{n}{\gamma(\blambda)}$, and $\hVn{r}{\gamma(\tilde{\blambda})}$ be built as in \eqref{eq:defVn}. Suppose that $\Vn{n}{\blambda} \cap \Vn{r}{\tilde{\blambda}} = \{ 0 \}$. Let $\dotplus$ denote the direct sum and define the subspaces
    \begin{equation*}
        \Vn{n+r}{\boldsymbol{\mu}} \coloneqq \Vn{n}{\blambda} \dotplus \Vn{r}{\tilde{\blambda}} = \myspan \left\{ \gmtkj \right\}_{k=0}^{n+r-1}, \quad \hVn{n+r}{\gamma(\boldsymbol{\mu})} \coloneqq \hVn{n}{\gamma(\boldsymbol{\lambda})} \dotplus \hVn{r}{\gamma(\tilde{\blambda})} = \myspan \left\{ \tgmtkj \right\}_{k=0}^{n+r-1}
    \end{equation*}
    along with
    \begin{equation*}
        \left(\Vn{n}{\blambda}\right)^{\perp} \coloneqq \Vn{n+r}{\boldsymbol{\mu}} \ominus \Vn{n}{\blambda}, \quad  \left(\hVn{n}{\gamma(\blambda)}\right)^{\perp} \coloneqq \hVn{n+r}{\gamma(\boldsymbol{\mu})} \ominus \hVn{n}{\gamma(\blambda)},
    \end{equation*}
    and the \ac{gmt} transformations
    \begin{equation*}
        \gmtTransVnr : \Vn{n+r}{\boldsymbol{\mu}} \to \hVn{n+r}{\gamma(\boldsymbol{\mu})}, \quad \gmtTransVn : \Vn{n}{\boldsymbol{\lambda}} \to \hVn{n}{\gamma(\boldsymbol{\lambda})}
    \end{equation*}
    generated by $\boldsymbol{\mu} \coloneqq \big(\lambda_0, \ldots, \lambda_{n-1}, \tlambda_{0}, \ldots, \tlambda_{r-1} \big)$ and $\blambda \coloneqq \big( \lambda_0, \ldots, \lambda_{n-1} \big)$, respectively. Then, the following are true:
    \begin{enumerate}[label=(\alph*)]
    \item\label{it:a} The operators
        \begin{equation}
            \label{eq:opers}
            \begin{aligned}
             & \cQ \coloneqq \cP_{\Vn{n}{\blambda}} \circ \left(\cP_{\big(\Vn{n}{\blambda}\big)^{\perp}}\Big|_{\Vn{r}{\tilde{\blambda}}}\right)^{-1}  : \left( \Vn{n}{\blambda} \right)^{\perp} \to \Vn{n}{\blambda}, \\ & \hat{\cQ} \coloneqq \cP_{\hVn{n}{\gamma(\blambda)}} \circ \left( \cP_{\left( \hVn{n}{\gamma(\blambda)}\right)^{\perp}}\Big|_{\hVn{r}{\gamma(\tilde{\blambda})}} \right)^{-1}  \ : \ \left( \hVn{n}{\gamma(\blambda)} \right)^{\perp} \to \hVn{n}{\gamma(\blambda)}
            \end{aligned}
        \end{equation} 
        are well defined.
    \item\label{it:b} For all
    \begin{equation*}
        \Fweak{r} \in W_r \coloneqq \myspan \left\{ \gmtTransVnr^{-1} \big[ \hat{\xi}_{\gamma(\tlambda_0)}\big], \ldots, \gmtTransVnr^{-1} \big[ \hat{\xi}_{\gamma(\tlambda_{r-1})}\big] \right\},
    \end{equation*}
    with $\gmtTransVn^{\perp} \coloneqq \gmtTransVnr\big|_{\left(\Vn{n}{\blambda}\right)^{\perp}}$, we have
    \begin{align}
        \label{eq:VrErr}
        \begin{split}
        \operatorname{dist} \left( \Fweak{r}, \Vn{r}{\tilde{\blambda}} \right) & \coloneqq \inf_{\bV \in \Vn{r}{\tilde{\blambda}}} \left\| \Fweak{r} - \bV \right\|_{\origspace} \\ &\leq \left\| \hat{\cQ} \circ \gmtTransVn^{\perp} - \gmtTransVn \circ \cQ \right\|_{\cL\left(\left( \Vn{n}{\blambda} \right)^{\perp}, \hVn{n}{\gamma(\blambda)}\right)} \left\| \Fweak{r} \right\|_{\cH},
        \end{split}
    \end{align}
    \end{enumerate}
\end{theorem}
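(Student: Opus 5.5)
For part~\ref{it:a}, I would show that the restriction $\cP_{(\Vn{n}{\blambda})^{\perp}}\big|_{\Vn{r}{\tilde{\blambda}}}$ is a linear bijection onto $(\Vn{n}{\blambda})^{\perp}$. It is injective: if $\bV \in \Vn{r}{\tilde{\blambda}}$ has vanishing projection onto $(\Vn{n}{\blambda})^{\perp}$, then $\bV = \cP_{\Vn{n}{\blambda}}[\bV] \in \Vn{n}{\blambda}$. The hypothesis $\Vn{n}{\blambda} \cap \Vn{r}{\tilde{\blambda}} = \{0\}$ then forces $\bV = 0$. For surjectivity, the direct sum structure gives $\dim (\Vn{n}{\blambda})^{\perp} = (n+r) - n = r = \dim \Vn{r}{\tilde{\blambda}}$, so injectivity between spaces of equal finite dimension suffices. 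Hence the inverse exists and $\cQ$ is well defined.

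The same argument applies to $\hat{\cQ}$ once I know that $\hVn{n}{\gamma(\blambda)} \cap \hVn{r}{\gamma(\tilde{\blambda})} = \{0\}$. This is implicit in writing $\hVn{n+r}{\gamma(\boldsymbol{\mu})}$ as a direct sum. It also follows from the linear independence in \Cref{ass:GMTkers} applied to the $n+r$ functions $\hat{\eta}_k$ generated by $\gamma(\boldsymbol{\mu})$, using that $\gamma$ is a bijection.

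For part~\ref{it:b}, the key observation is a graph description of the two $r$--dimensional subspaces. Every $\bV \in \Vn{r}{\tilde{\blambda}}$ splits as $\bV = \cP_{\Vn{n}{\blambda}}[\bV] + b$ with $b \coloneqq \cP_{(\Vn{n}{\blambda})^{\perp}}[\bV]$, and by part~\ref{it:a} the first term equals $\cQ b$. Thus
\begin{equation*}
\Vn{r}{\tilde{\blambda}} = \left\{ \cQ b + b \ : \ b \in (\Vn{n}{\blambda})^{\perp} \right\}, \qquad \hVn{r}{\gamma(\tilde{\blambda})} = \left\{ \hat{\cQ} \hat{b} + \hat{b} \ : \ \hat{b} \in (\hVn{n}{\gamma(\blambda)})^{\perp} \right\},
\end{equation*}
where the second identity follows in the same way.

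Next I would check that $\gmtTransVnr$ maps $(\Vn{n}{\blambda})^{\perp}$ unitarily onto $(\hVn{n}{\gamma(\blambda)})^{\perp}$. Because of the ordering in $\boldsymbol{\mu}$, the restriction $\gmtTransVnr|_{\Vn{n}{\blambda}}$ equals $\gmtTransVn$, as in~\eqref{eq:sameBasis}, and this maps onto $\hVn{n}{\gamma(\blambda)}$. Since $\gmtTransVnr$ is unitary by \Cref{thm:unitary}, it preserves orthogonal complements inside the $(n+r)$--dimensional spaces. This step, identifying $\gmtTransVn^{\perp}$ as a unitary map between the complements, is the one I expect to need the most care; it rests entirely on the basis--matching built into the ordering of $\boldsymbol{\mu}$.

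Now take $\Fweak{r} \in W_r$. By definition $\gmtTransVnr[\Fweak{r}] \in \hVn{r}{\gamma(\tilde{\blambda})}$, so $\gmtTransVnr[\Fweak{r}] = \hat{\cQ}\hat{b} + \hat{b}$ for some $\hat{b}$ in $(\hVn{n}{\gamma(\blambda)})^{\perp}$. Set $b \coloneqq (\gmtTransVn^{\perp})^{-1}\hat{b} \in (\Vn{n}{\blambda})^{\perp}$. Applying $\gmtTransVnr^{-1}$ and using the restriction properties gives
\begin{equation*}
\Fweak{r} = \gmtTransVn^{-1}\big[\hat{\cQ}\gmtTransVn^{\perp} b\big] + b .
\end{equation*}
In particular $b = \cP_{(\Vn{n}{\blambda})^{\perp}}[\Fweak{r}]$, so $\|b\|_{\origspace} \le \|\Fweak{r}\|_{\origspace}$.

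Finally I would choose the competitor $\bV \coloneqq \cQ b + b \in \Vn{r}{\tilde{\blambda}}$. Then
\begin{equation*}
\Fweak{r} - \bV = \gmtTransVn^{-1}\big[(\hat{\cQ}\circ\gmtTransVn^{\perp} - \gmtTransVn\circ\cQ)\, b\big].
\end{equation*}
Since $\gmtTransVn^{-1}$ is an isometry, $\|\Fweak{r} - \bV\|_{\origspace}$ is bounded by the operator norm of $\hat{\cQ}\circ\gmtTransVn^{\perp} - \gmtTransVn\circ\cQ$ times $\|b\|_{\origspace}$. Combining this with $\|b\|_{\origspace} \le \|\Fweak{r}\|_{\origspace}$ and taking the infimum over $\Vn{r}{\tilde{\blambda}}$ yields~\eqref{eq:VrErr}.
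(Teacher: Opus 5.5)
Your proposal is correct and follows the paper's proof essentially step by step. It uses the same injectivity-plus-dimension-count argument for part (a), the same graph descriptions of $\Vn{r}{\tilde{\blambda}}$ and $\hVn{r}{\gamma(\tilde{\blambda})}$ over the orthogonal complements, the same block-diagonal splitting of $\gmtTransVnr$, and the same competitor $\cQ b + b$ together with $\|b\|_{\origspace} \le \|\Fweak{r}\|_{\origspace}$. The differences are cosmetic: you get that $\gmtTransVnr$ maps $\big(\Vn{n}{\blambda}\big)^{\perp}$ onto $\big(\hVn{n}{\gamma(\blambda)}\big)^{\perp}$ from the fact that unitaries preserve orthogonal complements, where the paper reads it off the GMT basis functions with indices $n,\ldots,n+r-1$; and you state explicitly the trivial-intersection property in the companion space needed for $\hat{\cQ}$, which the paper covers by ``an analogous argument''.
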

\begin{proof}
We first observe that the \ac{gmt} transformation $\gmtTransVnr$ is a block--diagonal linear operator. Indeed, because of the ordering given by $\boldsymbol{\mu}$ determining the Gram--Schmidt steps, we have 
\begin{equation*}
\Vn{n}{\blambda} = \myspan \left\{ \gmtmu_0, \ldots, \gmtmu_{n-1} \right\} = \myspan \left\{ \gmt_0, \ldots, \gmt_{n-1} \right\}    
\end{equation*}
 and 
 \begin{equation*}
 \left(\Vn{n}{\blambda}\right)^{\perp} = \myspan \left\{ \gmtmu_{n}, \ldots, \gmtmu_{n+r-1} \right\}.    
 \end{equation*}
 An analogous observation can be made for $\hVn{n}{\gamma(\blambda)}$ and $\left(\hVn{n}{\gamma(\blambda)}\right)^{\perp}$. Hence, 
 \begin{equation*}
     \gmtTransVnr = \begin{bmatrix}
         \gmtTransVn & 0 \\
         0 & \gmtTransVn^{\perp}
     \end{bmatrix}
 \end{equation*}
 with respect to the orthogonal splittings
 \begin{equation*}
     \Vn{n+r}{\boldsymbol{\mu}} = \Vn{n}{\blambda} \oplus \left( \Vn{n}{\blambda}\right)^{\perp} \quad \textrm{and} \quad \hVn{n+r}{\gamma(\boldsymbol{\mu})} = \hVn{n}{\gamma(\boldsymbol{\lambda})} \oplus \left( \hVn{n}{\gamma(\boldsymbol{\lambda})} \right)^{\perp}.
 \end{equation*}
 It follows that $\gmtTransVn : \Vn{n}{\blambda} \to \hVn{n}{\gamma(\blambda)}$ and $\gmtTransVn^{\perp} : \left(\Vn{n}{\blambda}\right)^{\perp} \to \left(\hVn{n}{\gamma(\blambda)}\right)^{\perp}$ are unitary.

 We now show statement~\ref{it:a}, i.e., the existence of the operators $\cQ$ and $\hat{\cQ}$ in~\eqref{eq:opers}. Let $\bV_0 \in \Vn{r}{\tilde{\blambda}}$ and suppose $\cP_{\left( \Vn{n}{\blambda} \right)^{\perp}} [ \bV_0 ] = 0$. Since we can write $\bV_0$ as
 \begin{equation*}
     \bV_0 = \cP_{\Vn{n}{\blambda}} \left[ \bV_0 \right] + \underbrace{\cP_{\left(\Vn{n}{\blambda}\right)^{\perp}} \left[ \bV_0 \right]}_{=0} = \cP_{\Vn{n}{\blambda}} \left[ \bV_0 \right],
 \end{equation*}
 this gives $\bV_0 \in \Vn{n}{\blambda}$. On the other hand, by the  assumption $\Vn{r}{\tilde{\blambda}} \cap \Vn{n}{\blambda} = \{ 0 \}$, hence, $\bV_0 = 0$ follows. This shows that $\cP_{\left( \Vn{n}{\blambda} \right)^{\perp}} \Big|_{\Vn{r}{\tilde{\blambda}}}$ is injective. However, since $\dim \Vn{r}{\tilde{\blambda}} = \dim  \left(\Vn{n}{{\blambda}}\right)^{\perp} = r$, the restricted orthogonal projector $\cP_{\left( \Vn{n}{\blambda} \right)^{\perp}} \Big|_{\Vn{r}{\tilde{\blambda}}}$ is also bijective and therefore, $\cQ$ in~\eqref{eq:opers} is well--defined. An analogous argument can be used to show the existence of $\hat{\cQ}$.

Let us show statement~\ref{it:b}. We make use of the fact that any function $\bV \in \Vn{r}{\tilde{\blambda}}$ can be written as $\cP_{\Vn{n}{\blambda}} \left[ \bV \right] + \cP_{\left(\Vn{n}{\blambda}\right)^{\perp}} \left[ \bV \right]$. This, together with~\eqref{eq:opers}, and the bijective nature of $\cP_{\left( \Vn{n}{\blambda} \right)^{\perp}} \Big|_{\Vn{r}{\tilde{\blambda}}}$  gives
\begin{equation}
    \label{eq:vr}
    \Vn{r}{\tilde{\blambda}} = \left\{ \cQ [ \bH ] + \bH \ : \ \bH \in \left(\Vn{n}{\blambda}\right)^{\perp} \right\}.
\end{equation}
Observe that for each $\bH \in \left( \Vn{n}{\blambda} \right)^\perp$, there exists a $\bV \in \Vn{r}{\tilde{\blambda}}$ such that $\cQ [ \bH ] = \cP_{\Vn{n}{\blambda}} [ \bV ]$ and $\bH = \cP_{\left(\Vn{n}{\blambda}\right)^{\perp}} [ \bV ]$. By the same argument in the companion space, we also obtain 
\begin{equation}
\label{eq:vrhat}
\hVn{r}{\gamma(\tilde{\blambda})} = \left\{ \hat{\cQ}\big[ \hat{\bH} \big] + \hat{\bH} \ : \ \hat{\bH} \in \left( \hVn{n}{\gamma(\blambda)} \right)^{\perp}\right\}.
\end{equation}

Next, let $\Fweak{r} \in W_r$ be fixed. Then we also have $\Fweak{r} = \cP_{\Vn{n}{\blambda}} [ \Fweak{r} ] + \cP_{\left(\Vn{n}{\blambda}\right)^{\perp}} [ \Fweak{r} ]$. In addition, let $\hFweak{r} \coloneqq \gmtTransVnr [ \Fweak{r} ] \in \hVn{r}{\gamma(\tilde{\blambda})}$. Exploiting these observations and the block--diagonal structure $\gmtTransVnr^{-1} = \operatorname{diag}\bigg(\gmtTransVn^{-1}, \left( \gmtTransVn^{\perp}\right)^{-1}\bigg)$, we can express the set $W_r = \gmtTransVnr^{-1} \Big[ \hVn{r}{\gamma(\tilde{\blambda})} \Big]$ as
\begin{equation*}
    W_r = \left\{ \gmtTransVn^{-1} \big[ \hat{\bF} \big] + \left( \gmtTransVn^{\perp} \right)^{-1} \big[ \hat{\bF} \big] \ : \ \hat{\bF} \in \hVn{r}{\gamma(\tilde{\blambda})} \right\}.
\end{equation*}
Considering~\eqref{eq:vrhat}, we find that for all $\hat{\bF} \in \hVn{r}{\gamma(\tilde{\blambda})}$, we have
\begin{equation*}
    \hat{\bF} = \underbrace{\hat{\cQ}\big[ \hat{\bH}\big]}_{\in \hVn{n}{\gamma(\blambda)}} + \underbrace{\hat{\bH}}_{\in \left( \hVn{n}{\gamma(\blambda)} \right)^{\perp}}
\end{equation*}
for some $\hat{\bH} \in \left(\hVn{n}{\gamma(\blambda)}\right)^{\perp}$. Accordingly, we can write
\begin{equation*}
    \Fweak{r} = \gmtTransVnr^{-1} \big[ \hFweak{r} \big] = \left(\gmtTransVnr^{-1}  \circ \hat{\cQ} \right) \big[ \hat{\bH}_0 \big] + \gmtTransVnr^{-1} \big[ \hat{\bH}_0 \big]
\end{equation*}
for some $\hat{\bH}_0 \in \left(\hVn{n}{\gamma(\blambda)}\right)^{\perp}$. By exploiting the block diagonal nature of $\gmtTransVnr$, this gives
\begin{equation}
    \label{eq:firstBg}
    \Fweak{r} = \left( \gmtTransVn^{-1}  \circ \hat{\cQ} \right)\big[ \hat{\bH}_0 \big]  + \left(\gmtTransVn^{\perp} \right)^{-1} \big[ \hat{\bH}_0 \big].
\end{equation}
Now recall that $\hat{\bH}_0 \in \left(\hVn{n}{\gamma(\blambda)}\right)^{\perp}$ and so $\bH_0 \coloneqq \gmtTransVnr^{-1} \big[ \hat{\bH}_0\big] = \left(\gmtTransVn^{\perp}\right)^{-1} \big[ \hat{\bH}_0 \big] \in \left(\Vn{n}{\blambda} \right)^\perp$. Applying this substitution to~\eqref{eq:firstBg} yields
\begin{equation*}
    \Fweak{r} = \left( \gmtTransVn^{-1}  \circ \hat{\cQ}  \circ \gmtTransVn^{\perp} \right) [ \bH_0 ]   + \bH_0.
\end{equation*}
We find that we can describe the subspace $W_r$ by
\begin{equation}
     \label{eq:wr}
     W_r = \left\{ \left( \gmtTransVn^{-1}  \circ \hat{\cQ}  \circ \gmtTransVn^{\perp} \right) [ \bH ]  + \bH \ : \ \bH \in \left( \Vn{n}{\blambda} \right)^{\perp} \right\}.
\end{equation}
We are now ready to give the estimate in~\eqref{eq:VrErr}. Recall, that for $\Fweak{r} \in W_r$, the distance from $\Vn{r}{\tilde{\blambda}}$ is defined as
\begin{equation*}
    \operatorname{dist} \left( \Fweak{r}, \Vn{r}{\tilde{\blambda}} \right) \coloneqq \inf_{\bV \in \Vn{r}{\tilde{\blambda}}} \left\| \Fweak{r} - \bV \right\|_{\origspace}.
\end{equation*}
Hence, any particular $\bV_0 \in \Vn{r}{\tilde{\blambda}}$ satisfies
\begin{equation}
    \label{eq:overest}
     \operatorname{dist}\left( \Fweak{r}, \Vn{r}{\tilde{\blambda}} \right) \leq \left\| \Fweak{r} - \bV_0 \right\|_{\origspace}.
\end{equation}
We can now exploit~\eqref{eq:vr} and~\eqref{eq:wr} by observing that $\Fweak{r} = \left( \gmtTransVn^{-1}  \circ \hat{\cQ}  \circ \gmtTransVn^{\perp} \right) [ \bH_0 ] + \bH_0$ for some $\bH_0 \in \left( \Vn{n}{\blambda} \right)^{\perp}$ and choosing $\bV_0 = \cQ [ \bH_0 ] + \bH_0 \in \Vn{r}{\tilde{\blambda}}$. This allows us to write
\begin{align*}
    \Fweak{r} - \bV_0 &= \left(\gmtTransVn^{-1}  \circ \hat{\cQ}  \circ \gmtTransVn^{\perp} \right) [ \bH_0 ]  + \bH_0 - \cQ [ \bH_0 ] - \bH_0 \\ &= \left( \gmtTransVn^{-1}  \circ \hat{\cQ}  \circ \gmtTransVn^{\perp} \right) [ \bH_0 ] - \cQ [ \bH_0 ] \in \Vn{n}{\blambda},
\end{align*}
and therefore,
\begin{equation*}
    \left\| \Fweak{r} - \bV_0 \right\|_{\origspace} = \left\| \left(\gmtTransVn^{-1}  \circ \hat{\cQ}  \circ \gmtTransVn^{\perp} \right) [ \bH_0 ]  - \cQ [ \bH_0 ] \right\|_{\cH}.
\end{equation*}
To obtain the formula, we can exploit that $\gmtTransVn = \gmtTransVnr\Big|_{\Vn{n}{\blambda}}$ and that $\gmtTransVn$ and $\gmtTransVnr$ are unitary and find
\begin{align*}
    \left\| \Fweak{r} - \bV_0 \right\|_{\origspace} &= \left\| \gmtTransVn \left[\Fweak{r} - \bV_0\right] \right\|_{\ambspace} \\ &= \left\| \gmtTransVn \left[\left( \gmtTransVn^{-1} \circ \hat{\cQ}  \circ \gmtTransVn^{\perp}\right) [ \bH_0 ]  - \cQ [ \bH_0 ] \right] \right\|_{\ambspace} \\ &= \left\| \left(\hat{\cQ}  \circ \gmtTransVn^{\perp}\right) [ \bH_0 ]  - \left(\gmtTransVn  \circ \cQ \right) [ \bH_0 ]  \right\|_{\ambspace}.
\end{align*}
This, together with~\eqref{eq:overest} gives
\begin{align*}
    \operatorname{dist}\left( \Fweak{r}, \Vn{r}{\tilde{\blambda}} \right) & \leq \left\| \Fweak{r} - \bV_0 \right\|_{\origspace} \\&= \left\| \left(\hat{\cQ}  \circ \gmtTransVn^{\perp}\right) [ \bH_0 ] - \left( \gmtTransVn  \circ \cQ \right) [ \bH_0 ]   \right\|_{\ambspace} \\ &\leq \left\| \hat{\cQ}  \circ \gmtTransVn^{\perp}  - \gmtTransVn  \circ \cQ  \right\|_{\cL\left(\left( \Vn{n}{\blambda} \right)^{\perp}, \hVn{n}{\gamma(\blambda)}\right)} \left\| \bH_0 \right\|_{\origspace} \\ &\leq \left\| \hat{\cQ}  \circ \gmtTransVn^{\perp}  - \gmtTransVn  \circ \cQ  \right\|_{\cL\left(\left( \Vn{n}{\blambda} \right)^{\perp}, \hVn{n}{\gamma(\blambda)}\right)} \left\| \Fweak{r} \right\|_{\origspace},
\end{align*}
where the last inequality follows from~\eqref{eq:wr}, which we can apply to $\bH_0$ to obtain $\left\| \bH_0 \right\|_{\origspace} = \left\| \cP_{\left(\Vn{n}{\blambda}\right)^{\perp}} [ \Fweak{r} ] \right\|_{\origspace} \leq \left\| \Fweak{r} \right\|_{\origspace}$.
 
\end{proof}

\begin{corollary}
    \label{thm:weakProj}
    Suppose the assumptions of \Cref{thm:ProjErr}. Then, for any weak \ac{rom} $\Gweak{r} \in W_r$, the infimum in $\operatorname{dist}\left( \Gweak{r}, \Vn{r}{\tilde{\blambda}}\right)$ is attained and the best approximation in $\Vn{r}{\tilde{\blambda}}$ is given by $\cP_{\Vn{r}{\tilde{\blambda}}} \big[ \Gweak{r} \big]$. In addition,
    \begin{align}
        \label{eq:weakProjRom}
        \begin{split}
        \operatorname{dist}\left( \Gweak{r}, \Vn{r}{\tilde{\blambda}}\right) &= \left\| \Gweak{r} - \cP_{\Vn{r}{\tilde{\blambda}}} \big[ \Gweak{r} \big]\right\|_{\origspace} \\ &\leq \left\| \hat{\cQ}  \circ \gmtTransVn^{\perp}   - \gmtTransVn  \circ \cQ   \right\|_{\cL\left(\left( \Vn{n}{\blambda} \right)^{\perp}, \hVn{n}{\gamma(\blambda)}\right)} \left\| \Gweak{r} \right\|_{\cH}.
        \end{split}
    \end{align}
\end{corollary}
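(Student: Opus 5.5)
The corollary follows from \Cref{thm:ProjErr}~\ref{it:b} combined with the Hilbert projection theorem. The plan is to show that the infimum defining $\operatorname{dist}\big(\Gweak{r}, \Vn{r}{\tilde{\blambda}}\big)$ is attained at the orthogonal projection. The inequality in~\eqref{eq:weakProjRom} is then exactly~\eqref{eq:VrErr} specialized to $\Fweak{r} = \Gweak{r}$.

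\textbf{Step 1: attainment.} The subspace $\Vn{r}{\tilde{\blambda}} = \myspan\{\eta_0,\ldots,\eta_{r-1}\}$ is finite dimensional, hence closed in $\origspace$. By the Hilbert projection theorem, for any $\bF \in \origspace$ there is a unique minimizer of $\bV \mapsto \|\bF - \bV\|_{\origspace}$ over $\Vn{r}{\tilde{\blambda}}$. It is characterized by $\bF - \bV \perp \Vn{r}{\tilde{\blambda}}$, i.e.\ the minimizer is $\cP_{\Vn{r}{\tilde{\blambda}}}[\bF]$. This is the same argument as~\eqref{eq:projArg} in the proof of \Cref{thm:orthocond}. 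Concretely, $\cP_{\Vn{r}{\tilde{\blambda}}}$ can be written as the \ac{gmt}--Fourier partial sum $\cS_r^{\tilde{\blambda}}$ by \Cref{thm:orthoProjGMT}. Taking $\bF = \Gweak{r}$ gives the equality
\[
\operatorname{dist}\big(\Gweak{r}, \Vn{r}{\tilde{\blambda}}\big) = \big\|\Gweak{r} - \cP_{\Vn{r}{\tilde{\blambda}}}[\Gweak{r}]\big\|_{\origspace}.
\]

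\textbf{Step 2: membership in $W_r$.} To invoke \Cref{thm:ProjErr}~\ref{it:b}, check that the weak \ac{rom} $\Gweak{r} = \gmtTransVnr^{-1}[\hat{\bG}_r]$ from~\eqref{eq:weakROM} lies in $W_r$. Indeed, $\hat{\bG}_r \in \hVn{r}{\gamma(\tilde{\blambda})} = \myspan\{\hat{\xi}_{\gamma(\tlambda_j)}\}_{j=0}^{r-1}$ when the centers are distinct. By linearity of $\gmtTransVnr^{-1}$ (\Cref{thm:unitary}), its preimage lies in the span of the $\gmtTransVnr^{-1}[\hat{\xi}_{\gamma(\tlambda_j)}]$, which is $W_r$ as defined in \Cref{thm:ProjErr}. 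If repeated centers are allowed, the $\hat{\xi}$ are replaced by the $\hat{\eta}_j$, and the spans agree by \Cref{thm:GMTONS}.

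\textbf{Step 3: conclusion.} Applying~\eqref{eq:VrErr} with $\Fweak{r} = \Gweak{r}$ and chaining with the equality from Step 1 yields~\eqref{eq:weakProjRom}. There is no genuine obstacle; the only point needing care is Step 2. There one must make sure that the space $W_r$ of \Cref{thm:ProjErr}, defined through kernel slices, coincides with the space $W_r$ spanned by preimages of \ac{gmt} functions that was used when defining the weak \ac{rom}.
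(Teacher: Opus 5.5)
Your proposal is correct and follows the paper's proof. The paper also obtains attainment of the infimum at $\cP_{\Vn{r}{\tilde{\blambda}}}\big[\Gweak{r}\big]$ from the Hilbert projection theorem, and the bound from \Cref{thm:ProjErr}~\ref{it:b} with $\Fweak{r}=\Gweak{r}$. Your Step~2 is harmless but unnecessary, because the corollary already assumes $\Gweak{r}\in W_r$; your remark that the two descriptions of $W_r$ (preimages of GMT functions versus preimages of kernel slices) coincide is nonetheless correct.
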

\begin{proof}
    The attainment of the infimum and the best approximation are consequences of the Hilbert projection theorem, while~\eqref{eq:weakProjRom} is an application of \Cref{thm:ProjErr}.
\end{proof}

The previous statement bounds the discrepancy between the weak solution and its projection to $\Vn{r}{\tilde{\blambda}}$. To obtain this projected solution, one has to first compute the weak \ac{rom} given by $\Gweak{r} = \gmtTransVnr^{-1}\big[ \hat{\bG}_r \big]$, and then project it to $\Vn{r}{\tilde{\blambda}}$. This is computationally inefficient, because finding $\gmtTransVnr^{-1}\big[ \hat{\bG}_r \big]$ involves generating the first $n+r$ terms of the \ac{gmt} system in the companion space described by $\gamma(\boldsymbol{\mu})$. In the experiments, we found that it is much more economical to find the kernel centers $\gamma\big(\tlambda_0\big), \ldots, \gamma\big(\tlambda_{r-1}\big)$ describing the \ac{rom} $\hat{\bG}_r$ in the companion space and then consider the projected \ac{rom} 
\begin{equation}
    \label{eq:dirProjection}
    \Gweakproj{r} \coloneqq \cP_{\Vn{r}{\tilde{\blambda}}} \left[ \bG \right]
\end{equation}
in the original \ac{rkhs} $\origspace$. In addition to being computationally more efficient, by the Hilbert projection theorem, we also have
\begin{equation*}
    \left\| \bG - \cP_{\Vn{r}{\tilde{\blambda}}}\left[ \bG \right] \right\|_{\origspace} \leq  \left\| \bG - \left( \gmtTransVn^{-1} \circ \cP_{\hVn{r}{\gamma(\tilde{\blambda})}}  \right) \big[ \hat{\bG} \big] \right\|_{\origspace},
\end{equation*}
that is, the economical projection~\eqref{eq:dirProjection} is guaranteed to have an error to the \ac{fom} $\bG$ that is lower than or equal to that of the projected weak solution. We bound the $\origspace$ error of~\eqref{eq:dirProjection} with the following theorem.
\begin{theorem}[Error of projected \ac{rom}]
    \label{thm:ProjErrEco}
    Suppose the assumptions of \Cref{thm:ProjErr} hold and let $\Gweakproj{r}$ be defined according to~\eqref{eq:dirProjection}. Then, 
    \begin{equation*}
        \left\| \bG - \Gweakproj{r} \right\|_{\origspace} \leq \rkhsError{\xi}{r}\big( \Gweak{r} \big) + \left\| \hat{\cQ}  \circ \gmtTransVn^{\perp}   - \gmtTransVn  \circ \cQ  \right\|_{\cL\left(\left( \Vn{n}{\blambda} \right)^{\perp}, \hVn{n}{\gamma(\blambda)}\right)} \left\| \Gweak{r} \right\|_{\cH}.
\end{equation*}
    where $\Gweak{r} = \gmtTransVnr^{-1} \big[ \hat{\bG}_r \big]$ and $\rkhsError{\xi}{r}(\cdot)$ as  in \Cref{def:Objective}.
\end{theorem}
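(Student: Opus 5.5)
The plan is a triangle inequality combined with \Cref{thm:weakProj}. The key observation is that $\Gweakproj{r}=\cP_{\Vn{r}{\tilde{\blambda}}}[\bG]$ is, by the Hilbert projection theorem, the best approximation of $\bG$ from $\Vn{r}{\tilde{\blambda}}$. Hence, for every $\bV\in\Vn{r}{\tilde{\blambda}}$,
\begin{equation*}
\left\| \bG - \Gweakproj{r} \right\|_{\origspace} \le \left\| \bG - \bV \right\|_{\origspace}.
\end{equation*}
I will choose $\bV \coloneqq \cP_{\Vn{r}{\tilde{\blambda}}}\big[\Gweak{r}\big]$, which is the best approximation of the weak \ac{rom} in $\Vn{r}{\tilde{\blambda}}$ guaranteed by \Cref{thm:weakProj}.

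Next I split the error through the weak \ac{rom}:
\begin{equation*}
\left\| \bG - \cP_{\Vn{r}{\tilde{\blambda}}}\big[\Gweak{r}\big] \right\|_{\origspace} \le \left\| \bG - \Gweak{r} \right\|_{\origspace} + \left\| \Gweak{r} - \cP_{\Vn{r}{\tilde{\blambda}}}\big[\Gweak{r}\big] \right\|_{\origspace}.
\end{equation*}
The first term is $\rkhsError{\xi}{r}(\Gweak{r})$ by \Cref{def:Objective}, with the domain extended to weak \acp{rom}. The second term is exactly $\operatorname{dist}(\Gweak{r},\Vn{r}{\tilde{\blambda}})$. By \eqref{eq:weakProjRom} in \Cref{thm:weakProj}, it is bounded by $\big\| \hat{\cQ}\circ\gmtTransVn^{\perp}-\gmtTransVn\circ\cQ\big\|\,\|\Gweak{r}\|_{\origspace}$. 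Chaining the three inequalities gives the claim.

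The only point needing care is that $\Gweak{r}=\gmtTransVnr^{-1}[\hat{\bG}_r]$ actually lies in the subspace $W_r$ of \Cref{thm:ProjErr}, so that \Cref{thm:weakProj} applies. Since $\hat{\bG}_r\in\hVn{r}{\gamma(\tilde{\blambda})}=\myspan\{\hat{\xi}_{\gamma(\tlambda_j)}\}_{j=0}^{r-1}$ (pairwise distinct centers, so the span of the first $r$ \ac{gmt} functions equals the span of the kernel slices by \Cref{thm:GMTONS}), linearity of $\gmtTransVnr^{-1}$ gives $\Gweak{r}\in W_r$.

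It may also be worth remarking, via \Cref{thm:unitary} and \eqref{eq:sameBasis}, that $\rkhsError{\xi}{r}(\Gweak{r})=\|\hat{\bG}-\hat{\bG}_r\|_{\ambspace}$. This interpretation is not needed for the inequality itself.
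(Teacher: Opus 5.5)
Your proposal is correct and follows essentially the same route as the paper: use the best-approximation property of $\Gweakproj{r}=\cP_{\Vn{r}{\tilde{\blambda}}}[\bG]$ to compare it with $\cP_{\Vn{r}{\tilde{\blambda}}}\big[\Gweak{r}\big]$, then apply the triangle inequality through $\Gweak{r}$, and bound the distance term by \Cref{thm:weakProj}. Your explicit check that $\Gweak{r}\in W_r$, so that \Cref{thm:weakProj} applies, is a step the paper leaves implicit.
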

\begin{proof}
    For ease of notation, denote 
    the projection of $\Gweak{r}$ to $\Vn{r}{\tilde{\blambda}}$ by $\bG_r^{\mathrm{weak\text{-}proj}}$.
    By the Hilbert projection theorem and since $\Gweakproj{r} = \cP_{\Vn{r}{\tilde{\blambda}}} \left[ \bG \right]$, we know that 
    \begin{equation}
        \label{eq:ecoProjIsBetter}
        \left\| \bG - \Gweakproj{r} \right\|_{\origspace} \leq \big\| \bG - \bG_r^{\mathrm{weak\text{-}proj}} \big\|_{\origspace}.
    \end{equation}
     On the other hand, since the \ac{gmt} transformation is unitary, we have
     \begin{equation*}
         \rkhsError{\xi}{r}\big( \Gweak{r} \big) = \left\| \bG - \Gweak{r} \right\|_{\origspace} = \left\| \hat{\bG} - \hat{\bG}_r \right\|_{\ambspace} = \hat{\cE}_{\hat{\xi}}^{r}\big( \hat{\bG}_r \big),
     \end{equation*}
     and by \Cref{thm:ProjErr} and \Cref{thm:weakProj},
     \begin{equation*}
         \big\| \Gweak{r} - \bG_r^{\mathrm{weak\text{-}proj}} \big\|_{\origspace} \leq \left\| \hat{\cQ}  \circ \gmtTransVn^{\perp}   - \gmtTransVn  \circ \cQ   \right\|_{\cL\left(\left( \Vn{n}{\blambda} \right)^{\perp}, \hVn{n}{\gamma(\blambda)}\right)} \left\| \Gweak{r} \right\|_{\cH}.
     \end{equation*}
     Hence, the triangle inequality gives
     \begin{equation*}
         \big\| \bG - \bG_r^{\mathrm{weak\text{-}proj}} \big\|_{\origspace} \leq \rkhsError{\xi}{r} \big(\Gweak{r} \big) + \left\| \hat{\cQ}  \circ \gmtTransVn^{\perp}   - \gmtTransVn  \circ \cQ   \right\|_{\cL\left(\left( \Vn{n}{\blambda} \right)^{\perp}, \hVn{n}{\gamma(\blambda)}\right)} \left\| \Gweak{r} \right\|_{\cH}.
     \end{equation*}
     Combining this with~\eqref{eq:ecoProjIsBetter} gives the statement.
\end{proof}

\Cref{fig:trfPipeline} summarizes the \ac{gmt} transformation pipeline introduced above and the subspaces in which the \ac{fom} and the two considered \acp{rom} reside. The \ac{fom} $\bG \in \Vn{n}{\blambda} \subseteq \origspace$ is mapped via the unitary \ac{gmt} transformation $\gmtTransVn$ to $\hat{\bG} \in \hVn{n}{\gamma(\blambda)} \subseteq \ambspace$, where \ac{mor} is carried out to obtain the companion \ac{rom} $\hat{\bG}_r \in \hVn{r}{\gamma(\tilde{\blambda})}$. The kernel centers describing $\hat{\bG}_r$ determine the subspace $\Vn{r}{\tilde{\blambda}} \subseteq \origspace$ by $\gamma^{-1}$. The subspace $\Vn{r}{\tilde{\blambda}}$ contains the projected \ac{rom} $\Gweakproj{r} = \cP_{\Vn{r}{\tilde{\blambda}}} \left[ \bG \right]$ from~\eqref{eq:dirProjection}. Applying instead the extended inverse transformation $\gmtTransVnr^{-1}$ directly to $\hat{\bG}_r$ yields the weak \ac{rom} $\Gweak{r} \in W_r$ from~\eqref{eq:weakROM}. Note that $\hVn{n+r}{\gamma(\boldsymbol{\mu})} = \hVn{n}{\gamma(\blambda)} \dotplus \hVn{r}{\gamma(\tilde{\blambda})}$ is only a (not necessarily orthogonal) direct sum. This is also true for $\Vn{n+r}{\boldsymbol{\mu}} = \Vn{n}{\blambda} \dotplus \Vn{r}{\tilde{\blambda}}$.

\begin{figure}[tbp]
    \centering
    \resizebox{\linewidth}{!}{
\usetikzlibrary{arrows.meta}
\definecolor{fomgray}{RGB}{40,40,40}
\definecolor{weakgreen}{RGB}{0,150,95}
\definecolor{econpurple}{RGB}{135,35,190}
\definecolor{ambblue}{RGB}{0,85,210}
\definecolor{spacegreen}{RGB}{45,125,0}

\begin{tikzpicture}[>={Stealth[length=10pt, width=8pt]}, line cap=round, line join=round,
    every node/.style={text opacity=1},
    elabel/.style={font=\large, inner sep=3pt}]

\draw[spacegreen, thick, rounded corners=8pt] (-0.3,-0.9) rectangle (5.9,7.2);
\draw[spacegreen, thick, rounded corners=8pt] (7.0,-0.9) rectangle (13.2,7.2);

\node[font=\bfseries\large] at (2.8,7.55) {Original \ac{rkhs} $\origspace$};
\node[font=\bfseries\large] at (10.1,7.55) {Companion \ac{rkhs} $\ambspace$};

\node[spacegreen, font=\large, anchor=north west] at (-0.2,7.1) {$\Vn{n+r}{\boldsymbol{\mu}}$};
\node[spacegreen, font=\large, anchor=north west] at (7.1,7.1) {$\hVn{n+r}{\gamma(\boldsymbol{\mu})}$};


\filldraw[fomgray, fill=fomgray!14, thick] (2.8,4.3) ellipse [x radius=1.7, y radius=1.5];
\filldraw[weakgreen, fill=weakgreen!22, thick] (1.5,2.1) ellipse [x radius=1.4, y radius=1.3];
\filldraw[econpurple, fill=econpurple!20, thick] (3.9,2.1) ellipse [x radius=1.4, y radius=1.3];

\node[fomgray, font=\large, anchor=south] at (2.8,5.85) {$\Vn{n}{\blambda}$};
\node[weakgreen, font=\large, anchor=north] at (0.9,0.7) {$W_r$};
\node[econpurple, font=\large, anchor=north] at (4.4,0.7) {$\Vn{r}{\tilde{\blambda}}$};

\node[circle, fill=fomgray, inner sep=2.1pt] (G) at (2.8,5.0) {};
\node[fomgray, font=\large] at ($(G)+(-0.55,0.05)$) {$\bG$};

\node[circle, fill=weakgreen, inner sep=2.1pt] (Gweak) at (1.3,1.9) {};
\node[weakgreen, font=\large] at ($(Gweak)+(0,0.42)$) {$\Gweak{r}$};

\node[circle, fill=econpurple, inner sep=2.1pt] (Gecon) at (4.1,1.9) {};
\node[econpurple, font=\large] at ($(Gecon)+(0,-0.45)$) {$\Gweakproj{r}$};

\filldraw[fomgray, fill=fomgray!14, thick] (10.1,4.45) ellipse [x radius=1.9, y radius=1.7];
\filldraw[ambblue, fill=ambblue!18, thick] (10.1,1.65) ellipse [x radius=1.7, y radius=1.5];

\node[fomgray, font=\large, anchor=south] at (10.1,6.2) {$\hVn{n}{\gamma(\blambda)}$};
\node[ambblue, font=\large, anchor=north] at (10.1,0.1) {$\hVn{r}{\gamma(\tilde{\blambda})}$};

\node[circle, fill=fomgray, inner sep=2.1pt] (Ghat) at (10.1,5.25) {};
\node[fomgray, font=\large] at ($(Ghat)+(0.55,0.1)$) {$\hat{\bG}$};

\node[circle, fill=ambblue, inner sep=2.1pt] (Ghatr) at (10.1,1.0) {};
\node[ambblue, font=\large] at ($(Ghatr)+(0.6,0.1)$) {$\hat{\bG}_r$};

\draw[->, thick, fomgray] (G) to[bend left=12] node[midway, above, elabel] (Flab) {$\gmtTransVn$} (Ghat);

\draw[->, thick, ambblue] (Ghat) -- node[pos=0.3, left, elabel, align=right] (mor) {MOR} (Ghatr);

\coordinate (Fwinv) at (6.45,-0.5);
\draw[weakgreen, thick] (Ghatr) to[out=200, in=-10] (Fwinv);
\draw[->, thick, weakgreen] (Fwinv) to[out=175, in=-60] (Gweak);
\node[weakgreen, elabel, anchor=north] at (Fwinv) {$\gmtTransVnr^{-1}$};

\draw[->, thick, econpurple] (G) to[bend right=35] node[pos=0.35, right, elabel] (proj) {$\cP_{\Vn{r}{\tilde{\blambda}}}$} (Gecon);

\coordinate (lamX) at (6.45,4.13);
\draw[->, thick, dashed, ambblue, shorten >=2pt] (mor.west) to[out=172, in=0] (lamX) to[out=180, in=8] (proj.east);
\node[ambblue, elabel, anchor=south] (lamlab) at (lamX) {$\tilde{\blambda}$};

\draw[dashed, gray] (6.45,-0.3) -- (lamX);
\draw[dashed, gray] (lamlab.north) -- (lamX |- Flab.south);
\draw[dashed, gray] (lamX |- Flab.north) -- (6.45,7.6);

\end{tikzpicture}
    }
    \caption{The \ac{gmt} transformation pipeline. The \ac{fom} $\bG$ and its companion counterpart $\hat{\bG}$ are shown in gray/black; the companion \ac{rom} $\hat{\bG}_r$ obtained by \ac{mor} in blue. Inverting the \ac{gmt} transformation directly gives the weak \ac{rom} $\Gweak{r} \in W_r$ (green), while projecting $\bG$ onto the subspace $\Vn{r}{\tilde{\blambda}}$ determined by the kernel centers of $\hat{\bG}_r$ (via $\gamma^{-1}$) gives the projected \ac{rom} $\Gweakproj{r}$ (purple).}
    \label{fig:trfPipeline}
\end{figure}

Before using the \ac{gmt} transformation to show some existence theorems of the \ac{mor} problem in \acp{rkhs}, we show a more practical approach to compute it. In particular, we directly obtain \ac{gmt} coefficients of $\bG$ given the kernel centers $\lambda_k$ and linear weights $\res_k$ without having to compute the \ac{gmt} functions themselves.

\begin{theorem}[\ac{gmt} transformation without basis change]
\label{thm:BGMTtransfrom}
Suppose that \Cref{ass:GMTtrans} is satisfied and let $(\lambda_0, \ldots, \lambda_{n-1}) \in \Omeganeq{n}$. Let
\begin{align*}
\bG = \sum_{k=0}^{n-1} \res_k \xi_{\lambda_k}
\in \Vn{n}{\blambda}
\subseteq \origspace, \quad 
\gmtTransVn\left[\bG\right]
= \hat{\bG}
= \sum_{k=0}^{n-1} \hat{\res}_k
\hat{\xi}_{\gamma(\lambda_k)}
\in \hVn{n}{\gamma(\blambda)}
\subseteq \ambspace.
\end{align*}
Using the biorthogonal basis from \Cref{def:Lagrange}, let 
\begin{equation*}
\bc =  
[\res_0,\res_1,\ldots,\res_{n-1}]^\top
\coloneqq 
\left[
\left\langle \bG,\psi_0\right\rangle_{\origspace},
\ldots,
\left\langle \bG,\psi_{n-1}\right\rangle_{\origspace}
\right]^\top
\in \C^n .
\end{equation*}
Moreover, let $\boldsymbol{\Xi} = \bL\bL^*$ and $\hat{\boldsymbol{\Xi}} = \hat{\bL}\hat{\bL}^*$ be the Cholesky factorizations of the Gramian matrices
\begin{equation*}
\boldsymbol{\Xi} \coloneqq  
\left[\left\langle
\xi_{\lambda_k},
\xi_{\lambda_j}
\right\rangle_{\origspace}\right]_{k,j=0}^{n-1},
\quad
\hat{\boldsymbol{\Xi}}
\coloneqq
\left[\left\langle
\hat{\xi}_{\gamma(\lambda_k)},
\hat{\xi}_{\gamma(\lambda_j)}
\right\rangle_{\ambspace}\right]_{k,j=0}^{n-1}.
\end{equation*}
Then, with $\frak{P} \coloneqq
\hat{\bL}^{-\top}\bL^\top
\in \C^{n\times n}$, the linear coefficients of the \ac{rom} are given by
\begin{equation}
\label{eq:BGMTtransform}
\hat{\res}_k = \left[ \frak{P} \bc \right]_k \quad (k=0,\ldots,n-1).
\end{equation}
\end{theorem}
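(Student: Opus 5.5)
The plan is to write everything in vector--of--functions notation and use the fact that Gram--Schmidt on kernel slices is the same thing as a Cholesky factorization of their Gramian. The argument runs in three steps: identify the coefficients $\res_k$, identify the \ac{gmt} coefficients of $\bG$, and push them through the transformation.

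\textbf{Step 1: the coefficients $\res_k$.} Since the $\lambda_k$ are pairwise distinct, \Cref{thm:LagrangeProps} gives
\begin{equation*}
\left\langle \xi_{\lambda_j}, \psi_k \right\rangle_{\origspace} = \overline{\psi_k(\lambda_j)} = \delta_{j,k}.
\end{equation*}
Expanding $\bG = \sum_j \res_j \xi_{\lambda_j}$ and using linearity then yields $\left\langle \bG, \psi_k \right\rangle_{\origspace} = \res_k$. This is the same computation as in \Cref{thm:biorExp}, so $\bc$ is exactly the kernel--expansion coefficient vector of $\bG$.

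\textbf{Step 2: Gram--Schmidt as Cholesky.} Stack $\mathbf{X} \coloneqq [\xi_{\lambda_0},\ldots,\xi_{\lambda_{n-1}}]^\top$ and $\boldsymbol{\Phi} \coloneqq [\gmt_0,\ldots,\gmt_{n-1}]^\top$.
\begin{itemize}
\item By \Cref{def:GMT} and \Cref{thm:GMTONS}, each $\xi_{\lambda_k}$ lies in $\myspan\{\gmt_0,\ldots,\gmt_k\}$. Its coefficient on $\gmt_k$ is $\|\eta_k - \cS_k^{\blambda}[\eta_k]\|_{\origspace} > 0$, which is positive by the linear independence in \Cref{ass:GMTkers}.
\item Hence $\mathbf{X} = \mathbf{M}\boldsymbol{\Phi}$ with $\mathbf{M}$ lower triangular with positive diagonal.
\item Taking Gram matrices and using orthonormality \eqref{eq:GMTONS} gives $\boldsymbol{\Xi} = \mathbf{M}\mathbf{M}^*$.
\item By uniqueness of the Cholesky factor with positive diagonal, $\mathbf{M} = \bL$.
\end{itemize}
Then
\begin{equation*}
\bG = \bc^\top \mathbf{X} = \bc^\top \bL \boldsymbol{\Phi},
\end{equation*}
so the \ac{gmt} coefficients $\left\langle \bG, \gmt_k\right\rangle_{\origspace}$ are the entries of $\bL^\top \bc$.

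The same argument in $\ambspace$, using $\hat{\mathbf{X}} \coloneqq [\hat{\xi}_{\gamma(\lambda_0)},\ldots,\hat{\xi}_{\gamma(\lambda_{n-1})}]^\top$ and the companion \ac{gmt} vector $\hat{\boldsymbol{\Phi}}$, gives $\hat{\mathbf{X}} = \hat{\bL}\hat{\boldsymbol{\Phi}}$. Since $\hat{\bL}$ is triangular with nonzero diagonal, it is invertible, so $\hat{\boldsymbol{\Phi}} = \hat{\bL}^{-1}\hat{\mathbf{X}}$.

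\textbf{Step 3: apply the transformation.} By \eqref{eq:GMTtransfrom},
\begin{equation*}
\hat{\bG} = (\bL^\top\bc)^\top \hat{\boldsymbol{\Phi}} = \bc^\top \bL \hat{\bL}^{-1} \hat{\mathbf{X}}.
\end{equation*}
The kernel slices $\hat{\xi}_{\gamma(\lambda_k)}$ are linearly independent, so the coefficient vector of $\hat{\bG}$ in them is unique. It equals $(\bc^\top \bL \hat{\bL}^{-1})^\top = \hat{\bL}^{-\top}\bL^\top\bc = \frak{P}\bc$, which is \eqref{eq:BGMTtransform}.

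\textbf{Main obstacle.} The delicate step is Step 2, specifically the bookkeeping of conjugates and transposes. The entry ordering $[\langle \xi_{\lambda_k},\xi_{\lambda_j}\rangle]_{k,j}$ must match $\mathbf{M}\mathbf{M}^*$ rather than $\overline{\mathbf{M}}\mathbf{M}^\top$, and one must check that the \ac{gmt} coefficients come out as $\bL^\top\bc$ rather than $\bL^*\bc$. I would verify this entrywise:
\begin{equation*}
\left\langle \xi_{\lambda_k}, \xi_{\lambda_j}\right\rangle_{\origspace} = \sum_i M_{k,i}\overline{M_{j,i}},
\end{equation*}
and then read off $\bc^\top\mathbf{M}\boldsymbol{\Phi}$ directly from the expansion.
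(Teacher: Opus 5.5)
Your proof is correct and follows essentially the same route as the paper: both identify the Gram--Schmidt triangular factor with the Cholesky factor by uniqueness. The paper writes $\boldsymbol{\Phi}=\mathbf{R}\mathbf{X}$ and shows $\mathbf{R}\mathbf{L}=\mathbf{I}_n$, which is the same fact as your $\mathbf{M}=\mathbf{L}$ for $\mathbf{X}=\mathbf{M}\boldsymbol{\Phi}$. The only difference is bookkeeping: the paper computes the GMT coefficients through the point values at the $\lambda_j$, i.e.\ $\tilde{\mathbf{c}}=\boldsymbol{\Xi}^\top\mathbf{c}=\overline{\mathbf{L}}\mathbf{L}^\top\mathbf{c}$, which then cancels against $\overline{\mathbf{L}}^{-1}$, whereas you read off $\mathbf{L}^\top\mathbf{c}$ directly from the expansion $\mathbf{c}^\top\mathbf{L}\boldsymbol{\Phi}$. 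Your version is slightly shorter, and your Step~1 also makes explicit the identification of $\mathbf{c}$ via the Lagrange--GMT functions, which the paper takes for granted.
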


\begin{proof}
For all $k=0,\ldots,n-1$, we have $\gmt_k\in \Vn{n}{\blambda}$ and
\begin{equation}
\label{eq:GMTwithKerSlices}
\gmtk
=
\sum_{j=0}^{k} r_{k,j}\xi_{\lambda_j}.
\end{equation}
Hence,
\begin{equation*}
\left\langle \bG,\gmtk \right\rangle_{\origspace}
=
\left\langle
\bG,
\sum_{j=0}^{k} r_{k,j}\xi_{\lambda_j}
\right\rangle_{\origspace} 
=
\sum_{j=0}^{k}
\overline{r_{k,j}}
\left\langle
\bG,\xi_{\lambda_j}
\right\rangle_{\origspace} 
=
\sum_{j=0}^{k}
\overline{r_{k,j}}\bG(\lambda_j).
\end{equation*}
The same relation holds between $\tgmtk$ and
$\hat{\xi}_{\gamma(\lambda_k)}$, namely,
\begin{equation*}
\tgmtk
=
\sum_{i=0}^{k}
\hat{r}_{k,i}
\hat{\xi}_{\gamma(\lambda_i)}.
\end{equation*}
Substituting these expressions into \Cref{def:GMTtransform}, we obtain
\begin{align*}
\gmtTransVn \left[\bG \right]
&=
\sum_{k=0}^{n-1}
\left\langle\bG,\gmtk\right\rangle_{\origspace}
\tgmtk \\
&=
\sum_{k=0}^{n-1}
\left(
\sum_{j=0}^{k}
\overline{r_{k,j}}\bG(\lambda_j)
\right)
\left(
\sum_{i=0}^{k}
\hat{r}_{k,i}
\hat{\xi}_{\gamma(\lambda_i)}
\right) 
=
\sum_{i=0}^{n-1}
\left(
\sum_{j=0}^{n-1}
\left(
\sum_{k=0}^{n-1}
\hat{r}_{k,i}
\overline{r_{k,j}}
\right)
\bG(\lambda_j)
\right)
\hat{\xi}_{\gamma(\lambda_i)},
\end{align*}
where we use that $r_{k,i} = 0$ whenever $i > k$. Define 
\begin{equation*}
\frak{B}
\coloneqq
\left[\sum_{k=0}^{n-1}
\hat{r}_{k,i}\overline{r_{k,j}}\right]_{i,j=0}^{n-1} \in\C^{n\times n}.
\end{equation*}
Then
\begin{equation*}
\gmtTransVn\left[\bG\right]
=
\sum_{k=0}^{n-1}
[\frak{B}\tilde{\bc}]_k
\hat{\xi}_{\gamma(\lambda_k)}, \quad \text{where} \quad \tilde{\bc}
\coloneqq
\left[\bG(\lambda_0),\ldots,\bG(\lambda_{n-1}) \right]^\top.
\end{equation*}
Next, we show that
\begin{equation*}
    \frak{B}
    =
    \hat{\bL}^{-\top}\overline{\bL}^{-1},
\end{equation*}
where $\hat{\bL}$ and $\bL$ are obtained from the Cholesky factorizations of $\hat{\boldsymbol{\Xi}}$ and $\boldsymbol{\Xi}$, respectively. The Cholesky factorizations exist because the Gram matrices $\hat{\boldsymbol{\Xi}}$ and $\boldsymbol{\Xi}$ are positive definite under our assumptions.

Introducing the vector of basis functions $\Phi
    =
    \left[\gmt_0,\ldots,\gmt_{n-1}\right]^\top$, by~\eqref{eq:GMTwithKerSlices} we obtain $\Phi = \bR \left[\xi_{\lambda_0},\ldots,\xi_{\lambda_{n-1}}\right]^\top,$
where $\bR = [r_{k,j}]_{k,j=0}^{n-1}$.
By \Cref{def:GMT}, the diagonal coefficients of $\bR$ satisfy
 $   r_{k,k}
    =\left\|
    \xi_{\lambda_k}-\cS_k^{\blambda}\left[\xi_{\lambda_k}\right]
    \right\|_{\origspace}^{-1}>0$ for $k=0,\ldots,n-1$.
Since
$\left\langle\gmt_k,\gmt_j\right\rangle_{\origspace}
=\delta_{j,k}$. This gives
\begin{equation*}
    \bI_n = \left[ \left\langle\gmt_k,\gmt_j\right\rangle_{\origspace} \right]_{k,j=0}^{n-1} = \bR \left[ \left\langle \xi_{\lambda_k},\xi_{\lambda_j}\right\rangle_{\origspace} \right]_{k,j=0}^{n-1}  \bR^* = \bR\Xi\bR^*.
\end{equation*}
Substituting the Cholesky factorization $\boldsymbol{\Xi}=\bL\bL^*$ yields $\bR\bL\bL^* \bR^*
= (\bR\bL)(\bR\bL)^* = \bI_n$. Thus, $\bR\bL$ is unitary. Since both $\bR$ and $\bL$ are lower triangular, $\bR\bL$ is lower triangular. Moreover, since $r_{k,k}>0$ by the Gram--Schmidt construction and $[\bL]_{k,k}>0$ by the construction of the Cholesky factorization, its diagonal entries are $ [\bR\bL]_{k,k}
    =    r_{k,k}[\bL]_{k,k}$ are positive for $k=0,\dots,n-1$. Therefore, $\bR\bL=\bI_n$, i.e.,  $  \bR=\bL^{-1}$.
Analogously, $\hat{\bR}=\hat{\bL}^{-1}$.
By the definition of $\frak{B}$, we have $\frak{B} = \hat{\bL}^{-\top}\overline{\bL}^{-1} = 
    \hat{\bR}^\top 
    \overline{\bR}$.
It remains to relate the evaluation coefficients $\tilde{\bc}$ to the kernel coefficients $\bc$. By the reproducing property,
\begin{equation*}
    \tilde c_j \coloneqq [\tilde{\bc}]_j
    = \bG(\lambda_j) = \left\langle
        \bG,\xi_{\lambda_j}
    \right\rangle_{\origspace}.
\end{equation*}
Since
    $\bG = \sum_{k=0}^{n-1} \res_k\xi_{\lambda_k}$,
it follows that
\begin{equation*}
    \tilde c_j = \sum_{k=0}^{n-1} \res_k \left\langle \xi_{\lambda_k}, \xi_{\lambda_j} \right\rangle_{\origspace} = \sum_{k=0}^{n-1} \boldsymbol{\Xi}_{k,j}\res_k \quad (j=0,\ldots,n-1).
\end{equation*}
Therefore, $
    \tilde{\bc}
    =
    \boldsymbol{\Xi}^\top \bc, 
    $
and using $\Xi=\bL\bL^*$, we obtain
    $\tilde{\bc}
    =
    \boldsymbol{\Xi}^\top \bc
    =
    \overline{\bL}\bL^\top\bc$.
Combining this with the expression for $\frak{B}$ gives $
    \hat{\bc}
    =  \frak{B}\tilde{\bc}
    = \hat{\bL}^{-\top}
    \overline{\bL}^{-1}
    \overline{\bL}\bL^\top \bc
    = \hat{\bL}^{-\top}\bL^\top \bc$,
which proves~\eqref{eq:BGMTtransform}.
\end{proof}
\begin{remark}
    While \Cref{thm:BGMTtransfrom} allows for the computation of $\hat{\bG} = \gmtTransVn \left[ \bG \right]$ without having to express $\bG$ in a \ac{gmt} basis in $\origspace$, or generate the corresponding \ac{gmt} basis in $\ambspace$, it does not remove all of the computational difficulty for \ac{gmt} transformations. In particular, if the kernel slices defined by $\gamma(\lambda_0), \ldots, \gamma(\lambda_{n-1})$ are nearly linearly dependent in the companion \ac{rkhs} $\ambspace$, then inverting $\hat{\bL}$ from \Cref{thm:BGMTtransfrom} can still pose a challenge for large $n$. However, if one can identify a family of \acp{rkhs} where such a computation remains stable, even if the points $\gamma(\lambda_k)$ for $k=0,\ldots,n-1$ are clustered in $\hat{\Omega}$, then one could transform \ac{mor} problems to these spaces without further numerical issues. We do not further pursue the rigorous identification of such nicely behaved \acp{rkhs}; however, based on our presented experiments in \Cref{sec:exp}, we empirically find several candidate spaces.
\end{remark}

Finally, we clarify the existence of local minimum points of $\rkhsError{\xi}{r}$ in \acp{rkhs} with sufficiently smooth kernels. 

\begin{theorem}
    \label{thm:ExistLocalOpt}
    Let $\Omega \subseteq \C$ be a compact set and $\origspace$ be an \ac{rkhs} of functions $\Omega \to \C$ with kernel $\xi$. Let $\bG \in \fomspaceo$, and assume $\left(\lambda \mapsto \xi_{\lambda}\right) \in C^r(\Omega, \origspace)$ for some $r < n$. Finally, suppose that $\funder[j]{\xi_{\lambda}}{\olambda} \in \origspace$ for $j=0,\ldots,r-1$ and all $\lambda \in \Omega$. Then, there exists $\tilde{\blambda}^{\mathrm{opt}} \in \Omega^r$ such that
    \begin{equation*}
        \rkhsError{\xi}{r}\big( \cP_{\Vn{r}{\tlambda^{\mathrm{opt}}}}[\bG] \big) = \min_{\tilde{\blambda} \in \Omega^r} \rkhsError{\xi}{r}\big( \cP_{\Vn{r}{\tilde{\blambda}}}[\bG] \big).
    \end{equation*}
    Moreover, if $\tilde{\blambda}^{\mathrm{opt}}$ lies in the interior of $\Omega^r$, then $\tilde{\blambda}^{\mathrm{opt}}$ is a zero of the equilibrium function given in \Cref{def:EquibFun}. 
\end{theorem}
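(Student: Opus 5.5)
Existence will follow from the Weierstrass extreme value theorem applied to
\[
f(\tilde{\blambda}) \coloneqq \rkhsError{\xi}{r}\big(\cP_{\Vn{r}{\tilde{\blambda}}}[\bG]\big)^2 = \|\bG\|_{\origspace}^2 - \big\|\cP_{\Vn{r}{\tilde{\blambda}}}[\bG]\big\|_{\origspace}^2
\]
on the compact set $\Omega^r$. Stationarity at an interior minimizer will come from \Cref{thm:orthocond} and \Cref{thm:equibCond}.

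\textbf{Defining $f$ on all of $\Omega^r$.} For $\tilde{\blambda}\in\Omega^r$ with repetitions, I read $\Vn{r}{\tilde{\blambda}}$ as in~\eqref{eq:VnSec5}, i.e.\ the span of the $\eta_j$ from~\eqref{eq:kers}. A center repeated $d$ times contributes derivatives up to order $d-1\le r-1$, which exist by hypothesis. Under \Cref{ass:GMTkers}-type linear independence, $\dim\Vn{r}{\tilde{\blambda}}=r$ for every $\tilde{\blambda}$, so $f$ is well defined everywhere.

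\textbf{Continuity of $f$.} Let $\tilde{\blambda}^{(\nu)}\to\tilde{\blambda}$. If the multiplicity pattern is fixed along the sequence, the generators $\eta_j$ depend continuously on the centers in $\origspace$, since $\lambda\mapsto\xi_\lambda\in C^r$. The Gram matrix is then continuous and invertible at the limit, and hence
\[
\|\cP_{V}[\bG]\|^2 = \mathbf{b}^*\mathbf{M}^{-1}\mathbf{b}, \qquad \mathbf{b}_j=\langle \bG,\eta_j\rangle_{\origspace},
\]
is continuous. This is the main obstacle: when distinct centers coalesce, the kernel slices become nearly dependent and the Gram matrix degenerates. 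I would handle coalescence by replacing $\xi_{\lambda_{j}},\ldots,\xi_{\lambda_{j+d-1}}$ with their divided differences. By Taylor expansion in $C^r(\Omega,\origspace)$, these divided differences span the same space and converge to $\eta$'s involving $\funder[i]{\xi_\lambda}{\olambda}$. This is exactly the confluent basis of \Cref{def:gmodspacediff}/\eqref{eq:kers}, so $\Vn{r}{\tilde{\blambda}^{(\nu)}}$ converges to $\Vn{r}{\tilde{\blambda}}$ as subspaces and the projections converge in norm. One real-variable caveat: the divided-difference argument should be run along Wirtinger directions, and I would pass to a subsequence where the ratios of coalescing differences converge. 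With continuity in hand, the minimum over the compact set $\Omega^r$ is attained at some $\tilde{\blambda}^{\mathrm{opt}}$.

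\textbf{Interior minimizers are zeros of $\cD$.} Suppose $\tilde{\blambda}^{\mathrm{opt}}$ lies in the interior of $\Omega^r$ and set $\bG_r\coloneqq\cP_{\Vn{r}{\tilde{\blambda}^{\mathrm{opt}}}}[\bG]$.
\begin{enumerate}
\item Any $\tilde{\bG}_r\in\romspaceo$ near $\bG_r$ has error at least that of the projection onto its own span, which is $\ge f(\tilde{\blambda}^{\mathrm{opt}})$. Hence $\bG_r$ is a local minimizer in the sense of \Cref{def:locmin}.
\item Since $\bG_r$ is a projection, orthogonality to $\Vn{r}{\tilde{\blambda}^{\mathrm{opt}}}$ holds automatically. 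This gives the lower-order interpolation conditions by \Cref{thm:derivKerOrder}.
\item Differentiating $f$ in the real and imaginary parts of each distinct center, exactly as in the proof of \Cref{thm:orthocond}, and using that the gradient vanishes at an interior minimum, yields $\langle \bG-\bG_r,\funder[m_j]{\xi_{\tlambda_j}}{\overline{\tlambda}}\rangle_{\origspace}=0$. \Cref{thm:derivKerOrder} then converts this into $\bG^{(m_j)}(\tlambda_j)=\bG_r^{(m_j)}(\tlambda_j)$.
\end{enumerate}
Because $\bG_r$ is the projection, no coefficient-nonvanishing hypothesis is needed in step~3: one differentiates $f$ directly, and the derivative of the projection is orthogonal to $\bG-\bG_r$ except for the new derivative term. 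The interpolation identities in step~3 are precisely $[\cD(\tilde{\blambda}^{\mathrm{opt}})]_i=0$ by \eqref{eq:EquibFun}.
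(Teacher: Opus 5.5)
Your existence argument is the paper's: divided differences at coalescing centers, continuity of $\tilde{\blambda}\mapsto\cP_{\Vn{r}{\tilde{\blambda}}}$, then the extreme value theorem. The independence hypothesis you add is tacit in the paper too. However, the patch you propose for complex centers does not work. Take $r=2$ and centers $\lambda$, $\lambda+te^{i\theta}$ with $t\downarrow 0$. Then $\myspan\{\xi_\lambda,\xi_{\lambda+te^{i\theta}}\}$ converges to $\myspan\{\xi_\lambda,\,e^{i\theta}\funder{\xi_\lambda}{\lambda}+e^{-i\theta}\funder{\xi_\lambda}{\olambda}\}$. This limit depends on $\theta$, and it equals $\Vn{2}{(\lambda,\lambda)}=\myspan\{\xi_\lambda,\funder{\xi_\lambda}{\olambda}\}$ for every $\theta$ only if $\funder{\xi_\lambda}{\lambda}$ lies in that span. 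For the Gaussian kernel of \Cref{subsec:svm} on a closed disk, $\funder{\xi_\lambda}{\lambda}(z)=\sigma\overline{(z-\lambda)}\,\xi_\lambda(z)$ and $\funder{\xi_\lambda}{\olambda}(z)=\sigma(z-\lambda)\,\xi_\lambda(z)$, and these are independent of each other and of $\xi_\lambda$. So $\tilde{\blambda}\mapsto\Vn{r}{\tilde{\blambda}}$, and $f$ for generic $\bG$, is discontinuous on the diagonal. Passing to a subsequence along which the direction converges only identifies a direction-dependent limit. It does not give continuity (nor, in general, lower semicontinuity), so the extreme value theorem cannot be invoked. The argument does go through when $\Omega\subseteq\R$, or when $\lambda\mapsto\xi_\lambda$ is anti-holomorphic on a locally convex $\Omega$ (Hardy-, Bergman-, Szeg\H{o}-type kernels); there Hermite--Genocchi applies to divided differences in $\overline{\lambda}$. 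That is precisely the setting in which the paper's appeal to Hermite--Genocchi is justified, so you should impose it as a hypothesis rather than claim a subsequence fix. The same restriction enters your step~3. For a simple center with coefficient $\tres_j\neq0$, the two real stationarity equations only give $\overline{\tres_j}\,\bE'(\tlambda_j)+\tres_j\,\overline{\tfrac{\partial\bE}{\partial\overline{z}}(\tlambda_j)}=0$ with $\bE\coloneqq\bG-\bG_r$. This reduces to $\bE'(\tlambda_j)=0$ only when $\bE$ is holomorphic or all data are real.

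Second, your claim that step~3 needs no coefficient hypothesis is false. Write $V(t)\colon\C^r\to\origspace$ for the synthesis map of the generators and $\mathbf{c}$ for the projection coefficients. Then $\frac{d}{dt}\|\cP_{V(t)}[\bG]\|_{\origspace}^2=2\Re\langle\dot V\mathbf{c},\bG-\bG_r\rangle_{\origspace}$. Orthogonality kills the lower terms of the block at $\tlambda_j$, but the surviving term carries the factor $\tres_{j,m_j-1}$, exactly as in~\eqref{eq:derSums}. If that coefficient vanishes, first-order stationarity says nothing, which is why \Cref{thm:orthocond} assumes $\tres_{j,m_j-1}\neq0$. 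The case is easy to close without derivatives. If $\tres_{j,m_j-1}=0$, then $\bG_r=\cP_W[\bG]$ with $W$ the span of the other $r-1$ generators. Replacing one copy of $\tlambda_j$ by $\tlambda_j+\epsilon$ gives a point of $\Omega^r$ whose value is $f(\tilde{\blambda}^{\mathrm{opt}})-|\bE(\tlambda_j+\epsilon)|^2/\|w_\epsilon\|_{\origspace}^2$, where $w_\epsilon\coloneqq\xi_{\tlambda_j+\epsilon}-\cP_W[\xi_{\tlambda_j+\epsilon}]\neq0$. Minimality then forces $\bE\equiv0$ near $\tlambda_j$, so all conditions at $\tlambda_j$ hold. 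With these repairs your second part is actually sounder than the paper's. The paper obtains $\cD(\tilde{\blambda}^{\mathrm{opt}})=0$ by citing \Cref{thm:equibCond}, which is the converse implication ($\cD=0$ implies the optimality conditions). Your chain (minimality, stationarity, interpolation, then $\cD=0$ by~\eqref{eq:EquibFun}) runs in the required direction.
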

\begin{proof}
  We show that the map $\tilde{\blambda} \mapsto \cP_{\Vn{r}{\tilde{\blambda}}} [\bG]$ is continuous with respect to the norm induced on $\cH$. Recall that by \Cref{thm:orthoProjGMT}, we have $\cP_{\Vn{r}{\tilde{\blambda}}} = \gmtPsumr$.
  This shows that if $\tilde{\blambda}$ has no repeated kernel centers, then there exists a neighborhood of $\tilde{\blambda}$ where continuity is given by the continuity of the inner product on $\origspace$.  Otherwise, if $\tilde{\blambda} \in \Omega^r$ has repeated kernel centers, without loss of generality, assume that these repetitions are grouped. Formally, we assume that
  \begin{equation*}
   \tilde{\blambda} = \big( \underbrace{\tlambda_0,\ldots,\tlambda_0}_{m_0 \text{ times}}, \ldots, \underbrace{\tlambda_{k-1},\ldots,\tlambda_{k-1}}_{m_{k-1}\text{ times}}\big)
  \end{equation*}

  
  Consider the corresponding divided--difference basis
  
  \begin{align}
    \label{eq:divDiffBasis}
         \begin{split}
             \tau\big[\tlambda_j\big] &= \xi_{\tlambda_j} \quad (j = 0,\ldots, r-1) \\
            \tau\left[\tlambda_j, \ldots, \tlambda_{j+q}\right] &\coloneqq \begin{cases}
             \frac{\tau\left[\tlambda_{j+1}, \ldots, \tlambda_{j+q}\right] - \tau\left[\tlambda_j, \ldots, \tlambda_{j+q-1}\right]}{\tlambda_{j+q} - \tlambda_j}, & \text{if } \tlambda_j \neq \tlambda_{j+q}, \\
             \frac{1}{q!} \funder[q]{\xi_{\tlambda_j}}{\overline{\tlambda}}, & \text{if } \tlambda_j = \tlambda_{j+q} 
         \end{cases} \quad (j + q \leq r-1).
         \end{split}
  \end{align}
  Notice first~\cite[Proposition 35]{deBoor_divided_2005} that
  \begin{equation*}
    \Vn{r}{\tilde{\blambda}} = \myspan\left\{ \tau\left[\tlambda_0\right], \tau\left[\tlambda_0, \tlambda_1\right], \ldots, \tau\left[\tlambda_0, \ldots, \tlambda_{r-1}\right] \right\} = \myspan \left\{  \rgmtj  \right\}_{j=0}^{r-1} \subseteq \origspace
  \end{equation*}
  for any $\tilde{\blambda} \in \Omega^r$, where $\gmtj$ denote the \ac{gmt} functions constructed according to \Cref{def:GMT}. Consequently, $\tau\left[\tlambda_0\right], \ldots, \tau\left[\tlambda_0, \ldots, \tlambda_{r-1}\right]$ form a linearly independent system. The Hermite--Genocchi formula for divided differences (see, e.g.,~\cite[Theorem 3.3]{atkinson_introduction_2008} or~\cite{deBoor_divided_2005}) asserts the  continuity of $\tau\left[\tlambda_i, \ldots, \tlambda_{i+k}\right]$, even with repeated kernel centers. Then, we can express the projection operator $\cP_{\Vn{r}{\tilde{\blambda}}}$ in terms of the basis in~\eqref{eq:divDiffBasis}, i.e.,
  \begin{equation*}
    \cP_{\Vn{r}{\tilde{\blambda}}}[\cdot] = \sum_{k,j=0}^{r-1} \left[\bU 
    \big(\tilde{\blambda} \big)^{-1}\right]_{k,j} \left\langle \cdot, \tau\left[\tlambda_0, \ldots, \tlambda_{j}\right] \right\rangle_{\origspace} \tau\left[\tlambda_0, \ldots, \tlambda_k\right],
  \end{equation*}
  where $\bU \big(\tilde{\blambda} \big)$ denotes the Gramian matrix
  \begin{equation*}
      \bU 
    \big(\tilde{\blambda} \big) = \left[\left\langle  \tau\big[\tlambda_0, \ldots, \tlambda_k\big],  \tau\big[\tlambda_0, \ldots, \tlambda_j\big] \right\rangle_{\origspace}\right]_{k,j = 0}^{r-1}. 
  \end{equation*}
  We observe that:
  \begin{enumerate}
      \item The scalar--valued functions $\left[\bU (\cdot)^{-1}\right]_{k,j}$, $k,j=0,\ldots,r-1$, are continuous.
      \item The operators $\cT_{k,j}\big(\tilde{\blambda}\big)[\cdot] \coloneqq \left\langle \cdot, \tau\left[\tlambda_0, \ldots, \tlambda_{j}\right] \right\rangle_{\origspace} \tau\left[\tlambda_0, \ldots, \tlambda_k\right]$, $k,j=0,\ldots,r-1$, are continuous in the operator norm. Indeed, we have 
      \begin{equation*}
          \big\| \cT_{k,j}\big(\tilde{\blambda}\big) - \cT_{k,j}\big(\hat{\blambda}\big) \big\|_{\cL(\origspace)} \to 0
      \end{equation*}
      as $\tilde{\blambda} \to \hat{\blambda}$ (in any $p$--vector norm). This follows directly from the continuity of the divided differences $\tau$ with respect to the $\origspace$ norm.
  \end{enumerate}
  Consequently, the map $\tilde{\blambda} \mapsto \cP_{\Vn{r}{\tilde{\blambda}}}$ is continuous in the operator norm on $\Omega^r$, so is $\tilde{\blambda} \mapsto \rkhsError{\xi}{r}\big( \cP_{\Vn{r}{\tilde{\blambda}}}[\bG] \big)$. Since $\Omega^r$ is compact, the extreme value theorem guarantees the existence of $\tilde{\blambda}^{\mathrm{opt}} \in \Omega^r$ attaining $\min_{\tilde{\blambda} \in \Omega^r} \rkhsError{\xi}{r}\big( \cP_{\Vn{r}{\tilde{\blambda}}}[\bG] \big)$, which proves the first statement.

  For the second statement, suppose $\tilde{\blambda}^{\mathrm{opt}}$ lies in the interior of $\Omega^r$. Then there exists a $\delta > 0$ such that the open ball of radius $\delta$ around $\tilde{\blambda}^{\mathrm{opt}}$ is contained in $\Omega^r$, so $\bG_r^{\mathrm{opt}} \coloneqq \cP_{\Vn{r}{\tlambda^{\mathrm{opt}}}}[\bG]$ is a locally optimal \ac{rom} in the sense of \Cref{def:locmin}. By \Cref{thm:equibCond}, this yields $\cD\big(\tilde{\blambda}^{\mathrm{opt}}\big) = 0$.
\end{proof}

\section{MOR methods}
\label{sec:irka}

We now propose \ac{mor} algorithms in \acp{rkhs}. First, we consider a method for \ac{mor} problems posed in \acp{rkhs} that satisfy the conditions of Theorems~\ref{thm:orthocond} and~\ref{thm:MLcRKHS}. Then, we use \ac{gmt} transformations to solve the \ac{mor} problem in \acp{rkhs} with kernels that are not sufficiently smooth.

To clarify the setting of our proposed method, we make the following assumption. 
\begin{assumption}
    \label{ass:IRKAass} 
    Suppose that $\origspace$ is an \ac{rkhs} of functions $\Omega \to \C$ with kernel $\xi$ such that $\xi \in C^2(\Omega^2,\C)$.
\end{assumption}
Under Assumption~\ref{ass:IRKAass}, we propose a norm--optimal \ac{rkhs}--\ac{mor} method in Algorithm~\ref{alg:irkaRKHS}. Here, we assume that the initial and all intermediate kernel centers are pairwise different. If that was not the case, we would need higher regularity in the second kernel variable. 

\begin{algorithm}[tbp]
\caption{$\origspace$--optimal MOR in \acp{rkhs} with $C^2$ kernels}
\begin{algorithmic}[1]
\Require \ac{rkhs} $\origspace$ with kernel $\xi$, \ac{fom} $\bG \in \fomspaceo$, reduced order $r < n$, initial \ac{rom} kernel centers $\tilde{\blambda}^{(0)} \in \Omeganeq{r}$, termination tolerance $\varepsilon > 0$.
\Ensure \ac{rom} $\bG_r \in \romspaceo$.
\State $m \gets 0$.
\State $\varepsilon^{(0)}\gets  \infty$.
\While{$\varepsilon^{(m)}> \varepsilon$}
    \State $\bJ \gets \left[\frac{\partial \cD \left( \tilde{\blambda}^{(m)} \right)_i}{\partial \overline{\tlambda_j}} \right]_{i,j=0}^{r-1}$. \Comment{$\cD$ as in \Cref{def:EquibFun}.}
    \State $\tilde{\blambda}^{(m+1)} \gets \tilde{\blambda}^{(m)} -\bJ^{-1} \cD \left( \tilde{\blambda}^{(m)} \right)$.
    \State $\varepsilon^{(m)} \gets \left\| \tilde{\blambda}^{(m+1)} - \tilde{\blambda}^{(m)} \right\|_{\infty}. $
    \State $m \gets m+1$.
\EndWhile
\State $\bG_r \gets \cP_{\Vn{r}{\tilde{\blambda}^{(m)}}}\left[ \bG \right]$.
\State \Return $\bG_r$.
\end{algorithmic}
\label{alg:irkaRKHS}
\end{algorithm}
\begin{remark}
    \begin{enumerate}[label=(\alph*)]
        \item Algorithm~\ref{alg:irkaRKHS} is a Newton--like iteration to find a zero of $\cD$ in \Cref{def:EquibFun}. The Jacobian $\bJ$ in Algorithm~\ref{alg:irkaRKHS} is well--defined due to Assumption~\ref{ass:IRKAass}. One could potentially use a zero finding scheme that does not require the computation of $\bJ$ such as the secant method. In this case, \Cref{ass:IRKAass} can be weakened to only assume $\xi \in C \big( \Omega^2, \origspace\big)$.
        \item If upon convergence $\bJ$ is not singular, we satisfy \Cref{thm:equibCond}. 
        \item \Cref{thm:ExistLocalOpt} guarantees the existence of a local minimizer $\tilde{\blambda}^{\mathrm{opt}} \in \Omega^r$ of $\tilde{\blambda} \mapsto \rkhsError{\xi}{r} \big( \cP_{V_r^{\tilde{\blambda}}}[\cdot] \big)$, and, provided $\tilde{\blambda}^{\mathrm{opt}}$ lies in the interior of $\Omega^r$, that it is a zero of $\cD$. This can only be applied if we extend Assumption~\ref{ass:IRKAass} such that the kernel $\xi$ is $r$--times continuously differentiable with respect to the second variable.
    \end{enumerate}
\end{remark}
While Algorithm~\ref{alg:irkaRKHS} performs well under Assumption~\ref{ass:IRKAass}, in practice it has several limitations. First, it can only be applied in \acp{rkhs} equipped with a sufficiently smooth kernel. Several classes of \acp{fom}, however, cannot be embedded into generalized model spaces of such \acp{rkhs}. For example, ReLU networks considered in \Cref{sec:exp} do not meet this requirement. In addition, similarly to classical \ac{irka}~\cite{gugercin_mathcalh_2_2008}, the update steps in Algorithm~\ref{alg:irkaRKHS} do not guarantee that $\tilde{\blambda}$ remains inside $\Omega^r$ throughout the iteration. In the $H_2(\D)$ setting, this problem is addressed in~\cite{cdcAngino}. The proposed solution modifies the Newton update in Algorithm~\ref{alg:irkaRKHS} to be compatible with the hyperbolic geometry induced by the pseudohyperbolic metric on the manifold $\D$. In particular, the Newton update is performed along hyperbolic geodesics, which force the parameter vector $\tilde{\blambda}$ to remain inside $\D^r$ in each iteration of the method. 

For the above reasons, we propose a \ac{gmt}--transformation--based \ac{mor} method, that can be applied to produce "norm--optimal"  weak solutions. 
\begin{assumption}
    \label{ass:CDCRKHS}
    Let $\Omega \subseteq \C$ such that there exists a bijection $\gamma : \Omega \to \D$. Let $\origspace$ be an \ac{rkhs} of functions $\Omega \to \C$ with reproducing kernel $\xi$.
\end{assumption}
A general \ac{mor} method applicable to \acp{fom} defined in \acp{rkhs} with non--smooth kernels is given in Algorithm~\ref{alg:cdcRKHS}. Here, we assume Assumption~\ref{ass:CDCRKHS}.
\begin{algorithm}[btp]
\caption{\ac{gmt} transformation based \ac{rkhs}--\ac{mor} method.}
\begin{algorithmic}[1]
\Require \ac{rkhs} $\origspace$ with kernel $\xi$, \ac{fom} $\bG \in \fomspaceo$ defined by the kernel centers $\blambda \in \Omega^n$, reduced order $r < n$, initial \ac{rom} kernel centers $\tilde{\blambda}^{(0)} \in \Omega^r$, bijection $\gamma : \Omega \to \D$.
\Ensure weak \ac{rom} $\Gweak{r} \in \wromspaceo$.
\State $\hat{\bG} \gets \sum_{k=0}^{n-1} \left\langle \bG, \gmtk \right\rangle_{\origspace} \tgmtk \in H_2(\D)$. \Comment{\ac{gmt} transformation.}
\State $\hat{\bG}_r \gets$ output of \Cref{alg:hybrid_equilibrium} with inputs $\hat{\bG}$, $r$, and $\gamma\left(\tilde{\blambda}^{(0)}\right)$.
\State $\Gweak{r} \gets \gmtTransVnr^{-1} \big[ \hat{\bG}_r \big] \in \wromspaceo$. \Comment{$\boldsymbol{\mu}$ as in~\eqref{eq:extendedKerCenters}.}
\State \Return $\Gweak{r}$.
\end{algorithmic}
\label{alg:cdcRKHS}
\end{algorithm}

\begin{remark}
    \begin{enumerate}[label=(\alph*)]
        \item By \Cref{thm:unitary}, Algorithm~\ref{alg:cdcRKHS} guarantees that 
        \begin{equation*}
        \rkhsError{\xi}{r} \big( \Gweak{r} \big) =  \left\| \bG - \Gweak{r} \right\|_{\origspace} = \big\| \hat{\bG} - \hat{\bG}_r \big\|_{H_2(\D)} = \hat{\cE}_{\rho}^{r} \big( \hat{\bG}_r \big),
        \end{equation*}
        where $\hat{\bG}_r$ satisfies the optimality condition in~\eqref{eq:MLC} in $H_2(\D)$ and $\rho$ denotes the $H_2(\D)$ kernel in~\eqref{eq:kerh2d}. In this way, norm--optimal reduced--order models can be obtained in \acp{rkhs}, where Theorems~\ref{thm:orthocond} and~\ref{thm:MLcRKHS} cannot be applied. 
        \item In general, $\Gweak{r} \in \wromspaceo$ (see \Cref{def:weak}). We note, however, that 
        \begin{equation*}
            \Gweak{r} = \sum_{j=0}^{r-1} c_j \gmtTransVnr^{-1} \left[ \tgmtj \right] = \sum_{k=0}^{n+r-1} \left\langle \hat{\bG}, \tgmtkj \right\rangle_{H_2(\D)} \gmtkj
        \end{equation*}
         and that we can compute the coefficients $c_j$,  $j=0,\ldots,r-1$, given 
         \begin{equation*}
            \left\langle \hat{\bG}, \tgmtkj \right\rangle_{H_2(\D)} \quad  (k=0,\ldots,n+r-1).     
         \end{equation*}
         In this way, $\Gweak{r}$ can be fully recovered given the kernel centers of the \ac{fom} $\blambda$, the kernel centers of the \ac{rom} $\tilde{\blambda}$ and the coefficients $\left(c_j\right)_{j=0}^{r-1}$. Hence, we need $n + 2\cdot r$ parameters to reconstruct a weak \ac{rom}. Hence, $\Gweak{r}$ achieves a compression of the \ac{fom} as long as $r < n/2$, since the \ac{fom} is represented by $2n$ parameters in total ($n$ kernel centers $\blambda$ and $n$ linear coefficients $\res$).
        \item The requirement for the existence of $\gamma : \Omega \to \D$ can be weakened to $\gamma : \Omega \to \hat{\Omega} \subseteq \C$, if we suppose the existence of an \ac{rkhs} $\ambspace$ with a sufficiently smooth kernel such that \Cref{thm:orthocond} can be applied. In this case, we can consider a \ac{gmt} transformation $ \gmtTransVn\left[ \bG \right] = \hat{\bG} \in \ambspace$ and obtain a norm--optimal \ac{rom} $\hat{\bG}_r \in \romspaceg$ using Algorithm~\ref{alg:irkaRKHS}. 

        Note however, that in Algorithm~\ref{alg:irkaRKHS}, no formal guarantees are given that in each iteration, the \ac{rom} parameters $\tlambda_j^{(m)} \ (j=0,\ldots,r-1, \ m=0,1,\ldots)$ remain strictly inside $\hat{\Omega}$. In this regard, we note that if $\hat{\Omega}$ is a Riemannian manifold, then the Newton step in Algorithm~\ref{alg:irkaRKHS} can be modified to respect the geometry induced on it. However, such an approach would require the development of the Riemannian Newton step for each specific choice of $\hat{\Omega}$. If a bijection $\gamma : \Omega \to \D$  is available, then using Algorithm~\ref{alg:cdcRKHS} is much simpler from the perspective of applications.

        In \Cref{sec:exp}, we consider some \ac{mor} problems, where the functions in the original space $\origspace$ are defined over a real interval, that is, $\Omega \subseteq \R$. In such cases, no bijection between $\Omega$ and $\D$ can exist. Therefore, we use the real--analytic space $\realHtwo$ as defined in~\eqref{eq:realH2}. Note that for this space, Algorithm~\ref{alg:hybrid_equilibrium} can be applied with the same theoretical guarantees as in $H_2(\D)$. Indeed, if $\hat{\bG}$ is real valued on $\mathbb{I}$ and the initial kernel centers satisfy $\gamma \left(\tilde{\blambda}^{(0)}\right) \in \mathbb{I}^r$, then the \ac{rom} produced by Algorithm~\ref{alg:hybrid_equilibrium} will satisfy $\bG_r \in \realHtwo$.

        \item The error $\rkhsError{\xi}{r}\left( \bG_r \right)$ can be computed by
        \begin{equation*}
            \rkhsError{\xi}{r}\left( \bG_r \right) = \sqrt{ \sum_{k=r}^{n+r-1} \left| \left\langle \hat{\bG}, \tgmtkj \right\rangle_{H_2(\D)} \right|^2 },
        \end{equation*}
        where $\tgmtkj$ denote the classical \ac{mt} functions from~\eqref{eq:MT} defined by the parameter sequence
        \begin{equation*}
            \boldsymbol{\mu} = \big(\lambda_0, \ldots, \lambda_{n-1}, \tlambda_0, \tlambda_1, \ldots, \tlambda_{r-1}\big) \in \Omega^{n+r},
        \end{equation*}
        and where $\left(\gamma\big(\lambda_k\big)\right)_{k=0}^{n-1}$ and $\left(\gamma(\tlambda_j)\right)_{j=0}^{r-1}$ are kernel centers associated with the \ac{fom} $\hat{\bG}$ and the \ac{rom} $\hat{\bG}_r$, respectively. This is a consequence of \Cref{thm:unitary}.

        \item For many applications, obtaining the weak solution as proposed in Algorithm~\ref{alg:cdcRKHS}, is computationally expensive. Hence, for practical applications we can use the efficient projected \ac{rom} as defined in \Cref{thm:ProjErrEco}. That is, after retrieving the \ac{rom} kernel centers $\gamma\big( \tilde{\blambda}\big)$ in the companion space via Algorithm~\ref{alg:hybrid_equilibrium}, we can determine a \ac{rom} $\bG_r \in \romspaceo$ by $\Gweakproj{r} \coloneqq \cP_{V_r^{\tilde{\blambda}}} \left[ \bG \right]$. \Cref{thm:ProjErrEco} gives a theoretical error bound on this \ac{rom} when compared to the weak solution, while our experiments in \Cref{sec:exp} empirically demonstrate its effectiveness. 
    \end{enumerate}
\end{remark}

\section{Experiments}
\label{sec:exp}
We consider a number of different experiments to show applications of the norm--optimal  \ac{rkhs}--\ac{mor} methods discussed in \Cref{sec:irka}. The experiments purposely consider a diverse array of \acp{rkhs} in order to assert the generality of the introduced methods. We emphasize that these experiments do not encompass the full spectrum of possible applications, nor do they aim to present market--ready solutions. Instead, our objective is to present several applications where norm--optimal \ac{mor} is (to the best of our knowledge) currently not possible. The four experiments below are not equally mature: the polynomial fitting example in \Cref{subsec:pol} is an illustrative academic exercise, the \ac{svm} and ReLU network experiments in Sections~\ref{subsec:svm} and~\ref{subsec:relu} are fully developed case studies, and the \ac{pde} discretization example in \Cref{subsec:pdes} is a proof of concept for a new application of the framework rather than a finished method. The presented experiments are implemented in MATLAB and Python and are fully reproducible (see code availability statement at the end of the manuscript). The presented quantitative results are obtained on a MacBook Pro with $32$ GB of RAM and Apple M4 CPU, using MATLAB version R2025a and Python 3.9.6 with PyTorch 2.8.0.

\subsection{Polynomial fitting}
\label{subsec:pol}

We begin with a simple application involving fitting polynomials to discrete data points. In particular, consider the $n$--dimensional \ac{rkhs} $\mathbb{P}_n(\Omega)$ containing the real--coefficient polynomials of degree at most $n-1$ defined on the real interval $\Omega \coloneqq (-1,1) \subseteq \R$ equipped with the inner product
\begin{equation*}
    {\langle \bG, \bF \rangle}_{\mathbb{P}_n(\Omega)} \coloneqq \sum_{k=0}^{n-1} g_k f_k, \quad \text{where} \quad \bG(x) \coloneqq \sum_{k=0}^{n-1}g_k x^k, \, \bF(x) \coloneqq \sum_{k=0}^{n-1}f_k x^k.
\end{equation*}
This is an \ac{rkhs} with reproducing kernel
\begin{equation}
    \label{eq:polker}
    \xi(x, y) = \sum_{k=0}^{n-1} x^k y^k = \frac{1 - (xy)^{n}}{1 - xy} \quad (x,y \in \Omega).
\end{equation}
Notice that the monomials $1, x, x^2, \ldots, x^{n-1}$ form an orthonormal basis in $\mathbb{P}_n(\Omega)$. Indeed, let $\phi_k(x) \coloneqq x^k$ $(k=0,\ldots,n-1)$ and observe that by the reproducing property we have
\begin{equation*}
    \left\langle \phi_j, \xi(\cdot, \lambda) \right\rangle_{\mathbb{P}_n(\Omega)} = \lambda^j \quad (j=0,\ldots,n-1, \lambda \in \Omega).
\end{equation*}
Indeed, by exploiting the bilinearity of the inner product and~\eqref{eq:polker}, this gives
\begin{equation*}
     \left\langle \phi_j, \xi(\cdot, \lambda) \right\rangle_{\mathbb{P}_n(\Omega)} =  \left\langle \phi_j,  \sum_{k=0}^{n-1} \phi_k(\cdot) \phi_k(\lambda) \right\rangle_{\mathbb{P}_n(\Omega)} = \sum_{k=0}^{n-1} \phi_k(\lambda)  \left\langle \phi_j,  \phi_k  \right\rangle_{\mathbb{P}_n(\Omega)} = \lambda^j.
\end{equation*}
In the last equation, the expressions on both sides are polynomials in $\lambda$. Hence, they can only be equal if their algebraic coefficients match. Remembering that $\phi_k(x) = x^k$ $(x \in \Omega)$, this gives
    $\left\langle \phi_j, \phi_k \right\rangle_{\mathbb{P}_n(\Omega)} = \delta_{j,k}$.
Let $\{x_j\}_{j=0}^{n-1}\subseteq (-1,1)$ be a grid of pairwise distinct points and let data $\frak{D} \coloneqq \left(\left\{ x_0,y_0 \right),\ldots,\left(x_{n-1},y_{n-1}\right)\right\} \subseteq \Omega 
\times \R$ be given. Recall that there exists a unique polynomial $\bG$ (of \textit{degree} at most $n-1$) such that 
\begin{equation*}
    \bG(x_m) = y_m \quad (m=0,\ldots,n-1),
\end{equation*}
which, in our setting, plays the role of the \ac{fom}. Therefore, for a polynomial $\bG \in \mathbb{P}_n(\Omega)$, there exist kernel centers $\lambda_0, \ldots, \lambda_{n-1} \in (-1, 1)$ and linear coefficients $\res_0, \ldots, \res_{n-1} \in \R$ such that
\begin{equation*}
    \bG(x_m) = \sum_{k=0}^{n-1} \res_k \xi_{\lambda_k}(x_m) = y_m \quad (m=0,\ldots,n-1).
\end{equation*}

Now consider the problem of fitting an order--$r$ polynomial model to the dataset $\frak{D}$. The \textit{order} of the polynomial model to be fitted is defined according to \Cref{def:modelord} and should not be confused with the \textit{degree} of the polynomial, which is understood as the highest exponent appearing in the algebraic form. 
We can pose the problem of fitting an order--$r$ polynomial
\begin{equation*}
    \bG_r = \sum_{j=0}^{r-1} \tres_j \xi_{\tlambda_j} \in \romspaceo \subseteq \mathbb{P}_n(\Omega)
\end{equation*}
to $\frak{D}$ as a $\mathbb{P}_n(\Omega)$--norm--optimal \ac{mor} task. The following lemma clarifies why this is appropriate.
\begin{lemma}[Error of $\mathbb{P}_n(\Omega)$--norm--optimal polynomial fit]
\label{thm:errInterp}
Let $\Omega = (-1, 1)$ and $\bG \in \fomspaceo \subseteq \mathbb{P}_n(\Omega)$. Then, for any $x \in \Omega$, $r < n$, and $\bG_r \in \romspaceo$, we have
\begin{equation*}
    \left| \bG(x) - \bG_r(x) \right| \leq \sqrt{n} \left\| \bG - \bG_r \right\|_{\mathbb{P}_n(\Omega)}.
\end{equation*}
\end{lemma}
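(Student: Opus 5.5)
The plan is to use the reproducing property in $\mathbb{P}_n(\Omega)$ together with the Cauchy--Schwarz inequality, and then bound the kernel diagonal. Both $\bG$ and $\bG_r$ lie in $\mathbb{P}_n(\Omega)$, since $\romspaceo \subseteq \mathbb{P}_n(\Omega)$. Hence the difference $\bH \coloneqq \bG - \bG_r$ is again an element of $\mathbb{P}_n(\Omega)$. By \Cref{def:reprKer}, for every $x \in \Omega$ we have
\begin{equation*}
    \left|\bG(x) - \bG_r(x)\right| = \left|\left\langle \bH, \xi_x \right\rangle_{\mathbb{P}_n(\Omega)}\right| \leq \left\|\bH\right\|_{\mathbb{P}_n(\Omega)} \left\|\xi_x\right\|_{\mathbb{P}_n(\Omega)}.
\end{equation*}

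The second step is to compute $\|\xi_x\|_{\mathbb{P}_n(\Omega)}$. \Cref{thm:reprInnerProd} gives $\|\xi_x\|^2_{\mathbb{P}_n(\Omega)} = \xi(x,x)$, and by \eqref{eq:polker} this equals $\sum_{k=0}^{n-1} x^{2k}$. Alternatively, one can read this off directly from the orthonormality of the monomials established above, since $\xi_x = \sum_{k=0}^{n-1} x^k \phi_k$.

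The last step is to bound this sum. Since $x \in (-1,1)$, every term satisfies $x^{2k} \le 1$. Therefore $\xi(x,x) \le n$, and so $\|\xi_x\|_{\mathbb{P}_n(\Omega)} \le \sqrt{n}$, which gives the claimed estimate.

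There is no real obstacle here. The only point to check is that $\bG_r$ genuinely belongs to $\mathbb{P}_n(\Omega)$, so that the reproducing property may be applied to the difference. This holds because every kernel slice $\xi_{\tlambda_j}$ is itself a polynomial of degree at most $n-1$.
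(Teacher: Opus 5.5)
Your proposal is correct and follows essentially the same route as the paper: the reproducing property and Cauchy--Schwarz give $|\bG(x)-\bG_r(x)| \le \|\bG-\bG_r\|_{\mathbb{P}_n(\Omega)}\sqrt{\xi(x,x)}$, and $\xi(x,x)=\sum_{k=0}^{n-1}x^{2k}\le n$ for $x\in(-1,1)$. Your explicit check that $\bG_r \in \mathbb{P}_n(\Omega)$, because every kernel slice is a polynomial of degree at most $n-1$, is a small step the paper leaves implicit.
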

\begin{proof}
   The proof of this lemma is given in Appendix~\ref{app:errInterp}.
\end{proof}
Note that the kernel $\xi$ in~\eqref{eq:polker} is infinitely many times differentiable. The smoothness of $\xi$ allows us to perform \ac{mor} using Algorithm~\ref{alg:irkaRKHS} without having to transform the problem to another \ac{rkhs} using the \ac{gmt} method. We denote the \ac{rom} obtained with Algorithm~\ref{alg:irkaRKHS} by $\Gdirect{r}$. In order to compare their effectiveness, we also use Algorithm~\ref{alg:cdcRKHS} to obtain a \ac{rom} $\Gweak{r}$. In this experiment, we consider the bijection $\gamma : \Omega \to \hat{\Omega} \coloneqq (-1, 1)$ defined as $\gamma(x) = x \ (x \in \Omega)$ and choose the companion \ac{rkhs} $\realHtwo$ from~\eqref{eq:realH2}. We recall the discussion below Algorithm~\ref{alg:hybrid_equilibrium}, which clarifies that Algorithm~\ref{alg:hybrid_equilibrium} is closed in the space $\realHtwo$. Since our \ac{gmt}--transformation--based \ac{mor} scheme in Algorithm~\ref{alg:cdcRKHS} relies on Algorithm~\ref{alg:hybrid_equilibrium} to perform the \ac{mor} in the companion space, this practically means that if we initialize \ac{rom} parameters in $(-1, 1)$, then we obtain  $\tlambda_j \in (-1, 1)$ for $j=0,\ldots,r-1$ with Algorithm~\ref{alg:cdcRKHS}.

Note that Algorithm~\ref{alg:cdcRKHS} generally guarantees that $\Gweak{r} \in \wromspaceo$. We can obtain a strong \ac{rom} $\Gweakproj{r} = \cP_{V_r^{\tilde{\blambda}}} \left[ \bG \right]$, where we assume that $\tilde{\blambda} \in \Omeganeq{r}$ denote the kernel centers that describe $\Gweak{r}$ returned by Algorithm~\ref{alg:cdcRKHS}. A formal upper bound for $\left\| \Gweak{r} - \Gweakproj{r} \right\|_{\mathbb{P}_n(\Omega)}$ is given in \Cref{thm:ProjErrEco}. In this experiment, we empirically confirm that both $\Gweak{r}$ and $\Gweakproj{r}$ provide very high quality \acp{rom} and can be used effectively to solve the polynomial fitting task described above.

To evaluate the error of the different approximations, given a dataset $\frak{D}$, we consider the \ac{mse}
\begin{equation}
    \label{eq:MSE}
    \cE^{\frak{D}}_{\mathrm{MSE}}(\bG_r) \coloneqq \frac{1}{n} \sum_{m=0}^{n-1} \left( \bG_r(x_m) - y_m \right)^2 \quad (\bG_r \in \mathbb{P}_n(\Omega)).
\end{equation}
The best \textit{degree}--$(r-1)$ least--squares fit $\bG_r^{\mathrm{LSQ}}$ is also evaluated as a baseline model. This model satisfies
\begin{equation}
    \label{eq:lsqFit}
    \bG_r^{\mathrm{LSQ}}(x) = \sum_{k=0}^{r-1} p_k x^k \quad (x \in \Omega),
\end{equation}
where the coefficients are given by $\bp = [p_0,\ldots,p_{r-1}]^\top = \bX^+ \by$. Here, $\bX^+\in \R^{r \times n}$ denotes the Moore--Penrose pseudoinverse of  
\begin{equation*}
    \bX = \begin{bmatrix}
        1 & x_0 & \ldots & x_{0}^{r-1} \\
        1 & x_1 & \ldots & x_{1}^{r-1} \\
        \vdots & \vdots & \ddots & \vdots \\
        1 & x_{n-1} & \ldots & x_{n-1}^{r-1}
    \end{bmatrix} \in \R^{n \times r}
\end{equation*}
{\sloppy and $\by = [y_0, \ldots, y_{n-1}]^\top \in \R^{n}$. Note that $\bG_r^{\mathrm{LSQ}}, \Gdirect{r}, \Gweak{r}$, and $\Gweakproj{r}$ all live in $r$--dimensional subspaces of $\mathbb{P}_n(\Omega)$. In order to evaluate the proposed models, however, we need to store at least $2r$  parameters ($r$ kernel centers and $r$ coefficients). In contrast, $\bG_r^{\mathrm{LSQ}}$ can be fully evaluated given the $r$ coefficients from~\eqref{eq:lsqFit}. To ensure a fair comparison, we also consider the least squares fit model $\bG_{2r}^{\mathrm{LSQ}}$, which requires the same number ($2r$) of parameters for reconstruction but exists in a $2r$--dimensional subspace of $\mathbb{P}_n(\Omega)$. \Cref{fig:polyfit} illustrates the resulting polynomials using $r=4$. \Cref{fig:polyerr} shows the point--wise absolute error $\left| \bG_r(x_k) - y_k \right|^2$ $(k=0,\ldots,n-1)$ of each model on a logarithmic scale. Finally, \Cref{tab:polyEval} shows the reconstruction errors (in terms of~\eqref{eq:MSE}) for different choices of $r$ and $n=100$. In these experiments, we choose the points $x_k$ equidistantly in $[-0.99, 0.99]$. The corresponding values $y_k$ are randomly sampled from a uniform random distribution.\par}
\begin{figure}[tb]
    \centering
    \includegraphics[width=\linewidth]{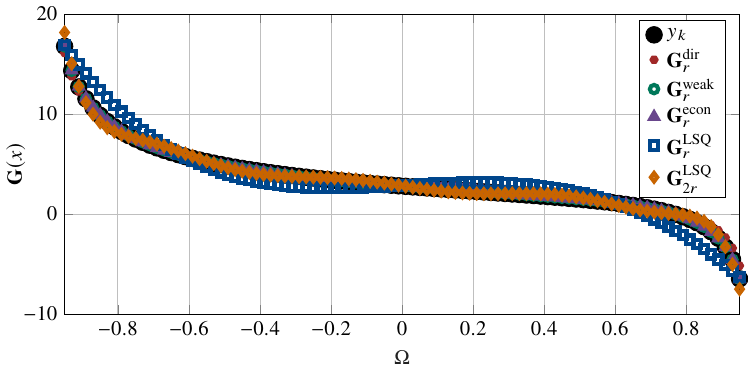}
    \caption{Polynomial fits obtained with the different approximation strategies for $r=4$ and $n=100$.}
    \label{fig:polyfit}
\end{figure}

\begin{figure}[tb]
    \centering
\includegraphics[width=\linewidth]{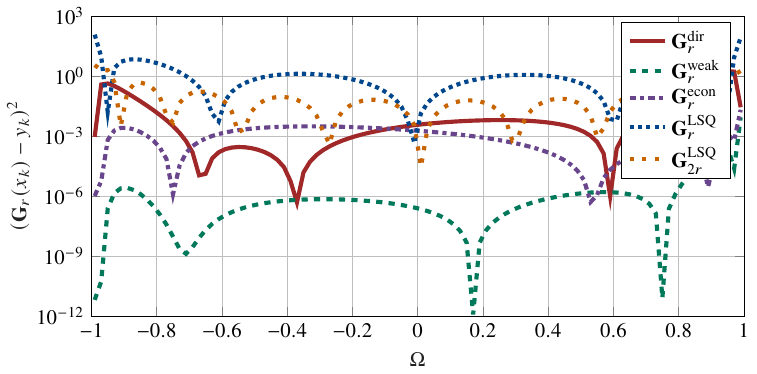}
    \caption{Pointwise square fitting errors obtained with the different approximation strategies for $r=4$ and $n=100$.}
    \label{fig:polyerr}
\end{figure}

\begin{table}[tbp]
\centering
\caption{\ac{mse} and \ac{rkhs} errors for the polynomial fitting application ($n=100$). The best \acp{rom} per reduced order are indicated by boldface font.}
\label{tab:polyEval}
\small
\setlength{\tabcolsep}{4pt}
\renewcommand{\arraystretch}{1.08}
\begin{tabular}{@{}lcccccccc@{}}
\toprule
\textbf{Model}
&
\makebox[0pt][l]{\hspace{1.7cm}$\cE^{\mathfrak{D}}_{\mathrm{MSE}}$}
& & & &
\makebox[0pt][l]{\hspace{1.7cm}$\rkhsError{\xi}{r}$}
& & & \\
\midrule
&
$r=2$ & $r=3$ & $r=4$ & $r=5$
&
$r=2$ & $r=3$ & $r=4$ & $r=5$ \\
\midrule
$\Gdirect{r}$
& $9.687$ & $0.134$ & $0.093$ & $0.060$
& $3.713$ & $1.046$ & $1.132$ & $0.962$ \\

$\Gweak{r}$
& $1.640$ & $\mathbf{0.038}$ & $\mathbf{2.6{\cdot}10^{-4}}$ & $\mathbf{6.5{\cdot}10^{-5}}$
& $2.408$ & $\mathbf{0.417}$ & $\mathbf{0.190}$ & $\mathbf{0.113}$ \\

$\Gweakproj{r}$
& $\mathbf{1.568}$ & $0.076$ & $2.0{\cdot}10^{-3}$ & $1.2{\cdot}10^{-3}$
& $\mathbf{2.362}$ & $0.438$ & $0.255$ & $0.189$ \\

$\bG_r^{\mathrm{LSQ}}$
& $9.784$ & $9.20$ & $3.528$ & $3.485$
& $10.527$ & $10.627$ & $17.631$ & $17.703$ \\

$\bG_{2r}^{\mathrm{LSQ}}$
& $3.528$ & $1.361$ & $0.513$ & $0.184$
& $17.631$ & $47.712$ & $157.146$ & $532.577$ \\
\bottomrule
\end{tabular}
\end{table}
The results in \Cref{tab:polyEval} verify the practical benefits of the proposed \ac{mor} based method for polynomial fitting. Indeed, the proposed \acp{rom} $\Gdirect{r}$, $\Gweak{r}$, and $\Gweakproj{r}$  consistently provide the best \ac{mse} scores regardless of the reduced order $r$. In fact, for $r=3,4,5$, even the worst performing proposed model provides $\geq 3$ times reduced \ac{mse} compared to $\bG_{2r}^{\mathrm{LSQ}}$. The only exception is $r=2$, where Algorithm~\ref{alg:irkaRKHS} failed to produce a good local optimum. We note that even in this case, $\Gweak{r}$ and $\Gweakproj{r}$ outperform the model $\bG_{2r}^{\mathrm{LSQ}}$ by a factor of at least $2.15$ in terms of \ac{mse}. Compared to $\bG_r^{\mathrm{LSQ}}$, the error reduction factor climbs to at least $5.97$. We note that even in this worst--case scenario, the \ac{mse} obtained by $\Gdirect{r}$ is comparable to the model  $\bG_r^{\mathrm{LSQ}}$. 

The above experiments provide insight into several properties of \ac{mor}--based polynomial fitting. First, we observe that even though it solves the problem in $\mathbb{P}_n(\Omega)$, Algorithm~\ref{alg:irkaRKHS} is consistently outperformed by the \ac{gmt}--transformation--based models obtained using Algorithm~\ref{alg:cdcRKHS}. This can be explained by the fact that Algorithm~\ref{alg:irkaRKHS} does not exploit the underlying geometry of $\Omega$. In contrast, $\Gweak{r}$ and $\Gweakproj{r}$ are obtained by first transforming the problem into $\realHtwo$ and then using a geometry--aware Newton's method (see Algorithm~\ref{alg:hybrid_equilibrium}).

Another interesting observation is that $\Gweak{r}$ and $\Gweakproj{r}$ provide similar results. While the weak \ac{rom} $\Gweak{r}$ achieves better overall scores both in terms of \ac{mse} and \ac{rkhs} error, $\Gweakproj{r}$ also provides high quality approximations of the data and consistently outperforms every model other than $\Gweak{r}$. This is especially useful if the application scenario does not allow us to store the $2r + n$ parameters needed to evaluate the weak \ac{rom} (see \Cref{subsec:GMTtrans}). Hence, while it provides the best overall approximation of the data, using the weak model $\Gweak{r}$ is not practical. Especially in the current experiment, we can express $\Gweak{r}$ using $n$ algebraic coefficients due to~\eqref{eq:polker}. While this is less than the theoretically required $n+2r$ parameters for weak model reconstruction discussed in \Cref{subsec:GMTtrans}, it matches the number of coefficients needed to reconstruct the interpolating polynomial $\bG$. We also note that according to \Cref{tab:polyEval}, the \acp{rom} $\Gdirect{r}, \Gweak{r}$, and $\Gweakproj{r}$ significantly outperform $\bG_r^{\mathrm{LSQ}}$ and $\bG_{2r}^{\mathrm{LSQ}}$ in terms of the \ac{rkhs} error $\rkhsError{\xi}{r}$. This is expected, as the latter models do not attempt to minimize $\rkhsError{\xi}{r}$. We recall that aiming for a small \ac{rkhs}--norm--based error is beneficial for the data fitting problem, because \Cref{thm:errInterp} provides a computable upper bound for the pointwise error. 

Finally, we consider the runtime of each algorithm in \Cref{tab:runtimesPol}. We observe that the proposed models $\Gweak{r}$ and $\Gweakproj{r}$ obtained using Algorithm~\ref{alg:cdcRKHS} can be computed in less than $1$ second for every experiment. In contrast, computing $\Gdirect{r}$ with Algorithm~\ref{alg:irkaRKHS} takes $6.34$ seconds on average. We note that this increase is likely not due to theoretical reasons, rather, it is a consequence of our chosen implementation. Indeed, in order to make Algorithm~\ref{alg:irkaRKHS} as easy to use as possible, we exploit the automatic differentiation feature of PyTorch to compute the derivatives appearing in the iteration. In particular, our MATLAB--based main script calls a PyTorch routine every time we need to compute the Jacobian associated with the equilibrium function defined in~\eqref{eq:EquibFun}. This significantly slows execution, but allows for a plug and play experience for the users, who can easily try Algorithm~\ref{alg:irkaRKHS} in a sufficiently smooth \ac{rkhs} of their choice. 
Finally, we note that in the case of the \ac{gmt}--based models, we use Algorithm~\ref{alg:hybrid_equilibrium} to obtain the \acp{rom} in the companion \ac{rkhs} $\realHtwo$. For this algorithm, the necessary derivatives can be efficiently computed, and no automatic differentiation or transition from MATLAB to PyTorch is required. This explains the improved execution times.

\begin{table}[tbp]
    \caption{Recorded runtimes (seconds) of the considered MOR schemes. $\bG_r^{\mathrm{LSQ}}$ and $\bG_{2r}^{\mathrm{LSQ}}$ are obtained with MATLAB's \texttt{polyfit} routine.}
    \centering
    \begin{tabular}{cccccc}
    \toprule
        $r$ & $\Gdirect{r}$ & $\max \left\{\Gweak{r}, \Gweakproj{r} \right\}$ & $\bG_r^{\mathrm{LSQ}}$ & $\bG_{2r}^{\mathrm{LSQ}}$\\
        \midrule
        2 & $6.10$ & $0.53$ & $0.02$ & $3.05 \cdot 10^{-4}$  \\
        3 & $6.15$ & $0.28$ & $4.62 \cdot 10^{-3}$ & $2.96 \cdot 10^{-4}$  \\
        4 & $6.76$ & $0.24$ & $4.21 \cdot 10^{-3}$ & $2.52 \cdot 10^{-4}$   \\
        5 & $6.33$ & $0.28$ &  $3.99 \cdot 10^{-3}$ & $2.72 \cdot 10^{-4}$  \\
        \bottomrule
    \end{tabular}
    \label{tab:runtimesPol}
\end{table}

\subsection{MOR on support vector machines}
\label{subsec:svm}

 Next, we consider \ac{mor} for \acp{svm} with \ac{rbf} kernels. The experiment uses the brake disc deformation data from~\cite{dozsa_detection_2026}, where brake fluid pressure and acceleration signals are considered to detect the presence of brake disc runout in passenger vehicles. This fault detection task is posed as a binary classification problem where measurement segments are classified as either healthy or deformed. The preprocessed segments correspond to active braking, with the vehicle speed between $90$ and $30$ km/h. We use the empirical standard deviation features introduced in~\cite{dozsa_brake_2024} to reduce the dimensionality of the data entries. In the present experiment, only two of the four sensor--based features are retained. 
 We use distinct training and test datasets, denoted by $\frak{D}_{\mathrm{tr}}$ and $\frak{D}_{\mathrm{te}}$, respectively.
 The training data is used to construct the \ac{fom}, whereas the test data is reserved for the subsequent evaluation of the \ac{fom} and its reduced--order approximations. More precisely, for pairwise different points $x_0,\ldots,x_{N_s-1}$, $s\in\{\mathrm{tr},\mathrm{te}\}$, we write
 $$
\frak{D}_{s}\coloneqq\left\{\left(x_0^{(s)},y_0^{(s)}\right),\ldots,\left(x_{N_s-1}^{(s)},y_{N_s-1}^{(s)}\right)\right\}
\subseteq \mathbb{R}^2\times\{-1,1\},
\quad s\in\{\mathrm{tr},\mathrm{te}\}.
$$

The classification procedure yields $N_{\mathrm{tr}}=5841$ training samples and
$N_{\mathrm{te}}=649$ test samples, with both datasets balanced between the deformed and healthy classes.

Here, $x_{m,1}^{(s)}$ and $x_{m,2}^{(s)}$ denote, respectively,
the empirical standard deviations of the acceleration signal measured
by an accelerometer attached to the vehicle chassis and of the brake
fluid pressure signal measured by a pressure sensor.

 Each feature vector $x_m^{(s)}$ is identified with the complex--valued input
$
z_m^{(s)}
\coloneqq
x_{m,1}^{(s)}+\mathrm{i}x_{m,2}^{(s)}
\in \Omega \coloneqq \D$.
Accordingly, we denote the corresponding complex--valued inputs by $$\mathcal{Z}_s
\coloneqq
\left(z_0^{(s)},\ldots,z_{N_s-1}^{(s)}\right)
\in \Omega^{N_s},
\quad
s\in\{\mathrm{tr},\mathrm{te}\}.$$

The inclusion in $\D$ is obtained by applying the same normalization to
the training and test inputs. This entails no restriction for the present
\ac{rbf}--\ac{svm} formulation, since a global rescaling of the input coordinates can
be compensated by a corresponding adjustment of the \ac{rbf} kernel parameter.
The kernel parameter is therefore selected in the normalized coordinates.
In particular, the support vector locations $\lambda_k$, being selected
from the training inputs, also belong to $\Omega=\D$.

Let the trained \ac{rbf}--\ac{svm} be denoted by
\begin{equation}
    \label{eq:svmFOM}
    \bD(z) = \bG(z) + b, \quad
    \bG(z) = \sum_{k=0}^{n-1} \res_k \xi(z,\lambda_k),
    \quad
    \xi(z,\lambda) = \exp\left(-\sigma |z-\lambda|^2\right),
\end{equation}
where $\sigma>0$ is the \ac{rbf} kernel parameter,  $\lambda_k \in \Omega$ denote the kernel centers, $\res_k$ are the signed dual coefficients, and $b\in\R$ is the bias term. Thus, the nonconstant part of the \ac{svm} decision function is an element $\bG \in \fomspaceo \subseteq \origspace$ of the \ac{rkhs} generated by the \ac{rbf} kernel $\xi$. The order $n$ of the \ac{fom} is equal to the number of support vectors. For the present experiment, the trained \ac{svm} contains
$n=N_{\mathrm{SV}}=5841$ support vectors.
The same bias term $b$ is used for the \ac{fom} and all \acp{rom}. We therefore apply the reduction to the nonconstant component $\bG$, seeking an approximation $\bG_r\in\romspaceo$. The corresponding reduced--order decision function is
$\bD_r \coloneqq \bG_r + b$.

We compare two reduction strategies already introduced in
\Cref{subsec:pol}. The direct approach computes a strong \ac{rom}
$\Gdirect{r}\in\romspaceo$ directly in the original \ac{rbf}--\ac{rkhs}
$\origspace$. The \ac{gmt}--based approach instead yields the projected strong \ac{rom} of the form
    $\Gweakproj{r}
    =
    \cP_{\Vn{r}{\tilde{\blambda}}}[\bG]$ for some kernel centers $\tilde{\blambda} \in \Omega^r$.
We do not report the weak \ac{gmt}--\ac{rom} in this experiment, since our focus is on reduced models that also substantially reduce inference costs. Indeed, the weak representation retains an $n$--dependent contribution, whereas the projected \ac{rom} depends only on the reduced order $r$.

The map $\gamma:\Omega\to\hat{\Omega}$ used for the \ac{gmt} transformation is set to $\gamma(z)\coloneqq z$. This choice of $\gamma$ is valid because, with the preprocessing steps described above, the kernel centers of the \ac{fom} belong to $\D$.

For any $z\in\Omega$, the predicted label of the \ac{rom} is given by
$\sgn (\bD_r(z))$, using the same convention as for the \ac{fom}.
Classification accuracy is evaluated separately on the training and test
datasets.
In addition, for $s\in\{\mathrm{tr},\mathrm{te}\}$, we report the pointwise
decision function error
\begin{equation*}
    \cE_{\mathrm{pt}}(\bD_r;\mathcal{Z}_s)
    \coloneqq
    \max_{z_m^{(s)}\in\mathcal{Z}_s}
    \left|
        \bD\big(z_m^{(s)}\big)-\bD_r\big(z_m^{(s)}\big)
    \right|
    =
    \max_{z_m^{(s)}\in\mathcal{Z}_s}
    \left|
        \bG\big(z_m^{(s)}\big)-\bG_r\big(z_m^{(s)}\big)
    \right|.
\end{equation*}
In addition to the dataset--dependent pointwise error, we consider the
\ac{rkhs} approximation error $\rkhsError{\xi}{r}$ from~\eqref{eq:Objective}.
Unlike $\cE_{\mathrm{pt}}$, this quantity measures the \ac{fom}--\ac{rom}
approximation over the underlying \ac{rkhs} and is independent of the finite
training and test datasets.

Because the \ac{fom} and \ac{rom} share the same bias, we also write
\begin{equation*}
    \rkhsError{\xi}{r}(\bD_r)
    \coloneqq
    \left\|\bD-\bD_r\right\|_{\origspace}
    =
    \left\|\bG-\bG_r\right\|_{\origspace}
    =
    \rkhsError{\xi}{r}(\bG_r),
\end{equation*}
where the \ac{rkhs} norm is well defined because
$\bD-\bD_r=\bG-\bG_r\in\origspace$.
For reporting purposes, we additionally consider the relative \ac{rkhs} error
\begin{equation}
    \label{eq:relErr}
    \cE_{\xi,\mathrm{rel}}^{r}(\bG_r)
    \coloneqq
    \frac{
        \rkhsError{\xi}{r}(\bG_r)
    }{
        \|\bG\|_{\origspace}
    }
    =
    \frac{
        \|\bG-\bG_r\|_{\origspace}
    }{
        \|\bG\|_{\origspace}
    }.
\end{equation}
The relative \ac{rkhs} error provides a scale--independent measure of the
\ac{fom}--\ac{rom} approximation quality and, like the absolute \ac{rkhs} error, is
independent of the finite training and test datasets.

The main objective of this experiment is to approximate the \ac{fom} decision
function with fewer stored parameters while preserving its classification
behavior.
This is consistent with the \ac{rkhs} approximation objective. Indeed, since $\xi(z,z)=1$ for the \ac{rbf} kernel, the reproducing property and the Cauchy--Schwarz inequality give
\begin{equation}\label{eq:cond_agree_uniform}
    \left|\bD(z)-\bD_r(z)\right|
    = \left|\bG(z)-\bG_r(z)\right|
    \leq \left\|\bG-\bG_r\right\|_{\origspace}
    =\rkhsError{\xi}{r}(\bD_r) \quad (z\in\Omega).
\end{equation}
Consequently, a \ac{rom} close to \ac{fom} in \ac{rkhs} norm implies similar decision functions over $\Omega$. Whether this approximation
yields the same classification at a given point depends on its spatial position
relative to the decision boundary.

\begin{lemma}[Simple pointwise classification certificate] \label{thm:svmRomMarginAgreement} Let $\bD$ and $\bD_r$ be the \ac{fom} and \ac{rom} decision functions~\eqref{eq:svmFOM}, respectively, and let $z \in \Omega$. If 
\begin{equation}\label{eq:cond_agree}
|\bD_r(z)|>|\bD (z)-\bD_r (z)|, 
\end{equation}
then \[ \sgn (\bD(z))=\sgn (\bD_r(z)). \] Hence, \ac{fom} and \ac{rom} assign the same label to $z$. \end{lemma}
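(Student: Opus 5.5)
The plan is a direct sign argument on real numbers, using only the hypothesis \eqref{eq:cond_agree}. First, I would observe that $\bD(z)$ and $\bD_r(z)$ are real. The \ac{rbf} kernel $\xi(z,\lambda)=\exp(-\sigma|z-\lambda|^2)$ is real--valued, and the signed dual coefficients $\res_k$ and the bias $b$ are real. If the \ac{rom} coefficients $\tres_j$ are real as well, which holds for the methods considered, then $\bD_r(z)\in\R$ and the sign function makes sense. If complex coefficients had to be allowed, I would restrict the statement to real parts; I will note this assumption explicitly rather than belabor it.

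Next, I would write $\bD(z) = \bD_r(z) + e$ with $e \coloneqq \bD(z)-\bD_r(z)$. The hypothesis $|\bD_r(z)|>|e|\ge 0$ immediately gives $\bD_r(z)\neq 0$, so $\sgn(\bD_r(z))\in\{-1,1\}$.

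The core step is a case split on the sign of $\bD_r(z)$. If $\bD_r(z)>0$, then
\begin{equation*}
\bD(z)=\bD_r(z)+e\ge \bD_r(z)-|e| = |\bD_r(z)|-|e|>0 .
\end{equation*}
If $\bD_r(z)<0$, then symmetrically
\begin{equation*}
\bD(z)\le \bD_r(z)+|e| = -\big(|\bD_r(z)|-|e|\big)<0 .
\end{equation*}
In both cases $\sgn(\bD(z))=\sgn(\bD_r(z))$. This is precisely the statement that the two decision functions assign the same label to $z$.

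There is no genuine obstacle here. The only point needing care is the realness and nonvanishing of $\bD_r(z)$, so that $\sgn$ is well defined, and both follow from the setup and the strict inequality. As a remark, combining with \eqref{eq:cond_agree_uniform} yields a computable certificate: $|\bD_r(z)|>\rkhsError{\xi}{r}(\bD_r)$ implies \eqref{eq:cond_agree}.
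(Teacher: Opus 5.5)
Your proof is correct and is essentially the paper's argument. Both set $e=\bD(z)-\bD_r(z)$, split on the sign of $\bD_r(z)$, and use $|e|<|\bD_r(z)|$ to keep $\bD(z)$ on the same side of zero. Your explicit remarks that $\bD_r(z)\neq 0$ and that the values are real (so the sign is meaningful) are small additions the paper leaves implicit. Your non-strict bound $e\ge -|e|$ is also slightly more careful than the paper's chain, which writes this step as a strict inequality.
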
\begin{proof}
    For the proof, we refer to~\Cref{app:svmMargin}.
\end{proof}

\begin{corollary}[Uniform classification certificate]
\label{cor:svmRomUniformMarginAgreement}
Let $\bD$ and $\bD_r$ be the \ac{fom} and \ac{rom}
decision functions, respectively, and let $z\in\Omega$.
If
\begin{equation}\label{eq:cond_agree_rkhs}
    |\bD_r(z)| > \rkhsError{\xi}{r}(\bD_r),
\end{equation}
then
\[
    \sgn(\bD(z)) = \sgn(\bD_r(z)).
\]
Hence, \ac{fom} and \ac{rom} assign the same label to $z$.
\end{corollary}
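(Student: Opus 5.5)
The plan is to reduce the corollary to \Cref{thm:svmRomMarginAgreement} by showing that hypothesis \eqref{eq:cond_agree_rkhs} implies hypothesis \eqref{eq:cond_agree}. The only ingredient needed is the uniform pointwise bound \eqref{eq:cond_agree_uniform}, which was derived just before the lemma from the reproducing property, the Cauchy--Schwarz inequality, and the normalization $\xi(z,z)=1$ of the \ac{rbf} kernel.

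Concretely, fix $z \in \Omega$ satisfying \eqref{eq:cond_agree_rkhs}. First we recall why \eqref{eq:cond_agree_uniform} holds at this $z$. Since $\bD-\bD_r=\bG-\bG_r\in\origspace$, the reproducing property and Cauchy--Schwarz give
\begin{equation*}
\left|\bD(z)-\bD_r(z)\right| = \left|\left\langle \bG-\bG_r, \xi_z\right\rangle_{\origspace}\right| \leq \left\|\bG-\bG_r\right\|_{\origspace}\sqrt{\xi(z,z)}.
\end{equation*}
With $\xi(z,z)=\exp(0)=1$, this yields $|\bD(z)-\bD_r(z)|\leq \rkhsError{\xi}{r}(\bD_r)$.

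Chaining this with the assumption gives
\begin{equation*}
|\bD_r(z)| > \rkhsError{\xi}{r}(\bD_r) \geq |\bD(z)-\bD_r(z)|,
\end{equation*}
which is exactly \eqref{eq:cond_agree}. \Cref{thm:svmRomMarginAgreement} then gives $\sgn(\bD(z))=\sgn(\bD_r(z))$.

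The argument has no genuine obstacle. The one point to watch is that the bias $b$ cancels, so the difference $\bD-\bD_r$ lies in $\origspace$ even though $\bD$ and $\bD_r$ themselves need not. This is what makes the reproducing-property estimate legitimate, and it is also why the strict inequality in \eqref{eq:cond_agree_rkhs} carries over to \eqref{eq:cond_agree}.
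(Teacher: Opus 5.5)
Your proposal is correct and follows the paper's proof: the uniform bound \eqref{eq:cond_agree_uniform} turns \eqref{eq:cond_agree_rkhs} into \eqref{eq:cond_agree}, and \Cref{thm:svmRomMarginAgreement} then gives the conclusion. Re-deriving that bound from the reproducing property, Cauchy--Schwarz, and $\xi(z,z)=1$ only spells out the estimate the paper cites.
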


\begin{proof}
By the uniform \ac{rkhs} error bound~\eqref{eq:cond_agree_uniform},
condition~\eqref{eq:cond_agree_rkhs} implies
condition~\eqref{eq:cond_agree}.
The result therefore follows directly from
\Cref{thm:svmRomMarginAgreement}.
\end{proof}
\Cref{cor:svmRomUniformMarginAgreement} shows that the \ac{rom} classifier is reliable at all points where its own decision value is sufficiently far from zero. 
Exploiting~\ref{cor:svmRomUniformMarginAgreement}, once the \ac{rkhs} error between the \ac{rom} and the \ac{fom} is known, partial agreement between \ac{fom} and \ac{rom} can be certified a priori at no additional cost for every $z\in \Omega$ that satisfies the above margin condition.
 
Hence, \Cref{thm:svmRomMarginAgreement} and \Cref{cor:svmRomUniformMarginAgreement} ensure that the signs match between $\bD(z)$ and $\bD_r(z)$ if $z$ is sufficiently far away from the decision boundary. In order to obtain a useful check for cases when $|\bD_r(z)| \leq \rkhsError{\xi}{r}(\bD_r)$, consider the following lemma.

\begin{lemma}[Projection--based classification certificate]
    \label{thm:svmRomMarginAgreementV2}
    Let $\bD=\bG+b$ and $\bD_r=\bG_r+b$ be the \ac{fom} and \ac{rom} decision functions~\eqref{eq:svmFOM}, respectively, with $\bG \in \fomspaceo$ and $\bG_r \in \romspaceo$. 
    Under \Cref{ass:GMTkers}, let $\tilde{\blambda} \in \Omega^r$ be the tuple of kernel centers defining the \ac{rom} $\bG_r$, and $\cP_{\Vn{r}{\tilde{\blambda}}}$ and
    $\cP_{\left(\Vn{r}{\tilde{\blambda}}\right)^\perp}$ denote the
    orthogonal projectors onto $\Vn{r}{\tilde{\blambda}}$ defined according to~\eqref{eq:VnSec5} and its
    orthogonal complement in $\cH$, respectively. Assume that $\bG_r = \cP_{\Vn{r}{\tilde{\blambda}}}\left[ \bG \right]$.
    
Then, for any $z\in \Omega$, if
    \begin{equation}
        \label{eq:projest}
        |\bD_r(z)| > \rkhsError{\xi}{r}(\bD_r)\left\| \cP_{\left(\Vn{r}{\tilde{\blambda}}\right)^\perp}\left[ \xi_z \right] \right\|_{\origspace} \eqqcolon \cE_{\xi,\mathrm{proj}}^r(\bD_r;z),
    \end{equation}
    then $\sgn (\bD(z)) = \sgn (\bD_r(z))$.
\end{lemma}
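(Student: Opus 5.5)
The plan is to reduce the claim to \Cref{thm:svmRomMarginAgreement}: it suffices to show that for every $z \in \Omega$
\begin{equation*}
    \left| \bD(z) - \bD_r(z) \right| \leq \rkhsError{\xi}{r}(\bD_r) \left\| \cP_{\left(\Vn{r}{\tilde{\blambda}}\right)^\perp}\left[ \xi_z \right] \right\|_{\origspace}.
\end{equation*}
Then \eqref{eq:projest} implies \eqref{eq:cond_agree}, and the sign agreement follows directly from that lemma.

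To prove this estimate, I will first use the shared bias, which gives $\bD(z) - \bD_r(z) = \bG(z) - \bG_r(z)$. By the reproducing property this equals $\langle \bG - \bG_r, \xi_z \rangle_{\origspace}$. Next, split $\xi_z = \cP_{\Vn{r}{\tilde{\blambda}}}[\xi_z] + \cP_{(\Vn{r}{\tilde{\blambda}})^\perp}[\xi_z]$. The assumption $\bG_r = \cP_{\Vn{r}{\tilde{\blambda}}}[\bG]$ means $\bG - \bG_r \perp \Vn{r}{\tilde{\blambda}}$, by the Hilbert projection theorem as used in \Cref{thm:orthocond}. Hence the first component contributes nothing, and
\begin{equation*}
    \bG(z) - \bG_r(z) = \left\langle \bG - \bG_r, \cP_{\left(\Vn{r}{\tilde{\blambda}}\right)^\perp}[\xi_z] \right\rangle_{\origspace}.
\end{equation*}
Applying the Cauchy--Schwarz inequality and recalling $\|\bG - \bG_r\|_{\origspace} = \rkhsError{\xi}{r}(\bD_r)$ yields the desired bound.

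The argument has no real obstacle; the only points needing care are bookkeeping. First, the complement must be taken in $\origspace$, so that the decomposition of $\xi_z$ is orthogonal and $\cP_{(\Vn{r}{\tilde{\blambda}})^\perp} = \myidH - \cP_{\Vn{r}{\tilde{\blambda}}}$ is well defined. This holds because $\Vn{r}{\tilde{\blambda}}$ is finite-dimensional, hence closed. Second, \Cref{ass:GMTkers} guarantees that $\Vn{r}{\tilde{\blambda}}$ is well defined even when $\tilde{\blambda}$ has repeated centers, since it is then spanned by kernel slices and their derivatives. Finally, the certificate sharpens \Cref{cor:svmRomUniformMarginAgreement}: the projected norm is at most $\|\xi_z\|_{\origspace} = \sqrt{\xi(z,z)} = 1$.
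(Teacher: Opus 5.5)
Your proof is correct and follows the paper's argument: shared bias, orthogonality of $\bG-\bG_r$ to the reduced space, the orthogonal splitting of $\xi_z$, and Cauchy--Schwarz. The only difference is cosmetic. You finish by invoking \Cref{thm:svmRomMarginAgreement}, whereas the paper redoes the sign case analysis directly. Your version is slightly cleaner, and it also covers the case where the right-hand side of \eqref{eq:projest} is zero, which the paper's chain of inequalities (with its strict ``$>0$'') tacitly excludes.
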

\begin{proof}
    The proof is given in~\Cref{app:svmMarginV2}.
\end{proof}

\begin{remark}
    Note that we can make the subspace $\Vn{r}{\tilde{\blambda}}$ in \Cref{thm:svmRomMarginAgreementV2} smaller by considering $\tVn{r}{\tilde{\blambda}}$ from \Cref{thm:orthocond} instead. However, for ease of notation we do not pursue this route.
\end{remark}

Assume that $\tilde{\blambda} \in \Omeganeq{r}$. The projection--dependent factor in~\eqref{eq:projest}
vanishes whenever $\xi_z\in \Vn{r}{\tilde{\blambda}}$.
In particular, this occurs at the reduced kernel centers $z=\tlambda_j$, $j=0,\ldots,r-1$.
Hence, the factor accounts for the local relationship between the kernel slice
$\xi_z$ and the reduced space $\Vn{r}{\tilde{\blambda}}$, yielding a substantially tighter
pointwise error estimate than the uniform \ac{rkhs} bound.

To obtain a practically useful sign check, we need to compute $\cP_{\left(V_r^{\tilde{\blambda} }\right)^\perp}[\xi_z]$ for all $z \in \Omega$.
We obtain an elegant and computationally economical method using \ac{gmt} expansions. Let $\left\{\rgmtj \right\}_{j=0}^{r-1}$ be the \ac{gmt} system obtained by orthogonalizing $\xi_{\tlambda_j}$, where the kernel centers $\tlambda_j$, $j=0,\ldots,r-1$ define the \ac{rom} and $V_r^{\tilde{\blambda}}$. Then,
\begin{equation}
    \label{eq:psumForRBF}
    \cP_{\left( V_r^{\tilde{\blambda}} \right)^\perp} \left[ \xi_z \right] = \xi_z - \cP_{ V_r^{\tilde{\blambda}}} \left[ \xi_z \right] = \xi_z - \gmtPsumr\left[ \xi_{z} \right] = \xi_z - \sum_{j=0}^{r-1} \left\langle \xi_z, \rgmtj \right\rangle_{\origspace} \rgmtj.
\end{equation}
Using the reproducing property, and since the \ac{rbf} kernel $\xi$ is real--valued, we find
\begin{equation*}
    \left\langle \xi_z, \rgmtj \right\rangle_{\origspace} = \overline{\left\langle \rgmtj, \xi_z \right\rangle}_{\origspace} = \overline{\rgmtj(z)} = \rgmtj(z), 
\end{equation*}
Plugging this into~\eqref{eq:psumForRBF}, using $\xi_z(z) = 1$, the producing property, and the orthogonality of $\left\{\rgmtj \right\}_{j=0}^{r-1}$, we obtain the formula
\begin{equation*}
    \left\| \cP_{\left(V_r^{\tilde{\blambda}}\right)^\perp} \left[ \xi_z \right] \right\|_{\origspace}^2
= \left\langle \xi_z - \sum_{j=0}^{r-1} \rgmtj(z) \rgmtj(\cdot),  \xi_z - \sum_{j=0}^{r-1} \rgmtj(z) \rgmtj(\cdot) \right\rangle_{\origspace} = 1 -  \sum_{j=0}^{r-1} \rgmtj(z)^2.
\end{equation*}
Consequently, the projection--dependent consistency condition can be
evaluated as
\begin{equation*}
    |\bD_r(z)|
    >
    \rkhsError{\xi}{r}(\bD_r)
    \sqrt{
        1-\sum_{j=0}^{r-1}\rgmtj(z)^2
    }.
\end{equation*}


Given the reduced centers $\tlambda_0,\ldots,\tlambda_{r-1}$, the $r$ kernel values required to evaluate the \ac{gmt} system can be computed once and reused. In the present implementation, evaluating the projection--dependent factor requires $r^2$ kernel evaluations. 
In contrast, the \Cref{thm:svmRomMarginAgreement} criterion requires no additional model dependent computation once the \ac{rom} decision value and the \ac{rkhs} error are available.
We note that in other \acp{rkhs}, such as boundary weighted Hardy spaces~\cite{qian_kernel_2026} or Bergman spaces~\cite{duren_bergman_2024}, we can compute $\rgmtj$ with a single kernel evaluation.

The two sufficient conditions naturally suggest a hierarchical certified inference strategy. For a new input $z\in \Omega$, the uniform certificate in   \Cref{cor:svmRomUniformMarginAgreement} can be verified after evaluating the \ac{rom}, since it requires essentially no additional online computation once the \ac{rom} decision value and the \ac{rkhs} error are available.
If the condition is satisfied, the \ac{rom} prediction is certified to agree with the \ac{fom} prediction.

The projection--based certificate of \Cref{thm:svmRomMarginAgreementV2} provides a tighter, but more computationally demanding alternative. When used together with the uniform
certificate, it can be applied to inputs that are not certified by the latter,
yielding a hierarchical two--stage certification procedure.

In the following, the applicability of the considered certificates is evaluated separately on the finite input sets
$\mathcal{Z}_{\mathrm{tr}}$ and $\mathcal{Z}_{\mathrm{te}}$.
We report the percentage of samples satisfying the uniform and projection--based certificate, and the exact pointwise classification agreement between the \ac{fom} and \ac{rom} as $\Cert_{\mathrm{unif}}$, $\Cert_{\mathrm{proj}}$, and
$\Cert_{\mathrm{true}}$, respectively.
The approximation errors, classification performance, and applicability
of the consistency conditions are summarized in
\Cref{tab:svm_certification_results}.

\begin{table*}[tbp]
\centering
\caption{Approximation errors, classification performance, and
consistency--certificate applicability for the \ac{fom}, direct \acp{fom}, and
projected \ac{gmt}--\acp{rom}.}
\label{tab:svm_certification_results}
\small
\setlength{\tabcolsep}{3pt}

\begin{tabular}{@{}llccccccc@{}}
\toprule
\textbf{Data}
& \textbf{Metric}
& $\bG$
& $\Gdirect{30}$
& $\Gdirect{100}$
& $\Gdirect{200}$
& $\Gweakproj{30}$
& $\Gweakproj{100}$
& $\Gweakproj{200}$
\\
\midrule

Global
& $\cE^r_{\xi,\mathrm{rel}}$
& $0$
& $0.4416$
& $0.1578$
& $0.1322$
& $0.4621$
& $0.0868$
& $0.0257$
\\

\midrule

\multirow{5}{*}{Training}
& $\cE_{\mathrm{pt}}$
& $0$
& $3.0816$
& $0.7373$
& $0.4306$
& $0.6019$
& $0.5163$
& $0.0571$
\\
& Acc. $(\%)$
& $80.50$
& $59.65$
& $80.48$
& $80.48$
& $81.56$
& $80.48$
& $80.50$
\\
& $\Cert_{\mathrm{unif}}$ $(\%)$
& --
& $0.00$
& $1.39$
& $3.94$
& $0.00$
& $14.19$
& $92.57$
\\
& $\Cert_{\mathrm{proj}}$ $(\%)$
& --
& $71.80$
& $99.57$
& $99.74$
& $86.53$
& $99.93$
& $100.00$
\\
& $\Cert_{\mathrm{true}}$ $(\%)$
& --
& $75.07$
& $99.83$
& $99.86$
& $92.79$
& $99.95$
& $100.00$
\\

\midrule

\multirow{5}{*}{Test}
& $\cE_{\mathrm{pt}}$
& $0$
& $2.8452$
& $0.2384$
& $0.2241$
& $0.5278$
& $0.1269$
& $0.0040$
\\
& Acc. $(\%)$
& $80.28$
& $57.94$
& $80.28$
& $80.28$
& $80.89$
& $80.28$
& $80.28$
\\
& $\Cert_{\mathrm{unif}}$ $(\%)$
& --
& $0.00$
& $1.08$
& $3.70$
& $0.00$
& $13.71$
& $94.76$
\\
& $\Cert_{\mathrm{proj}}$ $(\%)$
& --
& $71.49$
& $99.85$
& $100.00$
& $88.29$
& $100.00$
& $100.00$
\\
& $\Cert_{\mathrm{true}}$ $(\%)$
& --
& $73.81$
& $100.00$
& $100.00$
& $93.53$
& $100.00$
& $100.00$
\\

\bottomrule
\end{tabular}
\end{table*}

\begin{figure}[tbp]
    \centering
    \includegraphics{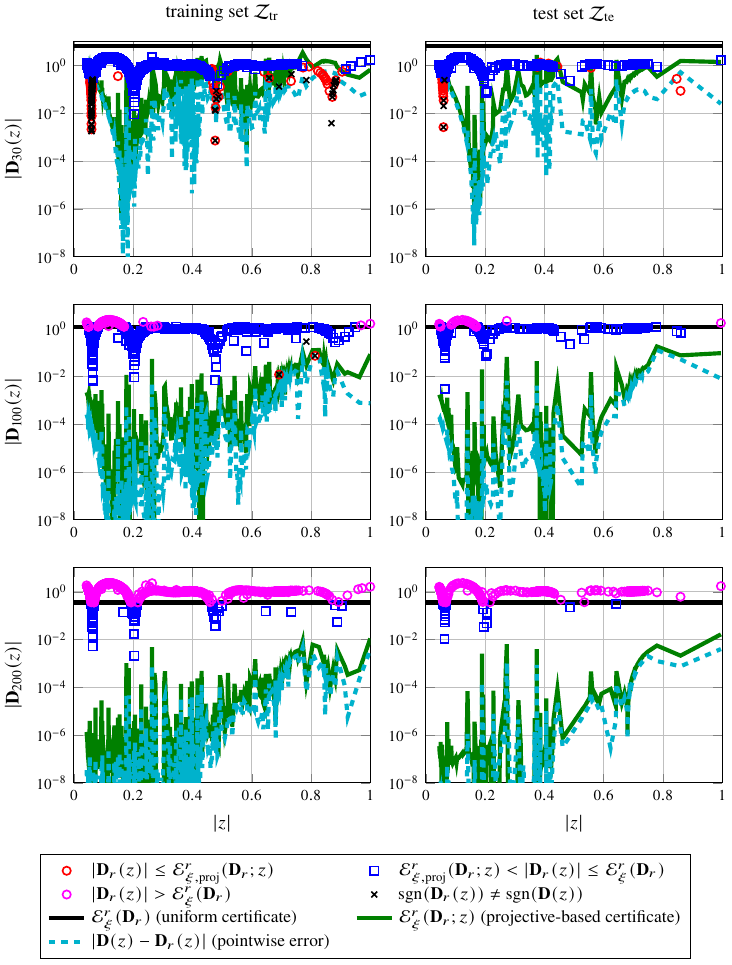}
    \caption{Consistency certificates for the projected \ac{gmt}--\acp{rom} with
$r=30,100,200$, evaluated on $\mathcal{Z}_{\mathrm{tr}}$ (left) and
$\mathcal{Z}_{\mathrm{te}}$ (right).}
\label{fig:SVM_Outputs_and_Bounds}
\end{figure}

\Cref{fig:SVM_Outputs_and_Bounds} illustrates how the applicability of the consistency certificates evolves with the \ac{rom} order  when evaluated on
$\mathcal Z_{\mathrm{tr}}$ (left) and $\mathcal Z_{\mathrm{te}}$ (right). The results highlight that approximation accuracy and
certificate applicability should be considered as distinct properties of the \ac{rom}. Increasing the reduced order generally improves the agreement between the \ac{rom} and the \ac{fom}, but the computational usefulness of the different sufficient conditions can vary significantly.

Recall that the \ac{fom} has order $n=2928$, while the reduced orders considered here are $r\in \{30, 100, 200\}$.
For $r=30$, the uniform certificate is too conservative to certify any of
the evaluated samples, whereas the projection--based certificate from \Cref{thm:svmRomMarginAgreementV2} remains
applicable to a large fraction of both the training and test inputs. Therefore, this regime provides a representative case in which the second, projection--based certification stage is practically useful.
In the present implementation, the corresponding $O(r^2)$ arithmetic cost amounts to $30^2=900$ operations at the level of the reduced representation and remains computationally competitive with the \ac{fom} evaluation. 

For $r=100$, the results highlight the distinction between \ac{rom} fidelity and certificate applicability. At this order, the reduced models already reproduce the \ac{fom} classification accuracy very closely, and the projected \ac{gmt}--\ac{rom} exhibits a small pointwise decision--function error. Nevertheless, the uniform certificate remains applicable only to a relatively limited fraction of the samples. The projection--dependent condition is much tighter and certifies almost all samples. However, its evaluation cost of order $O(r^2)$ is now considerably larger than evaluating the \ac{fom} directly. Consequently, in this regime the projection--based certification stage may no longer be computationally attractive relative to evaluating the \ac{fom} directly.

A different behavior is observed at $r=200$. For the projected \ac{gmt}--\ac{rom},
the uniform certificate becomes applicable to the large majority of the
samples, while the projection--based certificate is essentially fully applicable but too costly. This regime is therefore particularly
favorable for certified inference, because most predictions can already be
certified using the inexpensive uniform condition, without evaluating the more costly projection--based factor.

 \Cref{tab:svm_certification_results} also shows that the direct reduction approach becomes highly competitive as the reduced order
increases. In particular, for $r=100$ and $r=200$, the direct \ac{rom} attains
essentially the same classification accuracy as the projected \ac{gmt}--\ac{rom} and
closely reproduces the \ac{fom} predictions. Its relative \ac{rkhs} error, however, decreases
more slowly, and consequently the sufficient consistency certificates derived
above remain applicable to a smaller fraction of the samples. This difference
should not be interpreted as an intrinsic limitation of the direct approach.
Rather, in the present implementation, the reduction procedure  employed after the \ac{gmt} transformation in
$H_2(\D)$ is considerably more developed and numerically refined than the
optimization performed directly in the original \ac{rbf}--\ac{rkhs}. The comparison
therefore also reflects the different maturity of the two optimization
procedures, in addition to the properties of the corresponding reduced
representations.

The figure also provides a visual interpretation of the hierarchical inference strategy described above. After evaluating the \ac{rom}, samples that satisfy the uniform certificate can be used immediately. For the remaining samples, the tighter projection--based certificate can be checked before deciding whether to accept the \ac{rom} prediction without certification or whether the \ac{fom} should be evaluated as a fallback. Thus, improving the \ac{rom} affects not only approximation accuracy, but also the practical usefulness of the certification mechanism by increasing the fraction of inputs in $\mathcal Z_{\mathrm{tr}}$ and $\mathcal Z_{\mathrm{te}}$ for which \ac{fom}--\ac{rom} consistency can be established without a full--order evaluation.

For reference, if each kernel center and coefficient is counted as one stored quantity and the common bias is counted once, the \ac{fom} requires $p_{\mathrm{FOM}}=2n+1$ stored quantities, whereas the direct and projected \acp{rom} of order $r$ require $p_{\mathrm{dir}}=p_{\mathrm{proj}}=2r+1$. This parameter reduction is also directly related to inference cost, since evaluating an \ac{rbf}--\ac{svm} requires one kernel evaluation per stored center. Therefore, reducing the number of kernel centers from $n$ to $r$ lowers the cost of evaluating the decision function and, consequently, the expected runtime and energy consumption in embedded or real--time implementations. In particular, the projected \ac{rom} is smaller than the \ac{fom} whenever $r<n$.

Finally, the consistency results above should be interpreted as optional certification mechanisms rather than requirements for deploying the
reduced classifier. Once constructed, a strong \ac{rom} can be evaluated independently of the \ac{fom} and may therefore be used as a standalone surrogate whenever memory, computational, or hardware constraints make full--order inference impractical.

\subsection{MOR on splines and ReLU networks}
\label{subsec:relu}

In this example, we perform \ac{mor} on a deep feedforward ReLU network of type $\R \to \R$ using a \ac{gmt} transformation based approach. We follow~\cite[Equation (1.1)]{plonka_spline_2023} to obtain the following definition of ReLU networks mapping $\R \to \R$:
\begin{equation}
    \label{eq:reLUNet}
    \begin{split}
        \bF_1(t) &= t \bA^{(1)} + \bb^{(1)}, \\
        \bF_{\ell}(t) &= \bA^{(\ell)} \relu \left( \bF_{\ell-1}(t) + t \bc^{(\ell)} + \bb^{(\ell)} \right) \quad (\ell=2,\ldots,N-1), \\
       \bF(t) := \bF_N(t) &= \bA^{(N)} \relu \left(\bF_{N-1}(t) + t \bc^{(N)} + \bb^{(N)}\right),
    \end{split}
\end{equation}
where $\bA^{(\ell)} \in \R^{k_{\ell} \times k_{\ell-1}}, \bb^{(\ell)}, \bc^{(\ell)} \in \R^{k_{\ell}}$ for $\ell=1,\ldots,N$ with $k_N = 1$ and
\begin{equation*}
    \relu(\bx) \coloneqq \left[ \max\{x_1, 0\}, \ldots, \max\{ x_k, 0 \} \right]^\top,  \quad
    \bx = [x_1,\ldots,x_k]^\top \in\R^k.
\end{equation*}

ReLU networks have been instrumental to the development of modern deep learning. They form the building blocks of many state--of--the--art data--driven modeling schemes~\cite{vaswani_attention_2017, kovacs_vpnet_2022}. They are universal approximators~\cite{hornik_approximation_1991}, which makes them particularly well suited to learn nonlinear maps from data. On the other hand, to achieve good approximation, large networks with exceedingly high parameter counts are often required. For example, modern image and language processing deep learning models often contain billions of parameters, which makes training and model evaluation expensive. 

Compression of neural networks is often performed using pruning~\cite{blalock_what_2020}, which removes neurons (rows and columns of $\bA^{(\ell)}$) while trying to preserve the input--output relationship defined by $\bF$. This removal or pruning step is then followed by an adjustment of the remaining parameters $\bA_r^{(\ell)}, \bb_r^{(\ell)}$, and $\bc_r^{(\ell)}$ for $\ell=1,\ldots ,N$ using gradient--based backpropagation schemes~\cite{kingma_adam_2014}. Variants of this heuristic approach have been empirically verified to perform well for trained networks~\cite{blalock_what_2020}. However, no theoretical guarantees are given regarding the performance of the reduced (pruned) network. Furthermore, pruning has been observed to be unusable for very complex cases, for example, when the parameters of the neural network $\bF$ are chosen randomly. 

In the approach proposed here, we do not attempt to reduce the model as given in~\eqref{eq:reLUNet}. Instead, we exploit the fact that any scalar--input scalar--output ReLU network of this form can be represented as a \ac{cpl} function, or equivalently with linear splines. In~\cite{plonka_spline_2023}, an efficient algorithm is proposed to obtain the \ac{cpl} representation
\begin{equation}
    \label{eq:CPL}
    \bF(t) = q_0 + q_1 t + \sum_{k=0}^{n-1} \wres_k \relu(t - \lambda_k ) \quad (t \in \R)
\end{equation}
of a deep ReLU network $\bF$ of the form~\eqref{eq:reLUNet}. Although the \ac{cpl} representation can always be obtained, at first glance the relationship between the number of parameters needed to describe $\bF$ with~\eqref{eq:reLUNet} when compared to~\eqref{eq:CPL} looks discouraging. Indeed, in~\cite{montufar_number_2014} it is shown that $n$ in~\eqref{eq:CPL}
is bounded by $\cO(N W^2)$, where $W \coloneqq \max\left\{ k_0, \ldots, k_{N} \right\}$ denotes the width of the ReLU network. In contrast, in~\cite{hanin_complexity_2019} the authors give evidence that despite this theoretical bound, in practice $n$ grows linearly with the number of neurons. 

In the presented experiment, we consider a four--layer neural network ($N=4$) with $k_1=k_4=1, k_2=k_3=25$, and $\bc^{(\ell)} = 0$ for $\ell=2,3,4$, hence a total of $M=1375$ parameters are required to describe the model in the form~\eqref{eq:reLUNet}. In our experiment, the components of the parameters $\bA^{(\ell)}, \bb^{(\ell)}$ are chosen randomly. For this network, the \ac{cpl} form provided by the algorithm in~\cite{plonka_spline_2023} is described by only $n=187$ knots $\lambda_k$. Thus, in this example, the \ac{cpl} form provides a more compact representation of the same \ac{fom}.

In order to use the algorithms presented in \Cref{sec:irka} to perform \ac{mor} on~\eqref{eq:CPL}, we have to express $\bF$ as a linear combination of  kernel slices. To this end, consider the kernel
\begin{equation}
    \label{eq:sob1Ker}
    \xi(t, \lambda) \coloneqq 1 + \min\{t - \Delta, \lambda - \Delta\},
\end{equation}
where $\Delta$ is a parameter satisfying $\Delta < \min\left\{ \lambda_0, \ldots, \lambda_{n-1} \right\}$. With this choice, $\xi$ is the reproducing kernel of the Sobolev--like \ac{rkhs} 
\begin{equation*}
    \origspace \coloneqq \left\{ \bG \in L^2(\Omega) \ : \ \bG(\Delta)^2 + \int_{\Omega} \bG'(t)^2\, \du t < \infty \right\},
\end{equation*}
where $\Omega \coloneqq [\Delta, T]$ with $T \in \R$ or $\Omega = [\Delta, \infty)$~\cite[Section 1.2]{Wahba_1990}, and differentiation is understood in the weak sense. Note also, that $\origspace$ is equipped with the inner product
\begin{equation*}
    \langle \bG, \bH \rangle_{\origspace} \coloneqq \bG(\Delta)\bH(\Delta) + \int_{\Omega} \bG'(t) \bH'(t) \,\du t.
\end{equation*}
In practice, the choice of $\Delta$ in~\eqref{eq:sob1Ker} along with the choice of $T$ in $\Omega = [\Delta, T]$ depend on the considered \ac{fom} network $\bF$. In this experiment, they are determined as  $\Delta \coloneqq - \left( |\min\{ \lambda_0, \ldots, \lambda_{n-1} \} - 1| + |\max\{ \lambda_0,\ldots,\lambda_{n-1} \}| \right)$, resulting in the values  $T\coloneqq -\Delta = 70.19$. To express~\eqref{eq:CPL} in terms of $\xi$ from~\eqref{eq:sob1Ker}, observe that $\min\{a,b\} = a - \relu(a-b)$ for all $a,b \in \R$ and therefore,
\begin{equation*}
    \xi_{\lambda_k}(t) = 1 + \min\{ t - \Delta, \lambda_k - \Delta \} = 1 - \Delta + t - \relu(t - \lambda_k) \quad (t \in \Omega, \ k=0,\ldots,n-1).
\end{equation*}
Applying this to $\relu(t-\lambda_k)$ in~\eqref{eq:CPL} gives
\begin{align*}
    \bF(t) &= q_0 + q_1 t + \sum_{k=0}^{n-1} \wres_k \left[ (1-\Delta+t) - \xi_{\lambda_k}(t) \right]
    \\ &= q_0 + (1-\Delta) \sum_{k=0}^{n-1} \wres_k  + \left( q_1 + \sum_{k=0}^{n-1} \wres_k \right) t - \sum_{k=0}^{n-1} \wres_k \xi_{\lambda_k}(t).
\end{align*}
Finally, with $\wres_k = -\res_k$ for $k=0,\ldots,n-1$, we can write~\eqref{eq:CPL} as
\begin{equation}
    \label{eq:kerNet}
    \bF(t) = \beta(t) + \bG(t) = \beta_0 + \beta_1 t + \sum_{k=0}^{n-1} \res_k \xi_{\lambda_k}(t),
\end{equation}
where $\bG \in \fomspaceo \subseteq \origspace$, and
\begin{equation*}
    \beta_0 \coloneqq q_0 + (1-\Delta) \sum_{k=0}^{n-1} \wres_k, \quad \beta_1 \coloneqq q_1 + \sum_{k=0}^{n-1} \wres_k.
\end{equation*}
Similarly to the \ac{svm} experiment in \Cref{subsec:svm}, the affine component  $\beta(t)=\beta_0+\beta_1 t$ is retained exactly, while the \ac{mor} procedure is applied only to the non--affine component $\bG$ in~\eqref{eq:kerNet}.

Note that even though the \ac{fom} $\bG$ now belongs to the \ac{rkhs} $\origspace$, the derivatives $\funder{\xi_{\lambda}}{\lambda}$ and $\funder{\xi_{\lambda}}{\olambda}$ do not exist for any $\lambda \in \Omega$. Consequently, we cannot apply \Cref{thm:orthocond} to evaluate the optimality of a \ac{rom} $\bG_r$. Accordingly, Algorithm~\ref{alg:irkaRKHS} also cannot be applied directly.

To overcome this limitation, we express the \ac{fom} in the \ac{gmt} basis
\begin{equation*}
    \bG = \sum_{k=0}^{n-1} \left\langle \bG, \gmtk\right\rangle_{\origspace} \gmtk
\end{equation*}
and apply the \ac{gmt} transformation from \Cref{def:GMTtransform} to map the problem to an \ac{rkhs} with a sufficiently smooth kernel. Then, the \ac{mor} procedure can be carried out using Algorithm~\ref{alg:cdcRKHS}. For this example, the companion \ac{rkhs} $\ambspace$ is chosen as $\realHtwo$ as given in~\eqref{eq:realH2}. The corresponding domain maps are given as
\begin{equation*}
    \gamma(t) = t / T, \quad \gamma^{-1}(z) =  z T  \quad (t \in [-T, T], z \in (-1, 1)).
\end{equation*}
We observe that choosing these initial kernel centers has a notable effect on the \ac{rom}, although all investigated choices lead to highly accurate \acp{rom}.

We consider two baseline algorithms for comparison, and two different initialization schemes for Algorithm~\ref{alg:cdcRKHS}. First, we compare the efficiency of Algorithm~\ref{alg:cdcRKHS} to the structured pruning method~\cite{blalock_what_2020}. This method acts directly on the model representation~\eqref{eq:reLUNet}. First, it assigns an importance score to each neuron in the hidden network layers. This score is a heuristic to measure the effect of the neuron with respect to the incoming and outgoing information. The importance score is given by
\begin{equation*}
    \left\| \bA^{(\ell)}_j \right\|_{2} \left\|\left(\bA^{(\ell+1)}\right)^\top_j \right\|_{2} \quad (j=0,\ldots,k_{\ell}-1),
\end{equation*}
where $ \bA^{(\ell)}_j$ denotes the $j$th column of $ \bA^{(\ell)}$. In a single step of the pruning method, the neuron with the lowest importance score is pruned from the network, that is, the corresponding column of $\bA^{(\ell)}$ and row of $\bA^{(\ell+1)}$ are removed. This process continues iteratively until a defined compression ratio is achieved.

Since the proposed approach acts on~\eqref{eq:kerNet}, we also compare the effectiveness of Algorithm~\ref{alg:cdcRKHS} with a kernel slice fitting scheme. In particular, we consider a variant of the \ac{eim} adapted to fitting kernel slices constructed from $\xi$ in~\eqref{eq:sob1Ker}. \ac{eim} was originally introduced in~\cite{barrault_empirical_2004} to approximate the solution of parametric partial differential equations, but has since been widely used to approximate nonlinear operators in general. In the spline--kernel fitting context of this experiment, it greedily selects the knots $\tlambda_j$, $j=0,\ldots,r-1$ describing the \ac{rom}. The considered \ac{eim} variant is summarized in Algorithm~\ref{alg:eim}. We note that when applied to kernel slices, the algorithm is also known as greedy kernel interpolation, or kernel matching pursuit~\cite{vincent_kernel_2002}. 

\begin{algorithm}[tbp]
\caption{Kernel--\ac{eim} for piecewise--linear spline approximation}
\label{alg:eim}
\begin{algorithmic}[1]
\Require \ac{fom} $\bG$ queryable on $[\Delta, T]$,  kernel $\xi$ from~\eqref{eq:sob1Ker}, reduced order $r$.
\Ensure \ac{rom} $\bG_r$.
\State $\bE_0 \gets \bG$.
\State $\tlambda_0 \gets \argmax_{t \in [\Delta, T]} |\bE_0(t)|$.
\State $c_0 \gets \bG \big(\tlambda_0 \big) / \xi\big(\tlambda_0, \tlambda_0 \big)$.
\State $\bG_1 \gets c_0 \xi \big(\cdot, \tlambda_0 \big)$.
\For{$j = 1,\ldots, r-1$}
    \State $\bE_{j} \gets \bG - \bG_{j}$.
    \State $\tlambda_j \gets \argmax_{t \in [\Delta,T]} |\bE_{j}(t)|$.
    \State Assemble $\mathbf{K}^{(j)} = \left[\xi\big(\tlambda_i, \tlambda_k \big)\right]_{i,k=0}^{j}$ and $\mathbf{g}^{(j)} = \left[\bG\big(\tlambda_i\big)\right]_{i=0}^j$.
    \State Solve $\mathbf{K}^{(j)} \mathbf{c}^{(j)} = \mathbf{g}^{(j)}$ for $\bc^{(j)} \coloneqq \big[ c_0^{(j)},\ldots, c_j^{(j)}\big]^\top$.
    \State $\bG_{j+1} \gets \sum_{i=0}^j c_i^{(j)} \xi_{\tlambda_i}$.
\EndFor
\State \textbf{return} $\bG_r$.
\end{algorithmic}
\end{algorithm}
We use Algorithm~\ref{alg:eim} in two distinct ways. First, it is used as a baseline method for Algorithm~\ref{alg:cdcRKHS}, when the latter is used with a quasi--random heuristic initialization. Then, we use \ac{eim} to initialize a starting point $\tilde{\blambda}^{(0)} \in [\Delta, T]^r$ for Algorithm~\ref{alg:cdcRKHS}. With this ``fine tuning", we ensure that the final \ac{rom} satisfies our $\origspace$ optimality conditions.

In summary, we consider the following models:
\begin{itemize}
    \item $\bF$: the \ac{fom} neural network expressed according to~\eqref{eq:reLUNet},
    \item $\bF^{\mathrm{prun}}$: \ac{rom} obtained by applying pruning directly to $\bF$,
    \item  $\bG$: the \ac{fom} kernel representation of the non--affine part obtained according to~\eqref{eq:kerNet},
    \item $\Gweakproj{r}$: projected \ac{rom} obtained using \ac{gmt} transformation. In particular, $\Gweakproj{r} \coloneqq  \cP_{V_r^{\tilde{\blambda}}}\left[ \bG \right]$, where $V_r^{\tilde{\blambda}} = \myspan\left\{ \xi_{\tlambda_0}, \ldots, \xi_{\tlambda_{r-1}} \right\}$ and the kernel centers $\tlambda_j$, $j=0,\ldots,r-1$ are acquired using Algorithm~\ref{alg:cdcRKHS} and heuristic initialization,
    \item $\bG_r^{\mathrm{econ\text{-}EIM}}$: projected \ac{rom} obtained using Algorithm~\ref{alg:cdcRKHS} with \ac{eim} initialization, i.e., where the initial kernel centers $\tlambda_j$, $j=0,\ldots,r-1$ are found with the greedy Algorithm~\ref{alg:eim},
    \item $\bG_r^{\mathrm{EIM}}$: baseline \ac{rom} obtained with Algorithm~\ref{alg:eim}.
\end{itemize}

To evaluate the approximation quality, we first consider the mean squared error
\begin{equation}
    \label{eq:relLsq}
    \cE_{\mathrm{MSE}}(\bH_r; \bH) \coloneqq \frac{1}{K} \sum_{k=0}^{K-1} \left( \bH(t_k) - \bH_r(t_k)  \right)^2,
\end{equation}
where $\{t_k\}_{k=0}^{K-1}$ is a $K$--point equidistant grid of $\Omega$. Here, we choose $K=1000$. 
For the pruning baseline, we evaluate
$\cE_{\mathrm{MSE}}(\bF^{\mathrm{prun}};\bF)$, whereas for the
kernel--based \acp{rom} we evaluate
$\cE_{\mathrm{MSE}}(\Gweakproj{r};\bG)$.
Since the affine component $\beta$ is retained exactly, the latter is
equivalent to evaluating the \ac{mse} between the functions $\bF=\beta+\bG$ and  $\bF_r = \beta+\bG_r$, where $\bG_r$ can be any of the \acp{rom} discussed above.

In addition, for the kernel--based \acp{rom}  we report the relative \ac{rkhs} error as defined in \eqref{eq:relErr}, where again, $\bG_r$ denotes an arbitrary \ac{rom}.
This metric is not reported  for $\bF^{\mathrm{prun}}$, because no guarantees can be given that its \ac{cpl} form 
\begin{equation*}
    \bF^{\mathrm{prun}}(t) \eqqcolon \tq(t) + \bG^{\mathrm{prun}}(t) = \tq_0 + \tq_1 t + \sum_{j=0}^{\tn-1} \wtres_j \relu\big(t - \tilde{\mu}_j \big)
\end{equation*}
satisfies $\Delta < \min \big\{\tilde{\mu}_0,\ldots,\tilde{\mu}_{\tn-1}\big\}$ whose violation would result in $\bG^{\mathrm{prun}} \not \in \origspace$.

In our experiments, we consider the reduced orders
\begin{equation}
    \label{eq:nnorder}
    r \in \{ 50, 60, 70, 80, 90, 100 \}
\end{equation}
and recall that the \ac{fom} $\bG$ in~\eqref{eq:kerNet} can be described with $n=187$ knots. We already mentioned that the deep ReLU form of the \ac{fom} in~\eqref{eq:reLUNet} is defined by $M=1375$ parameters. To ensure a fair comparison for pruning, we compare the \ac{mse} of \acp{rom} that provide the same overall compression ratio
\begin{equation*}
    \CR = \frac{\textrm{Total number of ROM parameters}}{\textrm{Total number of FOM parameters}} \cdot 100\%,
\end{equation*}
where the deep network or the kernel--slice formulations are considered depending on the \ac{mor} method. Therefore, the different \ac{mor} schemes are compared along the following (approximate) compression ratios determined by~\eqref{eq:nnorder}:
\begin{equation*}
    \CR \in \{ 27.3, 32.6, 38.0, 43.3, 48.7, 54.0 \}\%.
\end{equation*}
\Cref{tab:nn} summarizes the results of the experiment. To better resolve the differences between the kernel--based \acp{rom},
we additionally report the improvement relative to the \ac{eim} baseline,
defined as
\begin{align*}
    \cI^{\mathrm{econ}}_r &\coloneqq \log_{10}\left(\cE\left(\Gweakproj{r}\right)\right) - \log_{10}\left(\cE\left(\bG_r^{\mathrm{EIM}}\right)\right), \\ \cI^{\mathrm{econ\text{-}EIM}}_r &\coloneqq \log_{10}\left(\cE\left(\bG_r^{\mathrm{econ\text{-}EIM}}\right)\right) - \log_{10}\left(\cE\left(\bG_r^{\mathrm{EIM}}\right)\right),
\end{align*}
where $\cE$ denotes either the \ac{mse} in~\eqref{eq:relLsq} or the relative \ac{rkhs} error in~\eqref{eq:relErr}. A negative value indicates an improvement over the \ac{eim} baseline, whereas a positive value indicates that \ac{eim} achieves the smaller error, and the absolute value indicates the differences in the order of magnitudes of the errors.
Since $\Gweakproj{r}, \bG_r^{\mathrm{EIM}}$, and $\bG_r^{\mathrm{econ\text{-}EIM}}$ all provide high quality approximations, these improvement measures help resolve the small differences between their performances.
This is further confirmed in \Cref{fig:NNapr}, which shows the reconstruction of $\bF$ over $[\Delta, T]$. In contrast, the considered pruning approach performs poorly for the randomly parameterized \ac{fom} used in this experiment.
Pruning's inability to reduce neural networks with random weights has already been observed in~\cite{blalock_what_2020}.

\begin{table}[tbp]
    \centering
    \caption{Proposed and baseline \acp{rom} for \ac{mor} on a ReLU network.}
    \label{tab:nn}

    \resizebox{\textwidth}{!}{%
    \begin{tabular}{c c c c c c c c}
        \toprule
        Metric
        & $\CR\,(\%)$
        & $\Gweakproj{r}$
        & $\bG_r^{\mathrm{econ\text{-}EIM}}$
        & $\bG_r^{\mathrm{EIM}}$
        & $\bF^{\mathrm{prun}}$
        & $\cI_r^{\mathrm{econ}}$
        & $\cI_r^{\mathrm{econ\text{-}EIM}}$ \\
        \midrule

        \multirow{6}{*}{$\cE^r_{\xi, \mathrm{rel}}$}
        & 27.27
        & 0.08018
        & 0.03824
        & 0.03824
        & --
        & $0.3216$
        & $-1.1856 \times 10^{-8}$ \\

        & 32.62
        & 0.07276
        & 0.03701
        & 0.03701
        & --
        & $0.2936$
        & $ -1.2626 \times 10^{-8}$ \\

        & 37.97
        & 0.07132
        & 0.03215
        & 0.03215
        & --
        & $0.3461$
        & $-1.6617 \times 10^{-8}$ \\

        & 43.32
        & 0.07111
        & 0.03156
        & 0.03156
        & --
        & $0.3528$
        & $-1.7235 \times 10^{-8}$ \\

        & 48.66
        & 0.07109
        & 0.02752
        & 0.02752
        & --
        & $0.4121$
        & $-2.2664 \times 10^{-8}$ \\

        & 54.01
        & 0.07053
        & 0.02659
        & 0.02659
        & --
        & $0.4236$
        & $-2.4156 \times 10^{-8}$ \\

        \midrule

        \multirow{6}{*}{$\cE_{\mathrm{MSE}}$}
        & 27.27
        & 0.03084
        & 0.01356
        & 0.01356
        & $1.949 \times 10^5$
        & $0.3569$
        & $-6.6911 \times 10^{-11}$ \\

        & 32.62
        & 0.01350
        & 0.01354
        & 0.01354
        & $1.753 \times 10^{6}$
        & $ -1.354 \times 10^{-3}$
        & $-5.9370 \times 10^{-11}$ \\

        & 37.97
        & 0.01112
        & 0.01306
        & 0.01306
        & $2.641 \times 10^{6}$
        & $-0.0701$
        & $-5.4816 \times 10^{-11}$ \\

        & 43.32
        & 0.01027
        & 0.01305
        & 0.01305
        & $1.192 \times 10^{6}$
        & $-0.1041$
        & $-5.1098 \times 10^{-11}$ \\

        & 48.66
        & 0.01014
        & 0.01297
        & 0.01297
        & $1.242 \times 10^{6}$
        & $-0.1070$
        & $-4.2457 \times 10^{-11}$ \\

        & 54.01
        & 0.009744
        & 0.01293
        & 0.01293
        & $6.573 \times 10^{5}$
        & $-0.1229$
        & $-3.6601 \times 10^{-11}$ \\

        \bottomrule
    \end{tabular}%
    }
\end{table}

\begin{figure}
    \centering
    \includegraphics{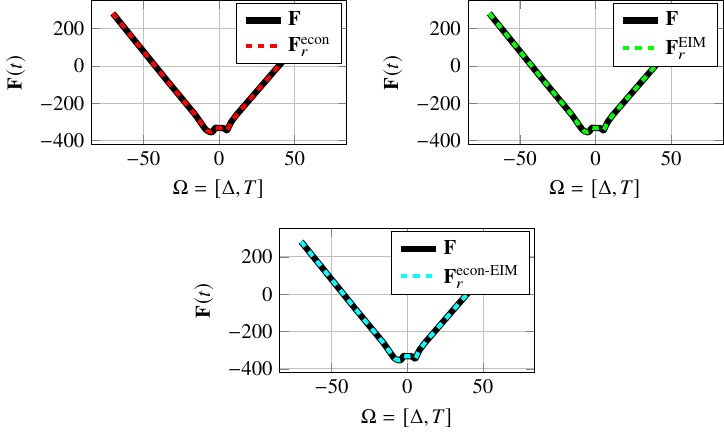}
    \caption{Approximation of the FOM neural network $\bF$ with the considered kernel expansion based ROMs. Here, $\bF_r^{\mathrm{econ}}(t) \coloneqq \Gweakproj{r}(t) + \beta(t)$, $\bF_r^{\mathrm{EIM}}(t) \coloneqq \bG_r^{\mathrm{EIM}}(t) + \beta(t)$, and $\bF_r^{\mathrm{econ\text{-}EIM}}(t) \coloneqq \bG_r^{\mathrm{econ\text{-}EIM}}(t) + \beta(t)$ are given in~\eqref{eq:kerNet}. In this figure, $r=70$ is considered.}
    \label{fig:NNapr}
\end{figure}

The spline--fitting--based \acp{rom} $\Gweakproj{r}, \bG_r^{\mathrm{econ\text{-}EIM}}$, and $\bG_r^{\mathrm{EIM}}$ provide very similar results. In order to explore subtle differences between them, we consult the last two columns of \Cref{tab:nn}. We observe that quasi--random initialization of Algorithm~\ref{alg:cdcRKHS} yields \acp{rom} with slightly worse $\origspace$ norm errors than $\bG_r^{\mathrm{EIM}}$ and $\bG_r^{\mathrm{econ\text{-}EIM}}$. 
On the other hand, when evaluated over a fine grid, the corresponding \ac{mse} scores are improved for $\Gweakproj{r}$ when compared to $\bG_r^{\mathrm{EIM}}$ and $\bG_r^{\mathrm{econ\text{-}EIM}}$ for most of the considered compression ratios. Recall that $\bG_r^{\mathrm{econ\text{-}EIM}}$ is obtained by initializing Algorithm~\ref{alg:cdcRKHS} from the kernel centers provided by \ac{eim}. When compared against $\bG_r^{\mathrm{EIM}}$, this model provides 
marginal improvements both in terms of
$\cE_{\xi,\mathrm{rel}}^{r}$ and $\cE_{\mathrm{MSE}}$.
 These improvements remain very small over the considered range of compression ratios. In fact, we find that $\bG_r^{\mathrm{econ\text{-}EIM}}$ and $\bG_r^{\mathrm{EIM}}$ match up to $9$--$12$ decimal digits across every considered compression ratio, indicating that the \ac{eim}--initialized knots already lie extremely close to the $\origspace$--optimal configuration found by Algorithm~\ref{alg:cdcRKHS}.
 We conclude that Algorithm~\ref{alg:cdcRKHS} can slightly refine
$\bG_r^{\mathrm{EIM}}$ when initialized from the \ac{eim} kernel centers.
However, both the proposed methods and the baseline
$\bG_r^{\mathrm{EIM}}$ provide high quality approximations to the \ac{fom}.
Hence, spline--fitting--based reduction of ReLU networks seems reasonable in cases when a spline representation of the network can be obtained.


We notice the following interesting behavior of the \ac{eim} baseline model $\bG_r^{\mathrm{EIM}}$. Although the greedy iteration in Algorithm~\ref{alg:eim} aims to reduce the $L^{\infty}$ error of the \ac{rom} with the addition of each new knot, we observe that $\bG_r^{\mathrm{EIM}}$ also approximately satisfies the equilibrium--based conditions of \Cref{thm:equibCond}.
This is reflected in \Cref{tab:equibNN}, which shows the
magnitude of the equilibrium function evaluated at the kernel centers
determining each \ac{rom}.
In this experiment, the \ac{eim}--\acp{rom} also approximately satisfy the
necessary conditions of $\origspace$ optimality in the \ac{gmt} transformation sense. This is notable because Algorithm~\ref{alg:eim} does not explicitly enforce these conditions.
On the other hand, the \acp{rom} obtained with Algorithm~\ref{alg:cdcRKHS} satisfies the equilibrium--based optimality conditions to even higher numerical accuracy. 

\begin{table}[tbp]
    \centering
    \caption{Maximum equilibrium function magnitude $\left\| \left[\hat{\cD}\left(\gamma\big(\tilde{\lambda}_0 \big),\ldots,\gamma\big(\tilde{\lambda}_{r-1} \big)\right)\right]^\top \right\|_{\infty}$ in the companion \ac{rkhs} $\realHtwo$ for different \acp{rom}. These results were computed using MATLAB's variable precision arithmetic using $100$ digits.}
    \label{tab:equibNN}
    \begin{tabular}{ccccc}
        \toprule
         $r$ & $\Gweakproj{r}$ & $\bG_r^{\mathrm{econ\text{-}EIM}}$ & $\bG_r^{\mathrm{EIM}}$ \\
        \midrule
         50 & $1.59 \times 10^{-29}$ & $1.43 \times 10^{-34}$ & $1.54 \times 10^{-18}$ \\
         60 & $2.72 \times 10^{-36}$ & $3.70 \times 10^{-41}$ & $1.24 \times 10^{-21}$ \\
         70 & $1.30 \times 10^{-43}$ & $4.23 \times 10^{-47}$ & $1.74 \times 10^{-24}$ \\
         80 & $4.81 \times 10^{-57}$ & $6.98 \times 10^{-53}$ & $2.82 \times 10^{-27}$ \\
         90 & $1.75 \times 10^{-62}$ & $2.22 \times 10^{-58}$ & $5.92 \times 10^{-30}$ \\
         100 & $9.49 \times 10^{-68}$ & $1.73 \times 10^{-64}$ & $7.35 \times 10^{-33}$ \\ \bottomrule
    \end{tabular}
\end{table}

In this experiment, we presented a viable application of Algorithm~\ref{alg:cdcRKHS} to reduce $\R \to \R$ type deep ReLU networks. We conclude that, for the randomly parameterized \ac{fom},
the proposed spline--based reduction methods substantially outperform
the considered pruning baseline.
The presented experiment also provides a strong case for further investigations into using spline--based representations of ReLU networks in order to manipulate already trained models. In addition, we observe that an effective greedy spline fitting technique, \ac{eim}, also achieves models which approximately satisfy the \ac{gmt}--transformation--based optimality conditions presented in this paper. Finally, we emphasize that this application is made possible through the use of \ac{gmt} transformations, because the native \ac{rkhs} containing the ReLU network does not have a differentiable kernel. Consequently, without the use of the \ac{gmt} transformation, the optimality conditions proposed in \Cref{sec:orthCond} cannot be directly evaluated.   

\subsection{PDE discretization}
\label{subsec:pdes}

We next demonstrate how Algorithm~\ref{alg:irkaRKHS} can be used to solve \acp{pde}. This example is once again meant to emphasize the scope of our results, because the problem considered is not a traditional \ac{mor} task. A systematic analysis of extending the framework for other \acp{pde} is left for future work.  
In this experiment, we consider the fourth--order Euler--Bernoulli beam
equation
\begin{equation}
    \label{eq:pde}
    \bG^{(4)}(x) = \bF(x),
    \quad x \in \Omega \coloneqq (0,1),
\end{equation}
with clamped--free boundary conditions
\begin{equation}
    \label{eq:bc}
    \bG(0)=\bG'(0)=0,
    \quad
    \bG''(1)=\bG^{(3)}(1)=0.
\end{equation}

For the numerical experiment, we construct the problem by prescribing the
analytic solution
\begin{equation}
    \label{eq:trueSol}
    \bG(x)
    =
     x^2(1-x)^2 \eu^{-\frac{(x-3/4)^2}{2\sigma^2}},
\end{equation}
with $\sigma=\frac{1}{20}$, and defining the corresponding right--hand side as
\begin{equation*}
    \bF(x)
    \coloneqq
    \bG^{(4)}(x)
    =
    \left[
        x^2(1-x)^2 \eu^{-\frac{(x-3/4)^2}{2\sigma^2}}
    \right]^{(4)}.
\end{equation*}
Hence, $\bG$ in~\eqref{eq:trueSol} is known by construction and can be
used to assess the accuracy of the numerical approximations.

Similarly to the \ac{fem}, our \ac{mor} based solution considers the weak form of the above equation:
\begin{equation*}
    \int_\Omega \bG^{(4)}(x) v(x)\, \du x = \int_\Omega \bF(x) v(x)\, \du x,
\end{equation*}
where $v$ belongs to the test space
\begin{equation}
    \label{eq:pdeRKHS}
    \origspace \coloneqq \left\{ v \in H^2(\Omega) \ : \ v(0) = v'(0) = 0 \right\}.
\end{equation}
Here, $H^2(\Omega)$ denotes the Sobolev--Hilbert space
\begin{equation*}
    H^2(\Omega) \coloneqq \left\{ v \in L^2(\Omega) \ : \ v', v'' \in L^2(\Omega),  \sum_{n=0}^{2} \big\| v^{(n)} \big\|_{L^2(\Omega)}^2 < \infty \right\},
\end{equation*}
where differentiation is understood in the weak sense. Following integration by parts and applying the boundary condition~\cite[Chapter 4]{Reddy_1993} gives the weak equation
\begin{equation}
    \label{eq:weakpde}
    \underbrace{\int_{\Omega} \bG''(x) v''(x)\, \du x}_{\eqqcolon a(\bG, v)} = \underbrace{\int_{\Omega} \bF(x) v(x)\, \du x}_{\eqqcolon f(v)} \quad (v \in \origspace).
\end{equation}
Note that $\origspace$ is a Hilbert space equipped with the inner product
\begin{equation*}
    \left\langle \bG, \bH \right\rangle_{\origspace} \coloneqq \int_{\Omega} \bG''(x) \bH''(x) \, \du x \quad (\bG, \bH \in \origspace). 
\end{equation*}
We observe, see e.g.,~\cite[Eq. (1.2.4)]{Wahba_1990} that $\origspace$ is an \ac{rkhs} with the kernel
\begin{equation}
    \label{eq:sobker}
    \xi(x,\lambda) = \frac{1}{2} \min\{x, \lambda\}^2 \max\{x, \lambda\} - \frac{1}{6} \min\{x, \lambda\}^3 \quad (x, \lambda \in \Omega).
\end{equation}
This coincides with Green's function corresponding to the 4th order differential operator~\cite{stakgold_greens_2011} and hence, the kernel slice generator $\lambda \mapsto \xi_{\lambda}$ is twice continuously differentiable on $\Omega$. Note that in our example, the test and ansatz space are the same since our analytic solution~\eqref{eq:trueSol} satisfies $\bG \in \origspace$. For the proposed method this is a necessity, since we are looking to obtain a weak solution $\bG_r \in \origspace$ which minimizes $\| \bG - \bG_r \|_{\origspace}$.

Let us assume for now, that we have given kernel centers
$\tilde{\blambda} \in \Omeganeq{r}$ for some $r \in \N$. We can approximate the unknown solution $\bG$ with the orthogonal (Galerkin) projection $\bG_r \coloneqq \cP_{V_r^{\tilde{\blambda}}}\left[ \bG \right]$, where $V_r^{\tilde{\blambda}} = \myspan \left\{ \xi_{\tlambda_0}, \ldots, \xi_{\tlambda_{r-1}} \right\} \subseteq \origspace$.
In fact, $\bG_r$ can be computed using~\eqref{eq:weakpde} as follows. Choosing the kernel slices $\xi_{\tlambda_j}$, $j=0,\ldots,r-1$ as test functions and since $\bG_r(x) = \sum_{k=0}^{r-1} \tres_k \xi_{\tlambda_k}(x)$ for some linear coefficients $\tres_k$, $k=0,\ldots,r-1$, we obtain 
\begin{align*}
    \int_{\Omega} \bG_r''(x) \xi_{\tlambda_j}''(x) \,\du x &=  \int_{\Omega} \left( \sum_{k=0}^{r-1} \tres_k \xi_{\tlambda_k}''(x) \right) \xi_{\tlambda_j}''(x) \,\du x \\ &= \sum_{k=0}^{r-1} \tres_k \int_{\Omega} \xi_{\tlambda_k}''(x) \xi_{\tlambda_j}''(x)\, \du x  = \int_{\Omega} \bF(x) \xi_{\tlambda_j}(x)\, \du x.
\end{align*}
Then, $\tres \coloneqq \left[\tres_0,\ldots,\tres_{r-1}\right]^\top$ is obtained as the solution to the linear system
\begin{equation*}
    \bA \boldsymbol{\tres} = \boldf,
\end{equation*}
where $\bA := \left[a\big(\xi_{\tlambda_k}, \xi_{\tlambda_j}\big)\right]_{k,j=0}^{r-1}$ and $\boldf \coloneqq \left[f\big(\xi_{\tlambda_j}\big)\right]_{j=0}^{r-1}$. We can compute the entries of $\bA$ exactly using the identity
\begin{equation*}
    \xi_{\lambda}''(x) = \begin{cases}
        \lambda - x, & \text{if } x \leq \lambda, \\
        0, & \text{if }  x > \lambda.
    \end{cases}
\end{equation*}
The components of $\boldf$ can be approximated using an appropriate numerical quadrature. In this experiment, we use the composite trapezoidal rule with $N=1000$ nodes.

Note that the bilinear form $a(\cdot, \cdot)$ on the left hand side of~\eqref{eq:weakpde} can be interpreted as an inner product $\left\langle \cdot, \cdot \right\rangle_{\origspace}$. In particular, $a(\cdot,\cdot)$ is coercive with coercivity constant 1, that is,
\begin{equation*}
    a(\bH, \bH) = \int_{\Omega} \bH''(x) \bH''(x) \, \du x \geq {\| \bH \|}_{\origspace}^2 \quad (\bH \in \origspace).
\end{equation*}
Moreover, by the Cauchy--Schwarz inequality, the
continuity constant of $a(\cdot,\cdot)$ is also equal to one.

The Galerkin solution satisfies
the Galerkin orthogonality
\begin{equation*}
    a\left(\bG-\bG_r,\bH\right)
    =
    \left\langle
        \bG-\bG_r,\bH
    \right\rangle_{\origspace}
    =
    0
    \quad
    \left(\bH\in V_r^{\tilde{\blambda}}\right).
\end{equation*}
Hence, the Galerkin solution coincides with the orthogonal projection $\bG_r =
    \cP_{V_r^{\tilde{\blambda}}}[\bG]$.
Furthermore, since both the continuity and coercivity constants are equal
to one, C\'ea's lemma gives
\begin{equation*}
    {\|\bG-\bG_r\|}_{\origspace}
    =
    \inf_{\bH\in V_r^{\tilde{\blambda}}}
    {\|\bG-\bH\|}_{\origspace}.
\end{equation*}
Thus, in this setting, the Galerkin solution is exactly the
$\origspace$--orthogonal best approximation of $\bG$ in
$V_r^{\tilde{\blambda}}$.

Up to this point, we established a \ac{fem}--like approach, where we approximate the true solution of~\eqref{eq:weakpde} using kernel slices instead of the usual piecewise polynomial basis. Such an approach is not without precedent, see, e.g.,~\cite{ xie_numerical_2008}. The novelty of the example comes from adaptively choosing the  "mesh" defined by the $r$ kernel centers $\tilde{\blambda} \in \Omeganeq{r}$ using Algorithm~\ref{alg:irkaRKHS}. That is, we interpret the solution $\bG$ and its approximation $\bG_r \in V_r^{\tilde{\blambda}}$ as a \ac{fom} and a \ac{rom}, respectively. 

In contrast to a usual \ac{mor} problem, we do not have access to the kernel centers defining $\bG$. However, to apply Algorithm~\ref{alg:irkaRKHS}, we only need to be able to evaluate the \emph{equilibrium function} in~\eqref{eq:EquibFun} (and its derivative due to our choice of Newton iteration), and no explicit knowledge of $\bG$ is required. However, evaluating the equilibrium function $\cD : \Omega^r \to \C^r$ and determining an optimal $\tilde{\blambda}$ using Algorithm~\ref{alg:irkaRKHS} requires access to $\bG'(x)$ and $\bG''(x)$ for all $x \in \Omega$, since by~\eqref{eq:EquibFun},
\begin{equation*}
    \cD\big(\tilde{\blambda}\big) = \left[ \bG'\big(\tlambda_0\big) - \cP_{V_r^{\tilde{\blambda}}}\left[ \bG \right]'\big(\tlambda_0\big), \ldots, \bG'\big(\tlambda_{r-1}\big) - \cP_{V_r^{\tilde{\blambda}}}\left[ \bG \right]'\big(\tlambda_{r-1}\big)\right]^\top.
\end{equation*}
While we cannot directly compute these values from~\eqref{eq:pde} or~\eqref{eq:weakpde}, we can leverage the method of \ac{dwr} introduced in~\cite{becker_optimal_2001}. We refer to~\cite{becker_optimal_2001} for a thorough discussion of \ac{dwr}, here we only state the following consequence. 
\begin{lemma}[Equilibrium function evaluation]
    \label{thm:dwr}
    Consider the weak problem~\eqref{eq:weakpde} and let $\origspace$ be the \ac{rkhs} defined in~\eqref{eq:pdeRKHS} with kernel $\xi$ from~\eqref{eq:sobker}. For a given $\tilde{\blambda} \in \Omeganeq{r}$, let 
    \begin{equation*}
    \bG_r = \cP_{V_r^{\tilde{\blambda}}}\left[ \bG \right] = \sum_{j=0}^{r-1} c_j \xi_{\tlambda_j}    
    \end{equation*}
    be the Galerkin projection of the true solution $\bG$ onto the  
        $V_r^{\tilde{\blambda}} = \myspan\left\{ \xi_{\tlambda_0}, \ldots, \xi_{\tlambda_{r-1}} \right\} \subseteq \origspace$.
    Let $\tilde{\blambda} \in \Omeganeq{r}$.  Then,
    \begin{align*}
        \left[\cD\big(\tilde{\blambda}\big)\right]_j &= \bG'\big(\tlambda_j\big) - \bG_r'\big(\tlambda_j\big) \\ &= \frac{1}{2} \int_0^{\tlambda_j} \bF(x) x^2\, \du x + \int_{\tlambda_j}^1 \bF(x) \left( \tlambda_j x - \frac{1}{2} \tlambda_j^2 \right)\, \du x - \bG_r'\big(\tlambda_j\big),
    \end{align*}
    and
    \begin{equation*}
        \left[\frac{\partial \cD}{\partial \tlambda_i}(\bx)\right]_j = \begin{cases}
            -\funder{\bG_r}{\tlambda_i}'\big(\tlambda_j\big), & \text{if } i \neq j, \\
            \bG''\big(\tlambda_j\big) - \bG_r''\big(\tlambda_j\big) = \int_{\tlambda_j}^1 \bF(x) \big(x - \tlambda_j\big) \,\du x - \bG_r''\big(\tlambda_j\big), & \text{if } i = j,
        \end{cases}    
    \end{equation*}
    where $i,j=0,\ldots,r-1$.
\end{lemma}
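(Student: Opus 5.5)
The plan is to express $\bG'$ and $\bG''$ at the nodes through the Green's function representation of the solution. Differentiating the kernel slices in $\lambda$ gives functions whose $\origspace$ inner products reproduce derivatives, by \Cref{thm:derivKer} and \Cref{thm:derivKerOrder}. The orthogonality relation $\langle \bG, \bH\rangle_{\origspace} = a(\bG,\bH) = f(\bH)$, valid for every $\bH \in \origspace$ by the weak formulation~\eqref{eq:weakpde}, then turns these inner products into integrals against $\bF$.

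\textbf{First step.} Since $\bG$ solves~\eqref{eq:weakpde} and the kernel slices lie in $\origspace$, we have $\bG(\lambda) = \langle \bG, \xi_\lambda\rangle_{\origspace} = a(\bG,\xi_\lambda) = f(\xi_\lambda) = \int_0^1 \bF(x)\xi(x,\lambda)\,\du x$; this is the Green's representation. Because $\lambda\mapsto\xi_\lambda$ is $C^1$ into $\origspace$ and real-valued, \Cref{thm:derivKer} gives
\begin{equation*}
\bG'(\lambda) = \Big\langle \bG, \funder{\xi_\lambda}{\lambda}\Big\rangle_{\origspace} = f\Big(\funder{\xi_\lambda}{\lambda}\Big) = \int_0^1 \bF(x)\,\partial_\lambda \xi(x,\lambda)\,\du x .
\end{equation*}
The derivative can be moved under the integral because $f$ is a bounded functional and~\eqref{eq:FrechDer} holds in norm.

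\textbf{Computing the kernel derivatives.} From~\eqref{eq:sobker}:
\begin{itemize}
\item for $x<\lambda$, $\xi = \tfrac12 x^2\lambda - \tfrac16 x^3$, so $\partial_\lambda\xi = \tfrac12 x^2$;
\item for $x>\lambda$, $\xi = \tfrac12\lambda^2 x - \tfrac16\lambda^3$, so $\partial_\lambda\xi = \lambda x - \tfrac12\lambda^2$.
\end{itemize}
Splitting the integral at $\tlambda_j$ yields the stated formula for $\bG'(\tlambda_j)$. Subtracting $\bG_r'(\tlambda_j)$ gives $[\cD(\tilde\blambda)]_j$ directly from \Cref{def:EquibFun}, since $\bG_r = \cP_{V_r^{\tilde\blambda}}[\bG]$ is the Galerkin solution, as argued before the lemma.

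\textbf{Jacobian.} Write $[\cD]_j = \bG'(\tlambda_j) - \bG_r'(\tlambda_j;\tilde\blambda)$, where $\bG_r$ depends on all centers. For $i\neq j$, the first term does not depend on $\tlambda_i$, leaving $-\funder{\bG_r}{\tlambda_i}'(\tlambda_j)$. For $i=j$, the chain rule gives $\bG''(\tlambda_j) - \bG_r''(\tlambda_j) - \funder{\bG_r}{\tlambda_j}'(\tlambda_j)$.

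To evaluate $\bG''(\tlambda_j)$, I would differentiate the integral formula for $\bG'(\lambda)$ directly rather than invoking \Cref{thm:derivKerOrder}, since $\lambda\mapsto\xi_\lambda$ is only $C^2$. The integrand is continuous across $x=\lambda$, because $\tfrac12\lambda^2 = \lambda\cdot\lambda - \tfrac12\lambda^2$, so the boundary terms from Leibniz's rule cancel. The remaining $\partial_\lambda$ of the integrand is $x-\lambda$ on $(\lambda,1)$ and $0$ on $(0,\lambda)$, giving $\int_{\tlambda_j}^1 \bF(x)(x-\tlambda_j)\,\du x$.

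\textbf{Main obstacle.} The delicate point is the diagonal term $\funder{\bG_r}{\tlambda_j}'(\tlambda_j)$, which is absent from the stated formula. I would try to show it vanishes by combining the Galerkin orthogonality of $\bG-\bG_r$ with the sensitivity of the projection coefficients, or else interpret the stated diagonal entry as the partial derivative taken with the reduced coefficients frozen, in the spirit of the definition of $\funder{\bG}{\lambda_k}$ in \Cref{subsec:rkhs}. I expect this identification to need the most care. The other ingredients are routine piecewise calculus plus justification of the limit interchanges via continuity of $f$ on $\origspace$.
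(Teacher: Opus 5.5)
\textbf{The integral identities.} Your derivation of the two integral identities is correct.

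For $\bG'(\tlambda_j)$ you give the paper's argument in different language. The paper poses the dual problem $a(\phi_1,\bH)=\bH'(\lambda)$, identifies $\phi_1=\partial_\lambda\xi_\lambda$ via \Cref{thm:derivKer}, and uses $a(\bG,\phi_1)=f(\phi_1)$. That is exactly your first step.

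For $\bG''(\tlambda_j)$ you take a different and better-justified route. The paper repeats the dual-problem argument with $\phi_2(x)=(x-\lambda)_+$ and asserts that $\bH\mapsto\bH''(\lambda)$ is bounded on $\origspace$. But $\phi_2''$ is a point mass, so $\phi_2\notin\origspace$, and $\bH''(\lambda)$ is not even well defined for a general $\bH\in H^2(\Omega)$. You instead differentiate $\lambda\mapsto\int_0^1\bF(x)\,\partial_\lambda\xi(x,\lambda)\,\du x$ under the integral sign. The integrand is continuous across $x=\lambda$ and has bounded $\lambda$-derivative, so dominated convergence suffices, and the problem never arises. Your stated reason is slightly off, though: the obstruction is that $\lambda\mapsto\xi_\lambda$ is only $C^1$ as an $\origspace$-valued map, not that it is ``only $C^2$''.

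\textbf{The diagonal Jacobian entry.} Your worry here is justified, but neither proposed resolution can work, because the extra term does not vanish.

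Differentiating the projection gives
$\partial_{\tlambda_j}\bG_r=c_j(I-\cP_{V_r^{\tilde{\blambda}}})d_j+[\cD(\tilde{\blambda})]_j\,w_j$,
where $d_j=\partial_\lambda\xi_\lambda|_{\lambda=\tlambda_j}$ and $w_j\in V_r^{\tilde{\blambda}}$ is the dual basis element. Hence
$(\partial_{\tlambda_j}\bG_r)'(\tlambda_j)=c_j\|(I-\cP_{V_r^{\tilde{\blambda}}})d_j\|_{\origspace}^2+[\cD(\tilde{\blambda})]_j\,w_j'(\tlambda_j)$.
Moreover $d_j\notin V_r^{\tilde{\blambda}}$, because $d_j''$ jumps at $\tlambda_j$, so the first term is nonzero whenever $c_j\neq 0$.

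Concretely, take $r=1$ with center $\mu$. Then $c=3\bG(\mu)/\mu^3$ and $\cD(\mu)=\bG'(\mu)-\frac{3\bG(\mu)}{2\mu}$, so
$\cD'(\mu)=\bG''(\mu)-\frac{3}{2\mu}\big(\bG'(\mu)-\bG(\mu)/\mu\big)$.
The stated entry is $\bG''(\mu)-\bG_r''(\mu)=\bG''(\mu)$. At a zero of $\cD$ the two differ by $3\bG(\mu)/(4\mu^2)$, which is nonzero unless $c=0$.

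Freezing the coefficients does not rescue the formula either. That reading still leaves an extra $-c_j\,\partial_x\partial_\lambda\xi(\tlambda_j,\tlambda_j)=-c_j\tlambda_j$. Under the coefficient-frozen convention of \Cref{subsec:rkhs}, even the off-diagonal entries would not be exact partial derivatives. Your off-diagonal computation is right only if $\partial\bG_r/\partial\tlambda_i$ means the full sensitivity of the projection.

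The paper's proof does not address this either. It establishes only $\bG^{(i)}(\lambda)-\bG_r^{(i)}(\lambda)=\int_0^1\bF\phi_i\,\du x-\bG_r^{(i)}(\lambda)$ for $i=1,2$, and then reads the matrix off the definition of $\cD$. So the stated diagonal entry should be read as the approximate Newton matrix used in Algorithm~\ref{alg:pdeSolver}, i.e.\ differentiation in the evaluation point only. Its integral representation is what you have already proved, so do not spend effort trying to show the extra term vanishes.
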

\begin{proof}
    The proof is given in~\Cref{app:dwr}.
\end{proof}
Using the formulas in \Cref{thm:dwr}, we can apply Algorithm~\ref{alg:irkaRKHS} to determine the $r$ kernel
centers $\tilde{\blambda}$ according to the optimality conditions of
\Cref{sec:orthCond}.
In this way, the points $\tilde{\blambda} \in \Omeganeq{r}$ that define $\bG_r$ can be interpreted as an ``optimal interior mesh". Finally, we note that the integrals in \Cref{thm:dwr} can be approximated using numerical quadratures. 

 Using the right--hand side in~\eqref{eq:pde} and the boundary conditions
in~\eqref{eq:bc}, we now proceed to examine the performance of the
proposed method. The proposed adaptive algorithm is given in Algorithm~\ref{alg:pdeSolver}. 
In the final step, we additionally include the kernel slice $\xi_1$ in the reduced space, i.e.,
\begin{equation}
\label{eq:Vr1}
        \Vn{r+1}{(\tilde{\blambda},1)}
    \coloneqq
    \myspan\left\{
        \xi_{\tlambda_0},\ldots,
        \xi_{\tlambda_{r-1}},\xi_1
    \right\}.
\end{equation}
By \Cref{thm:interpGMT}, the orthogonal projection onto
$ \Vn{r+1}{(\tilde{\blambda},1)}$ interpolates the target solution at the kernel centers.
Hence, including $\xi_1$ ensures interpolation of the exact solution
at $x=1$, which improves the approximation near the right boundary.
Throughout this experiment, however, $r$ refers to the number of
adaptive interior kernel centers, and we denote the final approximation
by $\bG_r$. No additional kernel slice at $x=0$ is required, since
$\xi_0\equiv0$ and the essential boundary conditions at $x=0$ are
already incorporated in the definition of $\origspace$. In this experiment we use $r=5$.
We note again that the corresponding orthogonal projection can be
computed without explicit access to $\bG$ by solving the Galerkin
system associated with~\eqref{eq:weakpde} on $ \Vn{r+1}{(\tilde{\blambda},1)}$.

\begin{algorithm}[tbp]
\caption{\ac{rkhs}--\ac{pde} solver for \eqref{eq:pde}, \eqref{eq:bc} with $\origspace$--optimal mesh}
\begin{algorithmic}[1]
\Require Right--hand side $\bF$ of \eqref{eq:pde},
kernel $\xi$ of the \ac{rkhs} $\cH$, kernel centers $\tilde{\blambda}^{(0)} \in \Omeganeq{r}$, damping factor of Newton's method $\beta \in (0,1]$.
\Ensure $\cH$--optimal approximate solution $\bG_r \in \romspaceo$.
\State $m \gets 0$.
\State $\varepsilon^{(0)}\gets  \infty$.
\While{$\varepsilon^{(m)}> \varepsilon$}
    \State $\bJ \gets \left[\frac{\partial \cD \left( \tilde{\blambda}^{(m)} \right)_i}{\partial \overline{\tlambda_j}} \right]_{i,j=0}^{r-1}$. \Comment{Use \Cref{thm:dwr} and $\bF$.}
    \State $\tilde{\blambda}^{(m+1)} \gets \tilde{\blambda}^{(m)} -\beta \bJ^{-1} \cD \left( \tilde{\blambda}^{(m)} \right)$.
    \State $\varepsilon^{(m)} \gets \left\| \tilde{\blambda}^{(m+1)} - \tilde{\blambda}^{(m)} \right\|_{\infty}. $
    \State $m \gets m+1$.
\EndWhile
\State $\bG_r \gets \cP_{\Vn{r+1}{\big(\tilde{\blambda}^{(m)},1\big)}}\left[ \bG \right]$.
\Comment{Projection to~\eqref{eq:Vr1}.}
\State \Return $\bG_r$.
\end{algorithmic}
\label{alg:pdeSolver}
\end{algorithm}


For comparison, the \ac{fem} meshes are constructed using the same number $r$ of interior mesh points together with the two boundary points $x=0$ and $x=1$. 
In addition, we solve~\eqref{eq:weakpde} using traditional \ac{fem} with cubic Hermite elements. In particular, we consider three baseline solutions:
\begin{itemize}
    \item $\bG_r^{\mathrm{FEM}}$: \ac{fem} solution generated using $r=5$ interior mesh points together with the two boundary points $x=0$ and $x=1$, resulting in an equidistant mesh with $r+2=7$ points.
     \item $\bG_r^{\mathrm{FEM\text{-}adapt}}$: \ac{fem} solution generated using $r=5$ interior mesh points concentrated in $[0.6,0.9]\subseteq\Omega$, which contains the main lobe of the true solution~\eqref{eq:trueSol} (see also \Cref{fig:PDEsols}). The two boundary points $x=0$ and $x=1$ are additionally included  to ensure compliance with~\eqref{eq:bc}, resulting in a mesh with $r+2=7$ points.
    \item $\bG_{2r}^{\mathrm{FEM}}$: \ac{fem} solution generated using $2r=10$ interior mesh points together with the two boundary points, resulting in an equidistant mesh with $2r+2=12$ points.
\end{itemize}
The results of this experiment are illustrated in \Cref{fig:PDEsols}. We evaluate each solution in terms of the relative $L^2(\Omega)$ error defined as
\begin{equation*}
    \cE_{L^2(\Omega)}(\bG_r) = \frac{\left\| \bG - \bG_r \right\|_{L^2(\Omega)}}{\left\| \bG \right\|_{L^2(\Omega)}} \approx \frac{\sqrt{\sum_{k=0}^{N-1} \big| \bG(x_k) - \bG_r(x_k) \big|^2}}{\sqrt{\sum_{k=0}^{N-1} \big| \bG(x_k)\big|^2}},
\end{equation*}
where for this experiment, we use an equidistant partitioning of $\Omega$ with $N=1000$ points. 

\begin{figure}[tb]
    \centering
    \includegraphics{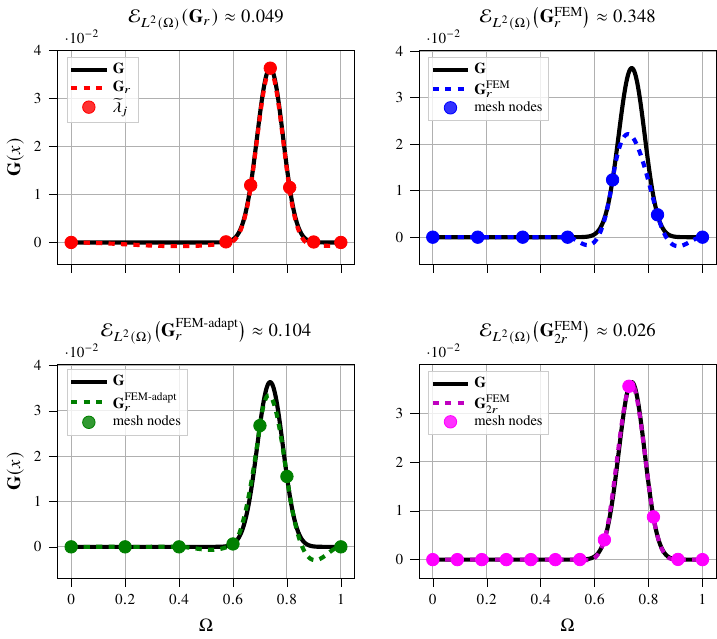}
    \caption{Solution to the 4th--order Euler beam equation using the proposed adaptive kernel mesh based approach and FEM for $r=5$. }
    \label{fig:PDEsols}
\end{figure}

\Cref{fig:PDEsols} demonstrates the utility of the proposed method for the considered problem~\eqref{eq:pde}. The solution obtained with Algorithm~\ref{alg:pdeSolver} significantly outperforms the \ac{fem}--based solutions for $r=5$ in terms of relative $L^2(\Omega)$ error. In fact, as observed in the figure, \ac{fem} achieves an error magnitude similar to that of the proposed approach only for $2r = 10$ elements. Notably, to find the solution $\bG_r$, we choose $\beta = 0.1$ in Algorithm~\ref{alg:pdeSolver} and initialize the kernel centers equidistantly $\tlambda_j^{(0)} \in [0.6, 0.9]$ for $j=0,\ldots,r-1$. Since the true solution $\bG$ vanishes quickly as we approach the boundary, failing to choose these hyperparameters leads to vanishing gradients: the iteration still converges, but considerably more slowly, since the equilibrium function and its Jacobian become extremely small away from the main lobe of $\bG$. We expect the severity of this sensitivity to depend strongly on the particular \ac{pde}/ODE under consideration; a systematic study of initialization strategies is beyond the scope of this proof--of--concept example and is left for future, application--specific work. The total runtime of the proposed Algorithm was $8.2$ seconds for a total of $200$ iterations. This gives a runtime of $0.04$ seconds per iteration. The number of iterations can be further decreased with better initialization.

This experiment showed the potential of using iterative \ac{pde} solvers that enforce \ac{rkhs}--based optimality conditions on the approximate solution. A separate study should be conducted to define a class of problems for which the proposed approach is applicable, and to develop numerically improved schemes. We note that \ac{rkhs}--based projections have already been considered for solving \acp{pde} in~\cite{stepaniants_learning_2023, xie_numerical_2008}, however, these works did not use the optimality conditions considered in this study.

\section{Conclusions}
\label{sec:conc}

In this work, we considered the problem of norm--optimal \ac{mor} in \acp{rkhs}. More precisely, we defined generalized model spaces and weak model spaces in \acp{rkhs} and introduced necessary conditions for optimal approximation between them. Our conditions generalize earlier results for \ac{mor} in $H_2$ Hardy--Hilbert spaces. The introduced conditions can only be applied if the kernel of the \ac{rkhs} is differentiable. In order to extend our methods to general \ac{rkhs}, we introduced the generalized Malmquist--Takenaka system. This orthogonal function system generalizes several earlier variants, such as the classical Malmquist--Takenaka system and Newton bases defined in \acp{rkhs}. Using the \ac{gmt} system, we introduced the \ac{gmt} transform, which provides a unitary map between any two separable \acp{rkhs}. Using these results, we developed several \ac{mor} methods and compared their performance on example applications.

We showed that in addition to solving \ac{mor} tasks in \acp{rkhs}, the proposed methods are also applicable to a variety of other problems. For example, in an academic example, we showed how fitting high--order polynomials to data can be interpreted as a \ac{mor} task. We showed how \ac{mor} can be applied to kernel methods such as support vector machines and provided several useful lemmas to clarify the relationship between \ac{fom} and \ac{rom}--\ac{svm} classifiers. We verified that we can apply the \ac{gmt}--based methodology to scalar--input scalar--output deep ReLU networks. Finally, we showed how a kernel basis discretization of certain \acp{pde} can be interpreted as norm--optimal \ac{mor} problem in Sobolev \acp{rkhs}. In this context, our algorithms can be interpreted as an adaptive mesh \ac{fem}--like approach that relies on a kernel slice basis instead of piecewise polynomials, with the additional benefit of the chosen "mesh" being determined according to necessary conditions of optimality.

Our results and experiments lay the foundation for further theoretical investigation as well as the thorough development of real--life applications. First, our optimality conditions and subsequent \ac{mor} algorithms should be extended to \acp{rkhs} defined over high--dimensional complex manifolds. This would allow the reduction of large machine learning and deep learning models. In addition, it would open the door toward the development of \ac{pde} applications. 

The asymptotic behavior and computational aspects of \ac{gmt} functions and the \ac{gmt} transformation should be further investigated. Generalized Blaschke--Sz\'asz conditions for a large class of \acp{rkhs} should be obtained to generate complete \ac{gmt} systems. A further open direction is to characterize the class of \acp{rkhs} for which weak and strong model spaces coincide under the \ac{gmt} transformation. Regarding computational aspects, kernel--sensitive variants of the Gram--Schmidt method should be developed. For the classical rational \ac{mt} functions, discrete orthogonal and fast \ac{fft}--based methods are already available for stable computation~\cite{dozsa_impulse_2026}. Possible generalizations of these methods to \acp{rkhs} should be investigated. 

Finally, the examples presented in this study were meant to showcase the versatility of the proposed \ac{mor} framework. The list of considered applications should be extended, for example, to obtain \acp{rom} of nonlinear dynamical systems. In addition, the presented examples should be extended into full--fledged applications in their own right.

\appendix
\section{Appendix}\label{sec:appendix}

\subsection{Equivalence of Theorems~\ref{thm:MLc} and~\ref{thm:orthoCondH2}}\label{app:equivMLc}

 \begin{theorem}[Equivalence of optimality conditions in $H_2(\C^+)$]
     Under the assumptions in \Cref{thm:MLc},~\eqref{eq:MLC} is equivalent to~\eqref{eq:orthCondH2}.
 \end{theorem}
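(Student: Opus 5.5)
The plan is to translate both conditions into statements about inner products with kernel slices and their derivatives in $H_2(\C^+)$, using the reproducing property and \Cref{thm:derivKer}. The Cauchy kernel $\zeta(s,w)=1/(s+\overline{w})$ gives
\begin{equation*}
\funder{\zeta_w}{\overline{w}}(s) = -\frac{1}{(s+\overline{w})^2}.
\end{equation*}
This is again in $H_2(\C^+)$ and depends smoothly on $w$. Hence \Cref{thm:derivKer} applies, and $\langle \bF, \funder{\zeta_{w}}{\overline{w}}\rangle_{H_2(\C^+)} = \bF'(w)$ for every $\bF \in H_2(\C^+)$. Setting $\bE \coloneqq \bG - \bG_r$, condition \eqref{eq:MLC} is therefore equivalent to
\begin{equation*}
\langle \bE, \zeta_{\tlambda_j}\rangle_{H_2(\C^+)} = 0 \quad \text{and} \quad \langle \bE, \funder{\zeta_{\tlambda_j}}{\overline{\tlambda}}\rangle_{H_2(\C^+)} = 0 \quad (j=0,\ldots,r-1),
\end{equation*}
that is, to $\bE \perp \myspan\{\zeta_{\tlambda_j}, \zeta_{\tlambda_j}^2\}_{j=0}^{r-1}$, because $\funder{\zeta_{\tlambda_j}}{\overline{\tlambda}} = -\zeta_{\tlambda_j}^2$.

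The key step is to identify the test space in \eqref{eq:orthCondH2} with this span. Define
\begin{equation*}
\mathcal{T} \coloneqq \left\{ \bG_r \bH_1 + \bH_2 \ : \ \bH_1,\bH_2 \in \myspan\{\zeta_{\tlambda_0},\ldots,\zeta_{\tlambda_{r-1}}\} \right\}.
\end{equation*}
I aim to show $\mathcal{T} = \myspan\{\zeta_{\tlambda_j}, \zeta_{\tlambda_j}^2\}_{j}$. The product $\zeta_{\tlambda_j}\zeta_{\tlambda_k}$ is handled by partial fractions:
\begin{align*}
\zeta_{\tlambda_j}\zeta_{\tlambda_k} &= \frac{\zeta_{\tlambda_j} - \zeta_{\tlambda_k}}{\overline{\tlambda_k} - \overline{\tlambda_j}} \quad (j \neq k), \\
\zeta_{\tlambda_j}\zeta_{\tlambda_j} &= \zeta_{\tlambda_j}^2 .
\end{align*}
Since $\bG_r = \sum_k \tres_k \zeta_{\tlambda_k}$, every element of $\mathcal{T}$ lies in the span, which gives $\mathcal{T} \subseteq \myspan\{\zeta_{\tlambda_j}, \zeta_{\tlambda_j}^2\}_{j}$.

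For the converse inclusion, the $\zeta_{\tlambda_j}$ are obtained from $\bH_2$ directly. To produce $\zeta_{\tlambda_j}^2$, take $\bH_1 = \zeta_{\tlambda_j}$. By the partial fraction identities, $\bG_r\zeta_{\tlambda_j}$ equals $\tres_j\zeta_{\tlambda_j}^2$ plus an element of $\myspan\{\zeta_{\tlambda_k}\}_k$, and that remainder can be removed with $\bH_2$. This step uses $\tres_j \neq 0$. That assumption is justified because $\bG_r$ has order $r$ (\Cref{def:modelord}), so if some $\tres_j$ vanished then $\bG_r$ would lie in a lower-order model space. The two sides are then equal.

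Combining the two steps, \eqref{eq:orthCondH2} reads $\bE \perp \mathcal{T}$, which is exactly $\bE \perp \myspan\{\zeta_{\tlambda_j}, \zeta_{\tlambda_j}^2\}_j$, and by the first paragraph this is equivalent to \eqref{eq:MLC}. The main obstacle is the reverse inclusion in the second paragraph, namely guaranteeing $\tres_j \neq 0$ and the distinctness of the $\tlambda_j$ needed for the partial fractions. Both come from the order-$r$ assumption in \Cref{thm:MLc}.
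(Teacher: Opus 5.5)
Your proposal is correct and follows essentially the same route as the paper's proof in the appendix. The reproducing property handles the $\bH_2$ part, the partial-fraction identity together with $\tres_j \neq 0$ shows that $\bG_r \zeta_{\tlambda_j}$ equals $\tres_j\zeta_{\tlambda_j}^2$ modulo $\myspan\{\zeta_{\tlambda_k}\}_k$, and \Cref{thm:derivKer} turns orthogonality to $\zeta_{\tlambda_j}^2$ into the derivative interpolation condition; your explicit two-sided identification of the test space with $\myspan\{\zeta_{\tlambda_j},\zeta_{\tlambda_j}^2\}_j$ is just a cleaner bookkeeping of that argument.
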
 \begin{proof}
     From~\eqref{eq:transfun} and~\eqref{eq:kerh2c}, $\bG$ and $\bG_r$ consist of finite linear combinations of kernel slices. We use the reproducing property to obtain
    \begin{align*}
        \left\langle \bG - \bG_r, \bH_2 \right\rangle_{H_2(\C^{+})} &=  \left\langle \bG - \bG_r, \sum_{j=0}^{r-1} c_j \zeta_{\tlambda_j} \right\rangle_{H_2(\C^{+})} \\ &= 
        \sum_{j=0}^{r-1} \overline{c_j} \left\langle \bG, \zeta_{\tlambda_j} \right\rangle_{H_2(\C^{+})} - \sum_{j=0}^{r-1} \overline{c_j} \left\langle \bG_r, \zeta_{\tlambda_j} \right\rangle_{H_2(\C^{+})} \\ &= \sum_{j=0}^{r-1} \overline{c_j} \bG\big(\tlambda_j\big) - \sum_{j=0}^{r-1} \overline{c_j} \bG_r\big(\tlambda_j\big).
    \end{align*}
    Hence, we have
    \begin{equation}
    \label{eq:equivlagrange}
         \left\langle \bG - \bG_r, \bH_2 \right\rangle_{H_2(\C^{+})} = 0 \quad (\bH_2 \in \fG_\zeta^r)  \iff \bG\big(\tlambda_j\big) = \bG_r\big(\tlambda_j\big) \quad (j=0,\ldots,r-1).
    \end{equation}
    Notice that the derivative $\funder{\zeta_\lambda}{\olambda}$ satisfies
    \begin{equation*}
       \funder{\zeta_\lambda}{\olambda}(s) = - \frac{1}{\big(s + \overline{\lambda}\big)^2}.
    \end{equation*}
    Since $\bG_r$ is a \ac{rom} of order $r$ and by choosing $\bH_1 = \zeta_{\tlambda_j}$, $j \in \{0,\ldots,r-1\}$, we obtain 
    \begin{align*}
        \left\langle \bG - \bG_r, \bG_r \cdot \bH_1 \right\rangle_{H_2(\C^{+})} &=  \left\langle \bG - \bG_r, -\tres_j\funder{\zeta_{\tlambda_j}}{\overline{\tlambda}} + \bH_3 \right\rangle_{H_2(\C^{+})} \\ &=  \left\langle \bG - \bG_r, -\tres_j\funder{\zeta_{\tlambda_j}}{\overline{\tlambda}}\right\rangle_{H_2(\C^{+})} +  \left\langle \bG - \bG_r, \bH_3 \right\rangle_{H_2(\C^{+})}
    \end{align*}
    for some $\bH_3 \in \myspan\left\{\zeta_{\tilde{\lambda}_0},\ldots, \zeta_{\tilde{\lambda}_{r-1}}\right\}$ and $\tres_j \neq 0$. Here, we exploit the fact that, by using a partial fraction decomposition, the terms $\zeta_{\tlambda_i}\zeta_{\tlambda_j}\in \myspan\left\{\zeta_{\tilde{\lambda}_i}, \zeta_{\tilde{\lambda}_{j}}\right\}$ for $i,j = 0,\ldots,r-1$ with $i \neq j$.

    By the equivalence in \eqref{eq:equivlagrange}, it remains to show that for $j=0,\ldots,r-1$,
    \begin{equation*}
         \left\langle \bG - \bG_r, \funder{\zeta_{\tlambda_j}}{\overline{\tlambda}}\right\rangle_{H_2(\C^{+})} =0  \iff \bG'\big(\tlambda_j\big) - \bG_r'(\tlambda_j).
    \end{equation*}
    However, this condition is satisfied by \Cref{thm:derivKer}.

 \end{proof}

\subsection{Proof of \Cref{thm:specialCases}} \label{app:specialCases}

\begin{proof}
        \Cref{thm:orthocond} and \Cref{thm:MLcRKHS} necessarily apply to $\bG_r$, because it is a norm--optimal \ac{rom} in an \ac{rkhs}. First, we show that the optimality conditions in \Cref{thm:MLcRKHS} take the form~\eqref{eq:MLC} under the assumptions of the statement. Exploiting that $\tlambda_j$ are pairwise distinct points in $\C^{+}$ for $j=0,\ldots,r-1$, \eqref{eq:MLcRKHS} is given by 
    \begin{equation*}
        \bG\big(\tlambda_j\big) = \bG_r\big(\tlambda_j\big) \quad \text{and} \quad \bG'\big(\tlambda_j\big) = \bG_r'\big(\tlambda_j\big)
    \end{equation*}
    for $j=0,\ldots,r-1$. This condition matches the statement of \Cref{thm:MLc} exactly. In~\cite{gugercin_mathcalh_2_2008} (see also \Cref{app:equivMLc}), the equivalence between \Cref{thm:MLc} and \Cref{thm:orthoCondH2} is proven. On the other hand, by \Cref{thm:equivMlcOrtho}, \Cref{thm:orthocond} and \Cref{thm:MLcRKHS} are also equivalent. This implies that in the current setting \Cref{thm:orthoCondH2} is a special case of \Cref{thm:orthocond}.

\end{proof}

\subsection{Proof of \Cref{thm:errInterp}}\label{app:errInterp}

\begin{proof}
     The statement is a direct consequence of \Cref{thm:reprInnerProd} and the Cauchy--Schwarz inequality. Indeed, for any $x \in \Omega$,
    \begin{align*}
        \left|\bG(x) - \bG_r(x) \right|^2 &= \left| \left\langle \bG - \bG_r, \xi_x \right\rangle_{\mathbb{P}_n(\Omega)} \right|^2 \\ &\leq \left\| \bG - \bG_r \right\|_{\mathbb{P}_n(\Omega)}^2 \left\| \xi_{x} \right\|_{\mathbb{P}_n(\Omega)}^2 \\ &=  \left\| \bG - \bG_r \right\|_{\mathbb{P}_n(\Omega)}^2 \xi(x, x) \\ &\leq  \left\| \bG - \bG_r \right\|_{\mathbb{P}_n(\Omega)}^2 \sum_{k=0}^{n-1}  x^{2k}  \leq n \left\| \bG - \bG_r \right\|_{\mathbb{P}_n(\Omega)}^2.
    \end{align*}
\end{proof}

\subsection{Proof of \Cref{thm:svmRomMarginAgreement}}\label{app:svmMargin}

\begin{proof}
Let $\bE(z)\coloneqq\bD(z)-\bD_r(z)$.
Then, $\bD(z)=\bD_r(z)+\bE(z)$.

Assume first that $\bD_r(z)>0$. If $
    |\bD_r(z)|>|\bE(z)|$,
then $|\bD_r(z)|=\bD_r(z)$, and therefore, $
    \bE(z)>-|\bE(z)|>-\bD_r(z)$.
Hence, $
    \bD(z)=\bD_r(z)+\bE(z)>0$.
Thus, $\bD(z)$ and $\bD_r(z)$ have the same sign.

Assume now that $\bD_r(z)<0$. Then $|\bD_r(z)|=-\bD_r(z)$. Since $
    |\bE(z)|<|\bD_r(z)|=-\bD_r(z)$,
we have $\bE(z)<|\bE(z)|<-\bD_r(z)$.
Therefore, $\bD(z)=\bD_r(z)+\bE(z)<0$.
Again, $\bD(z)$ and $\bD_r(z)$ have the same sign.

Thus, in both cases,
\[
    |\bD_r(z)|>|\bD(z)-\bD_r(z)|
    \ \Longrightarrow \
   \sgn (\bD(z))=\sgn (\bD_r(z)).
\]
\end{proof}

\subsection{Proof of \Cref{thm:svmRomMarginAgreementV2}}\label{app:svmMarginV2}

\begin{proof}

Recall that $\bD=\bG+b$ and $\bD_r=\bG_r+b$ use the same bias. Therefore,\begin{equation*}
\bD-\bD_r=\bG-\bG_r.
\end{equation*} 
Since $\bG_r = \cP_{V_r^{\tilde{\blambda}}}[\bG]$, the Hilbert projection theorem implies
\begin{equation}
    \label{eq:svmOrth}
    \left\langle \bG - \bG_r, \bH \right\rangle_{\origspace} = 0 \quad \big(\bH \in \Vn{r}{\tilde{\blambda}}\big).
\end{equation}

Let $\bE \coloneqq \bD - \bD_r=\bG-\bG_r \in \origspace$, and notice that by the reproducing property $\bE(z) = \left\langle \bE, \xi_z \right\rangle_{\origspace}$ holds for all $z \in \Omega$.
The kernel slice $\xi_{z}$ can be decomposed as
\begin{equation*}
    \xi_z = \cP_{V_r^{\tilde{\blambda}}} \left[ \xi_z \right] + \cP_{\left(V_r^{\tilde{\blambda}}\right)^{\perp}} \left[ \xi_z \right].
\end{equation*}
Hence, we find that
\begin{equation*}
    \bE(z) = \bD(z) - \bD_r(z)=\bG(z) - \bG_r(z) = \left\langle \bE, \cP_{V_r^{\tilde{\blambda}}} \left[ \xi_z \right] \right\rangle_{\origspace} + \left\langle \bE, \cP_{\left(V_r^{\tilde{\blambda}}\right)^{\perp}} \left[ \xi_z \right] \right\rangle_{\origspace}.
\end{equation*}
By~\eqref{eq:svmOrth}, the first term vanishes and thus,
\begin{equation*}
    \bE(z) = \left\langle \bE, \cP_{\left(V_r^{\tilde{\blambda}}\right)^{\perp}} \left[ \xi_z \right] \right\rangle_{\origspace}.
\end{equation*}
Applying the Cauchy--Schwarz inequality to this gives
\begin{equation}
    \label{eq:csbound}
    \left| \bE(z) \right| \leq \rkhsError{\xi}{r}(\bD_r) \left\|\cP_{\left(V_r^{\tilde{\blambda}}\right)^\perp} \left[ \xi_{z} \right] \right\|_{\origspace}.
\end{equation}
Suppose $\bD_r(z) > \rkhsError{\xi}{r}(\bD_r) \left\|\cP_{\left(V_r^{\tilde{\blambda}}\right)^\perp} \left[ \xi_{z} \right] \right\|_{\origspace} > 0$. Then by~\eqref{eq:csbound},
\begin{equation*}
   \bD(z) = \bD_r(z) + \bE(z) >  \bD_r(z)  - \rkhsError{\xi}{r}(\bD_r) \left\|\cP_{\left(V_r^{\tilde{\blambda}}\right)^\perp} \left[ \xi_{z} \right] \right\|_{\origspace} > 0,
\end{equation*}
therefore $\bD(z) > 0$ and $\bD_r(z) > 0$. Similarly, assuming 
\begin{equation*}
\bD_r(z) < - \rkhsError{\xi}{r}(\bD_r) \left\|\cP_{\left(V_r^{\tilde{\blambda}}\right)^\perp} \left[ \xi_{z} \right] \right\|_{\origspace} < 0    
\end{equation*}
gives
\begin{equation*}
    \bD(z) = \bD_r(z) + \bE(z) < \bD_r(z) + \rkhsError{\xi}{r}(\bD_r) \left\|\cP_{\left(V_r^{\tilde{\blambda}}\right)^\perp} \left[ \xi_{z} \right] \right\|_{\origspace} < 0,
\end{equation*}
hence $\bD(z) < 0$ and $\bD_r(z) < 0$. 
\end{proof}

\subsection{Proof of \Cref{thm:dwr}}\label{app:dwr}

\begin{proof}
  For simplicity of notation, define the pointwise error
  \begin{equation*}
      \bE(x) \coloneqq \bG(x) - \bG_r(x) \quad (x \in \Omega)
  \end{equation*}
  and fix $\lambda \in \Omega$. Define the functionals $\origspace \to \origspace$ by
  \begin{equation*}
      \cK_i(\bH) \coloneqq \bH^{(i)}(\lambda) \quad (i=1,2).
  \end{equation*}
  Consider the problems of finding $\phi_i \in \origspace$ such that
  \begin{equation}
    \label{eq:dualWeakProb}
      a \left( \phi_i,\bH \right) = \cK_i \left( \bH \right) \quad \left( \bH \in \origspace, i=1,2  \right).
  \end{equation}
  Since $\cK_i$ are bounded linear functionals on $\origspace$, the Riesz representation theorem guarantees the existence of unique $\phi_1,\phi_2 \in \origspace$ that satisfies~\eqref{eq:dualWeakProb} for all $\bH \in \origspace$. Exploiting that $\origspace$ is an \ac{rkhs} with kernel $\xi$ and $a(\cdot, \cdot) = \langle \cdot, \cdot \rangle_{\origspace}$, we find
  \begin{equation*}
      a \left( \xi_{\lambda},\bH \right) = a \left( \bH, \xi_{\lambda} \right) = \bH(\lambda) \quad \left( \bH \in \origspace \right).
  \end{equation*}
  Differentiating both sides and exploiting \Cref{thm:derivKer} gives
  \begin{equation*}
      \frac{\partial}{\partial \lambda} a \left( \bH, \xi_{\lambda} \right) = \frac{\partial}{\partial \lambda} \left\langle \bH, \xi_{\lambda} \right\rangle_{\origspace} = \left\langle \bH, \funder{\xi_{\lambda}}{\olambda} \right\rangle_{\origspace} = a \left( \bH, \funder{\xi_{\lambda}}{\olambda} \right) = \bH'(\lambda).
  \end{equation*}
  Hence, we find that the solution $\phi_1$ in~\eqref{eq:dualWeakProb} is
  \begin{equation}
      \label{eq:soli1}
      \phi_1(x) = \funder{\xi_{\lambda}}{\olambda}(x) \stackrel{\eqref{eq:sobker}}{=} \begin{cases}
          \frac{1}{2} x^2, & \text{if } x \leq \lambda, \\
          \lambda x - \frac{1}{2} \lambda^2, & \text{if } x > \lambda.
      \end{cases}
  \end{equation}
  By repeating the above steps, we further get
  \begin{equation}
      \label{eq:soli2}
      \phi_2(x) = \funder[2]{\xi_{\lambda}}{\olambda}(x) = \begin{cases}
          0, & \text{if } x \leq \lambda, \\
          x - \lambda, & \text{if } x > \lambda.
      \end{cases}
  \end{equation}
  In particular,
  \begin{equation}
      \label{eq:phixi}
      a(\bG_r, \phi_i) = \bG_r^{(i)}(\lambda) \quad (i=1,2).
  \end{equation}
  Making the substitution $\bH = \bE$ in~\eqref{eq:dualWeakProb} gives
  \begin{equation}
    \label{eq:Kie}
      \cK_i(\bE) = \bG^{(i)}(\lambda) - \bG_r^{(i)}(\lambda) = a(\bE, \phi_i) = a(\bG - \bG_r, \phi_i) = a(\bG, \phi_i) - a(\bG_r, \phi_i). 
  \end{equation}
  Since $\bG$ is a solution of~\eqref{eq:weakpde}, we obtain
  \begin{equation}
    \label{eq:removeG}
      a(\bG, \phi_i) = \int_0^1 \bF(x) \phi_i(x) \,\du x,
  \end{equation}
  where $\bF$ is the right hand side of the strong problem~\eqref{eq:pde}.  Facilitating~\eqref{eq:phixi} and \eqref{eq:removeG} in~\eqref{eq:Kie} gives
  \begin{equation*}
      \bG^{(i)}(\lambda) - \bG_r^{(i)}(\lambda) = \int_0^1 \bF(x) \phi_i(x)\, \du x - \bG_r^{(i)}(\lambda) \quad (i=1,2).
  \end{equation*}
  Considering the definition of the equilibrium function~\eqref{eq:EquibFun}, choosing $\lambda \coloneqq \tlambda_j$, $j=0,\ldots,r-1$, and applying~\eqref{eq:soli1} and~\eqref{eq:soli2} gives the statement.
\end{proof}

\begin{ack}
Most of this work was carried out while TD was with UniDistance Suisse, Faculty of Mathematics and Computer Science.
\end{ack}

\begin{funding}
The research of TD has received funding from the Swiss Government Excellence Scholarship No. 2025.0057. The research of AA and MV was funded in part by the Swiss National Science Foundation (SNSF) grant No. 224943.
\end{funding}

\begin{ack}[CRediT author statement]
\textbf{Tam\'as D\'ozsa:} Conceptualization,
Methodology, Formal Analysis, Investigation, Software, Visualization, Writing --- Original
Draft; \textbf{Andrea Angino:} Methodology, Software,  Visualization,
Writing --- Original Draft; \textbf{Rolf Krause:} Writing --- Review \& Editing; \textbf{Matthias Voigt:} Investigation, Methodology, Visualization, Writing --- Review \&
Editing, Supervision, Project Administration, Funding Acquisition.
\end{ack}

\begin{ack}[Code availability statement]
The code and data for reproducing the numerical experiments presented in this paper are available at Zenodo under the DOI
\begin{center}
\href{https://doi.org/10.5281/zenodo.23494458}{10.5281/zenodo.23494458}.
\end{center}
The code is released under the MIT license.
\end{ack}

\section*{List of acronyms}
\printacronyms[heading=none]


\bibliographystyle{emss}
\bibliography{refs_focm}

@article{dozsa_brake_2024,
	title = {Brake disc deformation detection using intuitive feature extraction and machine learning},
	volume = {12},
	number = {4},
	journal = {Machines},
	author = {Dózsa, Tamás and Őri, Péter and Szabari, Mátyás and Simonyi, Ernő and Soumelidis, Alexandros and Lakatos, István},
	year = {2024},
	pages = {article no. 214},
}

@article{dozsa_detection_2026,
	title = {Detection of brake disc deformation with adaptive wavelet neural networks},
	volume = {75},
	journal = {IEEE Trans. Instrum. Meas.},
	author = {Dózsa, Tamás and Őri, Péter and Ungvári, Gergő and Szabó, Zoltán and Soumelidis, Alexandros and Lakatos, István and Kovács, Péter},
	year = {2026},
	pages = {article no. 2510713},
}

@article{pap_hyperbolic_2011,
	title = {Hyperbolic wavelets and multiresolution in {$H^2(\mathbb{T})$}},
	volume = {17},
	number = {5},
	journal = {J. Fourier Anal. Appl.},
	author = {Pap, Margit},
	year = {2011},
	pages = {755--776},
    mrnumber = {2838106},
    zblnumber = {1228.42042},
}

@article{schipp_rational_2003,
	title = {Rational bases generated by {B}laschke product systems},
	volume = {36},
	number = {16},
	journal = {IFAC Proc. Vol.},
	author = {Schipp, F. and Bokor, J.},
	year = {2003},
	pages = {1309--1314},
    note      = {13th IFAC Symposium on System Identification (SYSID 2003), Rotterdam, The Netherlands}
}

@article{berrut_barycentric_2004,
	title = {Barycentric {L}agrange interpolation},
	volume = {46},
	number = {3},
	journal = {SIAM Rev.},
	author = {Berrut, Jean-Paul and Trefethen, Lloyd N.},
	year = {2004},
	pages = {501--517},
    mrnumber = {2115059},
    zblnumber = {1061.65006},
}

@article{wei_random_2024,
  author    = {Wei, Xizixiang and Shen, Cong and Yang, Jing and Poor, H. Vincent},
  title     = {Random Orthogonalization for Federated Learning in Massive {MIMO} Systems},
  journal   = {IEEE Trans. Wireless Commun.},
  volume    = {23},
  number    = {3},
  pages     = {2469--2485},
  year      = {2024},
}

@article{jang_randomized_2025,
	title = {Randomized orthogonalization process with reorthogonalization},
	volume = {32},
	number = {4},
	journal = {Numer. Linear Algebra Appl.},
	author = {Jang, Yongseok and Grigori, Laura},
	year = {2025},
	pages = {article no. e70029},
    zblnumber = {8127837},
    mrnumber = {4939755},
}

@article{terao_lu-cholesky_2020,
	title = {{LU}-{C}holesky {QR} algorithms for thin {QR} decomposition},
	volume = {92},
	journal = {Parallel Comput.},
	author = {Terao, Takeshi and Ozaki, Katsuhisa and Ogita, Takeshi},
	year = {2020},
	pages = {article no. 102571},
    mrnumber = {4057043},
}

@article{de_figueiredo_generalized_1983,
	title = {A generalized {F}ock space framework for nonlinear system and signal analysis},
	volume = {30},
	number = {9},
	journal = {IEEE Trans. Circuits Systems},
	author = {De Figueiredo, R.},
	year = {1983},
	pages = {637--647},
    zblnumber = {0539.93041},
    mrnumber = {0718397},
}

@book{zhu_analysis_2012,
  author    = {Zhu, Kehe},
  title     = {{Analysis on Fock Spaces}},
  series    = {Grad. Texts in Math.},
  volume    = {263},
  publisher = {Springer},
  address   = {New York, NY},
  year      = {2012},
  zblnumber = {1262.30003},
  mrnumber  = {2934601},
}

@article{horowitz_zeros_1974,
	title = {Zeros of functions in the {B}ergman spaces},
	volume = {41},
	number = {4},
	journal = {Duke Math. J.},
	author = {Horowitz, Charles},
	year = {1974},
	pages = {693--710},
	mrnumber = {0357747},
	zblnumber = {0293.30035},
}

@book{petrushev_rational_1988,
	address = {Cambridge},
	series = {Encyclopedia Math. Appl.},
	title = {{Rational Approximation of Real Functions}},
	publisher = {Cambridge University Press},
    volume = {28},
	author = {Petrushev, P. P. and Popov, Vasil Atanasov},
	year = {1987},
    zblnumber = {0644.41010},
    mrnumber = {0940242},
}

@article{becker_optimal_2001,
	title = {An optimal control approach to a posteriori error estimation in finite element methods},
	volume = {10},
	journal = {Acta Numer.},
	author = {Becker, Roland and Rannacher, Rolf},
	year = {2001},
	pages = {1--102},
    mrnumber  = {2009692},
    zblnumber = {1105.65349},
}

@article{xie_numerical_2008,
  author    = {Xie, S.-S. and Heo, S. and Kim, S. and Woo, G. and Yi, S.},
  title     = {Numerical Solution of One--Dimensional {B}urgers' Equation Using Reproducing Kernel Function},
  journal   = {J. Comput. Appl. Math.},
  volume    = {214},
  number    = {2},
  pages     = {417--434},
  year      = {2008},
  zblnumber = {1140.65069},
  mrnumber  = {2398343},
}

@article{stepaniants_learning_2023,
	title = {Learning partial differential equations in reproducing kernel {H}ilbert spaces},
	volume = {24},
	journal = {J. Mach. Learn. Res.},
	author = {Stepaniants, George},
	year = {2023},
	pages = {article no. 86},
    zblnumber = {1576.68175},
    mrnumber = {4582508},
}

@book{stakgold_greens_2011,
  author    = {Stakgold, Ivar and Holst, Michael J.},
  title     = {{Green's Functions and Boundary Value Problems}},
  edition   = {3rd},
  series    = {Pure Appl. Math. (Hoboken)},
  volume    = {99},
  publisher = {John Wiley \& Sons},
  address   = {Hoboken, NJ},
  year      = {2011},
  zblnumber = {1221.35001},
  mrnumber  = {2789179}
}

@article{vincent_kernel_2002,
	title = {Kernel matching pursuit},
	volume = {48},
	number = {1-3},
	journal = {Mach. Learn.},
	author = {Vincent, Pascal and Bengio, Yoshua},
	year = {2002},
	pages = {165--187},
    zblnumber = {0998.68120},
}

@article{barrault_empirical_2004,
	title = {An `empirical interpolation' method: application to efficient reduced--basis discretization of partial differential equations},
	volume = {339},
	number = {9},
	journal = {C. R. Math. Acad. Sci. Paris},
	author = {Barrault, Maxime and Maday, Yvon and Nguyen, Ngoc Cuong and Patera, Anthony T.},
	year = {2004},
	pages = {667--672},
    mrnumber = {2103208},
    zblnumber = {1061.65118},
}

@inproceedings{hanin_complexity_2019,
	title = {Complexity of linear regions in deep networks},
	booktitle = {{Proceedings of the 36th International Conference on Machine Learning}},
	author = {Hanin, Boris and Rolnick, David},
	year = {2019},
	pages = {2596--2604},
    editor    = {Chaudhuri, Kamalika and Salakhutdinov, Ruslan},
    series    = {Proc. Mach. Learn. Res.},
    volume    = {97},
    pages     = {2596--2604},
    address   = {Long Beach, CA},
}

@inproceedings{montufar_number_2014,
	  author    = {Mont{\'u}far, Guido and Pascanu, Razvan and Cho, Kyunghyun and Bengio, Yoshua},
      title     = {On the Number of Linear Regions of Deep Neural Networks},
      booktitle = {{Advances in Neural Information Processing Systems 27}},
      address   = {Montreal, QC, Canada},
      editor    = {Ghahramani, Zoubin and Welling, Max and Cortes, Corinna and Lawrence, Neil D. and Weinberger, Kilian Q.},
      pages     = {2924--2932},
      year      = {2014},
      url       = {https://papers.nips.cc/paper/2014/hash/109d2dd3608f669ca17920c511c2a41e-Abstract.html},
}

@inproceedings{vaswani_attention_2017,
	 author    = {Vaswani, Ashish and Shazeer, Noam and Parmar, Niki and Uszkoreit, Jakob and Jones, Llion and Gomez, Aidan N. and Kaiser, {\L}ukasz and Polosukhin, Illia},
  title     = {Attention Is All You Need},
  booktitle = {{Advances in Neural Information Processing Systems~30}},
  editor    = {Guyon, Isabelle and von Luxburg, Ulrike and Bengio, Samy and Wallach, Hanna and Fergus, Rob and Vishwanathan, S. V. N. and Garnett, Roman},
  pages     = {5998--6008},
  address   = {Long Beach, CA, USA},
  year      = {2017},
}

@techreport{kingma_adam_2014,
	title = {Adam: a method for stochastic optimization},
	author = {Kingma, Diederik P. and Ba, Jimmy},
	year = {2014},
    type = {{arXiv preprint}},
	eprint = {1412.6980},
    note = {Presented at the 3rd International Conference on Learning Representations (ICLR), San Diego, CA, USA, 2015}
}

@article{blalock_what_2020,
	title = {What is the state of neural network pruning?},
	volume = {2},
	journal = {Proc. Mach. Learn. Syst.},
	author = {Blalock, Davis and Gonzalez Ortiz, Jose Javier and Frankle, Jonathan and Guttag, John},
    editor = {I. Dhillon and D. Papailiopoulos and V. Sze},
	year = {2020},
	pages = {129--146},
}

@article{hornik_approximation_1991,
	title = {Approximation capabilities of multilayer feedforward networks},
	volume = {4},
	number = {2},
	journal = {Neural Netw.},
	author = {Hornik, Kurt},
	year = {1991},
	pages = {251--257},
}

@article{kovacs_vpnet_2022,
	title = {{VPNet}: variable projection networks},
	volume = {32},
	number = {1},
	journal = {Int. J. Neural Syst.},
	author = {Kov\'acs, P\'eter and Bogn\'ar, Gerg\H{o} and Huber, Christian and Huemer, Mario},
	year = {2022},
	pages = {article no. 2150054},
}

@book{atkinson_introduction_2008,
	edition = {2nd},
	title = {{An Introduction to Numerical Analysis}},
	publisher = {John Wiley \& Sons},
    address = {New York, NY},
	author = {Atkinson, K.~E.},
	year = {1989},
    mrnumber = {1007135},
    zblnumber = {0402.65001},
}

@book{noauthor_theory_1970,
	title = {{Theory of {$H^p$} Spaces}},
	publisher = {Academic Press},
	author = {Duren, Peter L.},
    series = {Pure Appl. Math.},
    volume = {38},
    address = {New York, NY},
	year = {1970},
    mrnumber  = {0268655},
    zblnumber = {0215.20203},
}

@book{duren_bergman_2024,
	title = {{Bergman Spaces}},
	publisher = {American Mathematical Society},
	author = {Duren, Peter and Schuster, Alexander},
    series = {Math. Surveys Monogr.},
    volume = {100}, 
    address = {Providence, RI},
	year = {2004},
    mrnumber  = {2033762},
    zblnumber = {1059.30001},
}

@article{pap_multiresolution_2011,
	title = {Multiresolution in the {B}ergman space},
	author = {Pap, Margit},
    journal   = {Ann. Univ. Sci. Budapest. Sect. Comput.},
    volume    = {39},
    pages     = {333--353},
    year      = {2013},
    zblnumber = {1289.30234},
    mrumber = {3045618},
}

@article{wahlberg_system_1991,
	title = {System identification using {L}aguerre models},
	volume = {36},
	number = {5},
	journal = {IEEE Trans. Automat. Control},
	author = {Wahlberg, B.},
	year = {1991},
	pages = {551--562},
    zblnumber = {0738.93078},
    mrnumber  = {1101710}
}

@article{kovacs_generalized_2020,
	title = {Generalized rational variable projection with application in {ECG} compression},
	volume = {68},
	journal = {IEEE Trans. Signal Process.},
	author = {Kov\'acs, P\'eter and Fridli, S\'andor and Schipp, Ferenc},
	year = {2020},
	pages = {478--492},
    zblnumber = {1543.94230}
}

@book{garcia_finite_2018,
	address = {Cham},
	title = {{Finite {B}laschke Products and Their Connections}},
	publisher = {Springer},
	author = {Garcia, Stephan Ramon and Mashreghi, Javad and Ross, William T.},
	year = {2018},
    zblnumber = {1398.30002},
    mrnumber = {3793610},
}

@article{nagy-csiha_construction_2023,
	title = {Construction of rational interpolations using {M}almquist-{T}akenaka systems},
	volume = {6},
	number = {1},
	journal = {Constr. Math. Anal.},
	author = {Nagy-Csiha, Zsuzsanna and Pap, Margit and Weisz, Ferenc},
	year = {2023},
	pages = {55--76},
    zblnumber = {1524.42052},
    mrnumber = {4566456},
}

@article{szabo_identification_1999,
	title = {Identification of rational approximate models in {$H^\infty$} using generalized orthonormal basis},
	volume = {44},
	number = {1},
	journal = {IEEE Trans. Automat. Control},
	author = {Szab\'o, Z. and Bokor, J. and Schipp, F.},
	year = {1999},
	pages = {153--158},
    zblnumber = {1056.93528},
    mrnumber = {1666925}, 
}

@inproceedings{gozse_realizing_2015,
	address = {Torremolinos, Spain},
	title = {Realizing system poles identification on the unit disc based on the {F}ourier transform of {L}aguerre coefficients},
	booktitle = {{2015 23rd Mediterranean Conference on Control and Automation ({MED})}},
	author = {G\H{o}zse, Istvan and Soumelidis, Alexandros},
	year = {2015},
	pages = {821--826},
}

@article{dozsa_pole_2023,
	title = {Pole identification using discrete {L}aguerre expansion and variable projection},
	volume = {56},
	number = {2},
	journal = {IFAC-PapersOnLine},
	author = {Dózsa, Tamás and Szabari, Mátyás and Soumelidis, Alexandros and Kovács, Péter},
	year = {2023},
	pages = {3994--3999},
}

@book{heuberger_modelling_2005,
	address = {London},
	title = {{Modelling and Identification with Rational Orthogonal Basis Functions}},
	publisher = {Springer},
	editor = {Heuberger, Peter S. C. and Van Den Hof, Paul M. J. and Wahlberg, Bo},
	year = {2005},
}

@article{dozsa_bernoullis_2024,
	title = {On {B}ernoulli's method},
	volume = {62},
	number = {3},
	journal = {SIAM J. Numer. Anal.},
	author = {Dózsa, Tamás and Schipp, Ferenc and Soumelidis, Alexandros},
	year = {2024},
	pages = {1259--1277},
    zblnumber = {1545.65077},
    mrnumber = {4750273},
}

@techreport{dozsa_impulse_2026,
	title = {Impulse response estimation via {L}aguerre-{F}ourier expansion},
	author = {Dózsa, Tamás and Pelling, Art J. R. and Voigt, Matthias},
	year = {2026},
    type = {{arXiv preprint}},
	eprint = {2608.14769},
}

@techreport{cdcAngino,
	title = {{$H_2$}-optimal model order reduction using hyperbolic geometry},
	author = {Angino, Andrea and Dózsa, Tamás and Voigt, Matthias},
	year = {2026},
    type = {{arXiv preprint}},
    eprint = {2610.16249},
}

@article{flagg_convergence_2012,
	title = {Convergence of the iterative rational {K}rylov algorithm},
	volume = {61},
	number = {6},
	journal = {Systems Control Lett.},
	author = {Flagg, Garret and Beattie, Christopher and Gugercin, Serkan},
	year = {2012},
	pages = {688--691},
    zblnumber = {1401.93051},
    mrnumber = {2924212},
}

@book{conway_course_2007,
	address = {New York, NY},
	series = {Grad. Texts in Math.},
	title = {{A Course in Functional Analysis}},
	volume = {96},
	publisher = {Springer},
    edition = {2nd},
	author = {Conway, John B.},
	year = {2007},
    mrnumber  = {1070713},
    zblnumber = {0706.46003},
}

@article{zhou_derivative_2008,
	title = {Derivative reproducing properties for kernel methods in learning theory},
	volume = {220},
	number = {1--2},
    journal = {J. Comput. Appl. Math.},
	author = {Zhou, Ding-Xuan},
	year = {2008},
	pages = {456--463},
    zblnumber = {1152.68049},
    mrnumber = {2444183}, 
}

@article{mercer_xvi_1909,
	title = {Functions of positive and negative type, and their connection with the theory of integral equations},
	volume = {209},
	journal = {Philos. Trans. Roy. Soc. London Ser. A},
	author = {Mercer, James},
	year = {1909},
	pages = {415--446},
    zblnumber = {40.0408.02},
}

@techreport{dozsa_adaptive_2026,
	title = {Adaptive kernel methods},
	author = {Dózsa, Tamás and Angino, Andrea and Szabó, Zoltán and Bokor, József and Voigt, Matthias},
	year = {2026},
    type = {{arXiv preprint}},
	eprint = {2601.21707},
}

@article{jorgensen_new_2025,
	title = {New duality in choices of feature spaces via kernel analysis},
	volume = {23},
	number = {1},
	journal = {Sampl. Theory Signal Process. Data Anal.},
	author = {Jorgensen, Palle E. T. and Tian, James},
	year = {2025},
	pages = {article no. 5},
    mrnumber = {4867336},
    zblnumber = {1575.46021},
}

@article{mlinaric_irka_2025,
	title = {{IRKA} is a {R}iemannian gradient descent method},
	volume = {70},
	number = {5},
	journal = {IEEE Trans. Automat. Control},
	author = {Mlinari\'c, Petar and Beattie, Christopher A. and Drma\v{c}, Zlatko and Gugercin, Serkan},
	year = {2025},
	pages = {2979--2991},
    zblnumber = {8062336},
    mrnumber = {4898622},
}

@book{paulsen_introduction_2016,
	title = {{An Introduction to the Theory of Reproducing Kernel Hilbert Spaces}},
	author = {Paulsen, Vern I. and Raghupathi, Mrinal},
	year = {2016},
    series    = {Cambridge Stud. Adv. Math.},
    volume    = {152},
    publisher = {Cambridge University Press},
    address   = {Cambridge},
    zblnumber = {1364.46004},
    mrnumber = {3526117},
}

@article{dozsa_variable_2024,
	title = {Variable projection support vector machines and some applications using adaptive {H}ermite expansions},
	volume = {34},
	number = {1},
	journal = {Int. J. Neural Syst.},
	author = {Dózsa, Tamás and Deuschle, Federico and Cornelis, Bram and Kovács, Péter},
	year = {2024},
	pages = {article no. 2450004},
}

@inproceedings{vapnik_complete_2020,
  author    = {Vapnik, Vladimir and Izmailov, Rauf},
  title     = {Complete Statistical Theory of Learning: Learning Using Statistical Invariants},
  booktitle = {{Proceedings of the Ninth Symposium on Conformal and Probabilistic Prediction and Applications}},
  editor    = {Gammerman, Alexander and Vovk, Vladimir and Luo, Zhiyuan and Smirnov, Evgueni and Cherubin, Giovanni},
  series    = {Proc. Mach. Learn. Res.},
  volume    = {128},
  pages     = {4--40},
  year      = {2020},
}

@article{hearst_support_1998,
	title = {Support vector machines},
	volume = {13},
	number = {4},
	journal = {IEEE Intell. Syst. Appl.},
	author = {Hearst, M. A. and Dumais, S. T. and Osuna, E. and Platt, J. and Sch\"olkopf, B.},
	year = {1998},
	pages = {18--28},
}

@article{aronszajn_theory_1950,
	title = {Theory of reproducing kernels},
	volume = {68},
	number = {3},
	journal = {Trans. Amer. Math. Soc.},
	author = {Aronszajn, N.},
	year = {1950},
	pages = {337--404},
	mrnumber = {0051437},
    zblnumber = {0061.26204},
    url = {https://pubs.ams.org/TRAN/1950-068-03/S0002-9947-1950-0051437-7},
}

@inproceedings{shilton_reproducing_2025,
	address = {San Diego, CA, USA and Mexico City, Mexico},
	title = {Reproducing kernel {B}anach space models for neural networks with application to {R}ademacher complexity analysis},
	booktitle = {{Advances in Neural Information Processing Systems 38}},
	publisher = {Neural Information Processing Systems Foundation},
	author = {Shilton, Alistair and Gupta, Sunil and Rana, Santu and Venkatesh, Svetha},
	year = {2025},
	pages = {77765--77809},
}

@article{daubechies_nonlinear_2019,
	title = {Nonlinear approximation and (deep) {ReLU} networks},
	author = {Daubechies, I. and DeVore, R. and Foucart, S. and Hanin, B. and Petrova, G.},
    journal = {Constr. Approx.},
    volume = {55},
    number = {1},
    pages = {127--172}, 
	year = {2022},
    zblnumber = {1501.41003},
    mrnumber = {4376561},
}

@article{plonka_spline_2023,
	title = {Spline representation and redundancies of one--dimensional {ReLU} neural network models},
	volume = {21},
	number = {1},
	journal = {Anal. Appl. (Singap.)},
	author = {Plonka, Gerlind and Riebe, Yannick and Kolomoitsev, Yurii},
	year = {2023},
	pages = {127--163},
    zblnumber = {1507.41005},
    mrnumber = {4539213},
}

@article{muller_newton_2009,
	title = {A {N}ewton basis for kernel spaces},
	volume = {161},
	number = {2},
	journal = {J. Approx. Theory},
	author = {M\"uller, Stefan and Schaback, Robert},
	year = {2009},
	pages = {645--655},
    mrnumber = {2563073},
    zblnumber = {1185.41004},
}

@techreport{qian_kernel_2026,
	title = {Kernel {T}akenaka-{M}almquist systems and adaptive approximation in boundary weighted {H}ardy spaces},
	author = {Qian, Tao},
	year = {2026},
    type = {{arXiv preprint}},
	eprint = {2607.24473},
}

@article{felder_analogues_2022,
	title = {Analogues of finite {B}laschke products as inner functions},
	volume = {54},
	number = {4},
	journal = {Bull. Lond. Math. Soc.},
	author = {Felder, Christopher and Le, Trieu},
	year = {2022},
	pages = {1197--1219},
    zblnumber = {1529.30058},
    mrnumber = {4468315},
}

@article{mi_fast_2012,
	title = {A fast adaptive model reduction method based on {T}akenaka-{M}almquist systems},
	volume = {61},
	number = {1},
	journal = {Systems Control Lett.},
	author = {Mi, Wen and Qian, Tao and Wan, Feng},
	year = {2012},
	pages = {223--230},
    mrnumber = {2878709},
    zblnumber = {1256.93029},
}

@article{meier_approximation_1967,
	title = {Approximation of linear constant systems},
	volume = {12},
	number = {5},
	journal = {IEEE Trans. Automat. Control},
	author = {Meier, L. and Luenberger, D.},
	year = {1967},
	pages = {585--588},
    
}

@article{benner_interpolation-based_2012,
	title = {Interpolation--based {$\mathcal{H}_2$}-model reduction of bilinear control systems},
	volume = {33},
	number = {3},
	journal = {SIAM J. Matrix Anal. Appl.},
	author = {Benner, Peter and Breiten, Tobias},
	year = {2012},
	pages = {859--885},
    mrnumber = {3023455},
    zblnumber = {1256.93027},
}

@article{borghi_data-driven_2025,
	title = {Data--driven optimal approximation on {H}ardy spaces in simply connected domains},
	volume = {51},
	number = {6},
	journal = {Adv. Comput. Math.},
	author = {Borghi, Alessandro and Breiten, Tobias},
	year = {2025},
	pages = {article no. 59},
    zblnumber = {8146323},
    mrnumber = {5000002},
}

@article{soumelidis_hyperbolic_2017,
	title = {Hyperbolic geometrical approach to model reduction},
	volume = {50},
	number = {1},
	journal = {IFAC-PapersOnLine},
	author = {Soumelidis, Alexandros and Bokor, József and Schipp, Ferenc and Szabó, Zoltán},
	year = {2017},
	pages = {12905--12910},
}

@article{schroder_balanced_2023,
	title = {Balanced truncation model reduction with a priori error bounds for {LTI} systems with nonzero initial value},
	volume = {420},
	journal = {J. Comput. Appl. Math.},
	author = {Schröder, Christian and Voigt, Matthias},
	year = {2023},
	pages = {article no. 114708},
    zblnumber = {1504.93049},
    mrnumber = {4497919},
}

@book{obinata_model_2012,
	title = {{Model Reduction for Control System Design}},
	publisher = {Springer},
	author = {Obinata, Goro and Anderson, Brian D. O.},
    series    = {Comm. Control Engrg. Ser.},
    address   = {London},
	year = {2001},
}

@article{baur_model_2014,
	title = {Model order reduction for linear and nonlinear systems: a system--theoretic perspective},
	volume = {21},
	number = {4},
	journal = {Arch. Comput. Methods Eng.},
	author = {Baur, Ulrike and Benner, Peter and Feng, Lihong},
	year = {2014},
	pages = {331--358},
    mrnumber  = {3274477},
    zblnumber = {1348.93075},
}

@article{borghi_mathcal_2024,
	title = {{$\mathcal{H}_2$}-optimal rational approximation on general domains},
	volume = {50},
	number = {3},
	journal = {Adv. Comput. Math.},
	author = {Borghi, Alessandro and Breiten, Tobias},
	year = {2024},
	pages = {article no. 28},
    mrnumber = {4734558},
    zblnumber = {1541.41011},
}

@article{gugercin_mathcalh_2_2008,
	title = {{$\mathcal{H}_2$} model reduction for large--scale linear dynamical systems},
	volume = {30},
	number = {2},
	journal = {SIAM J. Matrix Anal. Appl.},
	author = {Gugercin, S. and Antoulas, A. C. and Beattie, C.},
	year = {2008},
	pages = {609--638},
    mrnumber  = {2421462},
    zblnumber = {1159.93318},
}

@article{gugercin_survey_2004,
	title = {A survey of model reduction by balanced truncation and some new results},
	volume = {77},
	number = {8},
	journal = {Internat. J. Control},
	author = {Gugercin, Serkan and Antoulas, Athanasios C.},
	year = {2004},
	pages = {748--766},
    zblnumber = {1061.93022},
    mrnumber = {2072207},
}

@incollection{beattie_ch7_2017,
author = {Christopher Beattie and Serkan Gugercin},
editor = {Peter Benner and Mario Ohlberger and Albert Cohen and Karen Willcox},
title = {Model Reduction by Rational Interpolation},
booktitle = {{Model Reduction and Approximation: Theory and Algorithms}},
series = {Comput. Sci. Eng.},
volume = {15},
chapter = {7},
pages = {297--334},
publisher = {SIAM},
address = {Philadelphia, PA},
doi = {10.1137/1.9781611974829.ch7},
year = {2017},
mrnumber = {3672151},
}

@book{Wahba_1990,
  author    = {Wahba, Grace},
  title     = {{Spline Models for Observational Data}},
  series    = {CBMS-NSF Regional Conf. Ser. in Appl. Math.},
  volume    = {59},
  publisher = {SIAM},
  address   = {Philadelphia, PA},
  year      = {1990},
  zblnumber = {0813.62001},
  mrnumber = {1045442},
}

@article{deBoor_divided_2005,
  title={Divided differences},
  author={de Boor, Carl},
  year={2005},
  journal = {Surv. Approx. Theory},
  volume = {1},
  pages = {46--69},
  zblnumber = {1071.65027},
  mrnumber = {2221566},
}

@book{Reddy_1993,
  author    = {Reddy, J. N.},
  title     = {{An Introduction to the Finite Element Method}},
  edition   = {2nd},
  series    = {McGraw-Hill Ser. Mech. Engrg.},
  publisher = {McGraw-Hill},
  address   = {New York},
  year      = {1993},
}

\end{document}